\newcommand{\textcyr}[1]{%
 {\fontencoding{OT2}\fontfamily{wncyr}\fontseries{m}\fontshape{n}\selectfont #1}}
\newcommand{\Sha}{{\mbox{\textcyr{Sh}}}}
\definecolor{darkgreen}{rgb}{0,0.3,0}
\newcommand{\TAU}{\tau}
\newcommand{\defi}[1]{\textsf{#1}} 
\newcommand{\Aff}{{\mathbb A}}
\newcommand{\C}{{\mathbb C}}
\newcommand{\F}{{\mathbb F}}
\newcommand{\G}{{\mathbb G}}
\newcommand{\PP}{{\mathbb P}}
\newcommand{\PPdual}{\widehat{\PP}}
\newcommand{\Q}{{\mathbb Q}}
\newcommand{\R}{{\mathbb R}}
\newcommand{\Z}{{\mathbb Z}}
\newcommand{\ZZ}{{\mathbb Z}}
\newcommand{\cc}{{\mathfrak c}}
\newcommand{\mm}{{\mathfrak m}}
\newcommand{\fD}{{\mathfrak D}}
\newcommand{\fP}{{\mathfrak P}}
\newcommand{\calG}{{\mathcal G}}
\newcommand{\calJ}{{\mathcal J}}
\newcommand{\calK}{{\mathcal K}}
\newcommand{\calN}{{\mathcal N}}
\newcommand{\calO}{{\mathcal O}}
\newcommand{\calS}{{\mathcal S}}
\newcommand{\calT}{{\mathcal T}}
\newcommand{\calX}{{\mathcal X}}
\newcommand{\calY}{{\mathcal Y}}
\newcommand{\calZ}{{\mathcal Z}}
\newcommand{\LL}{{\mathscr L}}
\newcommand{\OO}{{\mathscr O}}
\newcommand{\TT}{{\mathscr T}}
\DeclareMathOperator{\odd}{odd}
\DeclareMathOperator{\ad}{ad}
\DeclareMathOperator{\tr}{tr}
\DeclareMathOperator{\Frob}{Frob}
\DeclareMathOperator{\coker}{coker}
\DeclareMathOperator{\rk}{rk}
\DeclareMathOperator{\id}{id}
\DeclareMathOperator{\Char}{char}
\DeclareMathOperator{\im}{im}
\DeclareMathOperator{\End}{End}
\DeclareMathOperator{\Hom}{Hom}
\DeclareMathOperator{\Aut}{Aut}
\DeclareMathOperator{\Gal}{Gal}
\DeclareMathOperator{\Disc}{Disc}
\DeclareMathOperator{\Res}{Res}
\DeclareMathOperator{\Br}{Br}
\DeclareMathOperator{\divisor}{div}
\DeclareMathOperator{\Selsym}{Sel}
\newcommand{\Sel}[1]{\Selsym^{#1}}
\newcommand{\Selfake}[1]{\Selsym^{#1}_\mathrm{fake}}
\newcommand{\Seltrue}[1]{\Selsym^{#1}_\mathrm{true}}
\newcommand{\Seltf}[1]{\Selsym^{#1}_\mathrm{true/fake}}
\DeclareMathOperator{\Cl}{Cl}
\DeclareMathOperator{\Div}{Div}
\DeclareMathOperator{\Pic}{Pic}
\DeclareMathOperator{\Princ}{Princ}
\DeclareMathOperator{\Jac}{Jac}
\DeclareMathOperator{\Alb}{Alb}
\DeclareMathOperator{\PIC}{\bf Pic}
\DeclareMathOperator{\Spec}{Spec}
\DeclareMathOperator{\Proj}{Proj}
\DeclareMathOperator{\pr}{pr}
\newcommand{\good}{{\textup{good}}}
\newcommand{\tors}{{\operatorname{tors}}}
\newcommand{\unr}{{\operatorname{unr}}}
\newcommand{\tH}{{\text{th}}}
\newcommand{\SL}{\operatorname{SL}}
\newcommand{\Sp}{\operatorname{Sp}}
\newcommand{\surjects}{\twoheadrightarrow}
\newcommand{\injects}{\hookrightarrow}
\newcommand{\To}{\longrightarrow}
\newcommand{\isom}{\simeq}
\newcommand{\del}{\partial}
\newcommand{\Intersection}{\bigcap} 
\newcommand{\intersect}{\cap} 
\newcommand{\union}{\cup} 
\newcommand{\tensor}{\otimes}
\newcommand{\isomto}{\overset{\sim}{\rightarrow}}
\newcommand{\fes}{\Delta}
\DeclareMathSymbol{\rightarrowhead}{\mathrel}{AMSa}{"4B}
\newcommand{\dotrightarrow}{\mathrel{\cdots\!\rightarrowhead}}
\newcommand{\corr}[3]{{#2} \stackrel{#1}\dotrightarrow {#3}}  
\newtheorem{theorem}{Theorem}[section]
\newtheorem{lemma}[theorem]{Lemma}
\newtheorem{corollary}[theorem]{Corollary}
\newtheorem{proposition}[theorem]{Proposition}
\theoremstyle{definition}
\newtheorem{definition}[theorem]{Definition}
\newtheorem{hypothesis}[theorem]{Hypothesis}
\newtheorem{example}[theorem]{Example}
\newtheorem{examples}[theorem]{Examples}
\theoremstyle{remark}
\newtheorem{remark}[theorem]{Remark}
\begin{document}

\title[Explicit descent and genus-3 curves]{Generalized explicit descent and its application to curves of genus~3}
\subjclass[2010]{Primary 11G30; Secondary 11G10, 14G25, 14H45}
\keywords{Descent, Selmer group, genus~3 curve}

\author{Nils Bruin}
\address{Department of Mathematics, Simon Fraser University,
         Burnaby, BC V5A 1S6, Canada}
\email{nbruin@sfu.ca}
\urladdr{http://www.cecm.sfu.ca/~nbruin/}

\author{Bjorn Poonen}
\address{Department of Mathematics, Massachusetts Institute of Technology, Cambridge, MA 02139-4307, USA}
\email{poonen@math.mit.edu}
\urladdr{http://math.mit.edu/~poonen/}

\author{Michael Stoll}
\address{Mathematisches Institut,
         Universit\"at Bayreuth,
         95440 Bayreuth, Germany}
\email{Michael.Stoll@uni-bayreuth.de}
\urladdr{http://www.mathe2.uni-bayreuth.de/stoll/}

\thanks{N.B.\ was partially supported by NSERC.  B.P.\ was partially supported by the Guggenheim Foundation and by National Science Foundation grants DMS-0301280, DMS-0841321, and DMS-1069236.  M.S.\ was partially supported by the Deutsche Forschungsgemeinschaft.}

\date{December 11, 2013}

\begin{abstract}
We introduce a common generalization of essentially all known methods
for explicit computation of Selmer groups, which are used to 
bound the ranks of abelian varieties over global fields.
We also simplify and extend the proofs relating what is computed 
to the cohomologically-defined Selmer groups.
Selmer group computations have been practical for many Jacobians
of curves over~$\Q$ of genus up to~$2$ since the 1990s,
but our approach is the first to be practical for general curves of genus~$3$.
We show that our approach succeeds
on some genus-$3$ examples defined by polynomials with small coefficients.
\end{abstract}

\maketitle

\section{Introduction}

\subsection{Background}

The Mordell--Weil theorem \cites{Mordell1922,Weil1929}
states that for any abelian variety~$J$ over a number field~$k$,
the abelian group~$J(k)$ is finitely generated.
One of the main steps of the proof involves showing the
finiteness of $J(k)/nJ(k)$ for some $n \ge 2$.
And there is essentially only one known proof of this finiteness,
based on a vast generalization of Fermat's method of infinite descent.
In modern terms, the proof embeds $J(k)/nJ(k)$
into a Selmer group~$\Sel{n}(J)$,
a finite group that is computable in principle.

But the Selmer group is defined as a subgroup of 
a Galois cohomology group~$H^1(k,J[n])$,
and $1$-cocycles for the absolute Galois group~$\calG$ of~$k$
are not objects that a computer can deal with directly.
Fortunately, sometimes one can find more concrete representations 
for elements of~$H^1(k,J[n])$.
For example, if $J$ is an elliptic curve with $J[2] \subseteq J(k)$,
then $J[2]$ is isomorphic to $\mu_2 \times \mu_2$ as $\calG$-module,
and the Kummer sequence yields
$H^1(k,J[2]) \isom k^\times/k^{\times 2} \times k^\times/k^{\times 2}$.

In many higher-dimensional situations, however, the number field over which 
all the points of~$J[n]$ become rational is too large for the
required computations.
Instead one tries to find exact sequences
relating $J[n]$ to modules induced from~$\Z/n\Z$ or~$\mu_n$,
since cohomology of induced modules can be computed by 
Shapiro's lemma~\cite{Atiyah-Wall1967}*{\S4,~Proposition~2}.
For example, if $J$ is the Jacobian of a hyperelliptic curve $y^2=f(x)$
with $\deg f$ odd,
and $\Delta$ is the $\calG$-set of Weierstrass points not including
the one at infinity, then elements of~$J[2]$ are represented
by degree-$0$ divisors supported on $\Delta \union \{\infty\}$,
and we obtain a split exact sequence
\[
	0 \to \frac{\Z}{2\Z} \to \left(\frac{\Z}{2\Z}\right)^{\Delta} 
		\to J[2] \to 0
\]
in which $(\Z/2\Z)^{\Delta}$ is a direct sum of induced modules: 
this was exploited in~\cite{Schaefer1995}.
For $y^2=f(x)$ with $\deg f$ even, there is still a relationship
between $J[2]$ and induced modules, but it is more involved,
and it becomes much harder to relate $H^1(k,J[2])$ 
to concrete objects: this problem, together with its generalization 
to $y^p=f(x)$ for larger primes~$p$,
was addressed in~\cite{Poonen-Schaefer1997}.
The situations of the two previous sentences 
were called true descent and fake descent, respectively, 
in~\cite{Poonen-Schaefer1997}.

\subsection{Goal of this article}

The main goal of this article is to develop a practical generalization
of true and fake descent that contains essentially all 
previous instantiations of explicit descent.
Our generalization is broad enough to suggest an explicit approach
for Jacobians of arbitrary curves, using the $\calG$-set of
odd theta characteristics.
We demonstrate its practicality by computing the rank of~$J(\Q)$
for the Jacobian of a non-hyperelliptic curve genus-$3$ curve~$X$
with no special property beyond having small discriminant: 
see Section~\ref{S:disc 4727 example}.
This is the first time that Selmer group computations 
for ``general'' genus-$3$ Jacobians have been possible.

\begin{remark}
Practical Selmer group calculations rely on the computation
of class groups of number fields, and this is usually the bottleneck,
except in situations where the $\calG$-action on certain torsion points
is much smaller than expected for a general curve.
For a general genus-$3$ curve over~$\Q$, our method requires the class group
of a number field of degree~$28$; this seems to be the smallest possible,
given that $28$ is the smallest index of a proper subgroup of $\Sp_6(\F_2)$.
For general genus-$4$ curves, 
the corresponding index is~$120$, which is likely to remain outside the realm
of practical computation for some time.
\end{remark}

Setting the computational advantages of our approach aside,
the main theoretical advances in our article are as follows.
\begin{itemize}
\item Our approach of taking Cartier duals before taking cohomology
(Sections \ref{S:true explicit definition} and~\ref{S:fake cohom def})
leads quickly to concrete results on groups such as
$H^1(k,J[2])$ over any field of characteristic not~$2$;
this approach has already found outside applications: 
see~\cite{Bhargava-Gross-Wang-AIT2-preprint}*{Section~4}.
In particular, our approach eliminates the use of generalized Jacobians and 
the group scheme $\calJ_\mm$ in~\cite{Poonen-Schaefer1997},
a significant simplification even in the setting of hyperelliptic curves.
\item The introduction of the middle rows in 
\eqref{E:true nz+y diagram} and~\eqref{E:fake nz+y diagram}
provides short proofs of the comparison theorems 
relating the cohomological and explicit definitions of descent maps.
\item Appendix~\ref{S:SelDirect} shows how to augment the explicit descent
maps to produce an explicit description of the Selmer group itself 
instead of a ``fake'' approximation to it.
\end{itemize}

\begin{remark}
\label{R:Thorne}
For certain Jacobians~$J$,
there is also a representation-theoretic approach 
to understanding $J[2]$ and its Galois cohomology,
based on Vinberg theory, whose relevance for arithmetic problems
was first pointed out by Benedict Gross.
Namely, Jack Thorne has shown very generally how, starting
from a simple split algebraic group~$G$ of type $A$, $D$, or~$E$
over a field~$k$ of characteristic~$0$, 
one can produce a family of curves for which $J[2]$ can be identified
with the stabilizers for an action of the subgroup~$G^\theta$
fixed by an involution~$\theta$ in a 
particular canonical $G^{\ad}(k)$-conjugacy class of involutions of~$G$,
and from this one can obtain information about $H^1(k,J[2])$.
In particular, when $G$ is of type~$E_6$ (resp.~$E_7$),
Thorne's construction yields the universal family of non-hyperelliptic
genus-$3$ curves with a marked hyperflex 
(resp.,\ a marked flex that is not a hyperflex).
See~\cite{Thorne-thesis}; the families of genus-$3$ curves
appear explicitly in Theorem~4.8 there.
In the $E_6$~case, $\Gal(k(J[2])/k)$ is generically~$W(E_6)$, 
isomorphic to an index-$28$ subgroup of the group $\Sp_6(\F_2)$
that arises for a general genus-$3$ curve.
In the $E_7$~case, $\Gal(k(J[2])/k)$ is generically~$W(E_7)/\{\pm1\}$,
isomorphic to the full group $\Sp_6(\F_2)$.
It is reasonable to hope that 
it will eventually be possible to use Thorne's work 
to study $\Sel{2}(J)$ 
for any non-hyperelliptic genus-$3$ curve with a rational flex.
\end{remark}

\subsection{Road map to the rest of the article}

The first few sections are preliminary.
Section~\ref{S:notation} introduces the notation to be used 
throughout the rest of the article; much of it is standard.
Section~\ref{S:twisted powers} introduces twisted powers,
a slight generalization of induced modules and permutation modules.
Section~\ref{S:Weil pairings} uses Lang reciprocity to relate 
various definitions of Weil pairings.
Section~\ref{S:odd theta characteristics} reviews and develops 
the combinatorics of theta characteristics on a curve.

Section~\ref{S:generalized explicit descent} introduces the
key notions of the paper.
First it axiomatizes the settings to which our explicit approach applies,
formalizing them into the notions of \emph{true descent setup} 
and \emph{fake descent setup},
which are general enough to handle various isogenies $\phi \colon A \to J$
over a global field~$k$.
Given a true descent setup, we define a homomorphism
\begin{equation}
\label{E:intro Cassels map}
	\frac{J(k)}{\phi A(k)} \to \frac{L^\times}{L^{\times n}}
\end{equation}
that acts as a computation-friendly substitute 
for the connecting homomorphism
\begin{equation}
\label{E:intro descent map}
	\frac{J(k)}{\phi A(k)} \to H^1(k,A[\phi])
\end{equation}
appearing in the definition of the actual $\phi$-Selmer group.
In fact, the homomorphism~\eqref{E:intro Cassels map}
can be defined in two ways, either by using cohomology 
(good for comparing it to the homomorphism~\eqref{E:intro descent map}
used to define the actual $\phi$-Selmer group)
or by evaluating explicit rational functions on $0$-cycles
(good for computing the homomorphism).
Using our work on Weil pairings, we prove that the two definitions agree.
A more complicated argument establishes compatibility of 
analogous definitions for a fake descent setup; 
here $L^\times/L^{\times n}$ is replaced by $L^\times/L^{\times n} k^\times$.

Section~\ref{S:unramified classes} identifies the computation-friendly
analogue of the subgroup of classes in~$H^1(k,A[\phi])$ unramified outside
a finite set of places, which is essential for making the computations finite.
Section~\ref{S:Selmer groups and sets} defines a computation-friendly
analogue of the $\phi$-Selmer group, called the \emph{true} or
\emph{fake Selmer group}, 
and defines an analogue for a variety~$X$ whose Albanese variety is~$J$.
Section~\ref{S:relations between Selmer groups} uses
the notion of Shafarevich--Tate group
from Section~\ref{S:Sha1} to prove results
that often enable one to pass from knowledge of the true or fake Selmer group
to the actual $\phi$-Selmer group.
A more elaborate method that \emph{always} succeeds 
in calculating the $\phi$-Selmer group is presented in an appendix, 
but in some situations it may be impractical.

Section~\ref{S:compute fake} provides details on how to compute
true and fake Selmer groups.
Section~\ref{S:genus 3} specializes the approach 
to the case of non-hyperelliptic genus-$3$ curves,
and ends with several examples.

\section{Notation}
\label{S:notation}

If $S$ is a set and $n \in \Z_{\ge 0}$, let $\binom{S}{n}$ denote
the set of $n$-element subsets of~$S$.
For each field~$k$, choose a separable closure~$k_s$
(compatibly, when possible), and let $\calG=\calG_k=\Gal(k_s/k)$.
In general, for an object~$X$ over~$k$, we denote by~$X_s$ its base
extension to~$k_s$.
If $k$ is a global field, 
let $\Omega_k$ be the set of nontrivial places of~$k$.
For $v \in \Omega_k$, let $k_v$ be the completion of~$k$ at~$v$;
if moreover $v$ is non-archimedean, let $\calO_v$ be the valuation
ring in~$k_v$, let $\F_v$ be the residue field, and let $k_{v,u}$
be the maximal unramified extension of~$k_v$ inside a separable
closure~$k_{v,s}$.
All $\calG$-sets and $\calG$-modules are given the discrete topology,
and the $\calG$-action is assumed to be continuous.
If $M$ is a $\calG$-module,
then $M^\calG$ or~$M(k)$ denotes the
subgroup of $\calG$-invariant elements,
and $H^n(M)$ or $H^n(k,M)$ or~$H^n(\calG,M)$
denotes profinite group cohomology.

If $X$ is an integral $k$-scheme,
then $k(X)$ is its function field.
More generally, if $X$ is a disjoint union of integral $k$-schemes~$X_i$,
let $k(X)$ be the product of the~$k(X_i)$;
equivalently, $k(X)$ is the ring of global sections of the 
sheaf of total quotient rings (see \cite{Hartshorne1977}*{p.141}).
Let $\OO=\OO_X$ be the structure sheaf.

Call a variety \defi{nice} if it is smooth, projective, 
and geometrically integral.
Curves will be assumed nice unless otherwise specified.
For a nice variety~$X$,
let $\Div X$ (resp.\ $\Div^0 X$) be the group of divisors (resp.\ divisors
algebraically equivalent to $0$) on~$X$ over~$k$,
let $\calZ(X)$ (resp.\ $\calZ^0(X)$) be the group of $0$-cycles 
(resp.\ $0$-cycles of degree $0$),
and define $\Pic X$ as in \cite{Hartshorne1977}*{II.\S6};
if $X$ is a curve, 
also define $\Pic^0 X \colonequals  \ker(\Pic X \stackrel{\deg}\to \Z)$.
Alternatively, if $\Princ X$ is the group of principal divisors,
then $\Pic X = \Div X/\Princ X$.
If $f \in k(X)^\times$ and $z = \sum n_P P \in \calZ(X)$
is such that no closed point~$P$ appearing in~$z$ is a zero or pole of~$f$,
let $f(z) = \prod_P (N_{k(P)/k} f(P))^{n_P} \in k^\times$.
Let $J \colonequals \Alb_X$ be the Albanese variety of~$X$,
so $J$ is an abelian variety.
Then the Picard variety of~$X$
(i.e., the reduced subgroup scheme associated
to the connected component of the Picard scheme of~$X/k$)
may be identified with the dual abelian variety $\widehat{J}$.
If $X$ is a curve, then both $J$ and $\widehat{J}$ are the Jacobian $\Jac X$.
Let $\calY^0(X)$ be the kernel of the natural map $\calZ^0(X) \to J(k)$,
and let $J(k)_\circ$ be the image of this map.
More generally, if $G$ is a subgroup of~$J(k)$,
let $(J(k)/G)_\circ$ be the image of $\calZ^0(X) \to J(k)/G$.

\section{Twisted powers}
\label{S:twisted powers}

Fix a field~$k$.

\begin{definition}
\label{D:twisted power of G-module}
Given a $\calG$-module~$M$,
and a finite $\calG$-set~$\fes$,
the \defi{twisted power} $M^\fes$ 
is the $\calG$-module of maps from~$\fes$ to~$M$.
\end{definition}

\begin{remark}
The $\calG$-action on maps is the usual one:
if $\sigma \in \calG$ and $P \mapsto m_P$
is an element $m \in M^\fes$, then ${}^\sigma m$
is the map $P \mapsto \sigma(m_{\sigma^{-1} P})$.
\end{remark}

\begin{remark}
\label{R:Z^fes}
Applying the construction to~$\Z$ with trivial action yields~$\Z^\fes$.
Then $M^\fes = \Hom_\Z(\Z^\fes,M)$ for any $\calG$-module $M$.
\end{remark}

\begin{remark}
If $G$ is a commutative group scheme over~$k$,
we also use~$G$ to denote the $\calG$-module~$G(k_s)$,
and define~$G^\fes$ (at least as a $\calG$-module).
Similarly, a finite \'etale $k$-scheme~$\fes$
can be identified with a finite $\calG$-set.
\end{remark}

\begin{definition}
If $M$ is a $\calG$-module and $\fes$ is a finite $\calG$-set,
there is a homomorphism $\deg \colon M^\fes \to M$ that sums the coordinates,
and we let $M^\fes_{\deg 0}$ be its kernel.
\end{definition}

\begin{definition}
Given a finite $\calG$-module $M$ of size not divisible by~$\Char k$,
the \defi{Cartier dual} of~$M$ is the $\calG$-module
$M^\vee\colonequals \Hom_\Z(M,k_s^\times)$.
(This is compatible with the notion for finite commutative group schemes.)
\end{definition}

\begin{remark}
\label{R:exact}
For fixed~$\fes$, the functor $M \mapsto M^\fes$ is exact.
\end{remark}

\begin{remark}
\label{R:Cartier dual}
For a finite $\calG$-module~$M$ of size not divisible by~$\Char k$,
and a finite $\calG$-set~$\fes$,
we have $(M^\fes)^\vee \isom (M^\vee)^{\fes}$.
\end{remark}

Each finite \'etale $k$-scheme~$\fes$ is
$\Spec L$ for some \'etale $k$-algebra $L$.
Define the \'etale $k_s$-algebra $L_s \colonequals  L \tensor_k k_s$.
Thus $\G_a^\fes(k)=L$ and $\G_a^\fes(k_s)=L_s$.
Assume $\Char k \nmid n$.
Then $\mu_n^\fes(k_s) = \mu_n(L_s)$.
The group $H^1(\G_m^\fes)=H^1(\calG,L_s^\times)$ is trivial
by a generalization of Hilbert's theorem~90
\cite{SerreLocalFields1979}*{p.~152, Exercise~2},
so $H^1(\mu_n^\fes)=L^\times/L^{\times n}$.

\pagebreak[2]
\begin{samepage}
\section{Weil pairings}
\label{S:Weil pairings}

Let $k$ be a field.
Let $n$ be a positive integer with $\Char k \nmid n$.

\subsection{The Albanese-Albanese definition}
\end{samepage}

In this section we review Lang's construction of the Weil pairing
between Albanese varieties.
Let $V$ and~$W$ be nice $k$-varieties
and let $\fD\in\Div(V\times W)$.
The divisor $\fD$ induces partial maps
\[
	\fD \colon \calZ^0(V_s)\to \Div^0 W_s \textup{ and } 
	\fD^t \colon \calZ^0(W_s)\to \Div^0 V_s.
\]
Summation of $0$-cycles in $\Alb_V(k_s)$ gives rise to the exact sequence
\[
	0\to\calY^0(V_s)\to \calZ^0(V_s)\to \Alb_V(k_s) \to 0.
\]
By \cite{LangAbelianVarieties}*{III,~Theorem~4,~Corollary~2},
$\fD(\calY^0(V_s))\subset \Princ W_s$.
In particular, if $v \in \calZ^0(V_s)$ maps to 
an $n$-torsion point $[v] \in \Alb_V[n](k_s)$,
then $nv \in \calY^0(V_s)$, and $\fD(nv) = \divisor(f_{nv})$ for some
$f_{nv} \in k_s(W)^\times$.
Define $f_{nw} \in k_s(V)^\times$ symmetrically.
Define
\[\begin{array}{cccccc}
e_{\fD,n} \colon & \Alb_V[n](k_s)&\times&\Alb_W[n](k_s)&\to&\mu_n(k_s)\\
&[v]&,&[w]&\mapsto&\dfrac{f_{nw}(v)}{f_{nv}(w)}
\end{array}\]
where $v \in \calZ^0(V_s)$ mapping to~$[v]$
and $w \in \calZ^0(W_s)$ mapping to~$[w]$
are chosen so that the evaluations make sense.
See \cite{LangAbelianVarieties}*{VI,~\S4} 
for the proof that $e_{\fD,n}$ 
is well-defined, bilinear, and Galois-equivariant.

\begin{remark}\label{R:pairingduality}
Let $A$ be an abelian variety, let $\widehat{A}$ be the dual abelian variety,
and continue to suppose that $\Char k \nmid n$.
Take $V=A$ and $W=\widehat{A}$, and let $\fD$ be a Poincar\'e divisor.
Since $\Alb_A=A$ and $\Alb_{\widehat{A}}=\widehat{A}$, 
we obtain a pairing $e_n \colon A[n]\times \widehat{A}[n]\to \mu_n$.
It is nondegenerate (see \cite{LangAbelianVarieties}*{VI, VII}), 
so we obtain an identification of $\widehat{A}[n]$ with $A[n]^\vee$.
\end{remark}

\subsection{The Albanese-Picard definition}
\label{S:Albanese-Picard}

Let $X$ be a nice $k$-variety.
Let $J \colonequals \Alb_X$.
Let $\fP$ be a Poincar\'e divisor in $\Div(J\times \widehat{J})$.
Fix a base point in~$X_s$ to obtain a map $\iota \colon X_s \to J_s$.
The functoriality of taking Albanese varieties yields 
$\iota(\calY^0(X_s))\subset\calY^0(J_s)$.
Define $\fP_0\colonequals (\iota\times \id_{\widehat{J}})^*\fP\in\Div(X_s\times \widehat{J}_s)$.
If $y \in \calY^0(X_s)$ and $z \in \calZ^0(\widehat{J}_s)$, 
then $\fP_0(y)$ (if defined) is the divisor of some
rational function on~$\widehat{J}_s$, which can be evaluated on~$z$
(if the supports are disjoint) to obtain a value $\fP_0(y,z) \in k_s^\times$.
Given $D \in \Div^0(X_s)$, we may find $z \in \calZ^0(\widehat{J}_s)$
summing to $[D] \in \Pic^0(X_s) = \widehat{J}_s(k_s)$,
which means that $D-\fP_0^t(z)=\divisor(g_{D,z})$ 
for some $g_{D,z}\in k_s(X)^\times$,
and we obtain a partially-defined pairing
\[
\begin{array}{cccccc}
[\;,\;]\colon &\calY^0(X_s)&\times&\Div^0(X_s)&\to&k_s^\times\\
&y&,&D&\mapsto&g_{D,z}(y) \fP_0(y,z)
  \end{array}\\
\]
See \cite{Poonen-Stoll1999}*{Section~3.2} 
for a proof that this pairing is independent of the choices made, 
using Lang reciprocity and the seesaw principle.
If $g \in k_s(X)^\times$, then $[y,\divisor(g)] = g(y)$
since the choice $z=0$ yields $g_{D,z}=g$.

Finally, define
\[
\begin{array}{cccc}
e_{X,n} \colon &J[n] \times \widehat{J}[n] &\to&\mu_n(k_s)\\
&([x],[D])&\mapsto&\dfrac{ f_{nD}(x)}{[nx,D]},
\end{array}
\]
where $x \in \calZ^0(X_s)$ represents an element of~$J[n](k_s)$,
and $D \in \Div^0 X_s$ represents an element of~$\widehat{J}[n](k_s)$,
and $f_{nD} \in k_s(X)^\times$ has divisor~$nD$,
and all these are chosen so that everything
is defined.

Let us check that $e_{X,n}$ is well-defined,
i.e., independent of the choices of~$x$ and~$D$.
Changing~$x$ means adding some $y \in \calY^0(X_s)$ to it,
and we have
\[
	e_{X,n}([y],D)
	= \dfrac{f_{nD}(y)}{[ny,D]}
	= \dfrac{f_{nD}(y)}{[y,nD]}
	= \dfrac{f_{nD}(y)}{f_{nD}(y)}
	= 1.
\]
Changing $D$ means adding $\divisor(f)$ to it for some $f \in k_s(X)^\times$,
and we have
\[
	e_{X,n}(x,\divisor(f))
	= \dfrac{f^n(x)}{[nx,\divisor(f)]}
	= \dfrac{f(x)^n}{f(x)^n}
	= 1.
\]

\subsection{Functoriality}

Let $\iota \colon X\to X'$ be a morphism of nice varieties.
Let $x\in \calY^0(X_s)$ and $D\in \Div^0(X'_s)$. 
It is straightforward to check that
\[
	[x,\iota^*D]=[\iota(x),D]
\]
whenever both sides are defined.
It follows that for $[x]\in \Alb_{X_s}[n]$ and $[D]\in(\Pic X_s)[n]$ we have
\[
	e_{X,n}([x],\iota^* [D])=e_{X',n}([\iota(x)],[D]).
\]

\subsection{Equality of the pairings}

Let $J \colonequals \Alb_X$. 
We now have three pairings
\[
	e_{X,n}, e_{J,n}, e_n \colon J[n]\times \widehat{J}[n]\to \mu_n,
\]
where the first two are from Section~\ref{S:Albanese-Picard}
and the third is from Remark~\ref{R:pairingduality}.

\begin{proposition}
The pairings $e_{X,n}$, $e_{J,n}$, and $e_n$ are equal.
\end{proposition}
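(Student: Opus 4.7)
The plan is to prove the two equalities $e_{X,n} = e_{J,n}$ and $e_{J,n} = e_n$ in turn. The first reduces to functoriality applied to the Abel--Jacobi map; the second is a direct calculation in which we choose compatible $0$-cycle representatives so that an auxiliary rational function cancels.

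For $e_{X,n} = e_{J,n}$, I would apply the functoriality statement of the preceding subsection to the Abel--Jacobi map $\iota\colon X_s \to J_s$ used in the definition of $e_{X,n}$. Under the canonical identification $\Alb_X = J$ the induced map $\iota_\ast$ on Albanese varieties is the identity, so dually the pullback $\iota^\ast\colon \Pic^0 J_s \to \Pic^0 X_s$ is an isomorphism. Consequently every element of $\widehat{J}[n] = (\Pic X_s)[n]$ has the form $\iota^\ast[D]$ for a unique $[D]\in(\Pic J_s)[n]$, and the functoriality identity reads
\[
  e_{X,n}([x],\iota^\ast[D]) = e_{J,n}([\iota(x)],[D]),
\]
which is the required equality of pairings on $J[n]\times\widehat{J}[n]$.

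For $e_{J,n} = e_n$, specialize the Albanese--Picard construction to $X = J$; then the Abel--Jacobi map is the identity and $\fP_0 = \fP$. Fix $[v]\in J[n]$ represented by $v\in\calZ^0(J_s)$ and $[\widehat{w}]\in\widehat{J}[n]$ represented by $w\in\calZ^0(\widehat{J}_s)$. The Poincar\'e identification $\widehat{J}\isom\Pic^0 J$ sends $[\widehat{w}]$ to $[\fP^t(w)]$, so I pick $E\in\Div^0 J_s$ with $E = \fP^t(w) + \divisor(g)$ for some $g\in k_s(J)^\times$ and use $[E]$ to represent $[\widehat{w}]$ in the definition of $e_{J,n}$. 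Taking $z = w$ in the definition of $[\,\cdot\,,\,\cdot\,]$ gives
\[
  [nv,E] = g(nv)\,\fP(nv,w) = g(nv)\,f_{nv}(w).
\]
Since $\divisor(f_{nw}) = \fP^t(nw) = n\fP^t(w)$ and $nE - n\fP^t(w) = \divisor(g^n)$, we may take $f_{nE} = f_{nw}\,g^n$ up to a harmless constant, so $f_{nE}(v) = f_{nw}(v)\,g(v)^n = f_{nw}(v)\,g(nv)$. The factors $g(nv)$ cancel in the ratio, yielding
\[
  e_{J,n}([v],[E]) = \frac{f_{nw}(v)}{f_{nv}(w)} = e_n([v],[\widehat{w}]).
\]

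The main obstacle is precisely this bookkeeping: arranging for the same $0$-cycle $w$ to play the role of both ``$z$'' in the Albanese--Picard definition of $e_{J,n}$ and ``$w$'' in the Albanese--Albanese definition of $e_n$, so that the auxiliary function $g$ (which encodes the difference between $\fP^t(w)$ and an arbitrary divisor representative of its Picard class) drops out of numerator and denominator. The only nontrivial inputs are the surjectivity of $\iota^\ast$ on $\Pic^0$ and the identification $\widehat{J} = \Pic^0 J$ via the Poincar\'e divisor, both of which are standard.
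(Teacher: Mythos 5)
Your proof is correct and takes essentially the same approach as the paper: the first equality is exactly the paper's appeal to functoriality with respect to the Albanese embedding $X_s \to \Alb_{X_s}$, and the second is the same direct computation. The only difference is cosmetic: the paper simply takes the representative $D = \fP^t(z')$ outright, so the bracket collapses immediately to $f_{nz}(z')$, whereas you allow a general representative $E = \fP^t(w)+\divisor(g)$ and cancel $g$ by hand — harmless, but unnecessary given the already-established independence of $e_{J,n}$ from the choice of divisor representative.
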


\begin{proof}
Functoriality with respect to an Albanese embedding $X_s \to \Alb_{X_s}$
shows that $e_{X,n} = e_{J,n}$.

Now we prove that $e_{J,n}=e_n$.
Suppose that $a\in J[n](k_s)$ and $a' \in \widehat{J}[n](k_s)$
are represented by appropriate $z \in\calZ^0(J_s)$ and 
$z' \in \calZ^0(\widehat{J}_s)$.
Let $D = \fP^t z' \in\Div^0(J_s)$. 
By definition, $f_{nz'}=f_{nD}$ and
\[
	e_{J,n}(a,a') 
	= \frac{f_{nD}(z)}{[nz,D]}
	= \frac{f_{nz'}(z)}{\fP_0(nz,z')}
	=\frac{f_{n z'}(z)}{f_{nz}(n z')}
	= e_n(a,a').\qedhere
\]
\end{proof}

\section{Odd theta characteristics}
\label{S:odd theta characteristics}

Assume $\Char k \ne 2$.
Let $X$ be a nice curve of genus~$g$ over~$k$.
Let $\omega$ be its canonical bundle, and let $J \colonequals \Jac X$.

\subsection{Theta characteristics}
\label{S:theta characteristics}

A \defi{theta characteristic} on~$X$
is a line bundle~$\vartheta$ on~$X$
such that $\vartheta^{\tensor 2} \isom \omega$.
A theta characteristic~$\vartheta$ is called \defi{odd}
if the nonnegative integer $h^0(\vartheta)\colonequals \dim H^0(X,\vartheta)$ is odd.
The isomorphism classes of theta characteristics on~$X_s$
form a set~$\TT$ of size~$2^{2g}$,
and the odd ones form a subset~$\TT_{\odd}$ of size~$2^{g-1}(2^g-1)$
(cf.~\cite{Mumford1971}).

\subsection{Theta characteristics and quadratic forms}
\label{S:thetas and quadratic forms}

Given a symplectic pairing~$e$ on an $\F_2$-vector space $V$,
a \defi{quadratic form associated to~$e$}
is a map of sets $q \colon V \to \F_2$ 
such that $q(x+y)-q(x)-q(y)=e(x,y)$ for all $x,y \in V$.

\begin{theorem}[Riemann-Mumford]
\label{T:Riemann-Mumford}
Suppose that $k$ is separably closed and $\Char k\ne 2$.
View the Weil pairing~$e_2$ as a symplectic pairing on~$J[2]$ with values
in $\F_2 \isom \{\pm 1\}$.
\begin{enumerate}[\upshape (a)]
\item 
For each theta characteristic $\vartheta$,
\begin{align*}
	q_\vartheta \colon J[2] &\to \F_2 \\
	\LL &\mapsto (h^0(\vartheta \tensor \LL) + h^0(\vartheta)) \bmod 2
\end{align*}
is a quadratic form associated to~$e_2$.
\item 
The map
\begin{align*}
        \TT &\To \{\textup{quadratic forms on $J[2]$ associated to $e_2$}\} \\
	\vartheta &\longmapsto q_\vartheta,
\end{align*}
is a bijection.
\item 
A theta characteristic~$\vartheta$ is odd
if and only if 
the Arf invariant of~$q_\vartheta$ is~$1$.
\end{enumerate}
\end{theorem}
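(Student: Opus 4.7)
The plan is to prove (a), deduce (b) by a torsor argument, and obtain (c) from a Gauss--sum identity. The heart of the proof, and the main obstacle, is the quadratic-form identity in~(a).

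Unwinding the definition of ``quadratic form associated to $e_2$,'' part~(a) reduces to the congruence
\[
h^0(\vartheta\tensor\LL\tensor\MM)+h^0(\vartheta\tensor\LL)+h^0(\vartheta\tensor\MM)+h^0(\vartheta)\equiv e_2(\LL,\MM)\pmod{2}
\]
for all $\LL,\MM\in J[2]$.  I would pass from $X$ to its Jacobian: each translate $\vartheta\tensor\LL$ defines a symmetric theta divisor $\Theta_{\vartheta\tensor\LL}\subset J$, and by the Riemann singularity theorem the parity of its multiplicity at the origin equals $h^0(\vartheta\tensor\LL)\bmod 2$.  The left-hand side above is then the second finite difference of $\LL\mapsto h^0(\vartheta\tensor\LL)$ in the pair $(\LL,\MM)$, and Mumford's description of the Weil pairing via symmetric biextensions (equivalently, via the theta group of~$\Theta_\vartheta$) identifies this second difference with $e_2(\LL,\MM)\bmod 2$.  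This is the classical Riemann--Mumford formula; for the detailed proof we follow \cite{Mumford1971}.

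For (b), observe that $\TT$ is a torsor over $J[2]$ under $\LL\cdot\vartheta\colonequals\vartheta\tensor\LL$, and that the set of quadratic forms associated to~$e_2$ is a torsor over $\Hom(J[2],\F_2)$, which is canonically identified with $J[2]$ via the non-degenerate Weil pairing.  Both torsors have cardinality $2^{2g}$, so it suffices to show that $\vartheta\mapsto q_\vartheta$ is $J[2]$-equivariant.  Applying~(a) with $\vartheta$ replaced by $\vartheta\tensor\LL$ yields
\[
q_{\vartheta\tensor\LL}(\MM)-q_\vartheta(\MM)\equiv h^0(\vartheta\tensor\LL\tensor\MM)+h^0(\vartheta\tensor\LL)+h^0(\vartheta\tensor\MM)+h^0(\vartheta)\equiv e_2(\LL,\MM)\pmod{2},
\]
which is the required equivariance; an equivariant map between torsors over the same group is automatically a bijection.

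For (c), invoke the standard Gauss--sum formula $\sum_v(-1)^{q(v)}=(-1)^{\mathrm{Arf}(q)}\,2^g$, valid for any quadratic form~$q$ on a non-degenerate symplectic $\F_2$-space of dimension~$2g$.  Substituting $\vartheta'=\vartheta\tensor\LL$ and using $q_\vartheta(\LL)\equiv h^0(\vartheta\tensor\LL)+h^0(\vartheta)\pmod 2$ gives
\[
(-1)^{\mathrm{Arf}(q_\vartheta)}\,2^g=(-1)^{h^0(\vartheta)}\sum_{\vartheta'\in\TT}(-1)^{h^0(\vartheta')},
\]
so $(-1)^{\mathrm{Arf}(q_\vartheta)+h^0(\vartheta)}$ is a constant independent of $\vartheta$, of absolute value~$1$.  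To identify this constant as $+1$ I would test a single hyperelliptic example: numbering the $2g+2$ Weierstrass points, the differences $W_i-W_{2g+2}$ span $J[2]$, the Weil pairing has an explicit combinatorial description in this basis, and for a specific theta characteristic (say one supported on $g-1$ Weierstrass points) both $h^0(\vartheta)$ and $\mathrm{Arf}(q_\vartheta)$ can be read off directly, fixing the sign.
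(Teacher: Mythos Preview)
The paper does not actually prove this theorem: its entire proof is the sentence ``See \cite{Mumford1971} and \cite{Gross-Harris2004}*{\S4}.''  Your proposal goes further by sketching the underlying arguments, and for~(a) you ultimately also defer to \cite{Mumford1971}, so in that sense you and the paper agree.  Your torsor argument for~(b) is correct and is the standard one.

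There is one small gap in your treatment of~(c).  Your Gauss--sum reduction correctly shows that $(-1)^{\mathrm{Arf}(q_\vartheta)+h^0(\vartheta)}$ is independent of~$\vartheta$ \emph{for a fixed curve~$X$}.  But checking a single hyperelliptic example only pins down this sign for that one curve; a priori the sign could depend on~$X$.  To extend to all curves you need one more ingredient: either the deformation invariance of $h^0(\vartheta)\bmod 2$ (also in \cite{Mumford1971}) combined with connectedness of~$\calM_g$, or an independent proof that $\#\TT_{\odd}=2^{g-1}(2^g-1)$ for every~$X$ (which the paper quotes, with the same citation, just before stating the theorem).  Either of these immediately gives $\sum_{\vartheta'\in\TT}(-1)^{h^0(\vartheta')}=2^g$ for all~$X$ and closes the gap without the hyperelliptic check.
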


\begin{proof}
See \cite{Mumford1971} and \cite{Gross-Harris2004}*{\S4}.
\end{proof}

Combining Theorem~\ref{T:Riemann-Mumford}
with the following lemma will help us understand 
the relations between the odd theta characteristics 
in the $\F_2$-vector space $\frac{\Pic X}{\langle \omega \rangle}[2]$: 
see Corollary~\ref{C:sums of thetas}.

\begin{lemma}
\label{L:polynomial}
Let $f$ be in the space $\F_2[x_1,\ldots,x_n]_{\le 2}$ 
of polynomials of total degree at most~$2$.
Let $f_2$ be its homogeneous part of degree~$2$.
  \begin{enumerate}[\upshape (i)]
  \item \label{I:f_2 is square}
If $f$ vanishes on $\F_2^n$, then $f_2$ is a square.
\item \label{I:dim V}
If $n$ is even, and $f_2$ is nondegenerate (as a quadratic form),
and $V \subseteq \F_2^n$ is a coset of a linear subspace of dimension
at least~$n/2+1$,
then $f(v)=0$ for some $v \in V$.
\item \label{I:x,x+v}
If $n$ is even and $n \ge 4$ and $v \in \F_2^n$, 
and $f_2$ is nondegenerate,
then there exists $x \in \F_2^n$ with $f(x)=f(x+v)=0$.
\item \label{I:x,x+v_1,x+v_2}
If $n$ is even and $n \ge 6$ and $v_1,v_2 \in \F_2^n$, 
and $f_2$ is nondegenerate,
then there exists $x \in \F_2^n$ with $f(x)=f(x+v_1)=f(x+v_2)=0$.
  \end{enumerate}
\end{lemma}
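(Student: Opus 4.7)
The plan is to prove the four parts in order, with (ii) as the technical heart and (iii)--(iv) deducible from it via an affineness trick. For (i), I would use that the ideal of polynomials in $\F_2[x_1,\ldots,x_n]$ vanishing on~$\F_2^n$ is generated by the $x_i^2 + x_i$. Hence if $f$ has total degree at most~$2$ and vanishes on $\F_2^n$, a degree comparison forces $f = \sum_i c_i(x_i^2 + x_i)$ with $c_i \in \F_2$, so $f_2 = \sum_i c_i x_i^2 = \bigl(\sum_i c_i x_i\bigr)^2$, since squaring is additive in characteristic~$2$ and $c_i^2 = c_i$.

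For (ii), I would argue by contradiction. If $f$ has no zero on~$V$, then $f \equiv 1$ on~$V$. Writing $V = v_0 + W$ and choosing a basis $w_1,\ldots,w_d$ of~$W$, the polynomial $F(t_1,\ldots,t_d) \colonequals f\bigl(v_0 + \sum_i t_i w_i\bigr) + 1$ has degree at most~$2$ in~$t$ and vanishes on $\F_2^d$, so by~(i) its degree-$2$ part is a square. That degree-$2$ part is exactly $f_2|_W$ expressed in the chosen basis; matching the $t_i t_j$ coefficients in the squareness relation yields $B(w_i,w_j)=0$ for all $i,j$, so $W$ is totally isotropic for the symplectic form $B$ associated to~$f_2$. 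Since $n$ is even and $f_2$ is nondegenerate, $B$ is a nondegenerate symplectic form on~$\F_2^n$, whose totally isotropic subspaces have dimension at most~$n/2$, contradicting $\dim W \ge n/2+1$.

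For (iii) and (iv), the key observation is that $g_v(x) \colonequals f(x) + f(x+v)$ is affine in~$x$: using $f_2(x+v) = f_2(x) + B(x,v) + f_2(v)$ in characteristic~$2$, the quadratic contributions cancel. Thus $\{f(x) = f(x+v) = 0\}$ is equivalent to $\{f(x)=0,\ g_v(x)=0\}$. For $v \ne 0$, nondegeneracy of~$B$ makes $B(\cdot,v)$ a nonzero linear form, so $\{g_v=0\}$ is a coset of a hyperplane, of dimension $n-1 \ge n/2+1$ when $n \ge 4$, and part~(ii) supplies the desired~$x$; the case $v=0$ is handled directly by~(ii) applied to all of~$\F_2^n$. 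This proves~(iii). For~(iv), when $v_1,v_2$ are linearly independent the linear forms $B(\cdot,v_1)$ and $B(\cdot,v_2)$ are also independent (again by nondegeneracy of~$B$), so $\{g_{v_1}=g_{v_2}=0\}$ is a coset of a subspace of dimension $n-2 \ge n/2+1$ when $n \ge 6$, and~(ii) finishes. The remaining degenerate cases ($v_i=0$ or $v_1=v_2$) reduce to~(iii) or~(ii).

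The main obstacle is~(ii): converting ``$f$ has no zero on~$V$'' into the rigid statement that $W$ is totally isotropic, so that the maximum-isotropic bound becomes available. Once~(ii) is in hand, parts (iii) and~(iv) are essentially linear-algebra bookkeeping, the only subtle point being the independence of the forms $B(\cdot,v_i)$, which is immediate from nondegeneracy of~$B$.
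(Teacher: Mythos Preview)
Your proof is correct and follows essentially the same route as the paper's. The only notable difference is in part~(i): the paper argues by contraposition, reducing to the case $n=2$ by restricting to the span of the two axes appearing in an off-diagonal monomial of~$f_2$, whereas you use that the vanishing ideal of~$\F_2^n$ is $(x_i^2+x_i)$ and read off $f=\sum c_i(x_i^2+x_i)$ directly. For parts~(ii)--(iv) the arguments are the same in substance; the paper phrases~(ii) as ``$e|_V$ has kernel of dimension at most $n-\dim V<\dim V$, so $e|_V\ne 0$, so $f_2|_V$ is not a square, so apply~(i) to $f+1$,'' which is just your totally-isotropic contradiction run in the forward rather than the contrapositive direction.
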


\begin{proof}\hfill
  \begin{enumerate}[\upshape (i)]
  \item 
If $f_2$ is not a square, it contains a monomial $x_i x_j$ with $i \ne j$,
and then restricting to the span of the $x_i$- and $x_j$-axes
lets us reduce to the case~$n=2$, which is easy.
\item 
Replacing $f(x)$ by~$f(x+v)$ lets us assume that $V$ is a subspace.
Let $e$ be the symmetric bilinear pairing associated to~$f_2$.
Since $e$ is nondegenerate, 
$e|_V$ has kernel of size at most $n-\dim V < \dim V$,
so $e|_V \ne 0$.
Hence $f_2|_V$ is not a square.
Apply \eqref{I:f_2 is square} to $f+1$.
\item 
If $v \ne 0$, let $V \subseteq \F_2^n$ be 
the codimension-$1$ coset defined by $f(x+v)-f(x)=0$;
if $v=0$, let $V=\F_2^n$.
Apply \eqref{I:dim V}.
\item
By \eqref{I:x,x+v}, we may assume that $v_1,v_2$ are distinct and nonzero.
Let $V$ be the codimension-$2$ coset 
defined by $f(x+v_1)-f(x)=0$ and $f(x+v_2)-f(x)=0$.
Apply \eqref{I:dim V}.\qedhere
  \end{enumerate}
\end{proof}

In the rest of Section~\ref{S:thetas and quadratic forms} 
except in Corollary~\ref{C:Galois action on theta characteristics},
we assume that $k$ is a separably closed field of characteristic not~$2$,
and that $X$ is a nice curve of genus $g \ge 2$ over~$k$.

\pagebreak[2]
\begin{samepage}
\begin{corollary}
\label{C:sums of thetas}
\strut
  \begin{enumerate}[\upshape (a)]
  \item \label{I:sum of 2 thetas}
If $g \ge 2$,
then every class in $J[2] \subset \frac{\Pic X}{\langle \omega \rangle}[2]$
has the form $\vartheta_1 + \vartheta_2$ (modulo $\omega$)
with $\vartheta_1,\vartheta_2 \in \TT_{\odd}$.
\item  \label{I:sum of 6 thetas}
If $g \ge 3$, and $\vartheta_1,\ldots,\vartheta_6 \in \TT_{\odd}$
sum to $0$ in $\frac{\Pic X}{\langle \omega \rangle}[2]$,
then there exist $\vartheta_{12},\vartheta_{34},\vartheta_{56} \in \TT_{\odd}$
such that 
\begin{align*}
  \vartheta_1 + \vartheta_2 + \vartheta_{34} + \vartheta_{56} &= 0 \\
  \vartheta_3 + \vartheta_4 + \vartheta_{12} + \vartheta_{56} &= 0 \\
  \vartheta_5 + \vartheta_6 + \vartheta_{12} + \vartheta_{34} &= 0
\end{align*}
in $\frac{\Pic X}{\langle \omega \rangle}[2]$.
  \end{enumerate}
\end{corollary}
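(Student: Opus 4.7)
The plan is to translate the corollary, via the Riemann--Mumford bijection (Theorem~\ref{T:Riemann-Mumford}), into a problem about quadratic forms on the $\F_2$-vector space $J[2]$, and then apply Lemma~\ref{L:polynomial}. The key preparatory step, which I would establish first, is the transformation rule
\[
  q_{\vartheta+L} \;=\; q_\vartheta + e_2(L,\,\cdot\,)
\]
for any theta characteristic $\vartheta$ and any $L\in J[2]$; this follows from the defining formula for $q_\vartheta$ combined with the associated-form identity $q_\vartheta(L+M)=q_\vartheta(L)+q_\vartheta(M)+e_2(L,M)$. Using the standard $\F_2$-quadratic-form identity $\operatorname{Arf}(q+e(v,\,\cdot\,))=\operatorname{Arf}(q)+q(v)$ (a quick check on a symplectic basis), this gives $\operatorname{Arf}(q_{\vartheta+L})=\operatorname{Arf}(q_\vartheta)+q_\vartheta(L)$, so that for odd $\vartheta$, the characteristic $\vartheta+L$ is odd if and only if $q_\vartheta(L)=0$. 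I would then fix a single odd theta characteristic $\vartheta_0$ (which exists since $|\TT_\odd|=2^{g-1}(2^g-1)>0$), put $q_0=q_{\vartheta_0}$ and $f(M)\colonequals q_0(M)$, and parametrize all theta characteristics as $\vartheta_0+M$ with $M\in J[2]\isom\F_2^{2g}$. Then $\vartheta_0+M$ is odd iff $f(M)=0$, and the degree-$2$ part of $f$ is $q_0$, whose polarization $e_2$ is nondegenerate---exactly the hypothesis Lemma~\ref{L:polynomial} needs.

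For part~(a), given $L\in J[2]$, I would set $\vartheta_1=\vartheta_0+M$ and $\vartheta_2=\vartheta_0+M+L$, so that $\vartheta_1+\vartheta_2=2\vartheta_0+L\equiv L$ in $\frac{\Pic X}{\langle\omega\rangle}$. Both are odd precisely when $f(M)=f(M+L)=0$, which is supplied by Lemma~\ref{L:polynomial}(iii) with $n=2g\ge 4$ and $v=L$.

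For part~(b), set $L_{ij}\colonequals\vartheta_i+\vartheta_j-\omega\in J[2]$ for $ij\in\{12,34,56\}$; comparing degrees shows that $\sum_i\vartheta_i\equiv 0\pmod{\omega}$ forces $\sum_i\vartheta_i=3\omega$ in $\Pic X$, whence $L_{12}+L_{34}+L_{56}=0$ in $J[2]$. I would then choose $\vartheta_{12}$ freely and define $\vartheta_{34}\colonequals\vartheta_{12}+L_{56}$ and $\vartheta_{56}\colonequals\vartheta_{12}+L_{34}$; using $L_{12}+L_{34}+L_{56}=0$ together with $2\vartheta_{12}\equiv 0\pmod{\omega}$, all three displayed relations then hold automatically. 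What remains is to arrange that $\vartheta_{12}$, $\vartheta_{12}+L_{34}$, and $\vartheta_{12}+L_{56}$ all be odd, i.e.\ $f(M)=f(M+L_{34})=f(M+L_{56})=0$ where $\vartheta_{12}=\vartheta_0+M$. This is supplied by Lemma~\ref{L:polynomial}(iv), using $n=2g\ge 6$. The only real work in the whole argument is the transformation rule for $q_\vartheta$ under translation by $J[2]$; once that dictionary is in hand, both parts reduce mechanically to the combinatorial lemma, and the constraints $g\ge 2$ and $g\ge 3$ in the two parts match exactly the size hypotheses in~(iii) and~(iv) of the lemma.
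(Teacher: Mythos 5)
Your proposal is correct and follows essentially the same route as the paper: fix an odd theta characteristic $\vartheta_0$, identify $\TT_{\odd}$ with the zero set of $q_{\vartheta_0}$ on $J[2]\isom\F_2^{2g}$, and apply Lemma~\ref{L:polynomial}\eqref{I:x,x+v} and~\eqref{I:x,x+v_1,x+v_2}, with the bookkeeping in part~(b) matching the paper's up to relabeling. The only (harmless) difference is that you obtain the dictionary ``$\vartheta_0+L$ odd $\iff q_{\vartheta_0}(L)=0$'' via the translation rule and the Arf-invariant identity, whereas it follows at once from the defining formula $q_{\vartheta_0}(L)=(h^0(\vartheta_0\tensor L)+h^0(\vartheta_0))\bmod 2$ together with $h^0(\vartheta_0)$ being odd.
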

\end{samepage}

\begin{proof}
Fix $\vartheta \in \TT_{\odd}$.
By Theorem~\ref{T:Riemann-Mumford}, there is an identification
$\{\textup{zeros of $q_\vartheta$ in $J[2]$}\} \to \TT_{\odd}$
given by $x \mapsto \vartheta+x$.
Identify $J[2]$ with~$\F_2^{2g}$, and $q_\vartheta$ with a polynomial~$f$.
  \begin{enumerate}[\upshape (a)]
  \item Apply Lemma~\ref{L:polynomial}\eqref{I:x,x+v} 
with $v$ the class in~$J[2]$.
Then $x$ corresponds to the desired $\vartheta_1$,
and $x+v$ to~$\vartheta_2$.
  \item Apply Lemma~\ref{L:polynomial}\eqref{I:x,x+v_1,x+v_2} 
with $v_1=\vartheta_1+\vartheta_2$ and $v_2=\vartheta_3+\vartheta_4$
to get~$x$.
Then $x$ corresponds to the desired $\vartheta_{56}$,
$x+v_1$ corresponds to the desired $\vartheta_{34}$,
and $x+v_2$ corresponds to the desired $\vartheta_{12}$.
The first two relations are then satisfied,
and the third is the sum of the first two.\qedhere
  \end{enumerate}
\end{proof}

By an \defi{incidence structure},
we will mean a pair~$(\fes,\Sigma)$,
where $\fes$ is a set and $\Sigma$ is a collection of subsets of~$\fes$.
An \defi{isomorphism} $(\fes,\Sigma) \to (\fes',\Sigma')$
is a bijection $\fes \to \fes'$ under which $\Sigma$ and $\Sigma'$ 
correspond.

Let $\Sigma$ be the set of $4$-element subsets of $\TT_{\odd}$
that sum to~$0$ in $\frac{\Pic X}{\langle \omega \rangle}[2]$.

\begin{proposition}
\label{P:structures}
The following structures have the same automorphism group $\Sp_{2g}(\F_2)$:
\begin{enumerate}[\upshape (1)]
\item The $\F_2$-vector space $\frac{\Pic X}{\langle \omega \rangle}[2]$
equipped with $\TT_{\odd}$ (viewed as a subset).
\item The $\F_2$-vector space $J[2]$ with the Weil pairing $e_2$.
\item The incidence structure $(\TT_{\odd},\Sigma)$.
\end{enumerate}
\end{proposition}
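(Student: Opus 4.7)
The plan is to identify each of the three automorphism groups with $\Sp_{2g}(\F_2) = \Aut(J[2], e_2)$, which by definition handles structure~(2). Write $V \colonequals \frac{\Pic X}{\langle \omega \rangle}[2]$.

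For the comparison of structures~(1) and~(2), I construct mutually inverse homomorphisms between $\Aut(V, \TT_{\odd})$ and $\Aut(J[2], e_2)$. Given $\phi \in \Aut(V, \TT_{\odd})$, Corollary~\ref{C:sums of thetas}(a) implies that $J[2]$ equals the $\F_2$-span of sums of two elements of~$\TT_{\odd}$, so $\phi$ preserves $J[2]$. For any $\vartheta \in \TT_{\odd}$, the parity $h^0(\vartheta) \equiv 1 \pmod 2$ together with Theorem~\ref{T:Riemann-Mumford}(a) yields the characterization $q_\vartheta(\LL) = 0 \iff \vartheta + \LL \in \TT_{\odd}$; hence $q_{\phi(\vartheta)}(\phi(\LL)) = q_\vartheta(\LL)$ for all $\LL \in J[2]$, and polarizing shows that $\phi|_{J[2]}$ preserves~$e_2$. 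Conversely, each $\psi \in \Aut(J[2], e_2)$ acts on quadratic forms associated to~$e_2$ by $q \mapsto q \circ \psi^{-1}$, preserving the forms of Arf invariant~$1$; via the bijection with $\TT_{\odd}$ from Theorem~\ref{T:Riemann-Mumford}(b,c), this extends in a $J[2]$-equivariant way on $V = J[2] \sqcup \TT$ to an $\F_2$-linear automorphism of $V$ preserving $\TT_{\odd}$, providing the inverse.

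For the comparison of structures~(1) and~(3), the restriction map $\Aut(V, \TT_{\odd}) \to \Aut(\TT_{\odd}, \Sigma)$ is well-defined since $\Sigma$ depends only on $(V, \TT_{\odd})$, and is injective because $\TT_{\odd}$ spans~$V$ (elements of $J[2]$ are sums of two odd theta characteristics by Corollary~\ref{C:sums of thetas}(a), and $\TT = \TT_{\odd} + J[2]$). For surjectivity, a $\Sigma$-preserving bijection of $\TT_{\odd}$ extends linearly to~$V$ provided every $\F_2$-linear relation $\sum_{\vartheta \in S} \vartheta = 0$ in~$V$ with $S \subseteq \TT_{\odd}$ lies in the $\F_2$-span of the $4$-term relations from~$\Sigma$. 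The parity map $V \to \F_2$ with kernel $J[2]$ forces $|S|$ to be even; $|S|=2$ is impossible, and $|S|=4$ is by definition. For $|S|=6$, summing the three $4$-term relations furnished by Corollary~\ref{C:sums of thetas}(b) recovers the given $6$-term relation. For $|S|=2k \ge 8$, I induct on~$k$: given $\vartheta_1 + \cdots + \vartheta_{2k} = 0$, set $v \colonequals \vartheta_1 + \vartheta_2 + \vartheta_3 + \vartheta_4 \in J[2]$; if $v=0$ the relation splits into two $4$-term pieces, and otherwise Corollary~\ref{C:sums of thetas}(a) writes $v = \vartheta_a + \vartheta_b$ with $\vartheta_a, \vartheta_b \in \TT_{\odd}$, so in the free $\F_2$-module on $\TT_{\odd}$ the original relation is the sum of the $6$-term relation $\vartheta_1 + \cdots + \vartheta_4 + \vartheta_a + \vartheta_b = 0$ and the relation $\vartheta_a + \vartheta_b + \vartheta_5 + \cdots + \vartheta_{2k} = 0$ of length at most $2k-2$.

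The main obstacle is the inductive step for $|S| \ge 8$: one must handle the cases where $\vartheta_a$ or $\vartheta_b$ happens to coincide with some $\vartheta_i$, checking that the resulting auxiliary relations have strictly smaller length (or collapse to $4$-term relations). Any coincidence only further shortens the residual relation, so the induction closes, but writing out the finite case analysis is the most tedious part of the argument; everything else is immediate from the Riemann--Mumford correspondence and Corollary~\ref{C:sums of thetas}.
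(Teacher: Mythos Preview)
Your argument for $(1)\leftrightarrow(2)$ is essentially identical to the paper's, and for $g\ge 3$ your inductive treatment of $(1)\leftrightarrow(3)$ is a valid reorganization of the paper's quotient construction: both reduce to Corollary~\ref{C:sums of thetas}.

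There is, however, a genuine gap at $g=2$. Your surjectivity argument for $\Aut(V,\TT_{\odd})\to\Aut(\TT_{\odd},\Sigma)$ rests on the claim that every $\F_2$-linear relation among elements of~$\TT_{\odd}$ lies in the span of~$\Sigma$. But for $g=2$ one computes (e.g.\ via Proposition~\ref{P:315}, where the factor $2^{g-2}-1$ vanishes) that $\Sigma=\emptyset$, while the sum of all six odd theta characteristics is a nontrivial relation in~$V$. So your sufficient condition fails, and the invocation of Corollary~\ref{C:sums of thetas}\eqref{I:sum of 6 thetas} for the $|S|=6$ case is illegitimate since that corollary requires $g\ge 3$. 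The paper handles $g=2$ separately: it presents $V$ as $\F_2^{\TT_{\odd}}$ modulo the single ``sum of all'' relation, notes that any bijection of a $6$-element set preserves this relation, and concludes by a dimension count that $\Aut(\TT_{\odd},\Sigma)=S_6$ has the correct order $720=\#\Sp_4(\F_2)$. You should insert this case.

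One minor point: in your inductive step you write that when $v=0$ ``the relation splits into two $4$-term pieces,'' but this is only literally true for $|S|=8$; for larger~$|S|$ it splits into a $4$-term piece and a $(2k-4)$-term piece, to which the inductive hypothesis applies.
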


\begin{proof}
The automorphism group of~(2) is $\Sp_{2g}(\F_2)$,
so it suffices to show how to build each structure 
canonically in terms of the others.

(1)$\to$(2): 
By Corollary~\ref{C:sums of thetas}\eqref{I:sum of 2 thetas},
we can recover~$J[2]$ as the subgroup of
$\frac{\Pic X}{\langle \omega \rangle}[2]$ 
generated by~$\vartheta_1+\vartheta_2$
with $\vartheta_1,\vartheta_2 \in \TT_{\odd}$.
The subset $\TT_{\odd}$ determines the function
$h^0 \bmod 2$ on the nontrivial coset $\TT$ of $J[2]$ 
in $\frac{\Pic X}{\langle \omega \rangle}[2]$.
Choose $\vartheta \in \TT_{\odd}$;
from $h^0 \bmod 2$, we can recover $q=q_\vartheta$ 
and hence $e_2(x,y)=q(x+y)-q(x)-q(y)$.

(2)$\to$(1): Take the space of functions $q \colon J[2] \to \F_2$
such that $(x,y) \mapsto q(x+y)-q(x)-q(y)$ is a multiple of~$e_2$.
Inside this we have the subset of $q$ for which the multiple is~$e_2$ itself
and for which the Arf invariant is~$1$.

(1)$\to$(3): Clear.

(3)$\to$(1): 
If $g=2$, take the $\F_2$-vector space $P$ with generator set $\TT_{\odd}$
and with one relation saying that the sum of the generators is~$0$.
If $g \ge 3$,
take the $\F_2$-vector space $P$ with generator set~$\TT_{\odd}$
and with relations given by the elements of~$\Sigma$.

To show that the map 
$\epsilon \colon P \to \frac{\Pic X}{\langle \omega \rangle}[2]$
sending each basis element to the corresponding~$\vartheta$
is an isomorphism, it suffices to show that its restriction 
$\epsilon_0 \colon P_0 \to J[2]$
is an isomorphism, where $P_0$ is the codimension-$1$ subspace of~$P$
spanned by pairs.
Corollary~\ref{C:sums of thetas}\eqref{I:sum of 2 thetas}
shows that $\epsilon_0$ induces a bijection 
$\binom{\TT_{\odd}}{2}/\!\!\sim \; \To J[2] - \{0\}$,
where $\{\vartheta_1,\vartheta_2\} \sim \{\vartheta_3,\vartheta_4\}$
means $\sum_{i=1}^4 \vartheta_i = 0$ 
in $\frac{\Pic X}{\langle \omega \rangle}[2]$.
In particular, $\epsilon_0$ is surjective.
If $g=2$, then $\epsilon_0$ is injective too, 
because $\dim P_0 = 4 = \dim J[2]$.
To prove injectivity for $g \ge 3$,
it suffices to prove that every relation
\[
	(\vartheta_1+\vartheta_2) + (\vartheta_3+\vartheta_4) 
	= (\vartheta_5+\vartheta_6) 
\]
in $\frac{\Pic X}{\langle \omega \rangle}[2]$
is a consequence of $4$-term relations.
But that is true, 
by Corollary~\ref{C:sums of thetas}\eqref{I:sum of 6 thetas}.
\end{proof}

\begin{corollary}
\label{C:Galois action on theta characteristics}
Let $k$ be any field of characteristic not~$2$.
The action of $\calG$ on $J[2]$, on the set of theta characteristics of~$X_s$
or on the set of odd theta characteristics of~$X_s$,
factors through the standard action of $\Sp_{2g}(\F_2)$ on~$J[2]$.
\end{corollary}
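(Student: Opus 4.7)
The plan is to deduce the corollary from Proposition~\ref{P:structures} applied to $X_s$ over~$k_s$. That proposition identifies the automorphism group of each of the three structures---the vector space $\frac{\Pic X_s}{\langle\omega\rangle}[2]$ together with its subset~$\TT_{\odd}$, the symplectic $\F_2$-vector space $(J[2], e_2)$, and the incidence structure $(\TT_{\odd}, \Sigma)$---with $\Sp_{2g}(\F_2)$. So it suffices to check that $\calG$ acts on each structure by automorphisms; the action then automatically factors through $\Sp_{2g}(\F_2)$, and compatibility across the three structures is automatic because the mutual identifications constructed in the proof of Proposition~\ref{P:structures} are intrinsic and hence $\calG$-equivariant.

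First I would dispose of the low-genus cases: for $g=0$ there is nothing to prove, and for $g=1$ the structure sheaf $\calO_X$ is a $\calG$-fixed odd theta characteristic, so $\vartheta \mapsto \vartheta - \calO_X$ gives a $\calG$-equivariant bijection $\TT \to J[2]$ identifying $\TT_{\odd}$ with a $\calG$-stable subset, and the claim reduces to Galois-equivariance of the Weil pairing. In the main case $g \ge 2$, the $\calG$-actions are visibly present: on $J[2]$ because $J$ is defined over $k$; on $\frac{\Pic X_s}{\langle \omega \rangle}[2]$ because $\omega \in \Pic X(k)$; on the subsets $\TT \supset \TT_{\odd}$ because $\sigma \in \calG$ acts on line bundles by pullback, preserves the relation $\vartheta^{\otimes 2} \isom \omega$, and preserves $h^0$; and on $\Sigma$ because it is defined intrinsically from $\TT_{\odd}$ and the vector space structure. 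That these actions respect the structures is then precisely Galois-equivariance of $e_2$ (from Section~\ref{S:Weil pairings}) together with the $h^0$ observation.

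I do not expect any real obstacle; the whole argument is a naturality check. The one point that deserves explicit mention is $\calG$-invariance of $h^0$ for line bundles on~$X_s$, which holds because pullback along the $\sigma$-action on $X_s$ gives a $\sigma$-semilinear isomorphism $H^0(X_s, \LL) \to H^0(X_s, \sigma^*\LL)$ and hence preserves $k_s$-dimension.
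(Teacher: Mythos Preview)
Your argument is correct and is precisely the naturality check the paper has in mind: the corollary is stated without proof, as an immediate consequence of Proposition~\ref{P:structures} together with the evident $\calG$-equivariance of the structures involved (in particular of~$e_2$, and the invariance of~$h^0$ under Galois pullback). Your explicit treatment of the low-genus cases is a welcome addition, since the paper's standing hypothesis $g\ge 2$ in Section~\ref{S:thetas and quadratic forms} is lifted for this corollary but the point is not spelled out.
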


\begin{remark}
\label{R:connectedness of moduli space}
It follows from the connectedness of the moduli space of genus-$g$ curves
\cite{Deligne-Mumford1969} 
that the isomorphism type of each structure in Proposition~\ref{P:structures}
depends only on~$g$, and not on $k$ or~$X$.
\end{remark}

\begin{proposition}
\label{P:315}
For $g \ge 2$, 
we have $\#\Sigma=\frac{2^{g-3}}{3}(2^{2g}-1)(2^{2g-2}-1)(2^{g-2}-1)$.
\end{proposition}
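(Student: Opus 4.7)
The plan is to fix an odd theta characteristic $\vartheta_0$ as an auxiliary base point, translate the problem into the quadratic space $(J[2], e_2)$, and count by a character-sum argument.

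By Theorem~\ref{T:Riemann-Mumford}, the bijection $\vartheta \mapsto \vartheta - \vartheta_0$ identifies $\TT$ with $J[2] \cong \F_2^{2g}$ and carries $\TT_{\odd}$ onto $Z \colonequals q^{-1}(0)$, where $q \colonequals q_{\vartheta_0}$ has Arf invariant~$1$; in particular $|Z| = 2^{g-1}(2^g-1)$. Since $4\vartheta_0 = 2\omega \in \langle\omega\rangle$, the set $\{\vartheta_0+x_1,\ldots,\vartheta_0+x_4\}$ belongs to~$\Sigma$ if and only if $x_1+x_2+x_3+x_4 = 0$ in $J[2]$. Hence $\#\Sigma$ equals the number of $4$-element subsets of $Z$ summing to~$0$.

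Next I would count ordered $4$-tuples by Fourier analysis on $(\F_2^{2g},+)$, using $e_2$ to identify $J[2]$ with its character group. Writing
\[
  N \colonequals \#\{(x_1,\ldots,x_4) \in Z^4 : x_1+x_2+x_3+x_4 = 0\}
  \quad\text{and}\quad
  S(y) \colonequals \sum_{x\in Z} (-1)^{e_2(x,y)},
\]
one has $N = 2^{-2g}\sum_{y \in J[2]} S(y)^4$ and $S(y) = 2\,|Z \cap y^\perp| - |Z|$, where $y^\perp$ is the kernel of $e_2(-,y)$. Clearly $S(0)=|Z|$. For $y\ne 0$ I claim $|S(y)| = 2^{g-1}$. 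If $q(y)=0$, then $y\in y^\perp$ and $q|_{y^\perp}$ descends to a nondegenerate quadratic form on $y^\perp/\langle y\rangle \cong \F_2^{2g-2}$; computing its Arf invariant in a symplectic basis containing~$y$ shows it equals~$1$, so its zero set has size $2^{g-2}(2^{g-1}-1)$ and hence $S(y) = -2^{g-1}$. If $q(y)=1$, the involution $x \mapsto x+y$ of $y^\perp$ interchanges $\{q=0\}$ and $\{q=1\}$ inside each coset of $\langle y\rangle$, so $|Z\cap y^\perp| = 2^{2g-2}$ and $S(y) = +2^{g-1}$. In either case $S(y)^4 = 2^{4g-4}$, and so
\[
  N = 2^{2g-4}\bigl[(2^g-1)^4 + 2^{2g} - 1\bigr].
\]

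Finally I would pass from $N$ to $\#\Sigma$. In characteristic~$2$, the only $4$-tuples in $Z^4$ summing to~$0$ with repeated entries are those of multiset type $\{x,x,x,x\}$ (contributing $|Z|$ tuples) and $\{x,x,y,y\}$ with $x\ne y$ (contributing $6\binom{|Z|}{2}$ tuples); the other multiset patterns force a distinctness contradiction. Hence $24\,\#\Sigma = N - 3|Z|^2 + 2|Z|$. Setting $M = 2^g$ and expanding, one checks that
\[
  384\,\#\Sigma = M(M-1)(M+1)(M-2)(M+2)(M-4),
\]
and substituting $M = 2^g$, then extracting a factor of~$4$ from each of $M^2-4$ and $M-4$, yields $16 \cdot 2^g(2^{2g}-1)(2^{2g-2}-1)(2^{g-2}-1)$, giving the stated formula. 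The main obstacle in this plan is the Arf-invariant bookkeeping in the $q(y)=0$ subcase; the remainder is routine character-sum manipulation and polynomial algebra.
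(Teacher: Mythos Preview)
Your proof is correct. The Arf-invariant step you flag is routine: extend $y$ to a symplectic basis $e_1=y,f_1,\ldots,e_g,f_g$; since $q(e_1)=0$, the basis formula $\sum_i q(e_i)q(f_i)$ for the Arf invariant of~$q$ reduces to the corresponding sum over $i\ge 2$, which is exactly the Arf invariant of the induced form on $y^\perp/\langle y\rangle$, so both equal~$1$.

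Your route differs from the paper's. The paper uses the transitivity of $\Sp_{2g}(\F_2)$ on $J[2]-\{0\}$ to conclude that every fibre of the summing map $\binom{\TT_{\odd}}{2}\to J[2]-\{0\}$ has the same size $\binom{2^{g-1}(2^g-1)}{2}/(2^{2g}-1)=2^{2g-3}-2^{g-2}$; since each element of~$\Sigma$ admits exactly three partitions into two pairs, this gives $\#\Sigma = \tfrac{1}{3}(2^{2g}-1)\binom{2^{2g-3}-2^{g-2}}{2}$ in one line. Your character-sum argument is longer but more self-contained: the case analysis of $|Z\cap y^\perp|$ according to $q(y)$ in effect recomputes that constant fibre size without invoking the group action, and the fourth-moment technique would adapt to related counts (e.g., larger subsets of~$Z$ with prescribed sum) where no such symmetry shortcut is available.
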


\begin{proof}
Since $\Sp_{2g}(\F_2)$ acts transitively on $J[2]-\{0\}$,
all fibers of the summing map 
$\binom{\TT_{\odd}}{2} \to J[2]-\{0\}
\subseteq \frac{\Pic X}{\langle \omega \rangle}[2]$
have the same size,
namely $\binom{2^{g-1}(2^g-1)}{2} / (2^{2g}-1) = 2^{2g-3}-2^{g-2}$.
Hence the number of {\em pairs} of pairs such that the two pairs
have the same image in $J[2]-\{0\}$
is $(2^{2g}-1)\binom{2^{2g-3}-2^{g-2}}{2}$.
Each such pair of pairs consists of disjoint pairs,
since $x+y=x+z$ would imply $y=z$.
Thus each pair of pairs corresponds to a $4$-element subset of $\TT_{\odd}$
summing to $0$ with a partition into two pairs.
Each $4$-element subset can be partitioned in $3$~ways,
so 
\[
	\#\Sigma = \frac13 (2^{2g}-1)\binom{2^{2g-3}-2^{g-2}}{2} 
	= \frac{2^{g-3}}{3}(2^{2g}-1)(2^{2g-2}-1)(2^{g-2}-1).\qedhere
\]
\end{proof}

\subsection{Representing odd theta characteristics by divisors over the ground field}

\begin{proposition}
\label{P:rational divisor for theta characteristic}
An odd theta characteristic~$\vartheta$ on~$X_s$
whose class lies in $(\Pic X_s)^\calG$
is represented by an element of~$\Div X$.
\end{proposition}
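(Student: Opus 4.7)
The plan is a Brauer-obstruction argument. I will define an obstruction class $\delta(\vartheta) \in \Br(k)$ that vanishes precisely when $\vartheta$ is representable by a divisor over~$k$, and then force $\delta(\vartheta)=0$ by showing that it is annihilated by both~$2$ and the odd integer $h^0(\vartheta)$. (The case $g=0$ is vacuous since then $\TT_{\odd}=\emptyset$, so assume $g \ge 1$ below.)

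To set up $\delta$, consider the exact sequence $0 \to \Princ X_s \to \Div X_s \to \Pic X_s \to 0$ of $\calG$-modules. Since $\Div X_s$ is a direct sum of permutation modules, $H^1(k,\Div X_s)=0$, so the long exact sequence produces a coboundary
\[
\delta \colon (\Pic X_s)^\calG \To H^1(k,\Princ X_s)
\]
whose kernel is the image of $\Div X=(\Div X_s)^\calG$. Via $\Princ X_s \isom k_s(X)^\times/k_s^\times$, Hilbert 90 applied to $k_s(X)/k(X)$ (whose Galois group is~$\calG$, since $k$ is algebraically closed in $k(X)$) identifies $H^1(k,\Princ X_s)$ with a subgroup of $H^2(k,k_s^\times)=\Br(k)$. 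So the task reduces to showing $\delta(\vartheta)=0$ in $\Br(k)$.

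The first constraint is $2\delta(\vartheta)=\delta(\omega)=0$: the canonical class $\omega$ is represented by an effective divisor in $\Div X$ because $H^0(X,\omega)$ is a $k$-vector space of dimension $g\ge 1$, and any nonzero $k$-section of $\omega$ yields such a divisor. The second constraint comes from the linear system: the $\calG$-action on $V\colonequals H^0(X_s,\vartheta)$ descends to a $\calG$-action on $\PP V$, making it a Brauer--Severi variety~$P$ over~$k$ of dimension $h^0(\vartheta)-1$. A standard comparison of coboundary computations (or, more directly, the observation that $k$-rational points of $P$ correspond bijectively to effective divisors in $\Div X$ representing~$\vartheta$) shows that the Brauer class $[P]\in\Br(k)$ coincides with $\delta(\vartheta)$. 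Because a Brauer--Severi variety of dimension $n-1$ has Brauer class of period dividing~$n$, the element $\delta(\vartheta)=[P]$ has period dividing $h^0(\vartheta)$, which is odd.

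Combining the two constraints, $\delta(\vartheta)\in\Br(k)$ is annihilated by coprime integers, hence vanishes. Therefore $\vartheta$ lies in the image of $\Div X \to (\Pic X_s)^\calG$, as required. The only nontrivial step is the identification $\delta(\vartheta)=[P]$ with the Brauer class of the linear system; this is classical but involves a careful comparison of cocycles arising from the divisor sequence and from the projective action on~$V$, and is the part of the argument that most deserves care.
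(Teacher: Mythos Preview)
Your argument is correct and is essentially the same as the paper's: both define the Brauer obstruction in $\Br k$, kill it by~$2$ via $2\vartheta \sim \omega$, and kill it by the odd integer $h^0(\vartheta)$ via the Brauer--Severi variety $|\vartheta|$. The only cosmetic difference is that the paper invokes the Hochschild--Serre spectral sequence to produce the map $(\Pic X_s)^\calG \to \Br k$, whereas you derive it directly from the divisor exact sequence and Hilbert~90; these are two presentations of the same construction.
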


\begin{proof}
The $k$-scheme parametrizing effective divisors 
whose class equals~$\vartheta$
is a Brauer-Severi variety,
a twisted form of $\PP^{h^0(\vartheta)-1}$,
corresponding to a central simple algebra of 
dimension $h^0(\vartheta)^2$ and hence of index dividing 
$h^0(\vartheta)$ \cite{Gille-Szamuely2006}*{Theorems 5.2.1 and~2.4.3},
so the associated Brauer class $\tau \in \Br k$
is killed by the odd integer 
$h^0(\vartheta)$ \cite{Gille-Szamuely2006}*{Proposition~4.5.13(1)}.
On the other hand, the Hochschild-Serre spectral sequence
yields an exact sequence
\[
	\Pic X \to (\Pic X_s)^\calG \to \Br k
\]
under which the second map sends $\vartheta$ to~$\tau$
and $\omega$ to~$0$,
so $2\tau=0$.
Thus $\tau=0$.
So $\vartheta$ comes from an element of~$\Pic X$,
or, equivalently, from an element of~$\Div X$.
\end{proof}

\pagebreak[2]
\begin{samepage}
\section{Generalized explicit descent}
\label{S:generalized explicit descent}

\subsection{The setting}
\label{S:the setting}
Suppose that $X$ is a nice variety over~$k$.
Let $J \colonequals \Alb_X$.

\subsection{True descent}\label{S:true descent}

To motivate the definition of a true descent setup, 
we recall the following:
\end{samepage}

\begin{example}
\label{E:odd hyperelliptic}
Assume $\Char k \ne 2$.
Let $F(x) \in k[x]$ be a non-constant separable polynomial of odd
degree~$2g+1$.
Let $X$ be the smooth projective model of the curve $y^2=F(x)$,
so $X$ is a hyperelliptic curve of genus~$g$.
Let $\infty$ be the unique point at infinity on~$X$,
and let $\fes \subset X$
be the $0$-dimensional $k$-scheme 
such that $\fes(k_s)$ consists of
the $2g+1$ Weierstrass points not equal to~$\infty$.
Then there is a surjection $(\Z/2\Z)^\fes \to \widehat{J}[2]$
sending each basis element $P \in \fes(k_s)$ 
to the divisor class~$[P-\infty]$
(see \cite{MumfordTheta2}*{Lemma~2.4 and Corollary~2.11}).
The family of divisors $P-\infty$ indexed by $P \in \fes(k_s)$
may be viewed as a single divisor~$\beta$ on~$X \times \fes$.
\end{example}

\begin{definition}
\label{D:true setup}
A \defi{true descent setup} for~$X$ consists of a triple $(n,\fes,\LL)$,
where $n$ is a positive integer not divisible by~$\Char k$,
$\fes=\Spec L$ is a finite \'etale $k$-scheme,
and $\LL$ is a line bundle on~$X \times \fes$
such that $\LL^{\tensor n} \isom \OO$.
In more concrete terms, if we choose $\beta \in \Div(X \times \fes)$
representing~$\LL$, the condition on~$\beta$ is that
$n\beta$ is principal; 
i.e., there is a function $f\in k(X\times \fes)^\times$ 
such that $\divisor(f)=n\beta$.
\end{definition}

\begin{remark}
Given $\beta$, for each $P \in \fes(k_s)$, we have the fiber 
$\beta_P \in \Div X_s$.
In fact, we may think of~$\beta$ as the family of divisors~$\beta_P$
depending $\calG$-equivariantly on~$P$.
Similarly, $f$ may be thought of as a family of 
functions $f_P \in k(X_s)^\times$.
\end{remark}

Given $(n,\fes,\LL)$,
we will define a homomorphism
\[
	C \colon J(k) \to \frac{L^\times}{L^{\times n}}
\]
using cohomology, and relate it to a more explicit homomorphism.

\subsubsection{Cohomological definition}
\label{S:true cohomological definition}

Take cohomology of 
\[
	0 \to J[n] \to J \stackrel{n}\to J \to 0
\]
to obtain $J(k) \to H^1(J[n])$.
The condition on~$\LL$ (or~$\beta$)
implies that it induces a map $\fes \to \widehat{J}[n]$,
and hence a homomorphism $(\Z/n\Z)^\fes \to \widehat{J}[n]$.
Taking the Cartier dual and
applying Remark~\ref{R:Cartier dual} yields
$\alpha\colon J[n]\to\mu_n^\fes$.
Composition yields a homomorphism
\begin{equation}
\label{E:true cohomological homomorphism}
	C \colon 
	J(k) \to H^1(J[n]) \stackrel{\alpha}{\longrightarrow}
        H^1(\mu_n^\fes)\isom\frac{L^\times}{L^{\times n}}.
\end{equation}

\subsubsection{Explicit definition}
\label{S:true explicit definition}

Choose $\beta$ and~$f$ as in Definition~\ref{D:true setup}.
Let $X^\good$ be the largest open subscheme of~$X$ such that $f$ is an
invertible regular function on~$X^\good \times \fes$.
Then $f$ defines a morphism $X^\good \times \fes \to \G_m$,
and hence a morphism $X^\good \to \G_m^\fes$.
Evaluating on closed points and taking norms (this is necessary
if the closed point is of degree greater than~$1$),
we obtain a homomorphism $\calZ(X^\good) \to L^\times$.
This induces $\calZ^0(X^\good) \to L^\times/L^{\times n}$.
If we change $(\beta,f)$ to~$(\beta',f')$,
then $\beta'-\beta$ is the divisor of some~$g \in k(X)^\times$,
and $f' = c g^n f$ for some~$c \in k^\times$.
If we evaluate on any $z \in \calZ^0(X)$ that is good for 
both $f$ and~$f'$,
then the value of the homomorphism in $L^\times/L^{\times n}$
is unchanged (the value of~$g$ gives an $n^{\tH}$~power,
and the value of~$c$ is $c^{\deg z}=1$).
Given any $z \in \calZ^0(X)$, we can move $\beta$ to avoid~$z$,
so the compatible homomorphisms glue to give a homomorphism
\begin{equation}
\label{E:true explicit homomorphism}
	\widetilde{C}\colon \calZ^0(X) \to \frac{L^\times}{L^{\times n}}.
\end{equation}

\subsubsection{Compatibility of the two definitions}
\label{S:true equivalence}

\begin{proposition}
\label{P:true equivalence}
The maps $C$ and~$\widetilde{C}$ are compatible
in the sense that the following diagram commutes:
\[\xymatrix{
\calZ^0(X)\ar@/^3.5ex/[drrr]^{\widetilde{C}}\ar[d]\\
J(k)\ar[r]&H^1(J[n])\ar[r]^-{\alpha}&
H^1(\mu_n^\fes)&
\dfrac{L^{\times}}{L^{\times n}}\ar[l]_-{\sim}.
}\]
\end{proposition}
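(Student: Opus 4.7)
The plan is to take a $0$-cycle $z \in \calZ^0(X)$, choose careful $k_s$-lifts, write down explicit $1$-cocycles representing $C(z)$ and $\widetilde C(z)$ in $H^1(\mu_n^\fes)$, and show that they agree component-by-component using the Albanese--Picard formula for the Weil pairing and Weil reciprocity.

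First, I would set up the lifts. Write $P \colonequals [z] \in J(k)$, pick $\bar P \in J(k_s)$ with $n\bar P = P$, and lift $\bar P$ to a $0$-cycle $\bar w \in \calZ^0(X_s)$ summing to $\bar P$, chosen (after moving within its Albanese class) so that $n\bar w - z = \divisor(h)$ for some $h \in k_s(X)^\times$, and so that $\bar w$ and $\sigma \bar w$ are disjoint from $\supp(\beta_Q)$ for all $Q \in \fes(k_s)$ simultaneously. Then the Kummer coboundary in $H^1(J[n])$ is the cocycle $\sigma \mapsto \sigma \bar P - \bar P$, represented at the $0$-cycle level by $\sigma \bar w - \bar w$.

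Next, I would identify $\alpha$ with the Weil pairing. The map $(\Z/n\Z)^\fes \to \widehat J[n]$ sends the basis element at $Q$ to the class $[\beta_Q]$, so by Cartier duality and Remark~\ref{R:pairingduality}, the $Q$-component of $\alpha(\sigma \bar P - \bar P)$ is $e_n(\sigma \bar P - \bar P,\, [\beta_Q])$. Using the Albanese--Picard formula of Section~\ref{S:Albanese-Picard} with $f_{n\beta_Q} = f_Q$, this equals
\[
\frac{f_Q(\sigma \bar w - \bar w)}{[\,n(\sigma\bar w - \bar w),\, \beta_Q\,]}
= \frac{f_Q(\sigma\bar w - \bar w)}{(\sigma h/h)(\beta_Q)},
\]
where the denominator is evaluated via the formula $[\divisor(g), D] = g(D)$ (a direct consequence of $[y,\divisor(g)] = g(y)$ together with Weil reciprocity), using that $n(\sigma \bar w - \bar w) = \divisor(\sigma h / h)$.

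Now I would compute the cocycle representative of $\widetilde C(z)$. The isomorphism $H^1(\mu_n^\fes) \simeq L^\times/L^{\times n}$ comes from the Kummer sequence for $\G_m^\fes$ together with Hilbert~90, so an element $\ell \in L^\times$ corresponds to the cocycle $\sigma \mapsto \sigma(m)/m$ for any choice of $m \in L_s^\times$ with $m^n = \ell$. Under $L_s^\times = \prod_{Q \in \fes(k_s)} k_s^\times$ we have $\widetilde C(z)_Q = f_Q(\bar z)$. The critical choice is
\[
m_Q \colonequals \frac{f_Q(\bar w)}{h(\beta_Q)} \in k_s^\times,
\]
which indeed satisfies $m_Q^n = f_Q(\bar w)^n / h(\beta_Q)^n = f_Q(n\bar w)/f_Q(\divisor h) = f_Q(n\bar w - \divisor h) = f_Q(\bar z)$ by Weil reciprocity. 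Then using the $\calG$-equivariance relations $\sigma f_{\sigma^{-1}Q} = f_Q$ and $\sigma \beta_{\sigma^{-1}Q} = \beta_Q$, a direct computation gives
\[
\left(\frac{\sigma m}{m}\right)_{\!Q} = \frac{f_Q(\sigma \bar w - \bar w)}{(\sigma h/h)(\beta_Q)},
\]
which matches the expression found for $C(z)$ in the previous paragraph. Hence $C(z) = \widetilde C(z)$ in $L^\times/L^{\times n}$.

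The main obstacle is the bookkeeping in the identification of the Weil pairing $e_n$ with $\alpha$: one must be careful that the Cartier dual of the map $(\Z/n\Z)^\fes \to \widehat J[n]$ really is $T \mapsto (Q \mapsto e_n(T,[\beta_Q]))$ and not its inverse, and that the two applications of Weil reciprocity (one to simplify the pairing $[\divisor(\sigma h/h), \beta_Q]$, one to justify the choice of $m_Q$) are set up with matching sign conventions. Aside from this, every step is either a direct cohomological unwinding or an application of the formulas proved in Section~\ref{S:Weil pairings}, and the near-identical shape of the two expressions ensures that a consistent normalization forces them to agree.
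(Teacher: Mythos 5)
Your argument is essentially the paper's diagram chase unrolled into an explicit cocycle comparison (your $m_Q = f_Q(\bar w)/h(\beta_Q)$ is the paper's lift $f(\bar z)\,[\bar y,\beta]$ with the pairing replaced by a function value), but as written it has two genuine gaps. The more serious one: Proposition~\ref{P:true equivalence} is stated for an arbitrary nice variety~$X$, while several of your steps silently assume $\dim X=1$. The equation $n\bar w - z = \divisor(h)$ has no meaning in higher dimension: $n\bar w - z$ is a $0$-cycle in $\calY^0(X_s)$ while $\divisor(h)$ is a divisor, and only on a curve is $\calY^0(X_s)$ the group of principal divisors. Likewise the evaluations $h(\beta_Q)$ and $(\sigma h/h)(\beta_Q)$, and the Weil-reciprocity step $h(\beta_Q)^n = h(\divisor f_Q) = f_Q(\divisor h)$, require the $\beta_Q$ to be $0$-cycles, which fails once $\dim X>1$. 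The paper's proof is arranged precisely to avoid this: the middle row of~\eqref{E:true nz+y diagram} keeps the chosen $n^{\tH}$~root in the form $f(\bar z)\,[\bar y,\beta]$, so the only evaluation fact needed is $[y,\divisor(g)]=g(y)$, which holds in every dimension.

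Second, even in the curve case, the identity $[\divisor(g),D]=g(D)$ that you use to evaluate the denominator $[n(\sigma\bar w-\bar w),\beta_Q]$ is not a direct consequence of $[y,\divisor(g)]=g(y)$ together with Weil reciprocity on~$X$. Unwinding the definition gives $[\divisor(g),D]=g\bigl(D-\fP_0^t(z')\bigr)\cdot G(z')$, where $z'\in\calZ^0(\widehat{J}_s)$ sums to~$[D]$ and $\divisor(G)=\fP_0(\divisor(g))$; to conclude one still needs a reciprocity statement for the correspondence~$\fP_0$, e.g.\ that $a\mapsto g(\fP_0^t(a))$ is a rational function on~$\widehat{J}_s$ with divisor $\fP_0(\divisor(g))$, after which $\deg z'=0$ finishes the computation. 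This is true, but it is an additional geometric input of the same nature as the Lang reciprocity used to construct the pairing, so it must be proved rather than asserted. Both gaps disappear if you keep the pairing in place of the function value, i.e.\ take $m_Q := f_Q(\bar w)\cdot[z-n\bar w,\beta_Q]$: then $m_Q^n=f_Q(z)$ and $(\sigma m/m)_Q = f_Q(\sigma\bar w-\bar w)/[n(\sigma\bar w-\bar w),\beta_Q] = \alpha(\sigma\bar P-\bar P)_Q$ use only $[y,\divisor(g)]=g(y)$, Galois equivariance, and the Albanese--Picard formula, and the computation then works for any nice variety --- at which point it is essentially the cohomology of the paper's diagram.
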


\begin{proof}
We may fix $(\beta,f)$ and consider $\calZ^0(X^\good)$ instead of~$\calZ^0(X)$.
Let $\calZ^0=\calZ^0(X_s^\good)$ and let $\calY^0$ be the Albanese kernel 
in the exact sequence of $\calG$-modules
\[
	0 \to \calY^0 \to \calZ^0 \to J(k_s) \to 0.
\]
We will construct the following commutative diagram of $\calG$-modules
with exact rows
(we write $J$ for~$J(k_s)$, and so on):
\begin{equation}
\label{E:true nz+y diagram}
\begin{split}
\xymatrix{
0\ar[r]&J[n]\ar[r]\ar@/_12ex/[dd]_{\alpha}&J\ar[r]^{n}&J\ar[r]&0\\
0\ar[r]&
    \widetilde{J[n]}\ar[r]\ar[u]\ar[d]&
    \calZ^0\times\calY^0\ar[r]^-{nz+y}\ar[u]\ar[d]&
    \calZ^0\ar[r]\ar[u]\ar[d]^{\widetilde{C}}&
    0\\
0\ar[r]&\mu_n^\fes\ar[r]&\G_m^\fes \ar[r]^n&\G_m^\fes \ar[r]&0
}
\end{split}
\end{equation}
The top and bottom rows are familiar. 
Surjectivity of the map $\calZ^0\times\calY^0\to \calZ^0$ 
given by $(z,y) \mapsto nz+y$ 
follows from surjectivity of multiplication by~$n$ on $J(k_s)=\calZ^0/\calY^0$;
let $\widetilde{J[n]}$ be the kernel of the $nz+y$ map.
We have
\[
	\widetilde{J[n]}=\{(z,-nz)\in \calZ^0\times \calY^0: [z]\in J[n](k_s)\}.
\]
All the upward maps are induced by $\calZ^0\to J(k_s)$, 
so commutativity of the top part of the diagram is straightforward.

The middle and rightmost downward maps in \eqref{E:true nz+y diagram} 
are given by
\[
 \begin{array}{cccccc}
  \calZ^0 &\times& \calY^0&\To&\G_m^\fes & \\
  z&,&y&\longmapsto&  f(z) [y,\beta] &\colonequals  \left(f_P(z)\cdot[y,\beta_P]\right)_{P\in \fes(k_s)}\\[2ex]
& \widetilde{C}\colon & \calZ^0 &\To& \G_m^\fes\\
&& z&\longmapsto& f(z)
\end{array}
\]
The bottom right square commutes since
\[
	(f(z)\cdot [y,\beta])^n=f(nz)\cdot[y,n\beta]=f(nz)f(y)=\widetilde{C}(nz+y).
\]
The leftmost downward vertical map is obtained by restricting the middle one.

Finally, we check that $\alpha$ and the vertical maps in the first column
form a commutative triangle.
Since $\alpha^\vee$ sends $P$ to~$[\beta_P]$,
Remark~\ref{R:pairingduality} and the Albanese-Picard definition of~$e_n$
show that for any $[z] \in J[n]$,
\[
	\alpha([z]) 
	= e_n([z],[\beta]) 
	= \frac{ f(z) }{[nz,\beta]} 
	= f(z)\cdot [-nz,\beta],
\]
so the images of an element $(z,-nz) \in \widetilde{J[n]}$
under the two paths to~$\mu_n^\fes$ are equal.
This completes the construction of~\eqref{E:true nz+y diagram}.

Taking cohomology of \eqref{E:true nz+y diagram} yields
 \[\xymatrix{
J(k)\ar[r]^n&J(k)\ar[r]&H^1(J[n])\ar@/^10ex/[dd]^{\alpha}\\
&\calZ^0(X^\good)\ar[u]\ar[r]\ar[d]^{\widetilde{C}}&H^1(\widetilde{J[n]})\ar[u]\ar[d]\\
L^\times\ar[r]^n&L^\times\ar[r]& H^1(\mu_n^\fes). \\
}\]
Equality of the compositions from~$\calZ^0(X^\good)$
to~$H^1(\mu_n^\fes)$ is the desired result.
\end{proof}

\begin{remark}
\label{R:same image}
If $U$ is a dense open subscheme of~$X$, then by a moving lemma,
$\calZ^0(U)$ and $\calZ^0(X)$ 
have the same image in~$J(k)$.
(\emph{Proof:} It suffices to prove that any closed point~$x$ of~$X$
is rationally equivalent to a $0$-cycle supported on~$U$.
Pick a closed point $u \in U$.
By a classical Bertini theorem, 
or \cite{Poonen-bertini2004}*{Corollary~3.4} if $k$ is finite,
we can find a nice curve $C \subseteq X$ through $x$ and~$u$,
and hence reduce to the case in which $X$ is a curve.
In this case, it suffices to apply weak approximation 
to find a rational function on~$X$
with prescribed valuations at the points of~$X-U$.)
\end{remark}

\begin{corollary}
The explicit homomorphism $\widetilde{C}$ in~\eqref{E:true explicit homomorphism} 
induces a homomorphism 
\[
	\widetilde{C}\colon \left( \frac{J(k)}{nJ(k)} \right)_\circ 
	\to \frac{L^\times}{L^{\times n}}.
\]
\end{corollary}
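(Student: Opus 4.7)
The plan is to deduce this directly from Proposition~\ref{P:true equivalence}, with essentially no additional work. I will split the factorization into its two natural pieces: first through the Albanese summation $\calZ^0(X) \to J(k)$, and then through multiplication by~$n$ on~$J(k)$.

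First, I would unwind Proposition~\ref{P:true equivalence}. Its commutative diagram exhibits $\widetilde{C}$ as the composition of the Albanese summation $\pi\colon \calZ^0(X) \To J(k)$ with the cohomological homomorphism $C\colon J(k) \To L^\times/L^{\times n}$ of~\eqref{E:true cohomological homomorphism}. Consequently $\widetilde{C}$ factors through~$\pi$, so the value of $\widetilde{C}$ on a $0$-cycle depends only on its class in~$J(k)_\circ$, the image of~$\pi$.

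Second, I would invoke the very definition of~$C$ in~\eqref{E:true cohomological homomorphism}. The first arrow there, $J(k) \to H^1(J[n])$, is the connecting homomorphism of the Kummer sequence $0 \to J[n] \to J \stackrel{n}{\to} J \to 0$, hence has kernel equal to~$nJ(k)$. Thus $C$ factors through $J(k)/nJ(k)$, and in particular its restriction to~$J(k)_\circ$ factors through the image of $J(k)_\circ$ in $J(k)/nJ(k)$, which by the notation of Section~\ref{S:notation} is precisely~$(J(k)/nJ(k))_\circ$.

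Stringing the two factorizations together yields the desired induced homomorphism $\widetilde{C}\colon (J(k)/nJ(k))_\circ \to L^\times/L^{\times n}$. I do not anticipate any genuine obstacle: the content is entirely packaged in Proposition~\ref{P:true equivalence} together with the standard fact that the Kummer connecting map kills~$nJ(k)$, and the corollary is really just bookkeeping that records the two nested quotients through which $\widetilde{C}$ passes.
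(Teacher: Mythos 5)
Your proposal is correct and is essentially the argument the paper intends: the corollary is stated without proof precisely because it follows from Proposition~\ref{P:true equivalence} in the way you describe, namely $\widetilde{C}=C\circ\pi$ with $C$ killing $nJ(k)$ via the Kummer connecting map, so $\widetilde{C}$ descends to the image $(J(k)/nJ(k))_\circ$ of $\calZ^0(X)$.
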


\subsection{Fake descent}
\label{S:fake descent}

For computations using true descent setups to be practical,
the degrees of the components of~$\fes$ (i.e., the degrees of the field
extensions~$L_i$ whose product is~$L$)
should be not too large.
A fake descent setup will be a variant of the true descent setup, 
a variant that may allow a simpler~$\fes$ to be used,
at the expense of giving less direct information about Selmer groups.
In Section~\ref{S:true fake comparison} 
we show that the true descent setup can be viewed
as a special case of the fake descent setup.

\begin{definition}\label{D:fake setup}
A \defi{fake descent setup} for~$X$ consists of
a triple $(n,\fes,\LL)$,
where $n$ is a positive integer not divisible by~$\Char k$,
$\fes=\Spec L$ is a nonempty finite \'etale $k$-scheme, 
and $\LL$ is a line bundle on $X \times \fes$
such that $\LL^{\tensor n}$ is the pullback of a line bundle on~$X$.
Equivalently, 
in terms of a divisor $\beta\in\Div(X\times \fes)$ representing~$\LL$,
the condition is that there exists $D\in\Div X$ 
is such that $n\beta - (D\times\fes)$ is principal;
i.e., there is a function $f\in k(X\times \fes)^\times$ 
such that $\divisor(f) = n\beta-(D\times \fes)$.
\end{definition}

\begin{remark}
\label{R:difference is n-torsion}
On the fibers, the last condition says that $\divisor(f_P) = n \beta_P - D$
for all $P \in \fes(k_s)$.
Subtracting shows that 
$[\beta_P-\beta_Q] \in \widehat{J}[n]$ 
for all $P,Q \in \fes(k_s)$.
\end{remark}

\begin{examples}\label{Ex:fake setup examples}
Definition~\ref{D:fake setup} is motivated by
the following examples of fake descent setups for curves.
\begin{enumerate}[\upshape (i)]
\item
\label{E:hyperelliptic example}
Assume $\Char k \ne 2$.
Let $\pi \colon X \to \PP^1_k$ be a (ramified) degree-$2$ cover,
so $X$ is a hyperelliptic curve.
Let $n=2$, 
let $\fes \subset X$ be the ramification locus of~$\pi$,
and let $\beta$ be the diagonal copy of~$\fes$ in~$X \times \fes$.
Then we can take $D = \pi^*(y)$ for some $y \in \PP^1(k)$ 
(cf.~\cite{Flynn-Poonen-Schaefer1997}).
See Example~\ref{Ex:hyperelliptic descent} for more about this case.
\item
The previous example generalizes to other 
geometrically generically cyclic covers $\pi\colon X \to \PP^1_k$ 
(cf.~\cite{Poonen-Schaefer1997}).
Let $n \ge 2$.
Let $k$ be a field with $\Char k \nmid n$.
Suppose that $F(x) \in k[x]$ factors completely in~$k_s[x]$
and is not a $p^\tH$~power in~$k_s[x]$ for any $p \mid n$.
Let $X$ be the smooth projective model of $y^n=F(x)$.
Let $\pi\colon X \to \PP^1$ be the $x$-coordinate map.
Let $\fes$ be the ramification locus of~$\pi$.
Let $\beta$ be the diagonal copy of~$\fes$ in~$X \times \fes$.
Then we can take $D = \pi^*(t)$ for some $t \in \PP^1(k)$.
\item
Assume $\Char k \notin\{2,3,7\}$.
Let $X$ be a twist of the Klein quartic curve $x^3 y + y^3 z + z^3 x = 0$
in~$\PP^2$.
Let $n=2$,
let $\fes$ correspond to the $\calG$-set of $8$~triangles,
as defined in~\cite{Poonen-Schaefer-Stoll2007}*{\S 11.1},
and let $\beta \in \Div(X \times \fes)$
be the divisor of relative degree~$3$ over~$\fes$
such that each $\beta_P$ is the sum of the 3~points
in the corresponding triangle.
This gives a fake descent setup, provided that $D$ can be found
(as turned out to be the case for the curves of interest in
\cite{Poonen-Schaefer-Stoll2007}).
\item \label{E:odd theta}
Assume $\Char k \ne 2$.
Let $X$ be any curve of genus $g \ge 2$ over~$k$.
Let $n=2$.
Let $\fes$ correspond to the $\calG$-set
of odd theta characteristics on~$X_s$.
Proposition~\ref{P:rational divisor for theta characteristic}
applied over the residue fields of each point of~$\fes$
shows that one can find $\beta \in \Div(X \times \fes)$
such that each $\beta_P$ is the corresponding odd theta characteristic.
Then one can take $D$ to be a canonical divisor.
This gives a fake descent setup that in principle can be used 
to perform a $2$-descent on the Jacobian of~$X$.
When $g=2$, this specializes to Example~\eqref{E:hyperelliptic example}.
\end{enumerate}
\end{examples}

A fake descent setup $(n,\fes,\LL)$
will give rise to a cohomologically defined homomorphism
\begin{equation}
\label{E:fake cohomological homomorphism}
	C \colon J(k) \to H^1\left( \frac{\mu_n^\fes}{\mu_n} \right)
\end{equation}
that we will relate to an explicit homomorphism
\begin{equation}
\label{E:fake explicit homomorphism}
	\widetilde{C}\colon \calZ^0(X) \to \frac{L^\times}{L^{\times n} k^\times}.
\end{equation}

\subsubsection{Cohomological definition}\label{S:fake cohom def}

By the final statement in Remark~\ref{R:difference is n-torsion}, 
$\LL$ (or $\beta$) induces a homomorphism
\[
	\alpha^\vee\colon (\Z/n\Z)^\fes_{\deg 0}\to \widehat{J}[n].
\]
Dualizing
\[
	0\to(\Z/n\Z)^\fes_{\deg 0}\to(\Z/n\Z)^\fes\to\Z/n\Z\to 0
\]
yields $((\Z/n\Z)^\fes_{\deg 0})^\vee \isom \mu_n^\fes/\mu_n$,
so the dual of $\alpha^\vee$ is a homomorphism
\[
	\alpha \colon J[n] \to \frac{\mu_n^\fes}{\mu_n}.
\]
The composition
\[
	J(k) \longrightarrow 
	H^1(J[n]) \stackrel{\alpha}\longrightarrow 
	H^1\left( \frac{\mu_n^\fes}{\mu_n} \right)
\]
is the desired homomorphism~$C$.

\subsubsection{Explicit definition}
\label{S:fake explicit definition}

As in Section~\ref{S:true explicit definition},
$f$ gives rise to a homomorphism $\calZ(X^\good) \to L^\times$.
If we change $(\beta,D,f)$ to $(\beta',D',f')$,
then $\beta'-\beta$ is the divisor of some $g \in k(X \times \fes)^\times$,
$D'-D$ is the divisor of some $h \in k(X)^\times$,
and $f' = (c g^n / h) f$ for some $c \in k(\fes)^{\times} = L^\times$.
If we evaluate on any $z \in \calZ^0(X)$ that is good for 
both $f$ and~$f'$,
then the value of the homomorphism in $L^\times/L^{\times n} k^\times$
is unchanged (the value of $g^n/h$ gives an element of $L^{\times n} k^\times$,
and the value of $c$ is $c^{\deg z} = c^0 = 1$).
Thus we obtain~\eqref{E:fake explicit homomorphism}.

\subsubsection{Compatibility of the two definitions}
\label{S:fake equivalence}

Taking cohomology of
\[
	0 \to \mu_n \to \mu_n^\fes \to \frac{\mu_n^\fes}{\mu_n} \to 0
\]
yields
\begin{equation}
  \label{E:Br}  
\frac{k^\times}{k^{\times n}} \to \frac{L^\times}{L^{\times n}} \to
H^1\left(\frac{\mu_n^\fes}{\mu_n}\right) \to \Br k,
\end{equation}
so we may identify $L^\times/L^{\times n} k^\times$
with a subgroup of $H^1(\mu_n^\fes/\mu_n)$.

\begin{proposition}
\label{P:fake equivalence}
The two maps $C$ and~$\widetilde{C}$ are compatible
in the sense that the following diagram commutes:
\[\xymatrix{
\calZ^0(X)\ar@/^3.5ex/[drrr]^{\widetilde{C}}\ar[d]\\
J(k) \ar[r] & H^1(J[n]) \ar[r]^-{\alpha}&
H^1\left(\dfrac{\mu_n^\fes}{\mu_n}\right)&
\dfrac{L^{\times}}{L^{\times n}k^\times}\ar@{_(->}[l]
}\]
\end{proposition}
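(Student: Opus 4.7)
The plan is to mimic the proof of Proposition~\ref{P:true equivalence}: I will build a three-row commutative diagram~\eqref{E:fake nz+y diagram} of $\calG$-modules whose bottom row resolves $\mu_n^\fes/\mu_n$ and whose middle row factors the inclusion $\widetilde{J[n]} \hookrightarrow \calZ^0 \times \calY^0$ (with $\calZ^0 = \calZ^0(X_s^\good)$), and then take cohomology. The top and middle rows will be identical to those of~\eqref{E:true nz+y diagram}, while the bottom row will be the $n$-th power sequence
\[
  0 \to \frac{\mu_n^\fes}{\mu_n} \to \frac{\G_m^\fes}{\G_m} \stackrel{n}{\longrightarrow} \frac{\G_m^\fes}{\G_m} \to 0.
\]
The middle-to-bottom vertical maps will be $(z,y) \mapsto f(z)\cdot[y,\beta] \pmod{\G_m}$ and $z \mapsto f(z) \pmod{\G_m}$, and the leftmost vertical will be the restriction of the middle one.

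The bottom-right square commutes because, using $n\beta_P = \divisor(f_P) + D$ and bilinearity of the Albanese-Picard pairing,
\[
  (f(z)\cdot[y,\beta])^n = f(nz+y) \cdot [y,D],
\]
and the factor $[y,D] \in k_s^\times$ is the same on every component $P \in \fes$, hence lies in the constant subgroup~$\G_m$ and disappears modulo~$\G_m$. Specializing to $(z,-nz) \in \widetilde{J[n]}$ gives $(f(z)\cdot[-nz,\beta])^n = [-nz,D]$, a constant, so the restricted map really does land in $\mu_n^\fes/\mu_n$.

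The main obstacle I anticipate is identifying this restricted map with~$\alpha$ (in the true case the analogous identification came essentially for free from Remark~\ref{R:pairingduality}, but in the fake setting the classes $[\beta_P]$ need not be $n$-torsion individually, only their differences). The dual map $\alpha^\vee \colon (\Z/n\Z)^\fes_{\deg 0} \to \widehat{J}[n]$ sends $c$ to $\sum_P c_P[\beta_P]$, and under the induced perfect pairing $\mu_n^\fes/\mu_n \times (\Z/n\Z)^\fes_{\deg 0} \to \mu_n$ it suffices to test against generators $\delta_P - \delta_Q$. Here $n(\beta_P - \beta_Q) = \divisor(f_P/f_Q)$ because the $D$-terms cancel in the difference, so the Albanese-Picard formula for~$e_n$ yields
\[
  \alpha([z])(\delta_P - \delta_Q) = e_n\bigl([z],[\beta_P - \beta_Q]\bigr) = \frac{f_P(z)\cdot[-nz,\beta_P]}{f_Q(z)\cdot[-nz,\beta_Q]},
\]
which matches $f(z)\cdot[-nz,\beta]$ up to a constant in~$\mu_n$, giving equality in $\mu_n^\fes/\mu_n$.

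Finally I take cohomology. Hilbert~$90$ applied to $0 \to \G_m \to \G_m^\fes \to \G_m^\fes/\G_m \to 0$ gives $H^0(\G_m^\fes/\G_m) = L^\times/k^\times$, and the identification~\eqref{E:Br} embeds the cokernel of multiplication by~$n$ on $L^\times/k^\times$, namely $L^\times/L^{\times n}k^\times$, as a subgroup of $H^1(\mu_n^\fes/\mu_n)$. A diagram chase from $\calZ^0(X^\good) = H^0(\calZ^0(X_s^\good))$ through the resulting cohomology diagram, exactly as at the end of the proof of Proposition~\ref{P:true equivalence}, then produces the desired equality of the two maps into $H^1(\mu_n^\fes/\mu_n)$.
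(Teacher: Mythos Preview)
Your proof is correct and takes a genuinely different route from the paper's.

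The paper's bottom row is
\[
0 \To \frac{\mu_n^\fes}{\mu_n} \To \frac{\G_m^\fes}{\mu_n} \stackrel{n}{\To} \G_m^\fes \To 0,
\]
a pushout of the Kummer sequence along $\mu_n \hookrightarrow \mu_n^\fes$. Because the target $\G_m^\fes/\mu_n$ is only quotiented by the finite diagonal $\mu_n$, the paper cannot use the raw pairing $[y,\beta]$ (whose $n$-th power involves the non-constant-across-$P$ factor $[y,\divisor f_P]$ plus the constant $[y,D]$). Instead it introduces an auxiliary pairing $[y,\beta]_D \colonequals [y,\beta-H]/h(y)^{1/n}$, where $H$ and $h$ are chosen so that $\divisor(h)=D-nH$, and verifies that this is well-defined modulo $\mu_n$ and satisfies $[y,\beta]_D^n = f(y)$ exactly. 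The identification with $\alpha$ then uses $e_n([z],[\beta-H])$ directly, since $[\beta_P - H] \in \widehat{J}[n]$ for every $P$.

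Your choice of bottom row, quotienting by the full diagonal $\G_m$, sidesteps this: the obstruction term $[y,D]$ is visibly constant across $\fes$ and dies automatically, so the unmodified $[y,\beta]$ suffices. The price is that the rightmost column now lands in $\G_m^\fes/\G_m$ rather than $\G_m^\fes$, but after taking cohomology and invoking Hilbert~90 this makes no difference. Your identification of the left vertical with $\alpha$ by testing against the generators $\delta_P - \delta_Q$ of $(\Z/n\Z)^\fes_{\deg 0}$ is the natural substitute for the paper's use of $[\beta-H]$, and is arguably more transparent. Either approach proves the proposition; yours avoids the auxiliary $H$, $h$, and $n$-th root, while the paper's has the minor advantage that the map $\widetilde{C}$ to $\G_m^\fes$ is literally $z \mapsto f(z)$ before any quotient is taken.
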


\begin{proof}
We replace the pairing $[y,\beta]$ used in the true case
with
\begin{align}
\label{E:fake bracket}
   \calY^0 &\To \frac{\G_m^{\fes}(k_s)}{\mu_n(k_s)} \\
\nonumber
	y &\longmapsto [y,\beta]_D\colonequals  \frac{[y,\beta-H]}{h(y)^{1/n}}
\end{align}
where $H \in \Div(X_s)$ and $h \in k_s(X)^\times$ 
are chosen so that $\divisor(h)=D-nH$, and $h(y)^{1/n}$ 
is a chosen $n^\tH$~root of~$h(y)$
(and then $H$ and~$h(y)^{1/n}$ are pulled back to~$(X \times \fes)_s$);
such $H$ and~$h$ exist
because one possibility is to take $H=\beta_P$ for some $P \in \fes(k_s)$.
If we change $H$ to another choice~$H'$,
then $h$ is changed to~$hj$ where $\divisor(j)=nH-nH'$,
so the value of~\eqref{E:fake bracket}
is multiplied by $[y,H-H']/j(y)^{1/n}$,
which lies in~$\mu_n(k_s)$ since its $n^\tH$~power equals
$[y,\divisor(j)]/j(y)=1$.
Thus \eqref{E:fake bracket} is well-defined and Galois-equivariant.
Also,
\begin{equation}
\label{E:nth power}
	[y,\beta]_D^n 
	= \frac{[y,n\beta-nH]}{h(y)} 
	= \frac{[y,\divisor(fh)]}{h(y)} 
	= \frac{(fh)(y)}{h(y)}
	= f(y).
\end{equation}
As in the proof of Proposition~\ref{P:true equivalence},
we construct a commutative diagram with exact rows:
\begin{equation}
\label{E:fake nz+y diagram}
\begin{split}
\xymatrix{
0\ar[r]&J[n]\ar[r]\ar@/_14ex/[dd]_{\alpha}&J\ar[r]^{n}&J\ar[r]&0\\
0\ar[r]&
    \widetilde{J[n]}\ar[r]\ar[u]\ar[d]&
    \calZ^0\times\calY^0\ar[r]^-{nz+y}\ar[u]\ar[d]&
    \calZ^0\ar[r]\ar[u]\ar[d]^{\widetilde{C}}&
    0\\
0\ar[r]&\dfrac{\mu_n^\fes}{\mu_n}\ar[r]&\dfrac{\G_m^\fes}{\mu_n} \ar[r]^n&\G_m^\fes \ar[r]&0.
}
\end{split}
\end{equation}
The first two rows and the vertical maps between them are the same
as in~\eqref{E:true nz+y diagram}.
The bottom row is a pushout of the bottom row of~\eqref{E:true nz+y diagram}.

The middle downward map is
\[
 \begin{array}{ccccc}
  \calZ^0 &\times& \calY^0 &\To& \dfrac{\G_m^\fes}{\mu_n}\\[1ex]
  z&,&y&\longmapsto& f(z) [y,\beta]_D \\
 \end{array}
\]
The rightmost downward map is
\[
 \begin{array}{cccc}
 \widetilde{C}\colon&\calZ^0(X^\good_s)&\To& \G_m^\fes\\[1ex]
 & z & \longmapsto & f(z)
 \end{array}
\]
The bottom right square commutes since \eqref{E:nth power} implies
\[
	\left( f(z) [y,\beta]_D \right)^n
	=f(z)^n f(y)
	=f(nz+y).
\]
The leftmost downward vertical map is obtained by restricting the middle one.

Finally, we check that $\alpha$ and the vertical maps in the first column
form a commutative triangle.
Any $(z,-nz) \in \widetilde{J[n]}$ maps up to~$[z] \in J[n]$,
and maps right and down to 
$f(z)\cdot [-nz,\beta]_D \in \frac{\G_m^\fes}{\mu_n}$.
Remark~\ref{R:pairingduality} and the Albanese-Picard definition of~$e_n$
show that
\[
	\alpha([z]) 
	= e_n([z],[\beta-H]) 
	= \frac{f(z)h(z)}{[nz,\beta-H]}
	= f(z)\cdot [-nz,\beta]_D \quad \in \frac{\G_m^\fes}{\mu_n}.
\]
So the triangle commutes.
This completes the construction of~\eqref{E:fake nz+y diagram}.

Taking cohomology of \eqref{E:fake nz+y diagram} yields
\[\xymatrix{
J(k)\ar[r]^n&J(k)\ar[r]&H^1(J[n])\ar@/^10ex/[dd]^{\alpha}\\
&\calZ^0(X^\good)\ar[u]\ar[r]\ar[d]^{\widetilde{C}}&H^1(\widetilde{J[n]})\ar[u]\ar[d]\\
&L^\times\ar[r]& H^1\left( \dfrac{\mu_n^\fes}{\mu_n} \right). \\
}\]
Equality of the compositions from $\calZ^0(X^\good)$ to 
$H^1(\mu_n^\fes/\mu_n)$ is the desired result.
\end{proof}

\begin{corollary}
The explicit homomorphism $\widetilde{C}$ in~\eqref{E:fake explicit homomorphism} 
induces a homomorphism 
\[
	\widetilde{C}\colon \left(\frac{J(k)}{nJ(k)} \right)_\circ 
	\to \frac{L^\times}{L^{\times n} k^\times}.
\]
\end{corollary}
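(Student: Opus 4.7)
The plan is to deduce this directly from Proposition~\ref{P:fake equivalence}, which is really the only input needed. First, I would note that by the construction in Section~\ref{S:fake explicit definition}, for any $z \in \calZ^0(X)$ one can choose a representative $(\beta,D,f)$ of $\LL$ whose support avoids~$z$ (just move $\beta$ within its linear equivalence class), and the resulting value in $L^\times/(L^{\times n} k^\times)$ is independent of that choice; this extends the original homomorphism $\calZ^0(X^\good) \to L^\times/(L^{\times n} k^\times)$ to all of~$\calZ^0(X)$ and identifies it with the $\widetilde{C}$ appearing in Proposition~\ref{P:fake equivalence}.

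Next, Proposition~\ref{P:fake equivalence} identifies $\widetilde{C}$ with the composition
\[
  \calZ^0(X) \To J(k) \To H^1(J[n]) \stackrel{\alpha}{\To} H^1\!\left(\frac{\mu_n^\fes}{\mu_n}\right),
\]
under the embedding $L^\times/(L^{\times n} k^\times) \hookrightarrow H^1(\mu_n^\fes/\mu_n)$ coming from~\eqref{E:Br}. The Kummer sequence for multiplication by~$n$ on~$J$ shows that the connecting map $J(k) \to H^1(J[n])$ has kernel exactly $nJ(k)$, so $\widetilde{C}$ factors through the induced map $J(k)/nJ(k) \to L^\times/(L^{\times n} k^\times)$. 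Since the image of $\calZ^0(X) \to J(k)/nJ(k)$ is $(J(k)/nJ(k))_\circ$ by definition, we obtain the desired homomorphism on $(J(k)/nJ(k))_\circ$.

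There is essentially no obstacle here: the content is entirely in Proposition~\ref{P:fake equivalence} and in the moving-lemma argument used to extend $\widetilde{C}$ from $\calZ^0(X^\good)$ to $\calZ^0(X)$ (analogous to Remark~\ref{R:same image} and to the extension argument given in the true case). The only thing to double-check is that the embedding of $L^\times/(L^{\times n} k^\times)$ into $H^1(\mu_n^\fes/\mu_n)$ is compatible with the factorization through $J(k)/nJ(k)$, but this is automatic because the embedding is a homomorphism of abelian groups and the diagram in Proposition~\ref{P:fake equivalence} already commutes. Hence the corollary follows with no further work.
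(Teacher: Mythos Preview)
Your argument is correct and is exactly the reasoning the paper intends: the corollary is stated without proof in the paper, as it follows immediately from Proposition~\ref{P:fake equivalence} together with the fact that the connecting map $J(k)\to H^1(J[n])$ factors through $J(k)/nJ(k)$. Your remark about extending $\widetilde{C}$ from $\calZ^0(X^\good)$ to $\calZ^0(X)$ by moving $\beta$ is also precisely how the paper sets things up in Section~\ref{S:fake explicit definition}, so there is nothing to add.
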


\begin{corollary}
\label{C:independence of explicit homomorphism}
The homomorphism $\widetilde{C}$, which a priori depends on $\beta$ and~$D$,
in fact is independent of~$D$ and depends only on the linear equivalence
class of~$\beta$.
\end{corollary}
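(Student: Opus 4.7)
The plan is to derive the corollary directly from Proposition~\ref{P:fake equivalence} together with the injectivity statement encoded in the exact sequence~\eqref{E:Br}.

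First I would observe that the cohomologically defined homomorphism $C \colon J(k) \to H^1(\mu_n^\fes/\mu_n)$ of Section~\ref{S:fake cohom def} depends only on the line bundle~$\LL$, and in particular not on~$D$. Indeed, the only ingredient in its definition is the map $\alpha^\vee \colon (\Z/n\Z)^\fes_{\deg 0} \to \widehat{J}[n]$ sending $P - Q$ to $[\beta_P - \beta_Q]$; the fact that this class is $n$-torsion is Remark~\ref{R:difference is n-torsion}, and the class itself depends only on the linear equivalence class of~$\beta$, not on~$D$, on~$f$, or on the specific divisor representing~$\LL$.

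Next I would invoke Proposition~\ref{P:fake equivalence}: it asserts that $\widetilde{C}$ followed by the canonical map $L^\times/L^{\times n} k^\times \to H^1(\mu_n^\fes/\mu_n)$ agrees with the composition $\calZ^0(X) \to J(k) \stackrel{C}{\longrightarrow} H^1(\mu_n^\fes/\mu_n)$. The key input is that this canonical map is in fact injective, which is precisely the statement of~\eqref{E:Br}: the kernel of $L^\times/L^{\times n} \to H^1(\mu_n^\fes/\mu_n)$ is the image of $k^\times/k^{\times n}$, and we have quotiented by exactly this subgroup. Since the lower composition depends only on~$[\LL]$ and the vertical map is injective, $\widetilde{C}$ itself is determined by~$[\LL]$ alone. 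I do not foresee any real obstacle: the corollary is a formal consequence of the comparison theorem already in hand, and the only subtlety worth articulating is the identification of $L^\times/L^{\times n}k^\times$ with its image inside~$H^1(\mu_n^\fes/\mu_n)$ via~\eqref{E:Br}, which transfers independence statements from the cohomological side to the explicit side.
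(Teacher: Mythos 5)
Your argument is correct and is essentially the paper's own proof, which simply notes that the linear equivalence class of~$\beta$ is all that is needed to define~$C$; the paper's terse justification implicitly relies, exactly as you spell out, on the identification of $L^\times/L^{\times n}k^\times$ with a subgroup of $H^1(\mu_n^\fes/\mu_n)$ via~\eqref{E:Br} together with the compatibility of Proposition~\ref{P:fake equivalence}. Your write-up just makes those two ingredients explicit.
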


\begin{proof}
The linear equivalence class of~$\beta$ is all that is needed to define~$C$.
\end{proof}

\begin{remark}
\label{R:moving}
By Corollary~\ref{C:independence of explicit homomorphism}, 
we may move $\beta$ and~$D$ within their linear equivalence
classes and change~$f$ accordingly
in order to make $\divisor(f)$ avoid any particular $0$-cycle.
Hence we may view $\widetilde{C}$ as being defined on all of~$\calZ^0(X)$.
\end{remark}

\subsection{Isogenies associated to descent setups}
\label{S:descent isogeny}

Given a true or fake descent setup, we obtain a homomorphism
$\alpha^\vee \colon E\to \widehat{J}[n]$, 
where $E\colonequals (\Z/n\Z)^\fes$ for a true descent setup 
and $E\colonequals (\Z/n\Z)_{\deg 0}^\fes$ for a fake descent setup.

In either case we obtain 
an isogeny $\widehat{\phi}\colon\widehat{J}\to \widehat{A}\colonequals \widehat{J}/\alpha^\vee(E)$,
and $\widehat{J}[\widehat{\phi}] = \alpha^\vee(E)$.
Let $A$ be the dual abelian variety of~$\widehat{A}$,
and let $\phi\colon A\to J$ be the dual of~$\widehat{\phi}$.
(For an early instance of relating the functions in a true descent setup
to an isogeny between different abelian varieties, for the purpose
of computing Selmer groups, see \cite{Schaefer1998}.)

\begin{proposition}
\label{P:factors through isogeny quotient}
The homomorphism $C \colon J(k) \to H^1(E^\vee)$
factors through the quotient $J(k)/\phi A(k)$.
\end{proposition}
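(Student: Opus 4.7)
The plan is to show directly that $C$ kills $\phi A(k)$. By functoriality of the Kummer connecting maps under the isogeny $\phi \colon A \to J$ (which intertwines multiplication by $n$ on $A$ and $J$), I would set up the commutative square
\[\xymatrix{
A(k)\ar[r]^-{\delta_A}\ar[d]_\phi & H^1(A[n])\ar[d]^{H^1(\phi|_{A[n]})}\\
J(k)\ar[r]^-{\delta_J} & H^1(J[n]).
}\]
Composing the right column with $H^1(\alpha)$ equals $C$, while composing the left column with $\phi$ produces $C \circ \phi$. Hence it suffices to prove that the $\calG$-module map $\alpha \circ \phi|_{A[n]} \colon A[n] \to E^\vee$ is already zero on the level of modules.

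To verify this I would use that, by construction, $\alpha$ is the Cartier dual of $\alpha^\vee \colon E \to \widehat{J}[n]$, whose image is precisely $\widehat{J}[\widehat{\phi}]$ (this is how $\widehat{\phi}$ was defined in Section~\ref{S:descent isogeny}). Unwinding this, for $a \in A[n]$ and $e \in E$,
\[
  (\alpha \circ \phi)(a)(e) \;=\; e_n^J\!\bigl(\phi(a),\, \alpha^\vee(e)\bigr)
\]
under the Weil pairing $J[n] \times \widehat{J}[n] \to \mu_n$. The standard compatibility
\[
  e_n^J(\phi(a),\, y) \;=\; e_n^A\!\bigl(a,\, \widehat{\phi}(y)\bigr)
  \qquad (a \in A[n],\ y \in \widehat{J}[n])
\]
then collapses this to $e_n^A(a, 0) = 1$, because $\alpha^\vee(e) \in \widehat{J}[\widehat{\phi}] = \ker\widehat{\phi}$.

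The only non-formal ingredient is this compatibility of Weil pairings with the isogeny $\phi$. It is classical, and within the framework already laid down in Section~\ref{S:Weil pairings} it fits naturally: applying the Albanese-Picard definition to $X = A$, transporting $0$-cycles by $\phi$ and divisor classes by $\widehat{\phi}$, one reads the identity off from the functoriality of the bilinear form $[\,\cdot\,,\,\cdot\,]$ established there, combined with the fact that $\widehat{\phi}$ acts as pullback on $\Pic^0$. I expect this Weil-pairing compatibility to be the main point needing verification; the rest is a short diagram chase together with the definition of~$\alpha$.
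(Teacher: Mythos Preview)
Your argument is correct. Both approaches ultimately rest on the fact that $\alpha^\vee(E) = \widehat{J}[\widehat{\phi}]$, but they exploit it differently. The paper dualizes the factorization $E \twoheadrightarrow \widehat{J}[\widehat{\phi}] \hookrightarrow \widehat{J}[n]$ (using Mumford's identification $A[\phi]^\vee \simeq \widehat{J}[\widehat{\phi}]$) to conclude that $\alpha \colon J[n] \to E^\vee$ factors through the quotient $J[n] \twoheadrightarrow A[\phi]$; the proposition then follows from functoriality of connecting maps for the sequence $0 \to A[\phi] \to A \xrightarrow{\phi} J \to 0$. You instead push forward along $\phi$ on the level of $n$-torsion and kill $\alpha \circ \phi|_{A[n]}$ directly via the isogeny compatibility $e_n^J(\phi(a),y) = e_n^A(a,\widehat{\phi}(y))$. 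Your route is more elementary in that it avoids invoking the duality $A[\phi]^\vee \simeq \widehat{J}[\widehat{\phi}]$ as a black box, trading it for the Weil-pairing adjunction (which, as you note, is readily deduced from Section~\ref{S:Weil pairings}). The paper's route, however, produces the factorization of $\alpha$ through $A[\phi]$ as a by-product, and this factorization is exactly what feeds into the exact sequence~\eqref{E:alpha and q} and the diagrams~\eqref{E:true main sequence}--\eqref{E:fake main sequence} immediately afterward; so for the paper's purposes its argument does double duty.
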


\begin{proof}
By \cite{MumfordAV1970}*{\S III.15,~Theorem~1}, 
$A[\phi]^\vee\isom\widehat{J}[\widehat{\phi}]$.
Now $\alpha^\vee \colon E\to \widehat{J}[n]$ 
factors through $\alpha^\vee(E)=\widehat{J}[\widehat{\phi}]$,
and dualizing shows that $\alpha \colon J[n] \to E^\vee$
factors through~$A[\phi]$.
This explains the right triangle in the commutative diagram
\begin{equation}
\label{E:isogeny cohomology}
\begin{split}
\xymatrix{
\dfrac{J(k)}{nJ(k)} \ar[r] \ar[d] & H^1(J[n]) \ar[r] \ar[d] & H^1(E^\vee) \\
\dfrac{J(k)}{\phi A(k)} \ar[r] & H^1(A[\phi]), \ar[ru] \\
}
\end{split}
\end{equation}
while the square comes from functoriality of connecting homomorphisms.
Since the top row gives~$C$, the result follows.
\end{proof}

\begin{corollary}
The explicit homomorphism $\widetilde{C}$ defined on $\calZ^0(X)$
factors through not only $J(k)_\circ$ or $(J(k)/n J(k))_\circ$,
but also $(J(k)/\phi A(k))_\circ$.
\end{corollary}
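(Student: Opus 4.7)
The plan is to derive this as a direct consequence of Proposition~\ref{P:factors through isogeny quotient} together with the compatibility results (Propositions~\ref{P:true equivalence} and~\ref{P:fake equivalence}), which is why the claim is stated as a corollary.

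Concretely, I would proceed in two small steps. First, in either the true or fake case, the compatibility diagram shows that $\widetilde{C}\colon \calZ^0(X) \to L^\times/L^{\times n}$ (respectively $L^\times/L^{\times n} k^\times$) agrees with the composition
\[
\calZ^0(X) \to J(k) \xrightarrow{\;C\;} H^1(E^\vee),
\]
once we identify $L^\times/L^{\times n}$ with $H^1(\mu_n^\fes)$ in the true case, or use the injection $L^\times/L^{\times n} k^\times \hookrightarrow H^1(\mu_n^\fes/\mu_n)$ coming from the exact sequence~\eqref{E:Br} in the fake case. In both cases the codomain of $\widetilde{C}$ embeds into $H^1(E^\vee)$, so factorization properties of $C$ transfer verbatim to $\widetilde{C}$.

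Second, Proposition~\ref{P:factors through isogeny quotient} tells us that $C$ itself factors through the quotient $J(k)/\phi A(k)$. Composing with the natural surjection $\calZ^0(X) \to J(k) \to J(k)/\phi A(k)$ and recalling that $(J(k)/\phi A(k))_\circ$ is by definition the image of $\calZ^0(X)$ in $J(k)/\phi A(k)$, we obtain the desired factorization
\[
\widetilde{C}\colon \left(\frac{J(k)}{\phi A(k)}\right)_\circ \to \frac{L^\times}{L^{\times n}} \quad \text{(resp.\ } \frac{L^\times}{L^{\times n}k^\times}\text{)}.
\]

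There is no real obstacle here: the substantive content has already been absorbed into Propositions~\ref{P:true equivalence}, \ref{P:fake equivalence}, and~\ref{P:factors through isogeny quotient}. The only thing that warrants a word of care is the fake case, where one must verify that the injection $L^\times/L^{\times n} k^\times \hookrightarrow H^1(\mu_n^\fes/\mu_n)$ from~\eqref{E:Br} lets one transport the cohomological factorization back to the explicit one; but this is immediate from injectivity.
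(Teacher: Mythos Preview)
Your proof is correct and follows exactly the approach of the paper, which simply says ``Combine Proposition~\ref{P:factors through isogeny quotient} with Proposition~\ref{P:true equivalence} or Proposition~\ref{P:fake equivalence}.'' You have merely spelled out the details of this combination, including the point about injectivity in the fake case.
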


\begin{proof}
Combine Proposition~\ref{P:factors through isogeny quotient}
with Proposition~\ref{P:true equivalence}
or Proposition~\ref{P:fake equivalence}.
\end{proof}

Let $R\colonequals \ker \alpha^\vee$, so we have an exact sequence
\begin{equation}
  \label{E:definition of R}
	0\longrightarrow 
	R\longrightarrow 
	E\stackrel{\alpha^\vee}{\longrightarrow} 
	\widehat{J}[\widehat{\phi}]\longrightarrow 
	0.
\end{equation}
Dualizing yields an exact sequence
\begin{equation}
  \label{E:alpha and q}  
 0\longrightarrow A[\phi]\stackrel{\;\alpha}{\longrightarrow} E^\vee \stackrel{q}{\longrightarrow} R^\vee\longrightarrow 0.
\end{equation}
Take cohomology:
for a true descent setup we obtain
\begin{equation}
\label{E:true main sequence}  
\begin{split}
\xymatrix{
&&&&\left(\dfrac{J(k)}{\phi A(k)}\right)_\circ \ar@{^(->}[d] \ar[r]^-{\widetilde{C}} \ar[dr]^-{C} & \dfrac{L^\times}{L^{\times n}} \ar@{=}[d] \\
0\ar[r]&A[\phi](k)\ar[r]^-{\alpha} & E^\vee(k)\ar[r]^q &R^\vee(k)\ar[r]&
H^1(A[\phi]) \ar[r]^-{\alpha} & H^1(E^\vee)\ar[r]^q \ar[r]^q &H^1(R^\vee)\\
}
\end{split}
\end{equation}
and for a fake descent setup we obtain
\begin{equation}
\label{E:fake main sequence}  
\begin{split}
\xymatrix{
&&&&\left(\dfrac{J(k)}{\phi A(k)}\right)_\circ\ar@{^(->}[d]\ar[r]^-{\widetilde{C}} \ar[dr]^-{C} &
\dfrac{L^\times}{L^{\times n}k^\times}. \ar@{^(->}[d] \\
0\ar[r]&A[\phi](k)\ar[r]^-{\alpha} &E^\vee(k)\ar[r]^q &R^\vee(k)\ar[r]&
H^1(A[\phi]) \ar[r]^-{\alpha} & H^1(E^\vee)\ar[r]^q &H^1(R^\vee)\\
}
\end{split}
\end{equation}
The commutativity follows from
Proposition~\ref{P:true equivalence}
or Proposition~\ref{P:fake equivalence}.

\subsection{Comparison of true and fake descent setups} 
\label{S:true fake comparison}

A fake descent setup $(n,\fes,\LL)$ in which
there exists $P \in \fes(k)$
gives rise to a true descent setup $(n,\fes',\LL')$ as follows:
Let $\fes' = \fes - \{P\}$ and define $\beta'_Q=\beta_Q-\beta_P$
for each $Q \in \fes'$.
The \'etale algebras $L,L'$ corresponding to~$\fes,\fes'$
satisfy $L \isom L' \times k$.
One can check that the fake diagram~\eqref{E:fake nz+y diagram}
maps to the true diagram~\eqref{E:true nz+y diagram}
as follows: 
the first two rows map via the identity,
and the third row maps via the homomorphisms induced
by $(c_Q)_{Q \in \fes} \mapsto (c_Q/c_P)_{Q \in \fes'}$.
Finally, the explicit true and fake homomorphisms are compatible in
the sense that 
\[
\xymatrix{
\left( \dfrac{J(k)}{\phi A(k)} \right)_\circ \ar[r]^-{\widetilde{C}_{\text{true}}} \ar[rd]_-{\widetilde{C}_{\text{fake}}} & \dfrac{(L')^\times}{(L')^{\times n}} \ar@{=}[d] \\
& \dfrac{L^\times}{L^{\times n} k^\times} \\
}
\]
commutes.

Moreover, every true descent setup can be obtained from some
fake descent setup in this way, at least if $k$ is infinite.
Thus fake descent is a generalization of true descent.

\subsection{Notation}

For our intended application, we are primarily interested in fake descent.
To avoid having to state things twice, 
we let $\widetilde{L^\times/L^{\times n} k^\times}$
denote $L^\times/L^{\times n}$ in the context of a true descent setup,
or $L^\times/L^{\times n} k^\times$ in the context of a fake descent setup.
If $v \in \Omega_k$, let $L_v \colonequals  L \tensor_k k_v$
and define $\widetilde{L_v^\times/L_v^{\times n} k_v^\times}$
similarly.
Decompose $L$ as~$\prod L_i$ where each~$L_i$ is a field.

\subsection{Restrictions on the image of $\widetilde{C}$}

Recall that $R \subseteq E \subseteq (\Z/n\Z)^{\fes}$.

\begin{lemma}
\label{L:kernel of norm}
Suppose that the image of the diagonal $\Z/n\Z \to (\Z/n\Z)^{\fes}$
is contained in~$R$.
Then the image of 
$\widetilde{C} \colon \calZ^0(X) \to \widetilde{L^\times/L^{\times n} k^\times}$
is contained in the kernel of the homomorphism
$N \colon \widetilde{L^\times/L^{\times n} k^\times} \to k^\times/k^{\times n}$
induced by the norm map $L^\times \to k^\times$.
\end{lemma}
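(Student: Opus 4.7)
The plan is to produce the required factorization purely cohomologically using the exact sequence~\eqref{E:alpha and q}, and then identify the resulting composition with the explicit norm map.

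First, the hypothesis that the diagonal $\Delta \colon \Z/n\Z \to (\Z/n\Z)^\fes$ lands in $R \subseteq E$ yields a factorization $\Z/n\Z \xrightarrow{\Delta} R \hookrightarrow E$. Taking Cartier duals produces a composed homomorphism $p \colon E^\vee \xrightarrow{q} R^\vee \twoheadrightarrow \mu_n$ which is the Cartier dual of~$\Delta$. In the true case, where $E^\vee = \mu_n^\fes$, the map~$p$ is just the product $(\zeta_P)_{P \in \fes} \mapsto \prod_P \zeta_P$. In the fake case, the condition $\Delta(\Z/n\Z) \subseteq (\Z/n\Z)^\fes_{\deg 0}$ forces $n \mid \#\fes$, which is precisely what is needed for the product map $\mu_n^\fes \to \mu_n$ to kill the diagonal copy of~$\mu_n$ and hence descend to $\mu_n^\fes/\mu_n = E^\vee$.

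Second, I identify $H^1(p) \colon H^1(E^\vee) \to H^1(\mu_n) = k^\times/k^{\times n}$ with the norm homomorphism~$N$. Under the Kummer isomorphism $H^1(\mu_n^\fes) \cong L^\times/L^{\times n}$, the map on $H^1$ induced by the product $\mu_n^\fes \to \mu_n$ corresponds to the field-theoretic norm $N_{L/k} \colon L^\times \to k^\times$, by functoriality of Kummer theory applied to $L_s/k_s$. In the fake case, the identification of $L^\times/L^{\times n} k^\times$ as a subgroup of $H^1(\mu_n^\fes/\mu_n)$ via~\eqref{E:Br} is compatible with the descent of the product map, and the condition $n \mid \#\fes$ ensures $N(k^\times) \subseteq k^{\times n}$, so that $N$ descends to a well-defined homomorphism out of $L^\times/L^{\times n} k^\times$ agreeing with $H^1(p)$ on the relevant subgroup.

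Third, exactness of~\eqref{E:alpha and q} gives $q \circ \alpha = 0$, hence $p \circ \alpha = 0$. Applying cohomology makes the composition
\[
	H^1(A[\phi]) \xrightarrow{\alpha_*} H^1(E^\vee) \xrightarrow{H^1(p)} H^1(\mu_n)
\]
zero. By Proposition~\ref{P:true equivalence} or Proposition~\ref{P:fake equivalence}, the homomorphism $\widetilde{C}$ factors as the connecting map $(J(k)/\phi A(k))_\circ \hookrightarrow H^1(A[\phi])$ followed by $\alpha_*$ and the identification of its image with $\widetilde{L^\times/L^{\times n}k^\times}$. Combining these, $N \circ \widetilde{C} = 0$. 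The main obstacle I anticipate is the bookkeeping in the fake case: verifying that $H^1(p)$ really corresponds to the explicit norm $N$ on the subgroup $L^\times/L^{\times n} k^\times \subseteq H^1(\mu_n^\fes/\mu_n)$, and that the descent of the product map through the quotient by $\mu_n$ is compatible with all the Kummer identifications. These reduce to routine diagram chases with~\eqref{E:Br} and Kummer functoriality, but they require care.
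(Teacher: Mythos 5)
Your proposal is correct and follows essentially the same route as the paper: dualize the complex $\Z/n\Z \hookrightarrow E \to \widehat{J}[\widehat{\phi}]$ (equivalently, factor the diagonal through~$R$ and use~\eqref{E:alpha and q}), identify the induced map on~$H^1$ with the norm~$N$ on the subgroup $\widetilde{L^\times/L^{\times n}k^\times}$, and invoke the compatibility diagrams \eqref{E:true main sequence}/\eqref{E:fake main sequence} to see that $\widetilde{C}$ factors through $\alpha$ applied to $H^1(A[\phi])$, forcing $N \circ \widetilde{C} = 0$. Your extra checks in the fake case (that $n \mid \#\fes$ so the product map kills the diagonal $\mu_n$ and $N$ descends modulo~$k^\times$) are exactly the bookkeeping the paper leaves implicit.
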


\begin{proof}
Dualizing the complex 
\[
	\Z/n\Z \hookrightarrow E \to \widehat{J}[\widehat{\phi}]
\]
and applying $H^1$ yields a complex
\[
	H^1(A[\phi]) \to H^1(E^\vee) 
	\stackrel{\calN}\to \frac{k^\times}{k^{\times n}}.
\]
The restriction of~$\calN$ to the subgroup
$\widetilde{L^\times/L^{\times n} k^\times}$
equals~$N$,
because it is induced by dualizing $\Z/n\Z \to (\Z/n\Z)^\fes$
and taking cohomology.
The result now follows from the commutative squares 
in \eqref{E:true main sequence} and~\eqref{E:fake main sequence}.
\end{proof}

\section{Unramified classes}
\label{S:unramified classes}

\subsection{Local and global unramified classes}
\label{S:local and global unramified classes}

Let $k_v$ be a non-archimedean local field.
If $M$ is any $\calG_{k_v}$-module, define the 
\defi{subgroup of unramified classes}
\[ 
	H^1(k_v, M)_\unr \colonequals  \ker\left(H^1(k_v, M) \to H^1(k_{v,u}, M)\right)
	\isom H^1(\Gal(k_{v,u}/k_v),M(k_{v,u})),
\]
where the isomorphism follows from the inflation-restriction sequence.

Now let $k$ be a global field,
and let $\calS$ be a subset of~$\Omega_k$ containing the archimedean places.
If $M$ is any $\calG_k$-module, define
the \defi{subgroup of classes unramified outside $\calS$},
$H^1(k,M)_\calS$,
as the set of~$\xi \in H^1(k,M)$ whose restriction in~$H^1(k_v,M)$
lies in~$H^1(k_v,M)_\unr$ for all $v\notin \calS$.

\subsection{A Tamagawa number criterion}

Let $k_v$ be a non-archimedean local field.
Let $\phi \colon A \to J$ be a separable isogeny of
abelian varieties over~$k_v$.
The short exact sequence
\[
	0 \to A[\phi] \to A \stackrel{\phi}\to J \to 0
\]
gives rise to a connecting homomorphism 
$\gamma_v \colon J(k_v) \to H^1(k_v,A[\phi])$.
Let $\calJ$ be the N\'eron model of~$J$ over~$\calO_v$. 
Let $\calJ^0$ be the connected component of the identity in~$\calJ$.
Let $\Phi$ be the group of connected components
of the special fiber $\calJ \times \F_v$,
so $\Phi$ is a finite \'etale commutative $\F_v$-group scheme.
The number $c_v(J) \colonequals  \#\Phi(\F_v)$ 
is called the \defi{Tamagawa number of~$J$ at~$v$}.
We use analogous notation for objects related to~$A$.

\begin{lemma} \label{L:Tamagawa-unramified}
Let $\phi \colon A \to J$ be as above.
Let $n$ be a positive integer such that $n A[\phi]=0$
(one possibility is $n=\deg \phi$).
If the residue characteristic $\Char \F_v$ does not divide~$n$, 
and if $c_v(J)$ and $c_v(A)$ are coprime to~$n$, then
$\gamma_v(J(k_v)) = H^1(k_v, A[\phi])_\unr$.
\end{lemma}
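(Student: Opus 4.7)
The plan is to prove the inclusions $\gamma_v(J(k_v))\subseteq H^1(k_v,A[\phi])_\unr$ and $H^1(k_v,A[\phi])_\unr\subseteq\gamma_v(J(k_v))$ separately. Throughout, let $F\colonequals\Gal(k_{v,u}/k_v)$, let $\calO_{v,u}$ denote the ring of integers of~$k_{v,u}$, and use the N\'eron mapping property $A(k_{v,u})=\calA(\calO_{v,u})$ and the analogue for~$J$. Since $nA[\phi]=0$, the multiplication-by-$n$ map on~$A$ factors as $\widehat\phi\circ\phi$ for some isogeny $\widehat\phi\colon J\to A$, giving $\phi\widehat\phi=n$ on~$J$ and $\widehat\phi\phi=n$ on~$A$.

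For the first inclusion, functoriality of the connecting map under restriction to $k_{v,u}$ shows that $\gamma_v(P)\in H^1(k_v,A[\phi])_\unr$ iff $P\in\phi A(k_{v,u})$, so it suffices to prove $J(k_v)\subseteq\phi A(k_{v,u})$. Apply the snake lemma to
\[
\xymatrix{
0\ar[r] & \calA^0(\calO_{v,u})\ar[r]\ar[d]_{\phi} & A(k_{v,u})\ar[r]\ar[d]_{\phi} & \Phi_A(\bar\F_v)\ar[r]\ar[d]_{\phi} & 0 \\
0\ar[r] & \calJ^0(\calO_{v,u})\ar[r] & J(k_{v,u})\ar[r] & \Phi_J(\bar\F_v)\ar[r] & 0.
}
\]
The leftmost vertical map is surjective: since $\Char\F_v\nmid n$, the $[n]$-map on $\calJ^0$ is \'etale and hence surjective on $\calO_{v,u}$-points by strict henselianness of $\calO_{v,u}$, while $[n]=\phi\widehat\phi$ factors through~$\phi$. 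The snake lemma then identifies $J(k_{v,u})/\phi A(k_{v,u})$ with $\Phi_J(\bar\F_v)/\phi(\Phi_A(\bar\F_v))$, which is killed by~$n$ (again because $\phi\widehat\phi=n$). On the other hand, the composite $J(k_v)\hookrightarrow J(k_{v,u})\to\Phi_J(\bar\F_v)$ factors through $\Phi_J(\F_v)$, a group of order $c_v(J)$ coprime to~$n$; any homomorphism from such a group into an $n$-torsion group is zero, so $J(k_v)$ maps to zero in the cokernel.

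For the reverse inclusion, take $\xi\in H^1(k_v,A[\phi])_\unr$ and let $\eta\in H^1(k_v,A)$ be its image under the map induced by $A[\phi]\hookrightarrow A$; by the long exact sequence of $0\to A[\phi]\to A\to J\to 0$ it suffices to prove $\eta=0$. Since $\xi$ restricts to zero in $H^1(k_{v,u},A[\phi])$, so does $\eta$ in $H^1(k_{v,u},A)$, so inflation--restriction places $\eta$ in $H^1(F,A(k_{v,u}))$, and $n\eta=0$ because $n\xi=0$. It therefore suffices to show $H^1(F,A(k_{v,u}))$ has no $n$-torsion. Filter $A(k_{v,u})=\calA(\calO_{v,u})$ using
\[
0\to\calA^1(\calO_{v,u})\to\calA^0(\calO_{v,u})\to\calA^0(\bar\F_v)\to 0,\qquad 0\to\calA^0(\calO_{v,u})\to A(k_{v,u})\to\Phi_A(\bar\F_v)\to 0,
\]
where $\calA^1$ is the formal group at the identity of~$\calA^0$. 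The group $\calA^1(\calO_{v,u})$ is pro-$p$ for $p=\Char\F_v$, hence uniquely $n$-divisible, so $n$ is an isomorphism on $H^1(F,\calA^1(\calO_{v,u}))$ and this group is $n$-torsion-free. Lang's theorem gives $H^1(F,\calA^0(\bar\F_v))=0$ since the identity component of the special fiber is a connected smooth algebraic group over~$\F_v$. Hence $H^1(F,\calA^0(\calO_{v,u}))$ is $n$-torsion-free. Finally $|H^1(F,\Phi_A(\bar\F_v))|=|\Phi_A(\F_v)|=c_v(A)$ is coprime to~$n$, and the long exact sequence attached to the second short exact sequence yields that $H^1(F,A(k_{v,u}))$ has no $n$-torsion, as needed.

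The hardest step is the cohomological vanishing in the second part, where the two Tamagawa hypotheses play dual roles: $(c_v(J),n)=1$ kills the component-group cokernel obstructing $J(k_v)\subseteq\phi A(k_{v,u})$ in the first step, while $(c_v(A),n)=1$ kills the $\Phi_A$-contribution of $n$-torsion to $H^1(F,A(k_{v,u}))$ in the second. Without either hypothesis the identification can fail.
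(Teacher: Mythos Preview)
Your argument is essentially the one the cited references carry out, and since the paper itself gives no proof beyond the citation, you have supplied what the paper omits. The two-inclusion strategy, the snake-lemma computation of $J(k_{v,u})/\phi A(k_{v,u})$ via component groups, and the filtration of $A(k_{v,u})$ by the formal group and the special fiber are all standard and correct.

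There is, however, one genuine gap in the second inclusion. You argue that $H^1(F,\calA^1(\calO_{v,u}))$ is uniquely $n$-divisible and that $H^1(F,\calA^0(\bar\F_v))=0$, and then conclude that the quotient $H^1(F,\calA^0(\calO_{v,u}))$ is $n$-torsion-free. But a quotient of a uniquely $n$-divisible group need not be $n$-torsion-free: for instance $\Z[1/n]/\Z$ has plenty of $n$-torsion. What saves you is a fact you did not invoke: cohomology of a profinite group with discrete coefficients is torsion in positive degree. Thus $H^1(F,\calA^1(\calO_{v,u}))$ is a torsion group on which $[n]$ is an isomorphism, so every element has order coprime to~$n$, and \emph{that} property does pass to quotients. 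The same remark is needed again when you pass from $H^1(F,\calA^0(\calO_{v,u}))$ to its image in $H^1(F,A(k_{v,u}))$. Once you insert this one observation, the chain of deductions is valid. (An alternative is to quote directly that $H^1(F,\calA^0(\calO_{v,u}))=0$ for any smooth $\calO_v$-group scheme with connected fibers, which follows from Lang's theorem plus henselian lifting; this is the route taken in the references the paper cites.)

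A smaller imprecision: $\calA^1(\calO_{v,u})$ is not literally pro-$p$, since the residue field $\bar\F_v$ is infinite; it is rather a direct limit of pro-$p$ groups $\calA^1(\calO_{k'})$ over finite unramified extensions~$k'/k_v$. Your intended conclusion, that $[n]$ is an automorphism on it when $p\nmid n$, is nevertheless correct and follows from the formal-group description.
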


\begin{proof}
This is a straightforward generalization of the proof
of~\cite{Schaefer-Stoll2004}*{Lemma~3.1~and~Proposition~3.2},
which uses results from~\cite{Schaefer1996}*{\S3}.
\end{proof}

\subsection{Unramified elements in the target of the descent map}
\label{S:unramified in target of descent}

Now suppose that $k$ is a global field 
and that we have a true or fake descent setup.
{}From \eqref{E:true main sequence} or~\eqref{E:fake main sequence}  
applied to~$k_v$ we obtain
$\widetilde{L_v^\times/L_v^{\times n} k_v^\times} \injects H^1(k_v,E^\vee)$.
Call an element of $\widetilde{L_v^\times/L_v^{\times n} k^\times}$
\defi{unramified}
if its image in~$H^1(k_v,E^\vee)$ is unramified.

Let $\calS$ be a subset of~$\Omega_k$ containing the archimedean places.
Say that an element of $\widetilde{L^\times/L^{\times n} k^\times}$ 
is \defi{unramified outside $\calS$}
if its image in $\widetilde{L_v^\times/L_v^{\times n} k_v^\times}$
is unramified for each $v \notin \calS$.
Let $\widetilde{(L^\times/L^{\times n} k^\times)}_\calS$ 
be the subgroup of such elements.
Proposition~\ref{P:unramified classes} will provide an explicit description
of $\widetilde{(L^\times/L^{\times n} k^\times)}_\calS$.

Given $\ell \in L = \prod L_i$, let $\ell_i$ be its image in~$L_i$.
Let $L(n,\calS)$ 
be the subgroup of $L^\times/L^{\times n}$ 
consisting of elements represented by~$\ell \in L^\times$
such that the prime-to-$\calS$ part of 
the fractional ideal~$(\ell_i)$ of~$L_i$
is an $n^{\tH}$~power for all~$i$.
In the true case, let $\widetilde{L(n,\calS)}=L(n,\calS)$;
in the fake case, let $\widetilde{L(n,\calS)}$
be the subgroup of $L^\times/L^\times k^\times$
consisting of elements represented by~$\ell \in L^\times$
for which there exists a fractional ideal~$\mathfrak{a}$ of~$k$
such that the prime-to-$\calS$ part of~$\mathfrak{a} \cdot (\ell_i)$
is an $n^{\tH}$~power for all~$i$.

\begin{proposition}
\label{P:unramified classes}
Suppose that $\calS \subseteq \Omega_k$ contains all archimedean places
and all places of residue characteristic dividing~$n$.
Then $\widetilde{(L^\times/L^{\times n} k^\times)}_\calS = \widetilde{L(n,\calS)}$.
\end{proposition}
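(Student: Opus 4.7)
The plan is to verify the equality of subgroups place by place, using the Kummer sequence for $\mu_n^\fes$ together with the generalized Hilbert~90 of Section~\ref{S:twisted powers}.

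By definition, $\bar\ell \in \widetilde{L^\times/L^{\times n}k^\times}$ lies in $\widetilde{(L^\times/L^{\times n}k^\times)}_\calS$ iff, for each $v \notin \calS$, its local image $\bar\ell_v$ maps to an unramified class in $H^1(k_v, E^\vee)$, where $E^\vee = \mu_n^\fes$ in the true case and $\mu_n^\fes/\mu_n$ in the fake case. Likewise, membership in $\widetilde{L(n,\calS)}$ is a conjunction of conditions indexed by $v \notin \calS$, so it suffices to match the two $v$-local conditions for each such~$v$. Fix such a $v$; by the hypotheses on~$\calS$, $v$ is non-archimedean with $\Char \F_v \nmid n$.

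Taking $\calG_{k_v}$- and $\calG_{k_{v,u}}$-cohomology of $0 \to \mu_n^\fes \to \G_m^\fes \to \G_m^\fes \to 0$, the generalized Hilbert~90 kills $H^1$ of $\G_m^\fes$ in both cases and yields $H^1(k_v,\mu_n^\fes) \cong L_v^\times/L_v^{\times n}$ and $H^1(k_{v,u},\mu_n^\fes) \cong L_{v,u}^\times/L_{v,u}^{\times n}$ (where $L_{v,u} \colonequals L \otimes_k k_{v,u}$), compatibly with restriction. Decomposing $L_v = \prod_{i,\,w \mid v} L_{i,w}$ componentwise, and observing that $\calO_{L_{i,w,u}}^\times$ is $n$-divisible (its residue field $\overline{\F_v}$ is, the $1$-units form a pro-$p$ group with $p = \Char\F_v \nmid n$, and $L_{i,w,u}/L_{i,w}$ is unramified so value groups coincide), the kernel of the componentwise restriction $L_{i,w}^\times/L_{i,w}^{\times n} \to L_{i,w,u}^\times/L_{i,w,u}^{\times n}$ consists precisely of classes with $v_w(\ell_{i,w}) \equiv 0 \pmod n$. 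In the true case this matches the $v$-local condition defining $L(n,\calS)$, completing the proof.

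In the fake case, sequence~\eqref{E:Br} localized at $v$ and at $k_{v,u}$ supplies compatible injections $L_v^\times/L_v^{\times n}k_v^\times \hookrightarrow H^1(k_v,\mu_n^\fes/\mu_n)$ and $L_{v,u}^\times/L_{v,u}^{\times n}k_{v,u}^\times \hookrightarrow H^1(k_{v,u},\mu_n^\fes/\mu_n)$, so $\bar\ell_v$ is unramified iff its image in $L_{v,u}^\times/L_{v,u}^{\times n}k_{v,u}^\times$ is trivial, iff there exists $\alpha_v \in k_{v,u}^\times$ making $\ell/\alpha_v$ a componentwise $n^\tH$~power in $L_{v,u}$. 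By the valuation analysis of the previous paragraph, this amounts to $v_w(\ell_{i,w}) \equiv e_{w/v}\,a_v \pmod n$ for every $(i,w)$ above~$v$, where $a_v \colonequals v_v(\alpha_v) \in \Z$ and $e_{w/v}$ is the ramification index. For $v \notin \calS$ outside the finite joint support of the $(\ell_i)$, the choice $a_v = 0$ satisfies all congruences, so $\fa \colonequals \prod_{v\notin\calS}\pp_v^{-a_v}$ is a bona fide fractional ideal of~$k$, and the whole system of congruences is equivalent to the prime-to-$\calS$ part of $\fa\,\calO_{L_i}\cdot(\ell_i)$ being an $n^\tH$~power in $\Div L_i$ for every~$i$ — the defining property of $\widetilde{L(n,\calS)}$ in the fake case. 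The reverse implication is immediate: given such~$\fa$, take $\alpha_v \in k_v^\times$ with $v_v(\alpha_v) = -\ord_v(\fa)$ to serve as a local witness. The main subtlety is this last paragraph: assembling the local correction exponents $a_v$ into a single global fractional ideal of~$k$, which relies essentially on each $(\ell_i)$ having finite support.
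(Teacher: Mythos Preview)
Your proof is correct and supplies exactly the argument the paper omits (the paper simply cites \cite{Poonen-Schaefer1997}*{Proposition~12.5}). The key steps --- identifying $H^1(k_{v,u},\mu_n^\fes)$ with $L_{v,u}^\times/L_{v,u}^{\times n}$ via Kummer theory, reducing the unramified condition to a valuation congruence using $n$-divisibility of $\calO_{L_{i,w,u}}^\times$, and in the fake case assembling the local correction exponents~$a_v$ into a global fractional ideal --- are the standard approach and match what is in that reference. One minor notational point: $L_{i,w}\otimes_{k_v} k_{v,u}$ may split as a product of $f_{w/v}$ copies of the maximal unramified extension of~$L_{i,w}$, so your ``$L_{i,w,u}$'' should be read componentwise; this does not affect the argument.
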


\begin{proof}
The statement and proof are the same 
as for~\cite{Poonen-Schaefer1997}*{Proposition~12.5}.
\end{proof}

\begin{proposition}
\label{P:L(n,S)}
The group $\widetilde{L(n,\calS)}$ is finite and computable.
\end{proposition}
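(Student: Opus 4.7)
The plan is to reduce the computation to the standard finiteness and computability of $\calS$-unit groups and $\calS$-class groups for number fields (or the analogous objects for global function fields), and then to handle the fake case by quotienting by the image of the analogous object over~$k$.

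First I would treat the true case. Decompose $L = \prod_i L_i$ and, for each~$i$, let $\calS_i$ denote the set of places of~$L_i$ lying above places in~$\calS$. The definition of $L(n,\calS)$ immediately factors as $L(n,\calS) = \prod_i L_i(n,\calS_i)$, so it suffices to treat a single field $L_i$. Sending an element $\ell \in L_i(n,\calS_i)$ to the class of $(\ell)$ in the $\calS_i$-class group $\Cl(\calO_{L_i,\calS_i})$ (and using that only the class modulo $n^{\tH}$~powers of ideals matters) yields a natural exact sequence
\[
  1 \to \frac{\calO_{L_i,\calS_i}^\times}{\calO_{L_i,\calS_i}^{\times n}}
    \to L_i(n,\calS_i)
    \to \Cl(\calO_{L_i,\calS_i})[n] \to 0.
\]
The left term is finite and computable by the Dirichlet--Chevalley $\calS$-unit theorem, and the right term is finite and computable by the finiteness of the $\calS$-class group of a global field. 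Hence $L_i(n,\calS_i)$, and therefore $L(n,\calS)$, is finite and computable.

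For the fake case, observe that by definition the natural map $L^\times \to L^\times/(L^{\times n} k^\times)$ sends $L(n,\calS) \cdot \im(k^\times)$ onto $\widetilde{L(n,\calS)}$: an element $\ell$ represents a class in~$\widetilde{L(n,\calS)}$ iff there exists a fractional ideal $\fa$ of~$k$ with $\fa \cdot (\ell_i)$ an $n^{\tH}$~power outside~$\calS$ for all~$i$, and choosing $c \in k^\times$ with $(c)\fa^{-1}$ an $n^{\tH}$~power outside~$\calS$ (possible after enlarging $\calS$ by a computable finite set of primes if necessary, or directly via the $n$-torsion in the $\calS$-class group of~$k$) shows that $c\ell$ lies in $L(n,\calS)$ and represents the same class. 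Therefore we have an exact sequence
\[
  k(n,\calS) \to L(n,\calS) \to \widetilde{L(n,\calS)} \to 0,
\]
and the finiteness and computability of $\widetilde{L(n,\calS)}$ follows from that of the two terms on the left, both of which were handled above.

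The main potential obstacle is purely computational rather than theoretical: class group and unit group computations are classically effective but can be expensive in the field degrees relevant here, which is precisely the practical bottleneck alluded to in the remark following the introduction. Theoretically, nothing beyond the standard finiteness theorems and effective algorithms for $\calS$-units and $\calS$-class groups is needed.
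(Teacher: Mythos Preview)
Your treatment of the true case is correct and matches the paper's exactly: the exact sequence
\[
  0 \to \frac{\calO_{L,\calS}^\times}{\calO_{L,\calS}^{\times n}}
    \to L(n,\calS)
    \to \Cl(\calO_{L,\calS})[n] \to 0
\]
reduces everything to the $\calS$-unit theorem and finiteness of the class group.

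In the fake case, however, your surjectivity claim for $L(n,\calS) \to \widetilde{L(n,\calS)}$ is not valid in general. Given $\ell$ representing a class in $\widetilde{L(n,\calS)}$ with witnessing ideal $\fa$, finding $c \in k^\times$ with $(c)\fa^{-1}$ an $n^{\tH}$~power outside~$\calS$ is exactly asking that the class of $\fa$ lie in $n\,\Cl(\calO_\calS)$, and there is no reason this should hold. Your parenthetical ``possible after enlarging $\calS$'' does not repair this: enlarging $\calS$ changes the group $\widetilde{L(n,\calS)}$ you are supposed to compute, and you do not explain how to cut back down. The alternative ``directly via the $n$-torsion in the $\calS$-class group of~$k$'' is not correct either; the obstruction lives in $\Cl(\calO_\calS)/n\Cl(\calO_\calS)$, not in the $n$-torsion.

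The paper handles this by writing the four-term sequence
\[
  k(n,\calS) \to L(n,\calS) \to \widetilde{L(n,\calS)}
  \to \frac{\Cl(\calO_\calS)}{n\,\Cl(\calO_\calS)},
\]
where the last map sends the class of $\ell$ to the class of $\fa$. Since $\Cl(\calO_\calS)/n\Cl(\calO_\calS)$ is finite and computable, this still yields finiteness and computability of $\widetilde{L(n,\calS)}$. Your argument becomes correct once you replace the erroneous $\to 0$ by this fourth term (or, alternatively, develop the enlarge-then-restrict strategy carefully: compute $\widetilde{L(n,\calS')}$ for $\calS'\supseteq\calS$ with $\Cl(\calO_{\calS'})=0$, then impose the finitely many local conditions at $\calS'\setminus\calS$).
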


\begin{proof}
Let $\calO_\calS$ and $\calO_{L,\calS}$ be 
the rings of $\calS$-integers in $k$ and~$L$, respectively;
then there are exact sequences
\begin{equation}
\label{E:L(n,S)}
	0 \to \frac{\calO_{L,\calS}^\times}{\calO_{L,\calS}^{\times n}}
	\to L(n,\calS)
	\to \Cl(\calO_{L,\calS})[n]
	\to 0
\end{equation}
and (in the fake case)
\begin{equation}
\label{E:widetilde{L(n,S)}}
	k(n,\calS) \to L(n,\calS) \to \widetilde{L(n,\calS)} 
	\to \frac{\Cl(\calO_\calS)}{n \Cl(\calO_\calS)},
\end{equation}
as in~\cite{Poonen-Schaefer1997}*{Propositions~12.6 and 12.8},
where these are constructed for prime~$n$.
So the finiteness follows from the Dirichlet $\calS$-unit theorem
and finiteness of the class groups,
and the computability follows too since these are effective.
\end{proof}

\begin{remark}
The computation of the unit groups and class groups
typically dominates the running time in performing explicit descent.
\end{remark}

\begin{proposition}
\label{P:finiteness of unramified classes}
Let $M$ be a finite $\calG$-module with $\Char k \nmid \#M$.
Let $\calS$ be a finite set of places of~$k$ containing the
archimedean places.
Then $H^1(k,M)_\calS$ is finite.
\end{proposition}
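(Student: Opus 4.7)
The plan is to reduce to the case when $\calG$ acts trivially on~$M$, and then apply finiteness theorems from global class field theory (or Hermite--Minkowski in the number field case, and its analogue for function fields).

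First I would let $K = k_s^{\ker(\calG \to \Aut M)}$, the finite Galois extension of~$k$ that trivializes the action on~$M$, and let $\calT$ be the set of places of~$K$ lying over~$\calS$ (still a finite set). The inflation--restriction sequence
\[
0 \to H^1(\Gal(K/k), M) \to H^1(k, M) \to H^1(K, M)^{\Gal(K/k)}
\]
gives an exact sequence whose left term is finite (a cohomology group of a finite group with finite coefficients). If $\xi \in H^1(k,M)_\calS$ and $w$ is a place of~$K$ lying over $v \notin \calS$, then $\xi|_{k_v} \in H^1(k_v,M)_\unr$, so $\xi|_{K_w}$ is the restriction of an unramified class and is therefore itself unramified. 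Thus restriction maps $H^1(k,M)_\calS$ into $H^1(K,M)_\calT$, and it suffices to prove that $H^1(K,M)_\calT$ is finite.

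Now $\calG_K$ acts trivially on~$M$, so $H^1(K,M) = \Hom_{\text{cts}}(\calG_K, M)$. Pick $n$ with $nM = 0$ (e.g., $n = \#M$), so $\Char k \nmid n$ by hypothesis. An element of $H^1(K,M)_\calT$ is a continuous homomorphism $\calG_K \to M$ that is unramified at every place $w \notin \calT$, hence factors through $\Gal(K_\calT^{(n)}/K)$, where $K_\calT^{(n)}$ is the maximal abelian extension of~$K$ of exponent dividing~$n$ unramified outside~$\calT$. So the hard part will be showing that $\Gal(K_\calT^{(n)}/K)$ is finite.

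For this final step I would invoke global class field theory: $\Gal(K_\calT^{(n)}/K)$ is a quotient of the idele class group $C_K$ by the product of $n^{\tH}$ powers and local units at all places outside~$\calT$. Concretely, this quotient sits in an exact sequence involving $\calO_{K,\calT}^\times/\calO_{K,\calT}^{\times n}$ and $\Cl(\calO_{K,\calT})[n]$, both of which are finite by the Dirichlet $\calT$-unit theorem and finiteness of the $\calT$-class group (or one may invoke Hermite--Minkowski directly in the number field case, or its function field analogue where the hypothesis $\Char k \nmid n$ ensures tame ramification and hence boundedness of local contributions). The two types of finiteness combine to bound $\#H^1(K,M)_\calT$, and combining with the finite kernel from inflation--restriction yields finiteness of~$H^1(k,M)_\calS$.
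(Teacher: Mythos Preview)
Your proof is correct and follows essentially the same strategy as the paper: use inflation--restriction to reduce to the case of trivial $\calG$-action, then invoke the Dirichlet $\calS$-unit theorem and finiteness of the class group. The only difference is in the endgame: the paper, after enlarging $k$, also arranges that $\mu_n \subset k$ (where $n = \#M$), so that $M$ decomposes as a product of copies of~$\mu_d$ for various $d \mid n$; then Kummer theory identifies $H^1(k,\mu_n)_\calS$ with $(k^\times/k^{\times n})_\calS$, whose finiteness is exactly Proposition~\ref{P:L(n,S)}. You instead keep $M$ with trivial action and interpret $H^1(K,M)_\calT$ as $\Hom(\Gal(K_\calT^{(n)}/K),M)$, appealing to class field theory or Hermite--Minkowski for the finiteness of that Galois group. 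Both routes land on the same two finiteness inputs; the paper's is slightly more self-contained (no class field theory needed), while yours avoids the extra step of adjoining roots of unity. One small quibble: in your class-field-theoretic exact sequence, the class-group term governing $\Gal(K_\calT^{(n)}/K)$ should be $\Cl(\calO_{K,\calT})/n$ rather than $\Cl(\calO_{K,\calT})[n]$, though of course this does not affect the finiteness conclusion.
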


\begin{proof}
Using the inflation-restriction sequence lets us enlarge $k$
to assume that $\calG$ acts trivially on~$M$ and on the roots of unity
of order dividing~$\#M$.
Decomposing $M$ as a product of cyclic groups 
lets us reduce to the case $M=\mu_n$ where $\Char k \nmid n$.
Now $H^1(k,\mu_n)_\calS \isom (k^\times/k^{\times n})_\calS$,
which, as in Proposition~\ref{P:L(n,S)}, is finite.
\end{proof}

\section{Finite Galois modules and $\Sha^1$}\label{S:sha1}
\label{S:Sha1}

Let $k$ be a global field, and let $M$ be a finite $\calG_k$-module. 
If $v$ is a place of~$k$, write $H^1(k,M)\to H^1(k_v,M)$
for the restriction to a decomposition group associated to~$v$;
if $\xi \in H^1(k,M)$, let $\xi_v \in H^1(k_v,M)$ be its restriction.
As usual, define
\[
	\Sha^1(k,M) \colonequals  \ker \left( H^1(k,M)\to \prod_{v \in \Omega_k} H^1(k_v,M) \right).
\]

\begin{lemma}[cf.~\cite{MilneADT2006}*{Example~I.4.11(i)}]\label{lemma:splitsha}
If $\calG_k$ acts trivially on~$M$, then $\Sha^1(k,M)=0$.
\end{lemma}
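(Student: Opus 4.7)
The plan is to reduce the problem to a statement about continuous group homomorphisms and then invoke Chebotarev density.

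Since $\calG_k$ acts trivially on~$M$, the cohomology group $H^1(k,M)$ is simply the group $\Hom_{\text{cts}}(\calG_k,M)$ of continuous homomorphisms, and similarly $H^1(k_v,M) = \Hom_{\text{cts}}(\calG_{k_v},M)$. So an element $\xi \in \Sha^1(k,M)$ corresponds to a continuous homomorphism $\phi \colon \calG_k \to M$ whose restriction to every decomposition subgroup $\calG_{k_v} \subseteq \calG_k$ (for every $v \in \Omega_k$) is trivial.

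Next I would translate this into field-theoretic terms. Since $M$ is finite, $\phi$ factors through $\Gal(K/k)$ for a finite Galois extension $K/k$ whose Galois group embeds into~$M$. Triviality of $\phi$ on a decomposition group at~$v$ is equivalent to saying that $v$ splits completely in~$K$. Hence $\xi \in \Sha^1(k,M)$ corresponds to a finite Galois extension $K/k$ together with an injection $\Gal(K/k) \hookrightarrow M$, such that every place of~$k$ splits completely in~$K$.

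The last step is to conclude $K = k$. This is a standard consequence of the Chebotarev density theorem: the set of primes of~$k$ that split completely in~$K$ has Dirichlet density $1/[K:k]$, so if every prime splits completely then $[K:k]=1$. (In the function field case the same conclusion holds by the analogue of Chebotarev for global function fields.) Hence $\phi$ is the trivial homomorphism, so $\xi = 0$, proving $\Sha^1(k,M) = 0$.

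The only subtle point is the use of Chebotarev, which requires $K/k$ to be a Galois extension of a global field; both are immediate in our setting, so there is no real obstacle, and the argument is essentially formal once the reformulation via $\Hom_{\text{cts}}(\calG_k,M)$ is made.
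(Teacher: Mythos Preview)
Your proof is correct and follows essentially the same approach as the paper: both identify $H^1(k,M)$ with $\Hom_{\text{cts}}(\calG_k,M)$ and then invoke Chebotarev. The paper phrases the Chebotarev step slightly more directly, observing that the union of the decomposition groups $\calG_{k_v}$ is dense in $\calG_k$, so the restriction map $\Hom(\calG_k,M)\to\prod_v\Hom(\calG_{k_v},M)$ is injective; your route via the finite splitting field $K/k$ and the density $1/[K:k]$ of completely split primes is just an unpacking of the same idea.
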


\begin{proof}
The hypothesis implies that $H^1(k,M)=\Hom(\calG_k,M)$, 
where $\Hom$ denotes the group of continuous homomorphisms.
Chebotarev's density theorem implies that 
the union of the decomposition groups~$\calG_{k_v}$ is dense in~$\calG_k$,
so the map
\[
	\Hom(\calG_k,M) \to \prod_v \Hom(\calG_{k_v},M)
\]
is injective.  This implies the result.
\end{proof}

\begin{lemma}
\label{L:Sha of mu_p}
If $p$ is a prime not equal to~$\Char k$, and $\fes$ is any $\calG$-set,
then $\Sha^1(k,\mu_p^{\fes})=0$.
\end{lemma}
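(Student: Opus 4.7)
The plan is to reduce via Shapiro's lemma to the case of a single $\mu_p$ over a finite extension, and then kill the resulting groups using a cyclotomic twist and Lemma~\ref{lemma:splitsha}.

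Decompose the finite $\calG$-set $\fes$ into orbits, so that if $\fes=\Spec L$ with $L=\prod_j L_j$ the field decomposition of $L$, then each orbit is $\calG/\calG_{L_j}$ and
\[
    \mu_p^\fes \cong \prod_j \Ind_{\calG_{L_j}}^{\calG}\mu_p
\]
as $\calG$-modules. Shapiro's lemma identifies $H^1(k,\mu_p^\fes)$ with $\prod_j H^1(L_j,\mu_p)$; this agrees with the Kummer identification $L^\times/L^{\times p}=\prod_j L_j^\times/L_j^{\times p}$ noted at the end of Section~\ref{S:twisted powers}. A diagram chase (the main bookkeeping step) shows that under this identification, restriction at a place $v$ of $k$ corresponds to the product over $j$ of restrictions at the places of $L_j$ lying above $v$, so
\[
    \Sha^1(k,\mu_p^\fes) \cong \prod_j \Sha^1(L_j,\mu_p).
\]
It thus suffices to prove $\Sha^1(F,\mu_p)=0$ for an arbitrary global field $F$ with $\Char F \ne p$.

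For this, let $F'=F(\zeta_p)$ and $d\colonequals [F':F]$, so $d$ divides $p-1$ and is coprime to $p$. The inflation-restriction sequence
\[
    0 \to H^1(\Gal(F'/F),\mu_p(F')) \to H^1(F,\mu_p) \to H^1(F',\mu_p)
\]
has left-hand term annihilated by both $d$ and $p$, hence zero, so restriction is injective. The analogous sequence at each place of $F$ shows that this injection carries $\Sha^1(F,\mu_p)$ into $\Sha^1(F',\mu_p)$. Over $F'$ the $\calG_{F'}$-action on $\mu_p$ is trivial (a choice of $\zeta_p$ trivializes it), so Lemma~\ref{lemma:splitsha} gives $\Sha^1(F',\mu_p)=0$, yielding $\Sha^1(F,\mu_p)=0$.

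The only non-formal step is the verification that Shapiro's isomorphism is compatible with local restrictions in the indicated way — i.e., that restriction at $v$ decomposes coordinatewise into restrictions at all places of the $L_j$ above $v$. Once that compatibility is granted, the rest is a short two-step descent (first to a single $\mu_p$, then to a cyclotomic base change) that lands squarely in the trivial-action case handled by Lemma~\ref{lemma:splitsha}.
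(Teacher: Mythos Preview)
Your proof is correct. The reduction via Shapiro's lemma to $\Sha^1(L_j,\mu_p)=0$ is exactly what the paper does. The paper then simply invokes the classical fact that an element of $F^\times$ that is a $p^{\tH}$~power in every completion is a $p^{\tH}$~power globally, without further argument. Your second step instead supplies a proof of this fact from within the paper: passing to $F'=F(\zeta_p)$ makes the Galois action on $\mu_p$ trivial, so Lemma~\ref{lemma:splitsha} applies, and the restriction map $\Sha^1(F,\mu_p)\to\Sha^1(F',\mu_p)$ is injective because $[F':F]$ is prime to~$p$. This is a genuinely different route for the final step --- more self-contained, since it derives the local--global principle for $p^{\tH}$~powers from the Chebotarev-based Lemma~\ref{lemma:splitsha} rather than citing it as folklore. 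The paper's version is terser; yours makes the dependence on earlier results explicit. The Shapiro--localization compatibility you flag is indeed the only bookkeeping needed, and it is standard.
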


\begin{proof}
By Shapiro's lemma, one reduces to proving that $\Sha^1(k,\mu_p)=0$.
The latter is the well-known fact that an element of~$k^\times$ that is
a $p^{\tH}$~power in every~$k_v$ is a $p^{\tH}$~power in~$k$.
\end{proof}

The following result allows us to compute $\Sha^1(k,M)$ 
using finite group cohomology.

\begin{proposition}\label{P:sha1 decomp}
Let $k$ be a global field, 
let $M$ be a finite $\calG_k$-module, 
and let $K$ be a Galois splitting field of~$M$.
Let $G=\Gal(K/k)$. 
For $v \in \Omega_k$, let $D_v\subset G$ denote a decomposition group.
Then
\[
	\Sha^1(k,M) \isom \ker\Big(H^1(G,M)\to \prod_v H^1(D_v,M)\Big).
\]
In particular,
\[
	\Sha^1(k,M) \subseteq
	\Intersection_{\textup{cyclic $H \le G$}} \ker(H^1(G,M)\to H^1(H,M)).
\]
\end{proposition}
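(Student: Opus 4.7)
The overall strategy is to apply the inflation--restriction sequence twice---once globally and once locally---and to invoke Lemma~\ref{lemma:splitsha} in both settings. Globally, the tower $k \subset K \subset k_s$ gives the exact sequence
\[
0 \to H^1(G,M) \xrightarrow{\inf} H^1(k,M) \xrightarrow{\res} H^1(K,M),
\]
since $\calG_K$ acts trivially on~$M$ so that $M^{\calG_K}=M$. Suppose $\xi \in \Sha^1(k,M)$. For any place $w$ of~$K$ lying over $v \in \Omega_k$, embed $K \hookrightarrow k_{v,s}$ so that $K_w$ is the closure of~$K$; then $K_w \supseteq k_v$, so the restriction of~$\xi$ to~$H^1(K_w,M)$ factors through~$\xi_v=0$ and vanishes. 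Thus $\res(\xi) \in \Sha^1(K,M)$, which is zero by Lemma~\ref{lemma:splitsha}. Hence $\xi = \inf(\eta)$ for a unique $\eta \in H^1(G,M)$.

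Next, for each $v \in \Omega_k$ choose $w \mid v$ in~$K$, so that $D_v = \Gal(K_w/k_v)$ is the image of $\calG_{k_v}$ in~$G$. The commutative diagram relating $\calG_{k_v} \twoheadrightarrow D_v$ and $\calG_k \twoheadrightarrow G$ shows that $\res(\inf \eta)_v$ equals the inflation of $\eta|_{D_v}$ along $\calG_{k_v} \twoheadrightarrow D_v$. Applying the inflation--restriction sequence locally to $\calG_{K_w} \trianglelefteq \calG_{k_v}$ yields
\[
0 \to H^1(D_v,M) \xrightarrow{\inf} H^1(k_v,M),
\]
because $K_w$ splits~$M$ (as $K \subseteq K_w$) so $M^{\calG_{K_w}}=M$. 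Therefore $(\inf \eta)_v = 0$ in $H^1(k_v,M)$ if and only if $\eta|_{D_v}=0$ in $H^1(D_v,M)$, which is exactly the first asserted isomorphism.

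For the containment statement, appeal to Chebotarev's density theorem (which holds in both the number field and function field settings): every element $\sigma \in G$ is conjugate to some Frobenius $\Frob_w$ for an unramified place $w$ of~$K$ over some $v \in \Omega_k$, and for such~$w$ the decomposition group is $D_v = \langle \Frob_w \rangle$. Hence every cyclic subgroup $H \le G$ is conjugate in~$G$ to some~$D_v$. Since restriction to conjugate subgroups has the same kernel in $H^1(G,M)$ (inner automorphisms act trivially on group cohomology), the vanishing of $\eta|_{D_v}$ for all~$v$ forces $\eta|_H = 0$ for every cyclic $H \le G$.

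The main obstacle---and the real content of the argument---is the injectivity of local inflation $H^1(D_v,M) \hookrightarrow H^1(k_v,M)$: it is what allows the infinite profinite cohomology of local fields to be replaced by finite group cohomology, and it relies essentially on $K_w$ splitting~$M$. Once this and Lemma~\ref{lemma:splitsha} are in hand, the two statements follow by formal diagram chasing and Chebotarev.
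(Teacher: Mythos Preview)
Your proof is correct and follows essentially the same approach as the paper: both use inflation--restriction globally and locally together with Lemma~\ref{lemma:splitsha} to identify $\Sha^1(k,M)$ with the kernel of $H^1(G,M)\to\prod_v H^1(D_v,M)$, and both invoke Chebotarev for the second statement. The paper packages the argument into a single commutative diagram, whereas you unwind it linearly, but the content is the same.
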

\begin{proof}
For each nontrivial place $v$ of $k_s$,
let $k_v$ and $K_v$ denote the corresponding completions of $k$ and $K$,
and let $D_v$ be the corresponding decomposition group inside $G$.
The products in the diagram below will be taken over all such $v$,
instead of just the underlying places of $k$, 
but this does not affect the definitions of the Shafarevich--Tate groups,
since in general, 
the kernel of a restriction map $H^1(G,M) \to H^1(H,M)$ is unchanged 
if $H \le G$ is replaced by a conjugate subgroup.
Inflation-restriction with respect to
\[
	1\to \calG_K \to \calG_k \to G\to 1
\]
and its local analogues yield the exactness of the last two rows 
in the commutative diagram
\[\xymatrix{
0\ar[r]&\Sha^1(G,M)\ar[r]\ar[d]&\Sha^1(k,M)\ar[r]\ar[d]& \Sha^1(K,M) \ar[d]\\
0\ar[r]&H^1(G,M)\ar[r]\ar[d]&H^1(k,M)\ar[r]\ar[d]&H^1(K,M)\ar[d]\\
0\ar[r]&\prod_v H^1(D_v,M)\ar[r]&\prod_v H^1(k_v,M)\ar[r]&\prod_v H^1(K_v,M),
}\]
and the groups in the top row are defined as the kernels
of the vertical maps connecting the second and third rows.
Lemma~\ref{lemma:splitsha} yields $\Sha^1(K,M)=0$, 
so $\Sha^1(k,M) \isom \Sha^1(G,M)$,
which is the first statement of the lemma.
By the Chebotarev density theorem, 
every cyclic subgroup of $G$ occurs as a $D_v$;
this implies the second statement.
\end{proof}

\section{Definition of Selmer groups and sets}
\label{S:Selmer groups and sets}

\subsection{Selmer groups associated to isogenies}

\begin{definition}
Let $\phi\colon A\to J$ be an isogeny
of abelian varieties over a global field~$k$
such that $\Char k \nmid \deg \phi$.
For each~$v$, we obtain a connecting homomorphism
$\gamma_v \colon J(k_v) \to H^1(k_v,A[\phi])$.
Define
\[
	\Sel{\phi}(J)\colonequals  \left\{\delta \in H^1(k,A[\phi]) : \delta_v \in \im \gamma_v \textup{ for all $v$} \right\}.
\]
\end{definition}

\begin{proposition}
\label{P:Sel is unramified}
Let $\calS$ be a finite set of places of~$k$ containing
\begin{itemize}
 \item all archimedean places,
 \item all places where the residue characteristic divides~$n$, and
 \item all places where the Tamagawa number of $J$ or~$A$
       is not coprime to~$n$
	(these form a subset of the places of bad reduction).
\end{itemize}
Then 
\begin{enumerate}[\upshape (a)]
\item \label{I:description of Selmer} 
$\Sel{\phi}(J) = \left\{ \delta \in H^1(k,A[\phi])_\calS : \delta_v \in \im \gamma_v \textup{ for all $v \in \calS$} \right\}$.
\item \label{I:finiteness of H^1_S}
The group $H^1(k,A[\phi])_\calS$ is finite.
\item \label{I:Sel is finite}
The group $\Sel{\phi}(J)$ is finite.
\end{enumerate}
\end{proposition}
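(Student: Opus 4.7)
The plan is to establish (a) first---this is the substantive step---then obtain (b) as a direct application of a previously proved finiteness result, and finally deduce (c) as an immediate consequence of (a) and (b). Throughout, I would set $n \colonequals \deg\phi$, so that $n$ annihilates $A[\phi]$; since $\Char k \nmid \deg\phi$ forces $\phi$ to be separable, one also has $\#A[\phi] = n$.

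For part~(a), the key point is that on the complement of~$\calS$ the condition ``$\delta_v \in \im\gamma_v$'' should coincide with ``$\delta_v$ is unramified.'' I would apply Lemma~\ref{L:Tamagawa-unramified} at each place $v \notin \calS$: the three bullet points defining~$\calS$ guarantee, respectively, that $v$ is non-archimedean (so the local setting of the lemma applies), that $\Char \F_v \nmid n$, and that $c_v(J)$ and $c_v(A)$ are coprime to~$n$. The lemma then yields $\gamma_v(J(k_v)) = H^1(k_v,A[\phi])_\unr$, and both inclusions in the claimed description of $\Sel{\phi}(J)$ follow at once from this identification: the ``only if'' direction of the characterization shows that any $\delta \in \Sel{\phi}(J)$ is automatically unramified outside~$\calS$, and the ``if'' direction shows that an element of $H^1(k,A[\phi])_\calS$ whose local restrictions lie in $\im\gamma_v$ for $v \in \calS$ already satisfies the full Selmer condition.

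For part~(b), I would invoke Proposition~\ref{P:finiteness of unramified classes} with $M = A[\phi]$; the hypothesis $\Char k \nmid \#M$ holds because $\#A[\phi] = \deg\phi$, and $\calS$ is finite by assumption. Part~(c) is then immediate: (a) exhibits $\Sel{\phi}(J)$ as a subset of the finite group $H^1(k, A[\phi])_\calS$ given by (b).

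There is no real obstacle here: the three bullets defining~$\calS$ have been engineered precisely so that Lemma~\ref{L:Tamagawa-unramified} applies uniformly at every $v \notin \calS$. The only point demanding any verification is the matching between the hypotheses of the lemma and the bullets, which is routine. The entire proposition is essentially bookkeeping that packages Lemma~\ref{L:Tamagawa-unramified} and Proposition~\ref{P:finiteness of unramified classes} into the standard statement that Selmer groups are finite and that membership in them can be tested using only a finite set of local conditions.
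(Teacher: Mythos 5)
Your proposal is correct and follows essentially the same route as the paper: part~(a) is exactly the application of Lemma~\ref{L:Tamagawa-unramified} at each $v \notin \calS$ (the bullets ensuring its hypotheses), part~(b) is the specialization of Proposition~\ref{P:finiteness of unramified classes} to $M = A[\phi]$, and part~(c) combines the two. Your remark that one may take $n = \deg\phi$ (so that $nA[\phi]=0$ and $\Char k \nmid \#A[\phi]$ by separability) is a fine way to pin down the hypotheses and matches the allowance made in the lemma itself.
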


\begin{proof}\hfill
  \begin{enumerate}[\upshape (a)]
  \item Apply Lemma~\ref{L:Tamagawa-unramified} to~$k_v$
   for each $v \notin \calS$.
 \item Finiteness of $H^1(k,A[\phi])_\calS$ is a special case
of Proposition~\ref{P:finiteness of unramified classes}.
\item This follows from \eqref{I:description of Selmer} 
	and~\eqref{I:finiteness of H^1_S}.\qedhere
  \end{enumerate}
\end{proof}

The image of $J(k)/\phi A(k) \injects H^1(k,A[\phi])$
is contained in~$\Sel{\phi}(J)$,
by definition of the latter.

\subsection{Selmer groups associated to descent setups}

We define true and fake Selmer groups
by replacing $H^1(k,A[\phi])$ by its explicit analogue
$\widetilde{L^\times/L^{\times n} k^\times}$,
and by replacing $\gamma_v \colon J(k_v) \to H^1(k_v,A[\phi])$ 
by~$C_v=\widetilde{C}_v$, which is a homomorphism
$J(k_v)^\circ \to \widetilde{L_v^\times/L_v^{\times n} k_v^\times}$.

\begin{definition}
\label{D:true and fake Sel}
Given a true descent setup $(n,\fes,\beta)$,
define the \defi{true Selmer group}
\[
	\Seltrue{\alpha}(J) \colonequals  
        \left\{\delta\in \frac{L^\times}{L^{\times n}} : 
	\delta_v \in \im(C_v)\textup{ for all places $v$ of $k$}\right\}.
\]
Given a fake descent setup $(n,\fes,\beta)$,
define the \defi{fake Selmer group}
\[
	\Selfake{\alpha}(J) \colonequals  
	\left\{\delta\in \frac{L^\times}{L^{\times n}k^\times}: 
	\delta_v \in \im(C_v)\textup{ for all places $v$ of $k$}\right\}.
\]
To avoid having to state results twice, 
we use $\Seltf{\alpha}(J)$ to denote either $\Seltrue{\alpha}(J)$
or~$\Selfake{\alpha}(J)$, depending on whether we are considering a true or a
fake descent setup. 
\end{definition}

\subsection{True and fake Selmer sets for $X$}
\label{S:Sel for X}

Suppose that we have a true or fake descent setup for~$X$,
and that we fix $f \in k(X\times \fes)^\times$ 
as in Definition~\ref{D:true setup} or~\ref{D:fake setup},
and define $X^\good$ accordingly.
Evaluation on closed points as in Section~\ref{S:true explicit definition}
or \ref{S:fake explicit definition}
defines a homomorphism
$\widetilde{C}_f \colon \calZ(X^{\good}) 
\to \widetilde{L^\times/L^{\times n} k^\times}$.
Since $\widetilde{C}_f$ and $\widetilde{C}$ agree on $\calZ^0(X^{\good})$,
they have a common extension to the sum of their domains,
$\calZ(X^{\good}) + \calZ^0(X) = \calZ(X)$.
Then we may restrict to~$X(k)$ to obtain a map of sets
\[
	C_f \colon X(k) 
	\to \widetilde{L^\times/L^{\times n} k^\times}.
\]
Similarly, for each $v \in \Omega_k$, we obtain
\[
	C_{f,v} \colon X(k_v) 
	\to \widetilde{L_v^\times/L_v^{\times n} k_v^\times}.
\]
As the notation suggests, these maps depend on the choice of~$f$.

\begin{definition}
\label{D:Sel for X}
Given a true or fake descent setup for $X$ and~$f$,
define
\[
	\Seltf{f}(X) \colonequals  
	\left\{ \delta\in \widetilde{\frac{L^\times}{L^{\times n}k^\times}} : 
		\delta_v \in \im(C_{f,v}) \textup{ for all places $v$ of $k$}
	\right\}.
\]
\end{definition}

\begin{lemma}
\label{L:coset of kernel of norm}
Suppose that the image of the diagonal $\Z/n\Z \to (\Z/n\Z)^{\fes}$
is contained in~$R$.
Then there exists $c \in k^\times$ such that 
$N(\widetilde{C}_f(z)) = c^{\deg z}$ in $k^\times/k^{\times n}$
for all $z \in \calZ(X)$.
\end{lemma}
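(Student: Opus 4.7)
The plan is to reduce the computation to a single rational function on $X$. Set $g \colonequals N_{L(X)/k(X)}(f) \in k(X)^\times$; writing $f=(f_i)$ with respect to $L = \prod L_i$, one checks that norms commute with evaluation, so $N_{L/k}(\widetilde C_f(z)) = g(z)$ in $k^\times$ whenever both sides are defined. Pushing forward $\divisor(f) = n\beta - D \times \fes$ from $X \times \fes$ to $X$ gives
\[
\divisor(g) \;=\; nB - mD,
\]
where $B \colonequals \sum_{P \in \fes(k_s)} \beta_P \in \Div X$ and $m \colonequals [L:k]$, with $D=0$ in the true case.

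Next I would decode the hypothesis. In the fake case, membership of $(1,\ldots,1)$ in $E = (\Z/n\Z)^\fes_{\deg 0}$ forces $n\mid m$; set $m_0 \colonequals m/n$ (and $m_0 \colonequals 0$ in the true case, consistent with $D=0$). Lifting $(1,\ldots,1)$ to $\Z^\fes$ and using the fiber-wise relation $\divisor(f_P) = n\beta_P - D$, one identifies $\alpha^\vee(1,\ldots,1) \in \widehat{J}[n]$ with the class $[B - m_0 D]$. By hypothesis this class vanishes, so $B - m_0 D$ is principal on $X_s$. To produce a defining function over $k$, I would combine Hilbert~90 with Hochschild--Serre: since $X$ is geometrically integral, $H^0(X_s,\OO^\times_{X_s}) = k_s^\times$, so the edge map $\Pic X \to (\Pic X_s)^\calG$ is injective. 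As $B - m_0 D$ is already $k$-rational, I may write $B - m_0 D = \divisor(h)$ with $h \in k(X)^\times$.

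Then $g/h^n \in k(X)^\times$ has trivial divisor and so equals a constant $c \in k^\times$ (using $H^0(X,\OO_X^\times) = k^\times$, again by geometric integrality). For $z \in \calZ(X^\good)$,
\[
N(\widetilde C_f(z)) \;=\; g(z) \;=\; c^{\deg z}\, h(z)^n \;\equiv\; c^{\deg z} \pmod{k^{\times n}}.
\]
For arbitrary $z \in \calZ(X)$, the extension of $\widetilde C_f$ from Section~\ref{S:Sel for X} writes $z = z_1 + z_0$ with $z_1 \in \calZ(X^\good)$ and $z_0 \in \calZ^0(X)$; Lemma~\ref{L:kernel of norm} makes $N(\widetilde C(z_0))$ trivial in $k^\times/k^{\times n}$, so the identity extends.

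The most delicate step will be pinning down $\alpha^\vee(1,\ldots,1) = [B - m_0 D]$ in the fake case, since the source $(\Z/n\Z)^\fes_{\deg 0}$ has no canonical $\Z$-lift for the diagonal; one has to choose an integer lift with literal sum $nm_0$ and absorb the discrepancy using $\divisor(f_P) = n\beta_P - D$. Everything else is routine divisor bookkeeping plus the standard Hilbert~90 descent.
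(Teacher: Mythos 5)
Your proof is correct and takes essentially the same route as the paper's: both reduce the hypothesis to the statement that $\sum_{P\in\fes}\beta_P-(\#\fes/n)D$ is principal over~$k$ (the paper asserts this directly; you justify it by computing $\alpha^\vee$ on the diagonal and descending principality via Hilbert~90), conclude $N_{L/k}(f)=c\,r^n$ with $r\in k(X)^\times$, evaluate on $\calZ(X^\good)$, and handle general $z$ by combining with Lemma~\ref{L:kernel of norm} on $\calZ^0(X)$ through $\calZ(X)=\calZ(X^\good)+\calZ^0(X)$. The additional detail you give on the principality and lifting step is precisely what the paper leaves implicit.
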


\begin{proof}
In the true case,
the hypothesis on~$R$ implies that
$\sum_{P \in \fes} \beta_P = \divisor(r)$ for some $r \in k(X)^\times$.
Then $\divisor(N_{L/k}(f))=n \sum_{P \in \fes} \beta_P = \divisor(r^n)$,
so $N_{L/k}(f) = cr^n$ for some $c \in k^\times$.
Evaluating on any $z \in \calZ(X^{\good})$ yields
the result for such~$z$.
On the other hand, Lemma~\ref{L:kernel of norm} yields the result
for $z \in \calZ^0(X)$.
Together, these prove the result 
for any $z \in \calZ(X^{\good}) + \calZ^0(X) = \calZ(X)$.

In the fake case, 
the hypothesis on~$R$ implies that
$\#\fes = n m$ for some $m \in \Z_{>0}$ 
and that $\left( \sum_{P \in \fes} \beta_P \right) - mD = \divisor(r)$
for some $r \in k(X)^\times$.
Then $\divisor(N_{L/k}(f))=n \left(\sum_{P \in \fes} \beta_P\right) - nmD 
= \divisor(r^n)$,
so $N_{L/k}(f) = cr^n$ for some $c \in k^\times$.
The rest of the proof is as in the true case.
\end{proof}


\section{Relations between various Selmer groups}\label{S:relations between Selmer groups}

In Sections~\ref{S:local considerations}--\ref{S:selmer group comparison},
we will assume the following:
\begin{hypothesis}
\label{H:circ}
The maps $J(k)_\circ \to J(k)/\phi A(k)$
and $J(k_v)_\circ \to J(k_v)/\phi A(k_v)$ are surjective 
for all places~$v$ of~$k$.
\end{hypothesis}

There are some common situations 
in which Hypothesis~\ref{H:circ} is justified:
\begin{lemma}\label{L:circ}
Suppose that $X$ is a nice curve such that
\begin{enumerate}[\upshape (i)]
\item\label{I:circ k} 
$X$ has a $k$-point 
(or more generally $H^0(k,\Pic X_s) \stackrel{\deg}\to \Z$ 
is surjective), or
\item\label{I:circ k_v} 
$k$ is a global field and $X$ has a $k_v$-point 
(or more generally $H^0(k_v,\Pic X_{k_{v,s}}) \stackrel{\deg}\to \Z$ 
is surjective)
for every $v \in \Omega_k$, or
\item\label{I:circ hyperelliptic}
$k$ is a global field of characteristic not~$2$
and $X$ is the smooth projective model of a hyperelliptic curve $y^2=f(x)$
of {\em even genus}.
\end{enumerate}
Then $J(k)_\circ = J(k)$.
If moreover $k$ is a global field,
then $J(k_v)_\circ = J(k_v)$ for every~$v$,
so Hypothesis~\ref{H:circ} holds.
\end{lemma}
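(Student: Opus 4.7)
Since $X$ is a curve, $\calZ^0(X) = \Div^0 X$, and the natural map $\calZ^0(X) \to J(k)$ factors through $\Pic^0 X$, so $J(k)_\circ = \im(\Pic^0 X \to J(k))$ with $J(k) = (\Pic^0 X_s)^\calG$. The Leray spectral sequence for $\G_m$ on $X$ together with Hilbert~90 yields an exact sequence
\[
  0 \to \Pic X \to (\Pic X_s)^\calG \xrightarrow{\epsilon} \Br k,
\]
so restricting to degree $0$ identifies $J(k)/J(k)_\circ$ with $\im(\epsilon|_{J(k)})$. My task therefore reduces to showing $\epsilon|_{J(k)} = 0$ in each case; the local statements then follow by running the same arguments over each $k_v$.

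For case~(i), if $X$ has a $k$-point~$x$ then the pullback $x^* \colon \Br X \to \Br k$ retracts the structure map, so $\Br k \hookrightarrow \Br X$ is injective and $\im \epsilon \subseteq \ker(\Br k \to \Br X) = 0$. For the generalized hypothesis, given $\xi \in (\Pic X_s)^\calG$ of degree~$1$, I would apply the snake lemma to the morphism of short exact sequences $0 \to \Pic^0 X \to \Pic X \to d_1\Z \to 0$ and $0 \to J(k) \to (\Pic X_s)^\calG \to \Z \to 0$ (where $d_1$ is the index of $X/k$), which embeds $\coker(\Pic^0 X \to J(k))$ into $\coker(\Pic X \to (\Pic X_s)^\calG)$, and then use the degree-$1$ class~$\xi$ to provide the additional structure that kills the image of $\epsilon|_{J(k)}$.

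Case~(ii) is handled by applying case~(i) locally to obtain $\epsilon_v|_{J(k_v)} = 0$ for every~$v$. Functoriality of Hochschild--Serre under base change yields the compatibility $\epsilon(\alpha)_v = \epsilon_v(\alpha_v)$ for $\alpha \in J(k)$, so $\epsilon(\alpha) \in \ker(\Br k \to \bigoplus_v \Br k_v)$; for a global field this kernel vanishes by the Albert--Brauer--Hasse--Noether theorem.

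For case~(iii), the hyperelliptic involution~$\iota$ is defined over~$k$ and acts as~$-1$ on~$J$, so functoriality gives $\epsilon(\alpha) = \epsilon(\iota^*\alpha) = \epsilon(-\alpha) = -\epsilon(\alpha)$, whence $\epsilon|_{J(k)}$ lands in $\Br(k)[2]$. If $\deg f$ is odd, the unique point at infinity is $k$-rational and~(i) applies directly. If $\deg f = 2g+2$, the identity $\divisor(y) = \sum w_i - (g+1) h$ in $\Pic X$ gives $\sum w_i \sim (g+1) h$, and for $g$ even a selection of one Weierstrass point per $\calG$-orbit produces a $\calG$-invariant class of odd degree $g+1$ in $(\Pic X_s)^\calG$; combining with the hyperelliptic class~$h$ of degree~$2$ makes the degree map $(\Pic X_s)^\calG \to \Z$ surjective, reducing us to the generalized form of~(i). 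The main obstacle I anticipate is making the generalized form of~(i) fully rigorous, since a priori $\epsilon|_{J(k)}$ could be nonzero even when the degree map on $(\Pic X_s)^\calG$ is surjective; the precise argument likely exploits further compatibility between the Hochschild--Serre obstruction and the group-scheme structure of the Picard scheme.
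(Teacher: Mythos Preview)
Your framework via the obstruction $\epsilon \colon (\Pic X_s)^\calG \to \Br k$ is exactly right, and your arguments for the $k$-point case of~(i) and for~(ii) are correct; the paper itself only cites \cite{Poonen-Schaefer1997} here. But there are two genuine gaps. The one you flag in generalized~(i) is real: the snake lemma alone does not force $\epsilon|_{J(k)}=0$. The one you do not flag is in~(iii): your ``selection of one Weierstrass point per $\calG$-orbit'' does not produce a $\calG$-invariant class, since distinct Weierstrass points $w,w'$ in the same orbit have $[w']-[w]\in J[2]\setminus\{0\}$, so the sum is $\calG$-invariant only modulo~$J[2]$. There is no evident way to manufacture a $\calG$-invariant odd-degree class in general, so the reduction to generalized~(i) is not available.

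The missing tool, which closes both gaps, is the Severi--Brauer bound already invoked in this paper's proof of Proposition~\ref{P:rational divisor for theta characteristic}: for $\beta\in(\Pic X_s)^\calG$ with $h^0(\beta)>0$, the linear system~$|\beta|$ is a Severi--Brauer variety over~$k$ with Brauer class~$\epsilon(\beta)$, whence $h^0(\beta)\cdot\epsilon(\beta)=0$. For~(iii), take $\beta=\alpha+Nh$ with $N\ge g$: since $h\in\Pic X$ one has $\epsilon(\beta)=\epsilon(\alpha)$, and Riemann--Roch gives $h^0(\beta)=2N+1-g$, which is \emph{odd} for $g$ even; together with your correct observation $2\epsilon(\alpha)=0$ this gives $\epsilon(\alpha)=0$. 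For generalized~(i), take $\beta=\alpha+N\xi$ with $N\ge 2g-1$ so that $(N+1-g)\bigl(\epsilon(\alpha)+N\epsilon(\xi)\bigr)=0$; subtracting the relations for consecutive~$N$ twice yields first $2\epsilon(\xi)=0$ and then $\epsilon(\alpha)=0$.
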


\begin{proof}
For \eqref{I:circ k} and~\eqref{I:circ k_v},
see~\cite{Poonen-Schaefer1997}*{Propositions~3.2~and~3.3}.
For \eqref{I:circ hyperelliptic} (generalized to superelliptic
curves), see~\cite{Poonen-Schaefer1997}*{end~of~\S4}.
\end{proof}

Hypothesis~\ref{H:circ} will let us define
a map from the usual $\phi$-Selmer group into the Selmer group associated
to our descent setup; 
without this map, not much could be said about how the groups relate.
In Section~\ref{S:local considerations} 
we study a local group~$W_v$
that in a sense measures the difference between
the local descent map~$\gamma_v$ and our approximation~$C_v$.
In Section~\ref{S:selmer group comparison}
we define a global group~$\calK$
with a homomorphism 
$\kappa \colon \calK \to \prod_v W_v$ whose kernel and cokernel control the
difference between the actual $\phi$-Selmer group 
and our explicit Selmer group~$\Seltf{\alpha}(J)$.


\subsection{Local considerations}\label{S:local considerations}

We fix a place~$v$ of~$k$. 
Diagram \eqref{E:true main sequence} or~\eqref{E:fake main sequence}  
applied to~$k_v$ yields
\begin{equation}
  \label{E:local descent diagram}
\begin{split}
\xymatrix{
       & & \dfrac{J(k_v)}{\phi A(k_v)} \ar@{^{(}->}[d]_-{\gamma_v} \ar[dr]^{C_v} \\
              E^\vee(k_v) \ar[r]^-{q} & R^\vee(k_v) \ar[r]
                                 & H^1(k_v, A[\phi]) \ar[r]^-{\alpha_v}
                                 & H^1(k_v, E^\vee) \ar[r]
                                 & H^1(k_v, R^\vee) \\
            }
\end{split}
\end{equation}
in which the main row is exact.
The group $\ker \alpha_v$ is hence isomorphic
to $R^\vee(k_v)/q E^\vee(k_v)$,
which is finite, and computable in terms of the actions
of the decomposition group of~$v$ on $E^\vee$ and~$R^\vee$.

Since $C_v = \alpha_v \gamma_v$ in \eqref{E:local descent diagram},
we have an exact sequence
\begin{equation}
  \label{E:kernel-cokernel}
	\ker C_v \to \ker \alpha_v \to \coker \gamma_v \to \coker C_v.
\end{equation}

\begin{definition}
\label{D:W_v}
Let $W_v$ be either of the following naturally isomorphic groups 
obtained from~\eqref{E:kernel-cokernel}:
\begin{enumerate}[\upshape (i)]
\item \label{I:W_v is cokernel}
the cokernel of the first map $\ker C_v \to \ker \alpha_v$ 
\item \label{I:W_v is image} 
the image of the second map $\ker \alpha_v \to \coker \gamma_v$, or
\item \label{I:W_v is kernel} 
the kernel of the third map $\coker \gamma_v \to \coker C_v$.
\end{enumerate}
\end{definition}

\begin{lemma}
\label{L:computing W_v}
For any $v \in \Omega_k$, 
the group~$W_v$ is finite, and
$\#W_v = \dfrac{\# \coker q \cdot \# \im C_v}{\# \im \gamma_v}$.
\end{lemma}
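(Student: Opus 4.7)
The plan is to use Definition~\ref{D:W_v}\eqref{I:W_v is image}, which realizes $W_v$ as the image of $\ker\alpha_v \to \coker\gamma_v$, together with injectivity of~$\gamma_v$ (as indicated by the hook arrow in~\eqref{E:local descent diagram}, coming from the short exact sequence $0 \to A[\phi] \to A \to J \to 0$).

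First, I would identify $\ker\alpha_v \isom R^\vee(k_v)/q E^\vee(k_v) = \coker q$ directly from the exact row in~\eqref{E:local descent diagram}; this is finite since $E^\vee$ and $R^\vee$ are finite $\calG_{k_v}$-modules, and it supplies the factor $\#\coker q$ in the numerator. Next, using injectivity of $\gamma_v$ to identify $\im\gamma_v$ with $J(k_v)/\phi A(k_v)$, the map $\ker\alpha_v \to \coker\gamma_v$ is just the restriction of the quotient map $H^1(k_v,A[\phi]) \surjects H^1(k_v,A[\phi])/\im\gamma_v$, whence
\[
W_v \;\isom\; \ker\alpha_v \big/ (\ker\alpha_v \cap \im\gamma_v).
\]
In particular, $W_v$ is finite. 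To identify the denominator, I restrict $\alpha_v$ to~$\im\gamma_v$: this gives a surjection onto $\im(\alpha_v\circ\gamma_v) = \im C_v$ with kernel $\ker\alpha_v \cap \im\gamma_v$, so
\[
\#(\ker\alpha_v \cap \im\gamma_v) \;=\; \frac{\#\im\gamma_v}{\#\im C_v}.
\]
Substituting into $\#W_v = \#\ker\alpha_v \big/ \#(\ker\alpha_v \cap \im\gamma_v)$ yields the claimed formula. Finiteness of $\im\gamma_v$ and $\im C_v$ follows from finiteness of $J(k_v)/\phi A(k_v)$, which is classical for abelian varieties over local fields.

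The argument is pure bookkeeping, and I do not foresee a genuine obstacle. The only delicate point is choosing the right characterization of~$W_v$: among the three naturally isomorphic presentations in Definition~\ref{D:W_v}, formulation~\eqref{I:W_v is image} is cleanest because it presents $W_v$ directly as a subquotient of the finite group~$\ker\alpha_v$, bypassing the a~priori infinite cokernels $\coker\gamma_v$ and $\coker C_v$.
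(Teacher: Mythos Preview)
Your proposal is correct and takes essentially the same approach as the paper: both use Definition~\ref{D:W_v}\eqref{I:W_v is image} to write $W_v \isom \ker\alpha_v/(\ker\alpha_v \cap \im\gamma_v)$, identify $\ker\alpha_v \isom \coker q$, and compute $\#(\ker\alpha_v \cap \im\gamma_v) = \#\im\gamma_v/\#\im C_v$ by restricting~$\alpha_v$ to~$\im\gamma_v$. The only difference is cosmetic: the paper is slightly more explicit about why $J(k_v)/\phi A(k_v)$ is finite (separability of~$\phi$ makes $\phi A(k_v)$ open in the compact group~$J(k_v)$), whereas you cite this as classical.
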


\begin{proof}
The group $\ker \alpha_v \isom \coker q$ is finite.
Separability of~$\phi$ implies that $\phi A(k_v)$ is an open subgroup
of the compact group~$J(k_v)$,
so the group $\im \gamma_v \isom J(k_v)/\phi A(k_v)$ is finite too.
By \eqref{E:local descent diagram} 
and Definition~\ref{D:W_v}\eqref{I:W_v is image}, 
we have exact sequences
\begin{align*}
	0 \To \im \gamma_v \intersect \ker \alpha_v 
		&\To \ker \alpha_v \To W_v \To 0 \\
	0 \To \im \gamma_v \intersect \ker \alpha_v 
		&\To \im \gamma_v \To \im C_v \To 0,
\end{align*}
which let us compute $\# W_v$.
\end{proof}

The following lemma will let us understand $W_v$ for most~$v$.

\begin{lemma} \label{L:Kvtrivial}
  Let $v$ be a non-archimedean place of~$k$ such that
  \begin{enumerate}[\upshape (i)]
    \item\label{I:residue char} 
	the residue characteristic of~$v$ does not divide~$n$, and
    \item\label{I:Tamagawa}
	the Tamagawa numbers $c_v(J)$ and $c_v(A)$ are coprime to~$n$.
  \end{enumerate}
Then 
\begin{enumerate}[\upshape (a)]
\item\label{I:im C_v}
$\im(C_v) \subseteq H^1(k_v,E^\vee)_\unr$, and
\item\label{I:W_v formula} 
$W_v = \im \left( \dfrac{R^\vee(k_v)}{q E^\vee(k_v)} \to \dfrac{R^\vee(k_{v,u})}{q E^\vee(k_{v,u})} \right)$.
\end{enumerate}
\end{lemma}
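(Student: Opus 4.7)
The plan is to deduce both parts from Lemma~\ref{L:Tamagawa-unramified} by a short diagram chase using the long exact cohomology sequence of~\eqref{E:alpha and q}.

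For~\eqref{I:im C_v}: The hypotheses are precisely those needed to apply Lemma~\ref{L:Tamagawa-unramified}, which yields $\gamma_v(J(k_v)) = H^1(k_v, A[\phi])_\unr$. From~\eqref{E:local descent diagram}, $C_v = \alpha_v \circ \gamma_v$, so $\im(C_v) = \alpha_v(H^1(k_v, A[\phi])_\unr)$. The unramified subgroup is functorial in the coefficient module, since restriction to $k_{v,u}$ commutes with the map on cohomology induced by $\alpha \colon A[\phi] \to E^\vee$; hence this image lies in $H^1(k_v, E^\vee)_\unr$.

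For~\eqref{I:W_v formula}: Use the description in Definition~\ref{D:W_v}\eqref{I:W_v is image} of $W_v$ as the image of $\ker \alpha_v \to \coker \gamma_v$. Taking cohomology of~\eqref{E:alpha and q} identifies $\ker \alpha_v$ with $R^\vee(k_v)/q E^\vee(k_v)$, and the equality from Lemma~\ref{L:Tamagawa-unramified} identifies $\coker \gamma_v$ with $H^1(k_v, A[\phi])/H^1(k_v, A[\phi])_\unr$, which via inflation-restriction embeds into $H^1(k_{v,u}, A[\phi])$. Now form the commutative diagram consisting of the long exact cohomology sequences of~\eqref{E:alpha and q} over $k_v$ and over $k_{v,u}$, with vertical arrows given by restriction. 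The composition
\[
R^\vee(k_v)/q E^\vee(k_v) \To \coker \gamma_v \hookrightarrow H^1(k_{v,u}, A[\phi])
\]
factors through $R^\vee(k_{v,u})/q E^\vee(k_{v,u})$, and the map $R^\vee(k_{v,u})/q E^\vee(k_{v,u}) \hookrightarrow H^1(k_{v,u}, A[\phi])$ coming from the bottom long exact sequence is injective. Therefore $W_v$ is identified with the image of $R^\vee(k_v)/q E^\vee(k_v) \to R^\vee(k_{v,u})/q E^\vee(k_{v,u})$, as claimed.

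There is no substantial obstacle here; the proof is a diagram chase once the correct identifications are in place. The only points requiring care are the functoriality of the unramified subgroup under $\alpha_v$ in~\eqref{I:im C_v} and the commutativity of the restriction diagram in~\eqref{I:W_v formula}, both of which are straightforward from the naturality of the inflation-restriction sequence.
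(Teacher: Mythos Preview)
Your proof is correct and follows essentially the same approach as the paper's. The paper organizes the argument around an explicit commutative diagram with exact rows (the long exact sequences of~\eqref{E:alpha and q} over $k_v$ and $k_{v,u}$, connected by restriction) and an exact central column coming from Lemma~\ref{L:Tamagawa-unramified}; your prose description encodes precisely the same diagram chase, with the same identifications $\ker\alpha_v \cong R^\vee(k_v)/qE^\vee(k_v)$ and $\coker\gamma_v \hookrightarrow H^1(k_{v,u},A[\phi])$.
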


\begin{proof}
The commutative diagram
\begin{equation}
\label{E:R/qE}
\begin{split}
\xymatrix{
 & & \dfrac{J(k_v)}{\phi A(k_v)}\ar[d]_-{\gamma_v}\ar[dr]^-{C_v}\\
0 \ar[r] & \dfrac{R^\vee(k_v)}{q E^\vee(k_v)} \ar[r] \ar[d] & H^1(k_v,A[\phi])\ar[r]^{\alpha_v}\ar[d] & H^1(k_v,E^\vee)\ar[d]\\
0\ar[r]& \dfrac{R^\vee(k_{v,u})}{q E^\vee(k_{v,u})}\ar[r] & H^1(k_{v,u},A[\phi])\ar[r] & H^1(k_{v,u},E^\vee)\\
}
\end{split}
\end{equation}
has exact rows (cf.~\eqref{E:local descent diagram}).
The central column is exact too,
by \eqref{I:residue char}, \eqref{I:Tamagawa}, 
and Lemma~\ref{L:Tamagawa-unramified}.
\begin{enumerate}[\upshape (a)]
\item 
In~\eqref{E:R/qE}, $J(k_v)/\phi A(k_v)$ maps to~$0$ in $H^1(k_{v,u},E^\vee)$,
so $\im(C_v)\subset H^1(k_v,E^\vee)_\unr$.
\item 
By Definition~\ref{D:W_v}\eqref{I:W_v is image},
\begin{align*}
	W_v &= \im\left(\ker \alpha_v \to \coker \gamma_v\right) \\
	&= \im \left( \dfrac{R^\vee(k_v)}{q E^\vee(k_v)} \to H^1(k_{v,u},A[\phi]) \right) \\
	&= \im \left( \dfrac{R^\vee(k_v)}{q E^\vee(k_v)} \to \dfrac{R^\vee(k_{v,u})}{q E^\vee(k_{v,u})} \right).\qedhere
\end{align*}
\end{enumerate}
\end{proof}

\begin{corollary}
\label{C:W_v=0 usually}
For all but finitely many~$v$, we have $W_v=0$.
\end{corollary}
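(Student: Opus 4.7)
The plan is to lean on Lemma~\ref{L:Kvtrivial}(b) to handle all but finitely many places, and then verify that under a mild additional genericity assumption on~$v$ the formula given there yields~$W_v = 0$.

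First I would identify the finitely many places to exclude: (i)~those whose residue characteristic divides~$n$, (ii)~those where $c_v(J)$ or $c_v(A)$ fails to be coprime to~$n$, and (iii)~those where the finite \'etale $k$-scheme~$\fes$ is ramified (equivalently, at least one of the finite $\calG$-modules $E$, $R$, $E^\vee$, $R^\vee$ is ramified). Each of these sets is finite: (i)~is obvious, (ii)~because $J$ and~$A$ have good reduction outside a finite set and good reduction forces trivial Tamagawa number, and (iii)~because a finite \'etale $k$-scheme spreads out to an \'etale $\calO_\calS$-scheme for some finite~$\calS$. Note that unramifiedness of $R \subseteq E = (\Z/n\Z)^\fes$ at such $v$ then transfers to $R^\vee$ and $E^\vee$ by Cartier duality, since $\mu_n$ is itself unramified at places with residue characteristic coprime to~$n$.

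For any remaining~$v$, Lemma~\ref{L:Kvtrivial}(b) gives
\[
        W_v = \im\left(\frac{R^\vee(k_v)}{qE^\vee(k_v)} \to \frac{R^\vee(k_{v,u})}{qE^\vee(k_{v,u})}\right),
\]
so it suffices to show that the target vanishes, i.e.\ that $q\colon E^\vee(k_{v,u})\to R^\vee(k_{v,u})$ is surjective. This is the only substantive step. The sequence~\eqref{E:alpha and q} is a short exact sequence of finite \'etale commutative $k$-group schemes, so $q$ is automatically surjective on $k_{v,s}$-points. Since $E^\vee$ and $R^\vee$ are unramified at~$v$, the inertia group $\Gal(k_{v,s}/k_{v,u})$ acts trivially on both $E^\vee(k_{v,s})$ and $R^\vee(k_{v,s})$, giving $E^\vee(k_{v,u}) = E^\vee(k_{v,s})$ and $R^\vee(k_{v,u}) = R^\vee(k_{v,s})$. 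Surjectivity of $q$ on $k_{v,s}$-points therefore implies surjectivity on $k_{v,u}$-points, and we conclude $W_v = 0$.

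I do not anticipate any real obstacle: the potential obstruction would live in $H^1(\Gal(k_{v,s}/k_{v,u}), A[\phi])$, but we never need to control that cohomology group, since we only require surjectivity on $k_{v,s}$-points, which is automatic for the epimorphism of \'etale group schemes $E^\vee \twoheadrightarrow R^\vee$. The care lies entirely in bookkeeping the three finite sets of excluded places and in noting that unramifiedness of~$\fes$ propagates to~$R^\vee$.
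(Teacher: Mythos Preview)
Your argument is correct and follows essentially the same route as the paper's proof: exclude the finitely many places violating the hypotheses of Lemma~\ref{L:Kvtrivial} together with those where the relevant modules ramify, then use the identification $E^\vee(k_{v,u}) = E^\vee(k_{v,s}) \twoheadrightarrow R^\vee(k_{v,s}) = R^\vee(k_{v,u})$ to kill the target of the map in Lemma~\ref{L:Kvtrivial}(b). The only cosmetic difference is that the paper phrases the ramification condition directly in terms of~$E^\vee$ rather than~$\fes$, and your extra remark about Cartier duality propagating unramifiedness is a welcome but inessential elaboration.
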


\begin{proof}
For all but finitely many~$v$,
the residue characteristic of~$v$ does not divide~$n$,
and $J$ and~$A$ have good reduction at~$v$, so $c_v(J)=c_v(A)=1$.
If we also discard the finitely many~$v$ at which $E^\vee$ is ramified,
then for the remaining~$v$ the surjection $E^\vee \to R^\vee$
of finite \'etale group schemes 
induces a surjection 
$E^\vee(k_{v,u}) = E^\vee(k_{v,s}) 
\stackrel{q}\to R^\vee(k_{v,s}) = R^\vee(k_{v,u})$,
so Lemma~\ref{L:Kvtrivial}\eqref{I:W_v formula}
implies that $W_v=0$.
\end{proof}

\begin{corollary}\label{C:ProdKv-finite}
The product $\prod_{v \in \Omega_k} W_v$ is a finite group.
\end{corollary}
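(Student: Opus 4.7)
The plan is to combine the two preceding results---Lemma~\ref{L:computing W_v} and Corollary~\ref{C:W_v=0 usually}---since together they immediately reduce the infinite product to a finite one whose factors are all finite groups.

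First, I would note that for each place $v \in \Omega_k$, Lemma~\ref{L:computing W_v} shows that $W_v$ is finite, since it gives the explicit formula
\[
	\#W_v = \frac{\#\coker q \cdot \#\im C_v}{\#\im \gamma_v},
\]
and each of $\coker q$, $\im C_v$, $\im \gamma_v$ has already been shown to be finite in that lemma's proof (via $\coker q \isom \ker \alpha_v$ being finite and $\im \gamma_v \isom J(k_v)/\phi A(k_v)$ being finite by separability of~$\phi$).

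Next, Corollary~\ref{C:W_v=0 usually} shows that $W_v = 0$ for all but finitely many $v \in \Omega_k$: this excludes only places of residue characteristic dividing~$n$, places where $J$ or~$A$ has bad reduction, and places where $E^\vee$ is ramified, which together form a finite set.

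Therefore the product $\prod_{v \in \Omega_k} W_v$ equals the product over the (finite) exceptional set of the corresponding (finite) groups~$W_v$, and hence is finite. There is essentially no obstacle here; the statement is a direct consequence of the two immediately preceding results, recorded separately for convenient reference in Section~\ref{S:selmer group comparison}.
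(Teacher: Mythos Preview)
Your proof is correct and takes essentially the same approach as the paper, which simply says to combine Lemma~\ref{L:computing W_v} and Corollary~\ref{C:W_v=0 usually}. You have merely spelled out in more detail why these two results together yield the conclusion.
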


\begin{proof}
Combine Lemma~\ref{L:computing W_v} and Corollary~\ref{C:W_v=0 usually}.
\end{proof}

\begin{lemma} \label{L:KvInfinite}
If $v$ is complex, or if $v$ is real and $n$ is odd, then $W_v = 0$.
\end{lemma}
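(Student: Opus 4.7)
The plan is to show that $H^1(k_v, A[\phi]) = 0$ in both cases, because then $\coker\gamma_v \subseteq H^1(k_v, A[\phi])$ is zero, and hence $W_v = 0$ via the description $W_v = \im(\ker\alpha_v \to \coker\gamma_v)$ from Definition~\ref{D:W_v}\eqref{I:W_v is image}.

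First I would observe that the finite $\calG$-module $A[\phi]$ is annihilated by~$n$. By \cite{MumfordAV1970}*{\S III.15,~Theorem~1} (cited in Section~\ref{S:descent isogeny}), there is a Cartier duality isomorphism $A[\phi]^\vee \isom \widehat{J}[\widehat{\phi}] = \alpha^\vee(E)$. Since $E$ is a submodule of $(\Z/n\Z)^\fes$, it is killed by~$n$; hence so is its quotient $\widehat{J}[\widehat{\phi}]$, and therefore also its Cartier dual $A[\phi]$ (Cartier duality preserves exponent, as $\Char k \nmid n$).

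If $v$ is complex, then $\calG_{k_v}$ is trivial, so $H^1(k_v, M) = 0$ for every $\calG_{k_v}$-module~$M$; in particular $H^1(k_v, A[\phi]) = 0$. If $v$ is real and $n$ is odd, then $\calG_{k_v}$ has order~$2$, so $H^1(k_v, A[\phi])$ is killed by~$2$; at the same time, since $nA[\phi]=0$, the group $H^1(k_v, A[\phi])$ is killed by~$n$. The odd integer $n$ and~$2$ are coprime, so $H^1(k_v, A[\phi]) = 0$ in this case as well.

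In either case $\coker\gamma_v = H^1(k_v, A[\phi])/\im\gamma_v = 0$, which forces $W_v = 0$. There is no real obstacle here; the argument is a direct consequence of Cartier duality (to bound the exponent of $A[\phi]$) together with the standard vanishing of Galois cohomology of a module of odd order for a group of order~$2$.
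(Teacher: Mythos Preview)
Your proof is correct and follows essentially the same approach as the paper: both show $H^1(k_v,A[\phi])=0$ by using that $A[\phi]$ is killed by~$n$ while $\#\calG_{k_v}$ is a unit modulo~$n$, and then conclude $W_v=0$ via Definition~\ref{D:W_v}. The paper's version is slightly terser (it phrases both cases at once as ``$\Gal(k_{v,s}/k_v)$ is annihilated by a unit modulo~$n$'' and also notes $H^1(k_v,E^\vee)=0$, though this is not needed), and it takes $nA[\phi]=0$ for granted rather than deriving it from Cartier duality as you do---but the substance is the same.
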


\begin{proof}
If $v$ is complex, then $\Gal(k_{v,s}/k_v)=\Gal(\C/\C)=\{1\}$.
If $v$ is real then $\Gal(k_{v,s}/k_v)=\Gal(\C/\R)=\Z/2\Z$. 
Our assumptions assure that in either case, 
$\Gal(k_{v,s}/k_v)$ is annihilated by a unit modulo~$n$,
so $H^1(k_{v},A[\phi])=H^1(k_{v},E^\vee)=0$. 
It follows that $W_v=0$.
\end{proof}


\subsection{Finite description of true and fake Selmer groups}
\label{S:finite description}

Let $\calS$ be as in Proposition~\ref{P:Sel is unramified}.
Given another finite set of places~$\calT$, define
\[
	S_\calT \colonequals  \left\{\delta\in \widetilde{L(n,\calS)} : 
	\delta\in \im(C_v)\textup{ for all }v\in\calT \right\}.
\]

\begin{theorem}
\label{T:finite description of Selmer}
Let $\calS$ and $\calT$ be as above.
\begin{enumerate}[\upshape (a)]
\item \label{I:upper bound for fake Sel}
We have $\Seltf{\alpha}(J) \subseteq S_\calT$.
\item \label{I:good T}
For each $\delta \in \widetilde{L(n,\calS)}$,
fix a finite Galois extension $k_\delta/k$ splitting~$R^\vee$
such that $\delta$ maps to~$0$ in~$H^1(k_\delta,R^\vee)$.
If $\calT$ contains all places in~$\calS$, all places at which $E^\vee$ ramifies,
and enough places so that for each $\delta \in \widetilde{L(n,\calS)}$
the Frobenius elements~$\Frob_v$ for~$v \in \calT$ unramified in~$k_\delta/k$
cover all conjugacy classes in~$\Gal(k_\delta/k)$,
then $\Seltf{\alpha}(J) = S_\calT$.
\end{enumerate}
\end{theorem}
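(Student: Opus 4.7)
Part~\eqref{I:upper bound for fake Sel} is the warm-up. Given $\delta \in \Seltf{\alpha}(J)$, Lemma~\ref{L:Kvtrivial}\eqref{I:im C_v} forces $\delta_v$ to be unramified for every $v\notin\calS$, so Proposition~\ref{P:unramified classes} places $\delta$ in $\widetilde{L(n,\calS)}$; combined with $\delta_v\in\im(C_v)$ for $v\in\calT$, this gives $\delta\in S_\calT$.

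For the substantive direction in~\eqref{I:good T}, I fix $\delta\in S_\calT$ and a place $v\notin\calT$ and aim to establish $\delta_v\in\im(C_v)$. Since $v\notin\calS$ and $E^\vee$ (hence its quotient $R^\vee$) is unramified at~$v$, the morphism $E^\vee(k_{v,u})\to R^\vee(k_{v,u})$ is surjective, making the cokernel $R^\vee(k_{v,u})/qE^\vee(k_{v,u})$ in the bottom row of~\eqref{E:R/qE} vanish. Consequently $H^1(k_{v,u},A[\phi])\to H^1(k_{v,u},E^\vee)$ is injective, so any preimage under $\alpha_v$ of the unramified class~$\delta_v$ is automatically unramified and hence lies in $\im(\gamma_v)=H^1(k_v,A[\phi])_\unr$. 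So the desired $\delta_v\in\im(C_v)$ reduces to the single equation $q(\delta_v)=0$ in $H^1(k_v,R^\vee)$.

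Set $\xi\colonequals q(\delta)\in H^1(k,R^\vee)$; the defining property of~$k_\delta$ makes $\xi$ an inflation of some $\xi'\in H^1(G,R^\vee)$ with $G=\Gal(k_\delta/k)$. For each $v\in\calT$, $\xi_v=q(\delta_v)=0$ since $\delta_v\in\im(C_v)\subseteq\ker q$, and for $v\in\calT$ unramified in~$k_\delta$ the inflation $H^1(D_v,R^\vee)\hookrightarrow H^1(k_v,R^\vee)$ is injective (because $R^\vee$ splits over the completion of~$k_\delta$ above~$v$), so $\xi'|_{D_v}=0$. The Chebotarev hypothesis then delivers $\xi'|_H=0$ for every cyclic subgroup $H\le G$, since every such $H$ equals some such $D_v$ up to conjugation.

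Finally, I close at an arbitrary $v\notin\calT$. Unramifiedness of $R^\vee$ at~$v$ again gives $H^1(D_v,R^\vee)\hookrightarrow H^1(k_v,R^\vee)$, so it suffices to prove $\xi'|_{D_v}=0$. Unramifiedness of $\xi_v$ writes $\xi'|_{D_v}$ as the inflation of some $\eta\in H^1(D_v/I_v,R^\vee)$, with $D_v/I_v$ cyclic. Picking $\sigma\in D_v$ projecting to a generator of this quotient, the surjection $\langle\sigma\rangle\twoheadrightarrow D_v/I_v$ induces an injection $H^1(D_v/I_v,R^\vee)\hookrightarrow H^1(\langle\sigma\rangle,R^\vee)$ by inflation-restriction (inertia acts trivially on~$R^\vee$); the preceding paragraph's vanishing $\xi'|_{\langle\sigma\rangle}=0$ then forces $\eta=0$ and thus $\xi_v=0$. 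I expect the main obstacle to be precisely this final step — extracting vanishing on a possibly non-cyclic decomposition group $D_v$ (when $v\notin\calT$ is ramified in~$k_\delta/k$) from vanishing on all cyclic subgroups of~$G$ — and the crucial lever is that $R^\vee$ is still unramified at every such~$v$.
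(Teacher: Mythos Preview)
Your proof is correct and follows the paper's route: part~(a) is identical, and for part~(b) both arguments reduce $\delta_v\in\im(C_v)$ (for $v\notin\calT$) to the vanishing of $q(\delta_v)$ in $H^1(k_v,R^\vee)$, then use the Chebotarev hypothesis to propagate this vanishing from places in~$\calT$.

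The one genuine difference is in the final step. The paper simply picks $w\in\calT$ with $\Frob_w$ conjugate to $\Frob_v$ and concludes that $D_v$ and $D_w$ are conjugate; this tacitly assumes $v$ is unramified in $k_\delta/k$, which the theorem statement does not guarantee (it allows an arbitrary splitting extension~$k_\delta$). Your last paragraph handles this more carefully: using that both $R^\vee$ and $\xi_v$ are unramified at~$v$, you descend $\xi'|_{D_v}$ to the cyclic quotient $D_v/I_v$ and then compare with a cyclic lift $\langle\sigma\rangle\subseteq D_v$, on which $\xi'$ is already known to vanish. This closes the case where $k_\delta$ ramifies at~$v$, so your argument is slightly more robust than the paper's as written (the paper's version is fine if one chooses $k_\delta$ as in Remark~\ref{R:T is computable}, where the ramification of~$k_\delta$ is confined to~$\calS$ and the ramification of~$E^\vee$).
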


\begin{proof}\hfill
  \begin{enumerate}[\upshape (a)]
  \item For $v \notin \calS$, 
Lemma~\ref{L:Kvtrivial}\eqref{I:im C_v} proves that $\Seltf{\alpha}(J)$
consists of elements unramified at~$v$.
Proposition~\ref{P:unramified classes} now shows that
$\Seltf{\alpha}(J) \subseteq \widetilde{L(n,\calS)}$.
The condition $\delta \in \im C_v$ in the definition of~$S_\calT$
is also in the definition of~$\Seltf{\alpha}(J)$.
\item 
Suppose that $\delta \in S_\calT$.
Given $v \notin \calT$, we must show that $\delta \in \im C_v$.
Choose $w \in \calT$ unramified in~$k_\delta/k$ such that the conjugacy classes
$\Frob_w$ and~$\Frob_v$ in~$\Gal(k_\delta/k)$ match.
By definition of~$S_\calT$, we have $\delta_w \in \im C_w$.
Then~\eqref{E:local descent diagram} for~$k_w$
shows that $\delta$ maps to~$0$ in~$H^1(k_w,R^\vee)$.
In other words, $\delta \in H^1(\Gal(k_\delta/k),R^\vee)$ restricts to~$0$
at~$w$.
But the decomposition groups of $v$ and~$w$ in $\Gal(k_\delta/k)$
are conjugate,
so $\delta$ maps to~$0$ in~$H^1(k_v,R^\vee)$ too.
Thus the element $\delta_v \in H^1(k_v,E^\vee)$
is the image of some $\xi_v \in H^1(k_v,A[\phi])$.
Since $v \notin \calS$, the element~$\delta_v$ is unramified;
in particular, $\xi_v$ maps to~$0$ in~$H^1(k_{v,u},E^\vee)$.
By hypothesis, $E^\vee$ is unramified at~$v$,
so $E^\vee(k_{v,u}) \stackrel{q}\to R^\vee(k_{v,u})$ is surjective,
so the bottom row of~\eqref{E:R/qE}
shows that $H^1(k_{v,u},A[\phi]) \to H^1(k_{v,u},E^\vee)$ is injective.
Thus $\xi_v$ maps to~$0$ already in~$H^1(k_{v,u},A[\phi])$;
i.e., $\xi_v \in H^1(k_v,A[\phi])_\unr$.
By Lemma~\ref{L:Tamagawa-unramified},
$\xi_v$ is in the image of~$J(k_v)$ under~$\gamma_v$.
Thus $\delta_v$ is in the image of~$J(k_v)$ under~$C_v$.\qedhere
  \end{enumerate}
\end{proof}

\begin{remark}
\label{R:DSS}
The idea to use an enlarged set $\calT$ including places whose
Frobenius elements cover the conjugacy classes was first used
in \cite{Djabri-Schaefer-Smart2000}*{Corollary~12}.
\end{remark}

\begin{remark}
\label{R:T is computable}
Here we show how to compute a finite set~$\calT$ as in
Theorem~\ref{T:finite description of Selmer}\eqref{I:good T}.
A finite splitting field $k_\fes$ of~$\fes$ will split~$R^\vee$.
Given $\delta$, represented by $\ell = (\ell_i) \in L^\times$, say,
adjoining all $n^{\tH}$~roots of the~$\ell_i$ to~$k_\fes$
yields a candidate for~$k_\delta$.
The Chebotarev density theorem guarantees that we can find enough~$v$
unramified in~$k_\delta/k$ to cover the conjugacy classes.
\end{remark}

\begin{remark}
\label{R:smaller T}
In practice, we may choose a smaller~$\calT$,
one that does not satisfy the hypotheses in 
Theorem~\ref{T:finite description of Selmer}\eqref{I:good T}.
Then we have only inclusions
\[
	C(J(k)) \subseteq \Seltf{\alpha}(J) \subseteq S_\calT.
\]
But if we find enough points in~$J(k)$ to show that $C(J(k))=S_\calT$,
then we obtain $\Seltf{\alpha}(J) = S_\calT$ nevertheless.
Often $\calT=\calS$ suffices.
\end{remark}


\subsection{Comparison of the Selmer group associated to an isogeny
            with the Selmer group associated to a descent setup}
\label{S:selmer group comparison}

In the following, we will study the relation between the Selmer group
associated to a true or fake descent setup (which is an object we can
hope to compute) and the Selmer group associated to the isogeny~$\phi$
determined by the descent setup (which is the object we would like to
compute).
Theorem~\ref{T:Selseq} will show that $\alpha$~induces a
homomorphism from the latter to the former.

\begin{lemma}
\label{L:Seltf}
We have an exact sequence 
\[
	0 \to \Seltf{\alpha}(J) \to H^1(k,E^\vee) \to \prod_v \frac{H^1(k_v, E^\vee)}{\im C_v}.
\]
\end{lemma}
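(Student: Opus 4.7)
The plan is to unwind the definitions, which handles the true case directly, and then invoke the global Hasse principle for the Brauer group to handle the fake case.

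First, I would identify the map $\Seltf{\alpha}(J) \to H^1(k, E^\vee)$ as the composition of the tautological inclusion $\Seltf{\alpha}(J) \subseteq \widetilde{L^\times/L^{\times n} k^\times}$ from Definition~\ref{D:true and fake Sel} with the embedding $\widetilde{L^\times/L^{\times n} k^\times} \hookrightarrow H^1(k, E^\vee)$. In the true case, where $E^\vee = \mu_n^\fes$, this embedding is the isomorphism $L^\times/L^{\times n} \isom H^1(\mu_n^\fes)$ noted at the end of Section~\ref{S:twisted powers}. In the fake case, where $E^\vee = \mu_n^\fes/\mu_n$, the embedding is the injection whose image is the kernel of the Brauer map $H^1(\mu_n^\fes/\mu_n) \to \Br k$, extracted from the exact sequence~\eqref{E:Br}. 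In either case, injectivity of the arrow out of $\Seltf{\alpha}(J)$ is immediate.

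The second map sends $\xi \in H^1(k, E^\vee)$ to the tuple of localizations $\xi_v$ modulo $\im C_v$. That the composition vanishes on $\Seltf{\alpha}(J)$ is simply the local condition $\delta_v \in \im C_v$ built into Definition~\ref{D:true and fake Sel}. So the only real content is to verify that an element $\xi \in H^1(k, E^\vee)$ all of whose localizations land in $\im C_v$ actually arises from $\Seltf{\alpha}(J)$. In the true case this is automatic, since the embedding of $L^\times/L^{\times n}$ into $H^1(k, E^\vee)$ is an isomorphism and the local conditions then directly place $\xi$ in $\Seltrue{\alpha}(J)$.

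In the fake case I must additionally argue that $\xi$ lies in the subgroup $L^\times/L^{\times n} k^\times \subseteq H^1(k, E^\vee)$, i.e., that its image in $\Br k$ vanishes. By Section~\ref{S:unramified in target of descent} the subgroup $\im C_v$ is contained in $L_v^\times/L_v^{\times n} k_v^\times$, which the $v$-adic analogue of~\eqref{E:Br} identifies with the kernel of $H^1(k_v, E^\vee) \to \Br k_v$. Hence the Brauer class of $\xi$ localizes to zero at every place, and by the Hasse--Brauer--Noether theorem it vanishes in $\Br k$. Consequently $\xi$ lies in $L^\times/L^{\times n} k^\times$, and together with the local hypothesis this shows $\xi \in \Selfake{\alpha}(J)$. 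The only non-formal step, and hence the main (and sole) obstacle, is this appeal to the global Hasse principle for the Brauer group; everything else is bookkeeping from the definitions and~\eqref{E:Br}.
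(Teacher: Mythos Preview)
Your proof is correct and follows essentially the same approach as the paper: both treat the true case as immediate from the definition, and in the fake case both use the exact sequence~\eqref{E:Br} together with the local-global principle for the Brauer group to show that any $\xi \in H^1(k,E^\vee)$ with $\xi_v \in \im C_v$ for all $v$ must already lie in $L^\times/L^{\times n}k^\times$. The only minor point is that the containment $\im C_v \subseteq L_v^\times/L_v^{\times n}k_v^\times$ is more directly visible from diagram~\eqref{E:fake main sequence} than from Section~\ref{S:unramified in target of descent}.
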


\begin{proof}
In the true case, $L^\times/L^{\times n} = H^1(k,E^\vee)$, so this is
just the definition of~$\Seltrue{\alpha}(J)$.
In the fake case, \eqref{E:Br} for~$k$ and its completions yields a
commutative diagram 
\[
\xymatrix{
0 \ar[r] & \dfrac{L^\times}{L^{\times n} k^\times} \ar[r] \ar[d] & H^1(k,E^\vee) \ar[r] \ar[d] & \Br k \ar[d] \\
0 \ar[r] & \displaystyle \prod_v \dfrac{L_v^\times}{L_v^{\times n} k_v^\times} \ar[r] & \rule[-1em]{0pt}{2.5em}\displaystyle \prod_v H^1(k_v,E^\vee) \ar[r] &\rule[-1em]{0pt}{2.5em}\displaystyle \prod_v \Br k_v \\
}
\]
with exact rows.
An element $\delta \in H^1(k,E^\vee)$ mapping into 
$\im C_v \subseteq L_v^\times/L_v^{\times n} k_v^\times$ for all~$v$
maps to~$0$ in~$\Br k_v$ for all~$v$, so by the local-global property
of the Brauer group, it also maps to~$0$ in~$\Br k$,
so $\delta \in L^{\times}/L^{\times n}k^\times$.
Thus
\[
	\ker\left( H^1(k,E^\vee) \to \prod_v \frac{H^1(k_v, E^\vee)}{\im C_v} \right)
	= \ker\left( \frac{L^\times}{L^{\times n} k^\times} \to \prod_v \frac{H^1(k_v, E^\vee)}{\im C_v} \right)
	= \Selfake{\alpha}(J).\qedhere
\]
\end{proof}

Let $\calK$ be the kernel of the global map 
$\alpha \colon H^1(k, A[\phi]) \to H^1(k, E^\vee)$,
which by~\eqref{E:fake main sequence} 
equals the (computable) cokernel of $q \colon E^\vee(k) \to R^\vee(k)$.
Let $\kappa$ be the composition
\begin{equation}
\label{E:kappa}
	\calK = \ker \alpha \to \prod_v \ker \alpha_v \surjects
	\prod_v \im(\ker \alpha_v \to \coker \gamma_v) = \prod_v W_v.
\end{equation}
The following proposition gives a homomorphism 
$\Sel{\phi}(J) \to \Seltf{\alpha}(J)$
and provides information on its failure to be an isomorphism.

\begin{theorem} \label{T:Selseq}
  We have an exact sequence
  \[ 0 \To \ker \kappa \To \Sel{\phi}(J)
     \stackrel{\alpha}{\To} \Seltf{\alpha}(J) \intersect \alpha\bigl(H^1(k, A[\phi])\bigr)
     \To \coker \kappa.
  \]
\end{theorem}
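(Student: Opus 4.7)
The plan is to produce the four-term sequence directly from a diagram chase using the relation $C_v = \alpha_v \gamma_v$ and the three equivalent descriptions of $W_v$ in Definition~\ref{D:W_v}.

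First I would check that $\alpha$ carries $\Sel{\phi}(J)$ into $\Seltf{\alpha}(J) \cap \alpha(H^1(k,A[\phi]))$: if $\delta \in \Sel{\phi}(J)$ and $\delta_v = \gamma_v(P_v)$ for some $P_v \in J(k_v)/\phi A(k_v)$, then $\alpha(\delta)_v = \alpha_v \gamma_v(P_v) = C_v(P_v) \in \im C_v$, so indeed $\alpha(\delta) \in \Seltf{\alpha}(J)$. This gives the map in the middle of the desired sequence. The kernel of $\alpha|_{\Sel{\phi}(J)}$ consists of $\delta \in \calK = \ker \alpha$ with $\delta_v \in \im \gamma_v$ for all~$v$. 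Using the description of~$W_v$ as $\im(\ker \alpha_v \to \coker \gamma_v)$ (Definition~\ref{D:W_v}\eqref{I:W_v is image}), the condition $\delta_v \in \im \gamma_v$ for $\delta \in \calK$ says precisely that $\delta_v$ maps to~$0$ in $W_v$. Thus the kernel equals $\ker \kappa$, giving exactness at the first two spots.

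Next I would construct a homomorphism
\[
\beta \colon \Seltf{\alpha}(J) \cap \alpha(H^1(k,A[\phi])) \to \coker \kappa
\]
as follows. Given $\eta$ in this intersection, choose $\delta \in H^1(k, A[\phi])$ with $\alpha(\delta) = \eta$. For each~$v$, the condition $\eta_v \in \im C_v = \alpha_v(\im \gamma_v)$ means $\delta_v - \gamma_v(P_v) \in \ker \alpha_v$ for some~$P_v$, and the image of this element in~$\coker \gamma_v$ equals $\delta_v \bmod \im \gamma_v$; in particular $\delta_v \bmod \im \gamma_v$ lies in~$W_v$ for each~$v$. Thus $(\delta_v \bmod \im\gamma_v)_v$ defines an element of $\prod_v W_v$, finite by Corollary~\ref{C:ProdKv-finite}. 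Replacing $\delta$ by $\delta + k$ with $k \in \calK$ shifts this element by $\kappa(k)$, so its class in $\coker \kappa$ is independent of the lift, and $\beta(\eta)$ is well-defined.

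Finally I would check exactness at $\Seltf{\alpha}(J) \cap \alpha(H^1(k,A[\phi]))$. Clearly $\beta \circ \alpha = 0$ on $\Sel{\phi}(J)$ because each $\delta_v$ then already lies in $\im \gamma_v$. Conversely, if $\beta(\eta) = 0$, there exists $k \in \calK$ such that, for every~$v$, $\kappa(k)_v = \delta_v \bmod \im \gamma_v$; then $(\delta - k)_v \in \im \gamma_v$ for all~$v$, so $\delta - k \in \Sel{\phi}(J)$, and $\alpha(\delta - k) = \eta$ since $k \in \ker \alpha$. The hardest step is verifying that $\beta$ is well-defined and independent of all choices: this is essentially a bookkeeping argument that requires carefully exploiting each of the three descriptions of~$W_v$ in Definition~\ref{D:W_v} and the commutative square $C_v = \alpha_v \gamma_v$, but no new input beyond the diagrams already assembled.
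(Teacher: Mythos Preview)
Your proposal is correct and follows essentially the same route as the paper. The paper packages your diagram chase into a single application of the snake lemma to the diagram
\[
\xymatrix{
0 \ar[r] & \calK \ar[r] \ar[d]_{\kappa} & H^1(k,A[\phi]) \ar[r]^-{\alpha} \ar[d] & \alpha\bigl(H^1(k,A[\phi])\bigr) \ar[r] \ar[d] & 0 \\
0 \ar[r] & \displaystyle\prod_v W_v \ar[r] & \displaystyle\prod_v \dfrac{H^1(k_v,A[\phi])}{\im\gamma_v} \ar[r] & \displaystyle\prod_v \dfrac{H^1(k_v,E^\vee)}{\im C_v},
}
\]
with Lemma~\ref{L:Seltf} used to identify the kernel of the rightmost vertical map as $\Seltf{\alpha}(J)\cap\alpha(H^1(k,A[\phi]))$; your steps are exactly the snake lemma applied to this diagram, written out by hand.
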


\begin{proof}
By definition of~$\kappa$, the diagram
\begin{equation}
\label{E:big snake}
\begin{split}
 \xymatrix{    
                0 \ar[r] & \calK \ar[r] \ar[d]_-{\kappa}
                         & H^1(k, A[\phi]) \ar[r]^-{\alpha} \ar[d]
                         & \alpha\bigl(H^1(k, A[\phi])\bigr) \ar[r] \ar[d]
                         & 0 \\
                0 \ar[r] & \rule[-1em]{0pt}{2.5em}\displaystyle\prod_v W_v \ar[r]
                         & \displaystyle\prod_v \dfrac{H^1(k_v, A[\phi])}{\im \gamma_v} \ar[r]
                         & \displaystyle\prod_v \dfrac{H^1(k_v, E^\vee)}{\im C_v}.\\
              }
\end{split}
\end{equation}
commutes.
By definition of~$\calK$ and Definition~\ref{D:W_v}\eqref{I:W_v is kernel}
of~$W_v$, the rows are exact.
Apply the snake lemma and take the first four terms of the snake;
Lemma~\ref{L:Seltf} identifies the kernel of the third vertical map.
\end{proof}

\begin{corollary}
\label{C:bounding Sel to Selfake}
We have 
$\#\ker\left(\Sel{\phi}(J) \to \Seltf{\alpha}(J) \right) \; \le \;
\# R^\vee(k)/q E^\vee(k)$.
\end{corollary}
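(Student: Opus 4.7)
The plan is to deduce the bound directly from Theorem~\ref{T:Selseq} and the explicit identification of the group $\calK$. The argument should be very short, essentially just chaining two inequalities.

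First I would observe that the map $\Sel{\phi}(J) \to \Seltf{\alpha}(J)$ in the statement is really $\alpha$ restricted to the Selmer group, and its image automatically lands in $\Seltf{\alpha}(J) \intersect \alpha(H^1(k, A[\phi]))$. Therefore the kernel of this map coincides with the kernel of the arrow $\Sel{\phi}(J) \stackrel{\alpha}{\To} \Seltf{\alpha}(J) \intersect \alpha(H^1(k, A[\phi]))$ that appears in Theorem~\ref{T:Selseq}. By exactness of the sequence in that theorem, this kernel equals $\ker \kappa$, so
\[
  \#\ker\bigl(\Sel{\phi}(J) \to \Seltf{\alpha}(J)\bigr) = \#\ker \kappa \;\le\; \#\calK.
\]

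Next I would identify $\calK$ explicitly. By definition $\calK = \ker\bigl(\alpha \colon H^1(k, A[\phi]) \to H^1(k, E^\vee)\bigr)$, and the long exact cohomology sequence associated to~\eqref{E:alpha and q}, which appears as the main row of \eqref{E:true main sequence} and \eqref{E:fake main sequence}, identifies this kernel with the cokernel of the map $q \colon E^\vee(k) \to R^\vee(k)$. Thus $\calK \isom R^\vee(k)/q E^\vee(k)$, and combining with the previous inequality yields
\[
  \#\ker\bigl(\Sel{\phi}(J) \to \Seltf{\alpha}(J)\bigr) \;\le\; \#R^\vee(k)/q E^\vee(k),
\]
as required.

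There is no substantive obstacle here: the only thing to double-check is the first bookkeeping step, namely that taking the kernel into the smaller target $\Seltf{\alpha}(J) \intersect \alpha(H^1(k, A[\phi]))$ does not change anything because the image of $\Sel{\phi}(J)$ under $\alpha$ is tautologically contained in $\alpha(H^1(k, A[\phi]))$. After that, the bound is an immediate consequence of Theorem~\ref{T:Selseq} together with the description of $\calK$ that was already recorded just above the statement of that theorem.
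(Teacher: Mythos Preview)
Your proposal is correct and follows exactly the same route as the paper: identify the kernel with $\ker\kappa$ via Theorem~\ref{T:Selseq}, bound it by $\#\calK$, and use the identification $\calK \isom R^\vee(k)/qE^\vee(k)$ already recorded just before that theorem. The only difference is that you spell out the bookkeeping more carefully than the paper's one-line proof.
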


\begin{proof}
By Theorem~\ref{T:Selseq},
the kernel is isomorphic to
$\ker \kappa \subseteq \calK \isom R^\vee(k)/q E^\vee(k)$.
\end{proof}

The following summarizes the best possible situation.

\begin{corollary} \label{C:Nicecase}
  Assume that $\Seltf{\alpha}(J) \subseteq \alpha\bigl(H^1(k, A[\phi])\bigr)$
  and that \hbox{$W_v = 0$} for all places~$v$ of~$k$. 
Then we have an exact sequence
  \[ 0 \To \calK \To \Sel{\phi}(J) \To \Seltf{\alpha}(J) \To 0 . \]
  In particular, $\#\Sel{\phi}(J) = \# \calK \cdot \#\Seltf{\alpha}(J)$.
\end{corollary}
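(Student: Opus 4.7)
The plan is to derive the exact sequence directly from Theorem~\ref{T:Selseq} by simplifying both its first and last terms under the two hypotheses. Specifically, I will argue that the hypothesis $W_v = 0$ for all $v$ makes the map $\kappa \colon \calK \to \prod_v W_v$ trivial (with trivial target), so $\ker\kappa = \calK$ and $\coker\kappa = 0$; and the hypothesis $\Seltf{\alpha}(J) \subseteq \alpha(H^1(k,A[\phi]))$ lets us replace the intersection $\Seltf{\alpha}(J) \intersect \alpha(H^1(k,A[\phi]))$ in Theorem~\ref{T:Selseq} by $\Seltf{\alpha}(J)$ itself. Substituting these simplifications into the four-term exact sequence
\[
    0 \To \ker \kappa \To \Sel{\phi}(J)
    \stackrel{\alpha}{\To} \Seltf{\alpha}(J) \intersect \alpha\bigl(H^1(k, A[\phi])\bigr)
    \To \coker \kappa
\]
yields
\[
    0 \To \calK \To \Sel{\phi}(J) \stackrel{\alpha}\To \Seltf{\alpha}(J) \To 0,
\]
as desired.

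For the cardinality statement, I will invoke finiteness: $\Sel{\phi}(J)$ is finite by Proposition~\ref{P:Sel is unramified}\eqref{I:Sel is finite}, and $\calK$ is a subgroup of $H^1(k,A[\phi])$ isomorphic to $R^\vee(k)/qE^\vee(k)$ (a quotient of the finite group $R^\vee(k)$), hence finite; so $\Seltf{\alpha}(J)$ is finite as well, and multiplicativity of orders in a short exact sequence of finite abelian groups gives $\#\Sel{\phi}(J) = \#\calK \cdot \#\Seltf{\alpha}(J)$.

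There is essentially no obstacle here: the corollary is a formal specialization of Theorem~\ref{T:Selseq} under the strongest possible local and global simplifying assumptions. The only thing worth double-checking is that the map in the middle of the four-term sequence in Theorem~\ref{T:Selseq} is indeed induced by $\alpha$ (as indicated by the notation), so that the surjection $\Sel{\phi}(J) \to \Seltf{\alpha}(J)$ in the resulting short exact sequence can legitimately be labelled~$\alpha$. This is immediate from the construction of the map via the commutative diagram~\eqref{E:big snake} used in the proof of Theorem~\ref{T:Selseq}.
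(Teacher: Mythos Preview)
Your proposal is correct and takes essentially the same approach as the paper: both simply observe that under the hypothesis $W_v=0$ the map $\kappa$ has trivial target, so $\ker\kappa=\calK$ and $\coker\kappa=0$, and then use the containment hypothesis to replace the intersection in Theorem~\ref{T:Selseq} by $\Seltf{\alpha}(J)$. The paper compresses all of this into a single sentence, whereas you have spelled out the details (including the finiteness justification for the cardinality claim), but the argument is identical.
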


\begin{proof}
In Theorem~\ref{T:Selseq}, 
the homomorphism $\kappa \colon \calK \to \prod_v W_v$ is~$0$.
\end{proof}

We need a criterion that tells us when $\Seltf{\alpha}(J)$ is already
contained in $\alpha\bigl(H^1(k, A[\phi])\bigr)$. Recall the discussion
of~$\Sha^1$ in Section~\ref{S:sha1} and the exact sequence
\[ 0 \To A[\phi] \To E^\vee \stackrel{q}{\To} R^\vee \To 0 . \]

\begin{lemma} \label{L:Sha1R} \hfill
  \begin{enumerate}[\upshape (a)]
  \item\label{I:Sha1R}
  There is an exact sequence
  \[ 0 \To \Seltf{\alpha}(J) \intersect \alpha\bigl(H^1(k, A[\phi])\bigr)
       \To \Seltf{\alpha}(J)
       \stackrel{q}{\To} \Sha^1(k, R^\vee).
  \]
\item\label{I:if Sha=0}
In particular, if $\Sha^1(k, R^\vee) = 0$, then 
$\Seltf{\alpha}(J) \subseteq \alpha\bigl(H^1(k, A[\phi])\bigr)$.
  \end{enumerate}
\end{lemma}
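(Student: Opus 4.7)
The plan is to derive both parts from the exact sequence
\[
  0 \to A[\phi] \to E^\vee \stackrel{q}{\to} R^\vee \to 0
\]
(see \eqref{E:alpha and q}) by taking cohomology globally and locally, and comparing with the membership condition defining $\Seltf{\alpha}(J)$.

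First I would observe that, both in the true and the fake setting, $\Seltf{\alpha}(J)$ is naturally a subgroup of $H^1(k,E^\vee)$: in the true case by the identification $H^1(k,\mu_n^\fes)\isom L^\times/L^{\times n}$ from Section~\ref{S:twisted powers}, and in the fake case by the injection $L^\times/L^{\times n}k^\times \hookrightarrow H^1(k,\mu_n^\fes/\mu_n)$ coming from~\eqref{E:Br}. The long exact cohomology sequence of the above short exact sequence identifies $\alpha(H^1(k,A[\phi]))$ with $\ker\bigl(q\colon H^1(k,E^\vee)\to H^1(k,R^\vee)\bigr)$. Consequently, for any $\delta\in\Seltf{\alpha}(J)$, the obstruction to lifting $\delta$ to $H^1(k,A[\phi])$ is exactly the class $q(\delta)\in H^1(k,R^\vee)$, and the kernel of $q$ restricted to $\Seltf{\alpha}(J)$ is $\Seltf{\alpha}(J)\cap\alpha(H^1(k,A[\phi]))$.

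The key step, and the only nontrivial one, is to show that $q(\delta)$ actually lies in $\Sha^1(k,R^\vee)$, i.e. that $q_v(\delta_v)=0$ in $H^1(k_v,R^\vee)$ for every place $v$. For this I would invoke the local diagram~\eqref{E:local descent diagram}: its bottom row is exact, so $\im \alpha_v = \ker q_v$, and therefore $\im C_v = \alpha_v(\gamma_v(J(k_v)/\phi A(k_v))) \subseteq \ker q_v$. By the definition of $\Seltf{\alpha}(J)$ in Definition~\ref{D:true and fake Sel}, $\delta_v\in\im C_v$, whence $q_v(\delta_v)=0$. Running over all $v$ shows $q(\delta)\in\Sha^1(k,R^\vee)$, giving a well-defined homomorphism $q\colon\Seltf{\alpha}(J)\to\Sha^1(k,R^\vee)$ with the required kernel. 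Assembling these facts produces the exact sequence in~\eqref{I:Sha1R}, and part~\eqref{I:if Sha=0} is then an immediate formal consequence: if $\Sha^1(k,R^\vee)=0$, the map $q$ on $\Seltf{\alpha}(J)$ is zero, so $\Seltf{\alpha}(J)$ equals its intersection with $\alpha(H^1(k,A[\phi]))$.

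I do not anticipate a serious obstacle here; the only point that could trip one up is keeping the true/fake distinction under control when viewing $\Seltf{\alpha}(J)$ as sitting inside $H^1(k,E^\vee)$, which is handled uniformly by the embeddings noted in the first paragraph.
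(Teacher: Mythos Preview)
Your proof is correct and follows essentially the same approach as the paper. The paper packages the argument via a commutative diagram (with rows $0\to\alpha(H^1(k,A[\phi]))\to H^1(k,E^\vee)\to H^1(k,R^\vee)$ over~$k$ and its local analogues modulo~$\im C_v$) and invokes Lemma~\ref{L:Seltf} to identify the middle kernel as~$\Seltf{\alpha}(J)$, but the underlying idea is precisely your direct argument that $\delta_v\in\im C_v\subseteq\im\alpha_v=\ker q_v$ forces $q(\delta)\in\Sha^1(k,R^\vee)$.
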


\begin{proof}
The last three terms in \eqref{E:true main sequence} 
or~\eqref{E:fake main sequence} 
for~$k$ and the~$k_v$ give rise to a commutative diagram
with exact rows:
\[
\xymatrix{
0 \ar[r] & \alpha\left( H^1(k,A[\phi]) \right) \ar[r] \ar[d] & H^1(k,E^\vee) \ar[r]^q \ar[d] & H^1(k,R^\vee) \ar[d] \\
0 \ar[r] & \displaystyle\prod_v \frac{\alpha_v\left( H^1(k_v,A[\phi]) \right)}{\im C_v} \ar[r] & \displaystyle\prod_v \frac{H^1(k_v,E^\vee)}{\im C_v} \ar[r] & \displaystyle\prod_v H^1(k,R^\vee) \\
}
\]
Take the kernels of the three vertical maps and apply 
Lemma~\ref{L:Seltf}.
\end{proof}

\begin{remark}
Results of Section~\ref{S:sha1} allow us to bound~$\Sha^1(k, R^\vee)$
and to prove that it is~$0$ in many cases.
\end{remark}

\begin{example}
\label{Ex:hyperelliptic descent}
We revisit the fake descent setup in 
Example~\ref{Ex:fake setup examples}\eqref{E:hyperelliptic example}
used for 2-descent 
on the Jacobian~$J$ of a hyperelliptic curve~$X$ with an even degree model
$y^2=f(x)$.
Assume that $X$ has even genus or that $X(k_v) \ne \emptyset$
for all~$v$; then Hypothesis~\ref{H:circ} is satisfied,
by Lemma~\ref{L:circ}.
The $\calG_k$-set~$\fes$ is the set of Weierstrass points.
The group~$\Z/2\Z$ injects into $E = (\Z/2\Z)^\fes_{\deg 0}$,
and $E^\vee = \mu_2^\fes/\mu_2$.
Since $\widehat{J} \isom J$, 
exact sequence~\eqref{E:definition of R} is
\[ 
	0 \To \frac{\Z}{2\Z} \To E \To J[2] \To 0, 
\]
and its dual~\eqref{E:alpha and q} is
\[ 
	0 \To J[2] \stackrel{\alpha}\To E^\vee \stackrel{N}{\To} \mu_2 \To 0
\]
where $N$ is the norm. 
Cohomology (see~\eqref{E:fake main sequence}) gives
  \[ 0 \To \calK \To H^1(k, J[2]) \stackrel{\alpha}\To H^1(k, E^\vee),\]
where $\calK \isom \mu_2(k)/N(E^\vee(k))$.
One can show (see~\cite{Poonen-Schaefer1997}*{Theorem~11.3})
that the element of~$\calK$ represented by~$-1 \in \mu_2(k)$
corresponds to the class~$\xi$
of the $2$-covering $\PIC^1_{X/k} \to J$ in~$H^1(k,J[2])$,
where $\PIC^1_{X/k}$ is the Picard scheme component
parametrizing line bundles of degree~$1$ on~$X$.
For each~$v$,
our assumption implies that $\PIC^1_{X/k}$ has a $k_v$-point,
so $\xi_v$ maps to~$0$ in~$H^1(k_v,J)$,
so $\xi_v \in \im\left(C_v \colon J(k_v)/2J(k_v) \to H^1(k_v,J[2]) \right)$.
Since $\xi_v$ generates $\ker \alpha_v$,
we have $W_v=0$ by Definition~\ref{D:W_v}\eqref{I:W_v is image}.
Also, $\Sha^1(k,\mu_2)=0$ by Lemma~\ref{lemma:splitsha},
so Lemma~\ref{L:Sha1R}\eqref{I:if Sha=0} applies,
and Corollary~\ref{C:Nicecase} yields an exact sequence
\[ 
	0 \To \calK \To \Sel{2}(J) \To \Seltf{\alpha}(J) \To 0.
\]
Finally, since $\calK \isom \mu_2(k)/N(E^\vee(k))$,
we have that $\calK=0$ if 
$\fes$ has a Galois-stable unordered partition into
two sets of odd cardinality (a computable condition),
and $\calK \isom \mu_2(k)$ otherwise 
(cf.~\cite{Poonen-Schaefer1997}*{Theorem~13.2}).
\end{example}


\section{Computing true and fake Selmer groups} \label{S:compute fake}

Choose $\calS$ and $\calT$ as in Section~\ref{S:finite description},
and compute~$\widetilde{L(n,\calS)}$
as in Section~\ref{S:unramified in target of descent}.
Because of Theorem~\ref{T:finite description of Selmer},
it remains to find an algorithm to test whether a given element
of~$\widetilde{L(n,\calS)}$ is in the local image~$\im C_v$ for a given~$v$.
It is clear that this can be done in principle,
but our goal here will be to describe a practical method,
under an additional hypothesis:
\begin{hypothesis}
\label{H:curve with a k_v-point}
The variety~$X$ is a genus-$g$ curve with a $k_v$-point~$x_0$
(or more generally, a degree~$1$ divisor~$x_0$ over~$k_v$),
and either $\Char k_v = 0$ or $\Char k_v > g$.
\end{hypothesis}
We consider the non-archimedean and archimedean cases separately.

\subsection{Computing the local image at a non-archimedean place}
\label{S:non-arch local image}

Fix a non-archimedean place~$v$ of~$k$.
Let $K$ be a finite extension of~$k_v$, say of degree~$d$,
with valuation ring $\calO$, uniformizer $\pi$, and maximal ideal~$\mm$.
Let $L_K\colonequals L \tensor_k K$.
We have a map $X(K) \to \calZ^0(X_K)$ sending $x$ to the class
of the $0$-cycle $x-x_0$,
and following this with~$\widetilde{C}_K$ defines
a continuous map 
\[
	\cc_K \colon X(K) \to 
		\widetilde{\frac{L_K^\times}{L_K^{\times n} K^\times}}.
\]
To define $\cc_K$ on all of~$X(K)$ requires using several
functions $f_1,\ldots,f_r$ with disjoint support,
as in Remark~\ref{R:moving}.
By perturbing $x_0$ in the smooth space~$X(k_v)$
(if $x_0$ is a point)
or perturbing each component of~$x_0$ in the space of points
over its field of definition (if $x_0$ is a divisor),
we may assume that the~$f_i$ can be evaluated at~$x_0$.

We now explain how to compute a finite description of~$\cc_K$.
Choose a proper $\calO$-scheme~$\calX$ with $\calX_K \isom X_K$.
By the valuative criterion for properness, 
$X(K) \isom \calX(\calO) \isom \varprojlim_{m \ge 0} \calX(\calO/\pi^m)$.
By Hensel's lemma, $L_K^{\times n}$ has finite index in~$L_K^\times$,
so $\widetilde{L_K^\times/L_K^{\times n} K^\times}$
is a finite discrete set,
so $\cc_K$ is locally constant.
Proceed as follows:
\begin{itemize}
\item 
Start with $m=1$.
\item
On each remaining residue disk modulo $\pi^m$ in~$X(K)$,
check whether any~$f_i$ is such that $f_i(x)/f_i(x_0)$
is constant in~$\widetilde{\frac{L_K^\times}{L_K^{\times n} K^\times}}$;
if not, break the residue disk into residue disks modulo~$\pi^{m+1}$,
and apply recursion.
\end{itemize}
Because the divisors of the~$f_i$ are disjoint,
eventually this algorithm will terminate,
with a partition of~$X(K)$ into residue disks on which $\cc_K$
takes a known constant value.

Next, the diagram
\[\xymatrix@C3em{
X(K)\ar[r]^{x \mapsto x-x_0} \ar[rd]_{x \mapsto \tr_{K/k_v} x  - dx_0} & \calZ^0(X_K)\ar[r] \ar[d]^{\tr_{K/k_v}} & \widetilde{\dfrac{L_K^\times}{L_K^{\times n} K^\times}} \ar[d]^{N_{K/k_v}}\\
&\calZ^0(X_{k_v})\ar[r] & \widetilde{\dfrac{L_v^\times}{L_v^{\times n} k_v^\times}} \\
}\]
commutes, so we can compute $\widetilde{C}$ on elements of~$\calZ^0(k_v)$
of the form $\tr_{K/k_v} x  - dx_0$.

\begin{lemma}
\label{L:generating J(K)}
\hfill
\begin{enumerate}[\upshape (a)]
\item\label{I:finitely many extensions}
There are only finitely many extensions~$K/k_v$ with $[K:k_v] \le g$
up to isomorphism, and we can list them all.
\item As $K$ ranges through these field extensions, and $x$ ranges over $X(K)$,
the images of $\tr_{K/k_v} x  - dx_0$ generate~$J(k_v)$.
\end{enumerate}
\end{lemma}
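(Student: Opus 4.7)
The plan is to prove part~(a) by Krasner's lemma and part~(b) by Riemann--Roch. Both parts are essentially classical, so the proposal is more about assembling the right pieces than overcoming a real obstacle.

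For part~(a), I would use the standard decomposition of a finite extension $K/k_v$ of degree~$d$ as $k_v \subseteq k_v^{\unr,f} \subseteq K$, where $k_v^{\unr,f}$ is the unique unramified extension of degree~$f$ and $K/k_v^{\unr,f}$ is totally ramified of degree~$e$, with $ef = d$. The totally ramified part is generated by a root of an Eisenstein polynomial of degree~$e$ over~$k_v^{\unr,f}$. Krasner's lemma ensures that two such Eisenstein polynomials whose coefficients are sufficiently close $v$-adically define isomorphic extensions. Since $d \le g$ leaves only finitely many pairs $(e,f)$, and truncation of the Eisenstein coefficients to a fixed precision leaves only finitely many candidate polynomials, there are only finitely many such~$K$, and the enumeration is effective (and in fact implemented in standard computer algebra systems).

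For part~(b), I plan to use Riemann--Roch. Hypothesis~\ref{H:curve with a k_v-point} supplies a $k_v$-rational divisor~$x_0$ of degree~$1$; this implies that the natural map $\Pic^0(X) \to J(k_v)$ is surjective, since the Hochschild--Serre obstruction in $\Br(k_v)$ is killed by the existence of a rational degree-$1$ divisor. Hence every element of~$J(k_v)$ has the form $[D - g\, x_0]$ for some $k_v$-rational divisor~$D$ of degree~$g$. Riemann--Roch then gives $\ell(D) \ge \deg D - g + 1 = 1$, so~$D$ is linearly equivalent to an effective $k_v$-rational divisor $E = \sum_i n_i P_i$ of degree~$g$. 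Each closed point $P_i$ has a residue field~$K_i$ of degree $d_i = [K_i:k_v] \le g$ (from $\sum_i n_i d_i = g$), and any lift of $P_i$ to a point $x_i \in X(K_i)$ satisfies $\tr_{K_i/k_v} x_i = P_i$ as $0$-cycles on~$X_{k_v}$. Therefore
\[
	[D - g\, x_0] \;=\; \sum_i n_i \, [\tr_{K_i/k_v} x_i - d_i\, x_0]
\]
in $J(k_v)$, exhibiting an arbitrary element as a $\Z$-linear combination of the claimed generators.

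The main obstacle is essentially nothing substantial: neither step introduces new difficulty. The one point worth handling carefully is the surjectivity of $\Pic^0(X) \to J(k_v)$, which relies on Hypothesis~\ref{H:curve with a k_v-point} to kill the Brauer obstruction; when $x_0$ is only a degree-$1$ divisor rather than a point, the same argument applies verbatim, using the splitting of the degree map $H^0(k_v, \Pic X_{k_{v,s}}) \to \Z$ provided by~$x_0$.
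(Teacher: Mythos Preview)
Your approach matches the paper's: Riemann--Roch for~(b), and a Krasner-type enumeration for~(a) (the paper cites Pauli--Roblot for characteristic~$0$, which is exactly this argument).

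One point you gloss over in~(a): the assertion that Eisenstein polynomials of degree~$e$ can be truncated ``to a fixed precision'' needs a uniform bound on how close is close enough in Krasner's lemma, which amounts to a uniform bound on the valuation of the discriminant. This fails for wildly ramified extensions---indeed, a local field of characteristic~$p$ has infinitely many separable extensions of degree~$p$. The paper handles this by invoking Hypothesis~\ref{H:curve with a k_v-point} explicitly in positive characteristic: the condition $\Char k_v > g \ge [K:k_v]$ forces $K/k_v$ to be tamely ramified, so $K$ is obtained by adjoining a single $m^{\text{th}}$ root of a uniformizer to a finite unramified extension, and finiteness (with effective enumeration) is immediate. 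Your argument is correct under this standing hypothesis, but you should make explicit where it enters; as literally written, your part~(a) claims something false for arbitrary non-archimedean local fields.
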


\begin{proof}
\hfill
\begin{enumerate}[\upshape (a)]
\item 
If $\Char k_v=0$, see~\cite{Pauli-Roblot2001}.
If $\Char k_v>0$, 
the characteristic assumption in Hypothesis~\ref{H:curve with a k_v-point}
guarantees that each~$K$ is tamely ramified over~$k_v$,
so $K$ is obtained by adjoining a single $m^{\tH}$~root of a uniformizer
to a finite unramified extension of~$k_v$.
We can compute representatives for the set of possible uniformizers
up to $m^{\tH}$~powers.
\item 
The Riemann-Roch theorem shows that every degree-$0$ divisor on~$X$
is linearly equivalent to $E-g x_0$ for some effective $E \in \Div^g X$.
Writing $E$ as a sum of closed points lets one express $E-g x_0$
as a sum of $0$-cycles of the form $\tr_{K/k_v} x - dx_0$
for various~$K$ of degree at most~$g$.
\qedhere
\end{enumerate}
\end{proof}

We can now compute $\im C_v$ as follows:
\begin{enumerate}[\upshape (i)]
\item Compute all extensions $K$ as in 
Lemma~\ref{L:generating J(K)}\eqref{I:finitely many extensions}.
\item For each, compute the image of~$\cc_K$, and apply
$N_{K/k_v}$ to find the image of~$C_v$ on $0$-cycles
of the form $\tr_{K/k_v} x - dx_0$.
\item Compute the subgroup generated by all such images.
\end{enumerate}
By Lemma~\ref{L:generating J(K)}, the result equals $\im C_v$.

\subsection{Computing the local image at an archimedean place}\label{S:arch local image}

For archimedean~$v$ a similar method works,
but instead of partitioning $X(K)$ into residue classes,
we partition it into connected components,
because a continuous map to the discrete set 
$\widetilde{L_v^\times/L_v^{\times n} k_v^\times}$
is constant on connected components.
If $K=\C$, then every point in~$X(K)$ has the same image under
$x \mapsto \tr_{K/k_v} x - dx_0$ as~$x_0$, which is~$0$.
If $K=\R$, then we compute $f(x)/f(x_0)$ for one point~$x$
in each connected component of~$X(K)$,
perturbing~$x$ as necessary to ensure that $f$ is regular and nonvanishing
at~$x$.

\subsection{Further comments on the computation of local images}

\begin{remark}
To find different $f$'s that can be used to compute~$\cc_K$,
we need to move~$\beta$ (and/or~$D$ in the fake case) 
within their linear equivalence classes.
In the fake case, it is often more convenient to move~$D$ in practice,
since in many applications $\beta$ is 
the only effective divisor in its class.
\end{remark}

\begin{remark}
\label{R:no moving necessary}
Sometimes we can compute $\cc_K$ without moving~$\beta$,
as we now explain.
Suppose that
\begin{itemize}
\item we have a true or fake descent setup, 
with a choice of $\beta$, $D$, and~$f$;
\item $f$ can be evaluated at~$x_0$;
where $x_0 \in X(K)$ or $x_0$ is a degree~$1$ divisor on~$X$ over~$K$,
for some local field~$K$;
\item the supports of~$\beta_P$ for~$P \in \fes$ are pairwise disjoint;
and
\item the image of the diagonal $\Z/n\Z \to (\Z/n\Z)^{\fes}$
is contained in~$R$.
\end{itemize}
Given $x \in X(K)$,
the hypothesis on the~$\beta_P$ implies that there is at most one~$P$
such that $f_P$ has a zero or pole at~$x$.
We can evaluate $f_Q(x-x_0)$ for all~$Q \ne P$,
and the missing component of~$\cc_K(x)$ can be recovered
from the fact (Lemma~\ref{L:kernel of norm})
that $\cc_K(x)$ lies in the kernel of~$N$.
\end{remark}

\begin{remark}
\label{R:no moving necessary for curve}
Under the same hypotheses on $\beta,D,f,R$ as in 
Remark~\ref{R:no moving necessary},
we can evaluate $C_f$ at arbitrary~$x \in X(k)$,
using Lemma~\ref{L:coset of kernel of norm}
instead of Lemma~\ref{L:kernel of norm}.
Similarly, we can evaluate~$C_{f,v}$.
\end{remark}

\begin{remark}
\label{R:image of random points}
Here we describe an alternative method that, when it succeeds, 
computes~$\im C_v$ much more quickly in practice.
The method consists of two parts:
computing an upper bound on~$\#\im C_v$ (or its exact value),
and computing lower bounds on the group~$\im C_v$,
and hoping that the sizes match.
Suppose that $A=J$ and $\phi$ is multiplication-by-$n$ on~$J$.
To compute an upper bound on~$\#\im C_v$:
\begin{itemize}
\item First calculate the action of 
the decomposition group $\Gal(k_{v,s}/k_v)$ on~$\fes$.
\item 
Using this, compute the exact sequence
\begin{equation}
  \label{E:local sequence computed}
	0\to J[n](k_v)\to E^\vee(k_v) \stackrel{q}\to R^\vee(k_v)
\end{equation}
of finite groups.
\item From this, compute $\# J[n](k_v)$.
\item
Substitute this into the formula
\begin{equation}
  \label{E:J(k_v) mod n}
	\#\frac{J(k_v)}{nJ(k_v)}
	= \|n\|_v^{-\dim J} \#J[n](k_v),
\end{equation}
where $\|\;\|_v$ is the $v$-adic absolute value normalized so that
$\|a\|_v=(\calO_v:a\calO_v)^{-1}$ for $a \in \calO_v$ 
(this is a variant of \cite{Poonen-Schaefer1997}*{Proposition~12.10}).
\item
We obtain the bound $\#\im C_v \le \#J(k_v)/n J(k_v)$,
since $C_v$ factors through~$J(k_v)/n J(k_v)$.
\item
If $q \colon E^\vee(k_v) \to R^\vee(k_v)$ is surjective,
a condition that can be checked using~\eqref{E:local sequence computed},
then 
\eqref{E:local descent diagram} shows that $\alpha_v$ is injective and
$\#\im C_v = \#J(k_v)/n J(k_v)$.
\end{itemize}
To compute lower bounds on~$\im C_v$:
\begin{itemize}
\item
Randomly select $x \in X(k_v) \injects J(k_v)$ 
and compute their images under~$C_v$
hoping that they generate a group of the right size.
(Calculate $N_{K/k_v}(\cc_K(x))$ for $x \in X(K)$ 
also for finite extensions~$K$ of~$k_v$
if it seems that the $k_v$-points are insufficient.)
\end{itemize}
\end{remark}


\section{Genus-3 curves}
\label{S:genus 3}

In this section we specialize to the case of a fake descent setup
given by the bitangents of a smooth plane quartic.
This is the non-hyperelliptic $g=3$ case of 
Example~\ref{Ex:fake setup examples}\eqref{E:odd theta}. 
Many of the results about smooth plane quartics we require 
were established already before 1900. 
See \cite{Salmon1879}*{p.~223 onwards} for a survey.

\subsection{Bitangents of smooth plane quartics}\label{S:bitangents}

Let $k$ be a field of characteristic not~$2$.
Let $X$ be a non-hyperelliptic genus-$3$ curve over~$k$.
The canonical map embeds $X$ as a smooth quartic curve $g(x,y,z)=0$
in~$\PP^2$.
Conversely, every smooth quartic curve in~$\PP^2$ arises in this way.
Let $\PPdual^2$ be the dual projective space, 
with homogeneous coordinates $u,v,w$;
its points correspond to lines in~$\PP^2$.

\begin{definition}
\label{D:bitangent}
A \defi{bitangent} to~$X_s$ is a line $l \subset \PP^2_{k_s}$ such that
the intersection $l.X$ is~$2\beta_l$ for some $\beta_l \in \Div X_{k_s}$.
\end{definition}
Given a bitangent~$l$,
the line bundle~$\LL_l$ associated to~$\beta_l$
is an odd theta characteristic on~$X_s$.
The bitangents to~$X_s$ form a $28$-element $\calG$-set~$\fes$
in bijection with the $\calG$-set~$\TT_{\odd}$ of
Section~\ref{S:theta characteristics} \cite{Gross-Harris2004}*{p.~289}.
Alternatively, we may view $\fes$ as a finite \'etale subscheme of~$\PPdual^2$,
and the collection~$(\LL_l)$ as a line bundle~$\LL$ on~$X \times \fes$.
Then $(2,\fes,\LL)$ is isomorphic to the fake descent setup
in Example~\ref{Ex:fake setup examples}\eqref{E:odd theta}
with $g=3$.
The isogeny $\phi \colon A \to J$ of Section~\ref{S:descent isogeny}
is $[2] \colon J \to J$.

\subsection{Syzygetic quadruples}
\label{S:syzygetic quadruples}

\begin{lemma}\label{L:bitang pair classes}
Let $l_1,\ldots,l_4$ be bitangents.
Then the following are equivalent:
\begin{enumerate}[\upshape (i)]
\item The corresponding four odd theta characteristics sum to~$0$
in $\frac{\Pic X_s}{\langle \omega \rangle}[2]$.
\item The divisor $\beta_{l_1}+\cdots+\beta_{l_4}$ 
is linearly equivalent to $2$~times a canonical divisor.
\item The divisor $\beta_{l_1}+\cdots+\beta_{l_4}$ is an intersection $Q.X$
for some (not necessarily irreducible) conic $Q \subset \PP^2$.
\end{enumerate}
\end{lemma}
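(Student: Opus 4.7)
The plan is to prove the chain (i)$\Leftrightarrow$(ii)$\Leftrightarrow$(iii), using the basic numerical fact that on a smooth plane quartic we have $\omega_X \isom \OO_X(1)$, so $\deg\omega = 4$, and each $\beta_{l_i}$ has degree~$2$ with $2\beta_{l_i}\sim\omega$ by definition of a bitangent.

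For (i)$\Leftrightarrow$(ii), I would unwind what the quotient $\frac{\Pic X_s}{\langle\omega\rangle}[2]$ does to the classes $\vartheta_i = [\beta_{l_i}]$. Each $\vartheta_i$ already satisfies $2\vartheta_i = \omega$, so $2(\vartheta_1+\cdots+\vartheta_4) = 4\omega$ in $\Pic X_s$, which confirms that the sum is automatically a $2$-torsion class modulo~$\omega$; the point is whether the class is trivial in $\Pic X_s/\langle\omega\rangle$, i.e.\ whether $\vartheta_1+\cdots+\vartheta_4 = n\omega$ for some integer~$n$. Comparing degrees, the left-hand side has degree~$8$ and $\omega$ has degree~$4$, so necessarily $n=2$, which is exactly the content of (ii).

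For (ii)$\Leftrightarrow$(iii), I would use the identification $\omega_X \isom \OO_X(1)$, which gives $2\omega_X \isom \OO_X(2)$. By Riemann--Roch, $h^0(X,2\omega) = 3g-3 = 6$, and $h^0(\PP^2,\OO(2)) = 6$ as well; since $X$ is an irreducible quartic, no conic in~$\PP^2$ vanishes on it, so the restriction map $H^0(\PP^2,\OO(2))\to H^0(X,\OO_X(2))$ is injective, hence an isomorphism by the dimension count. Therefore every effective divisor in the linear system $|2\omega|$ arises as~$Q.X$ for some conic~$Q\subset\PP^2$ (unique up to scalar). Since $\beta_{l_1}+\cdots+\beta_{l_4}$ is manifestly effective of degree~$8$, the equivalence (ii)$\Leftrightarrow$(iii) follows.

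The only substantive step is the dimension computation and verifying that the restriction map is an isomorphism, which amounts to the well-known fact that smooth plane quartics are projectively normal in degree~$2$; the rest is bookkeeping with degrees and the defining relation $2\beta_l\sim\omega$. I expect no essential obstacle, only the mild care needed to keep the quotient by $\langle\omega\rangle$ in~(i) straight (a priori the vanishing only forces the sum to be some multiple of $\omega$, and it is the degree comparison that pins it to $2\omega$).
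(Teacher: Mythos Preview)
Your proof is correct and follows essentially the same approach as the paper's: the (i)$\Leftrightarrow$(ii) argument via degree comparison is identical, and for (ii)$\Leftrightarrow$(iii) both arguments reduce to the surjectivity of $H^0(\PP^2,\OO(2))\to H^0(X,\OO_X(2))$. The only cosmetic difference is that the paper establishes this surjectivity via the vanishing $H^1(\PP^2,\OO(-2))=0$ in the ideal-sheaf sequence, while you obtain it from a dimension count (injectivity plus $h^0=6$ on both sides by Riemann--Roch); these are equivalent.
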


\begin{proof}\hfill

(i)$\iff$(ii): 
Considering degrees shows that the odd theta characteristics sum to~$0$
in $\frac{\Pic X_s}{\langle \omega \rangle}[2]$
if and only if they sum to $\omega^{\tensor 2}$ in $\Pic X_s$.

(iii)$\implies$(ii): Let $l$ be any line in $\PP^2$.
Then $Q \sim 2l$ on $\PP^2$, so $Q.X \sim 2l.X$ on $X$,
and $l.X$ is a canonical divisor.

(ii)$\implies$(iii):
Equivalently, we must show that $\Gamma(\PP^2,\OO(2)) \to \Gamma(X,\OO_X(2))$
is surjective.
This is true, since the cokernel is contained in $H^1(\PP^2,\OO(-4+2))=0$.
\end{proof}

A $4$-element set
$\{l_1,\ldots,l_4\}$ 
satisfying the conditions of Lemma~\ref{L:bitang pair classes}
is called a \defi{syzygetic quadruple}.
Let $\Sigma \subset \binom{\fes}{4}$ be the set of syzygetic quadruples.
This incidence structure~$(\fes,\Sigma)$ constructed above from bitangents
is the same as that in Section~\ref{S:thetas and quadratic forms} for $g=3$.
In particular, its isomorphism type
is independent of~$X$ (see Remark~\ref{R:connectedness of moduli space}).
By Proposition~\ref{P:315}, $\#\Sigma=315$.

\subsection{$\Sp_6(\F_2)$-modules}
\label{S:modules}

If we chose a bijection between $\fes$ and $\{1,\ldots,28\}$,
then the image~$G$ of $\calG \to \Aut \fes$ would be identified with a subgroup
of the symmetric group~$S_{28}$, and changing the bijection
would change the subgroup up to~$S_{28}$-conjugacy.

Instead we will choose an isomorphism between 
$(\fes,\Sigma)$ and a fixed incidence structure
$(\mathbf{\fes},\mathbf{\Sigma})$,
so that $G$ is identified with a subgroup of 
$\Aut(\mathbf{\fes},\mathbf{\Sigma})$,
which by Proposition~\ref{P:structures}
is a specific copy of~$\Sp_6(\F_2)$ in~$S_{28}$.
Changing {\em this} isomorphism changes~$G$ only up to $\Sp_6(\F_2)$-conjugacy.
This more refined information will be needed to deduce the action
of~$G$ on $J[2]$, $E^\vee$, and~$R^\vee$.

The following lemma constructs our fixed $(\mathbf{\fes},\mathbf{\Sigma})$ 
directly from the group~$\Sp_6(\F_2)$:
\begin{lemma}\label{L:incidence structure from group}
\hfill
\begin{enumerate}[\upshape (a)]
\item\label{I:index 28}
The group $\Sp_6(\F_2)$ has a unique index-$28$ subgroup~$H$,
up to conjugacy.  Let $\mathbf{\fes}$ be the $\calG$-set $\calG/H$.
Identify $\mathbf{\fes}$ with~$\{1,\ldots,28\}$.
\item There is a unique $\Sp_6(\F_2)$-invariant subset 
of $\binom{\mathbf{\fes}}{4}$ of size~$315$;
call it~$\mathbf{\Sigma}$.
\end{enumerate}
\end{lemma}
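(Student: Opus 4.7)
My plan is to build both structures from the natural action of $G = \Sp_6(\F_2)$ on the $64$-element affine space $\calQ$ of quadratic forms on $V = \F_2^6$ associated to the standard nondegenerate symplectic pairing~$e$.

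For~(a), I would invoke Arf-invariant theory to partition $\calQ$ into the two $G$-orbits of even ($36$ elements) and odd ($28$ elements) forms, with transitivity on each following from Witt's extension theorem. The stabilizer $H$ of an odd form is then an index-$28$ subgroup of order $|G|/28 = 51840$, isomorphic to $W(E_6)$. For the uniqueness up to conjugacy I would invoke the \emph{ATLAS of finite groups}, which lists the maximal subgroups of $\Sp_6(\F_2) = S_6(2)$ and exhibits exactly one conjugacy class of index-$28$ subgroups. Since $28$ is the minimal index of a proper subgroup of $\Sp_6(\F_2)$ (as noted in the introduction), every index-$28$ subgroup is automatically maximal and hence belongs to this class.

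For~(b), I would identify $\mathbf{\fes}$ with $\calQ^\odd$ and let $\mathbf{\Sigma} \subseteq \binom{\mathbf{\fes}}{4}$ consist of the \emph{syzygetic} quadruples, those $\{q_1,q_2,q_3,q_4\}$ with $q_1+q_2+q_3+q_4 = 0$ as functions on $V$. $G$-invariance is built in, and $|\mathbf{\Sigma}|=315$ is Proposition~\ref{P:315} with $g=3$. For uniqueness I would first show that $\mathbf{\Sigma}$ is a single $G$-orbit, via a $G$-equivariant bijection $\mathbf{\Sigma} \to \{\text{totally isotropic } 2\text{-planes in } (V,e)\}$: the assignment $\{q_1,\ldots,q_4\}\mapsto W:=\langle q_2-q_1,q_3-q_1\rangle$ (using $V^\vee\cong V$ via~$e$) lands in isotropic $2$-planes because a short Arf-invariant computation shows that each $q_i$ being odd forces $q_1(v)=0$ for all $v\in W$ and hence $e|_W=0$; conversely each isotropic $W$ carries exactly one $W$-coset of odd forms vanishing on $W$, producing a unique syzygetic quadruple. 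The set of isotropic $2$-planes in $(V,e)$ has exactly $315$ elements and is a single $G$-orbit by standard symplectic geometry. Then I would enumerate the remaining $G$-orbits on $\binom{\mathbf{\fes}}{4}$ via the $G$-equivariant sum map $\sigma\colon \binom{\mathbf{\fes}}{4}\to V$, whose nonzero fibers each have $320$ elements and are permuted transitively, and verify that no union of the resulting orbit sizes equals~$315$.

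The main obstacle is this last combinatorial verification. I would carry it out using the classical identification of the $H \cong W(E_6)$-action on $\mathbf{\fes}\setminus\{q_0\}$ with its action on the $27$ lines of a cubic surface: the $H$-orbits on $3$-subsets of these lines are classified by the incidence pattern of the three lines (tritangent-plane triangles, triples of mutually skew lines, and the intermediate mixed cases), and via the bijection ``$3$-subset of the $27$ lines $\leftrightarrow$ $4$-subset of $\mathbf{\fes}$ containing $q_0$'' this produces a complete (and short) list of $G$-orbit sizes on $\binom{\mathbf{\fes}}{4}$, readily confirmed by a direct (or GAP-assisted) calculation to yield $315$ only for $\mathbf{\Sigma}$.
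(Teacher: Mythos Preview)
Your approach is correct and genuinely different from the paper's, which dispatches the entire lemma in one line: ``Direct computation using {\tt Magma}.'' You instead build the structures conceptually from the $\Sp_6(\F_2)$-action on quadratic forms, which has the virtue of making the connection with Section~\ref{S:thetas and quadratic forms} explicit rather than leaving it as a numerical coincidence to be verified by machine. Your argument for~(a) is complete once one accepts the ATLAS listing (and the maximality argument via minimal index is clean). For~(b), your bijection between syzygetic quadruples and isotropic $2$-planes is a nice touch and correctly establishes that $\mathbf{\Sigma}$ is a single orbit of size~$315$.

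One imprecision worth flagging in your final step: the map ``$3$-subset of the $27$ lines $\leftrightarrow$ $4$-subset of~$\mathbf{\fes}$ containing~$q_0$'' is indeed an $H$-equivariant bijection of \emph{sets}, so it matches $H$-orbits on the two sides; but passing from $H$-orbits on $\{S \in \binom{\mathbf{\fes}}{4} : q_0 \in S\}$ to $G$-orbits on $\binom{\mathbf{\fes}}{4}$ is only a surjection, not a bijection---several $H$-orbits can fuse into a single $G$-orbit (concretely, a $G$-orbit of size~$N$ meets the sets containing~$q_0$ in an $H$-stable set of size~$N/7$, which may split further under~$H$). So the $H$-orbit sizes on triples of lines do not directly read off the $G$-orbit sizes on $4$-subsets without an additional fusion check. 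Your parenthetical ``(or GAP-assisted) calculation'' is then doing real work, and at that point you are essentially back to the paper's strategy. A cleaner route, given what you have already proved, is to observe that every $G$-orbit outside~$\mathbf{\Sigma}$ maps onto $V\setminus\{0\}$ under~$\sigma$ and hence has size $63m$ for some $\Stab_G(v)$-orbit size~$m$ on the $320$-element fiber; then one only needs to rule out $\Stab_G(v)$-orbits on $\sigma^{-1}(v)$ of sizes summing to~$5$, which is a much smaller check.
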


\begin{proof}
Direct computation using {\tt Magma}~\cite{Magma}.
\end{proof}

Let $\mathbf{\Gamma}$ be the image of the map
\begin{align*}
  \binom{\mathbf{\fes}}{2} &\to \binom{\mathbf{\fes}}{12} \\
	\pi &\mapsto \bigcup \{\sigma\in \Sigma: \pi\subset\sigma\}.
\end{align*}
In~$\F_2^{\mathbf{\fes}}$,
let $\mathbf{1}$, $\widetilde{\mathbf{J}}$, $\mathbf{R}$, $\mathbf{E}$
be the $\F_2$-spans of $\{\sum_{l \in \mathbf{\fes}} l\}$,
$\mathbf{\Gamma}$, $\mathbf{\Sigma}$, $\binom{\mathbf{\fes}}{2}$,
respectively,
where we identify subsets of~$\mathbf{\fes}$ with the sums of their elements
in~$\F_2^{\mathbf{\fes}}$.

\begin{lemma}
\label{L:submodules}
The $\Sp_6(\F_2)$-submodules of~$\F_2^{\mathbf{\fes}}$ are
\[
	0\subset \mathbf{1} \subset \widetilde{\mathbf{J}}
	\subset \mathbf{R} \subset \mathbf{E} \subset \F_2^{\mathbf{\fes}},
\]
which have dimensions $0,1,7,21,27,28$, respectively.
\end{lemma}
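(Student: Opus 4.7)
The plan is to identify each of the nonzero proper spaces with a concrete $\Sp_6(\F_2)$-invariant object of the stated dimension, verify the chain of inclusions, and then argue that the submodule lattice admits no further elements.

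For the four easier dimensions and the inclusions $\mathbf{R}\subset\mathbf{E}\subset\F_2^{\mathbf{\fes}}$: the cases $\dim\mathbf{1}=1$ and $\dim\F_2^{\mathbf{\fes}}=28$ are immediate; $\mathbf{E}$ is exactly the kernel of the weight-mod-$2$ homomorphism $\F_2^{\mathbf{\fes}}\to\F_2$ (pairs have weight $2$ and generate this kernel), giving $\dim\mathbf{E}=27$; and $\mathbf{R}\subset\mathbf{E}$ since syzygetic quadruples have weight~$4$. Applying Proposition~\ref{P:structures} in the direction $(3)\to(1)$ with $g=3$ gives a canonical $\Sp_6(\F_2)$-equivariant isomorphism $\F_2^{\mathbf{\fes}}/\mathbf{R}\isom\frac{\Pic X_s}{\langle\omega\rangle}[2]$, of dimension $2g+1=7$, so $\dim\mathbf{R}=21$.

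For $\widetilde{\mathbf{J}}$: a standard count using $\#\Sigma=315$ (Proposition~\ref{P:315}) and the transitivity of $\Sp_6(\F_2)$ on pairs shows that each pair~$\pi$ lies in exactly $315\cdot 6/378 = 5$ syzygetic quadruples $\sigma_1,\ldots,\sigma_5$; writing $\sigma_i = \pi\cup\pi_i'$ for the disjoint ``partner pair''~$\pi_i'$, and using that the $\pi_i'$ are pairwise disjoint (as $\#\Gamma_\pi=12$), one computes in $\F_2^{\mathbf{\fes}}$ that $\sum_{i=1}^5\sigma_i = 5\pi + \pi_1' + \cdots + \pi_5' = \Gamma_\pi$, so $\widetilde{\mathbf{J}}\subset\mathbf{R}$. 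Summing $\Gamma_\pi$ over all pairs yields a nonzero multiple of the all-ones vector (by transitivity of $\Sp_6(\F_2)$ on bitangents), so $\mathbf{1}\subset\widetilde{\mathbf{J}}$. To pin down $\dim\widetilde{\mathbf{J}}=7$, equip $\F_2^{\mathbf{\fes}}$ with its standard orthonormal symmetric pairing and verify $\widetilde{\mathbf{J}}=\mathbf{R}^\perp$: the containment $\widetilde{\mathbf{J}}\subseteq\mathbf{R}^\perp$ amounts to the combinatorial fact that $|\Gamma_\pi\cap\sigma|$ is even for every pair $\pi$ and every $\sigma\in\Sigma$, and the reverse inclusion then matches dimensions via self-duality, $\dim\mathbf{R}^\perp = 28 - \dim\mathbf{R} = 7$.

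To conclude that this chain is the entire submodule lattice, note that the successive quotients have dimensions $1,6,14,6,1$, with the end factors trivial and the two $6$-dimensional factors both isomorphic to the symplectic $\Sp_6(\F_2)$-module $J[2]$. The cleanest verification is a direct computation in {\tt Magma}, in the spirit of Lemma~\ref{L:incidence structure from group}; conceptually, one checks that $\F_2^{\mathbf{\fes}}$ has socle $\mathbf{1}$ and radical $\mathbf{E}$, and inducts on the socle/radical series, showing each successive layer is simple. The main obstacle is precisely this uniqueness statement: because the composition factors $\F_2$ and $J[2]$ each occur twice, one must rule out an alternative splitting by controlling the relevant $\Ext^1_{\Sp_6(\F_2)}$ groups or verifying the chain directly by computer.
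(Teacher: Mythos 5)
The paper's own proof is a one-line appeal to {\tt Magma} for everything (both the dimensions and the completeness of the lattice), and your final step defers the completeness to exactly the same computation, so for the hard part you and the paper coincide. Your hand arguments for the chain are a genuinely different (and partly successful) supplement: the identification of $\F_2^{\mathbf{\fes}}/\mathbf{R}$ with $\frac{\Pic X_s}{\langle\omega\rangle}[2]$ via Proposition~\ref{P:structures}, giving $\dim\mathbf{R}=21$, and the Steiner-complex computation showing $\Gamma_\pi$ is the sum of the five quadruples through~$\pi$, giving $\widetilde{\mathbf{J}}\subset\mathbf{R}$, are both correct and nice.

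However, two of your steps do not hold up. First, the sum of $\Gamma_\pi$ over \emph{all} pairs is zero, not the all-ones vector: a fixed bitangent $l$ lies in $\Gamma_\pi$ exactly when $l$ belongs to the Steiner complex determined by the sum of~$\pi$, and this happens for $27+27\cdot 5=162$ pairs $\pi$, an even number; so this argument for $\mathbf{1}\subset\widetilde{\mathbf{J}}$ fails (the inclusion is true, but you must produce a different subfamily of the $\Gamma_\pi$ summing to the all-ones vector, or deduce it from $\widetilde{\mathbf{J}}=\mathbf{R}^\perp$). Second, ``the reverse inclusion then matches dimensions via self-duality'' is not an argument: from $\widetilde{\mathbf{J}}\subseteq\mathbf{R}^\perp$ (your parity claim $|\Gamma_\pi\cap\sigma|$ even is fine) and $\dim\mathbf{R}^\perp=7$ you only get $\dim\widetilde{\mathbf{J}}\le 7$; equality requires exhibiting seven linearly independent $\Gamma_\pi$, or some other proof that $\mathbf{R}^\perp\subseteq\widetilde{\mathbf{J}}$, and without it neither $\dim\widetilde{\mathbf{J}}=7$ nor $\mathbf{1}\subset\widetilde{\mathbf{J}}$ is established. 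Since you in any case end by invoking {\tt Magma} (or unspecified $\Ext^1$ computations) for the statement that there are no further submodules --- which is the real content, given that the composition factors $\F_2$ and $J[2]$ each occur twice --- the cleanest repair is either to fix the two points above or simply to let the computer verify the dimensions as well, as the paper does.
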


\begin{proof}
The given modules obviously are submodules.
Direct computation using {\tt Magma} establishes that there are no others
and that the dimensions are as stated.
\end{proof}

\begin{corollary}
\label{C:specialization}
Let $X$ be a smooth plane quartic.
Let $\fes$ be its set of bitangents.
Let $\Sigma$ be its set of syzygetic quadruples.
Construct $E$ and~$R$ from the fake descent setup
as in Section~\ref{S:fake descent}.
Then there is an isomorphism 
$(\fes,\Sigma) \isom (\mathbf{\fes},\mathbf{\Sigma})$
of incidence structures,
and any such isomorphism induces a homomorphism
$\rho \colon \calG \to \Sp_6(\F_2)$
and $\calG$-equivariant isomorphisms
$E \isom \mathbf{E}$, $R \isom \mathbf{R}$,
and $J[2] \isom \ker(\mathbf{E}^\vee \to \mathbf{R}^\vee)$,
where $\calG$ acts on the $\Sp_6(\F_2)$-modules via~$\rho$.
\end{corollary}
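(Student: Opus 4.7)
The plan is to assemble the isomorphisms from structural results already in hand. First I would identify the two incidence structures. By Proposition~\ref{P:structures}, $\Aut(\fes,\Sigma) = \Sp_6(\F_2)$, so $\fes$ is a transitive $\Sp_6(\F_2)$-set of cardinality~$28$; by Lemma~\ref{L:incidence structure from group}\eqref{I:index 28}, $\mathbf{\fes}$ is the unique such $\Sp_6(\F_2)$-set up to isomorphism. Fix a bijection $\fes \isom \mathbf{\fes}$ respecting the $\Sp_6(\F_2)$-action. Transporting $\Sigma$ yields an $\Sp_6(\F_2)$-invariant subset of $\binom{\mathbf{\fes}}{4}$ of size $315$ (Proposition~\ref{P:315}), which must equal $\mathbf{\Sigma}$ by the uniqueness clause of Lemma~\ref{L:incidence structure from group}.

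Next, Corollary~\ref{C:Galois action on theta characteristics} shows that the Galois action on $\fes$ factors through $\Sp_6(\F_2)$, so the chosen bijection converts that action into a homomorphism $\rho \colon \calG \to \Sp_6(\F_2)$, well-defined up to conjugation. The identification $E \isom \mathbf{E}$ is then immediate: by definition of the fake descent setup with $n = 2$ we have $E = \F_2^{\fes}_{\deg 0}$, and $\mathbf{E}$ is precisely the degree-zero part of $\F_2^{\mathbf{\fes}}$.

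For $R \isom \mathbf{R}$ I would appeal to the (3)$\to$(1) direction in the proof of Proposition~\ref{P:structures}. There, for $g = 3$, the map $\epsilon_0$ from the quotient of~$E$ by the span of the $\Sigma$-relations down to $J[2]$, sending a pair $\{P,Q\}$ to $[\beta_P - \beta_Q]$, is shown to be an isomorphism. But this map coincides with the homomorphism $\alpha^\vee \colon E \to \widehat{J}[2]$ from Section~\ref{S:fake cohom def}, so $R = \ker \alpha^\vee$ is identified with the span of $\Sigma$, which under the chosen bijection is exactly $\mathbf{R}$. The same isomorphism shows $\alpha^\vee$ is surjective (as already noted in Corollary~\ref{C:sums of thetas}\eqref{I:sum of 2 thetas}), whence $\widehat{\phi} = [2]$, hence $\phi = [2]$, and so $A[\phi] = J[2]$. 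Feeding these identifications into sequence~\eqref{E:alpha and q} gives the desired isomorphism $J[2] \isom \ker(\mathbf{E}^\vee \to \mathbf{R}^\vee)$.

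I anticipate no genuine obstacle: every map in sight is canonical in the $\calG$-equivariant bitangent data $(\fes,\Sigma,\beta)$, so $\calG$-equivariance propagates automatically.  The only piece of bookkeeping worth flagging is that $\rho$ depends on the chosen incidence isomorphism, but only up to conjugation in~$\Sp_6(\F_2)$, as already anticipated at the start of Section~\ref{S:modules}.
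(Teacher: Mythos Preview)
Your proof is correct, and the overall architecture matches the paper's, but you take a genuinely different route for the identification $R \isom \mathbf{R}$ (and to a lesser extent $E \isom \mathbf{E}$). The paper argues as follows: once the incidence isomorphism $\fes \isom \mathbf{\fes}$ is fixed, the induced isomorphism $\F_2^{\fes} \to \F_2^{\mathbf{\fes}}$ must carry $E$ to $\mathbf{E}$ and $R$ to $\mathbf{R}$ simply because, by Lemma~\ref{L:submodules}, $\F_2^{\mathbf{\fes}}$ has a \emph{unique} $\Sp_6(\F_2)$-submodule of each of the dimensions $27$ and~$21$; no further identification of these submodules is needed.

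Your argument instead identifies $R$ concretely as the $\F_2$-span of~$\Sigma$, by unwinding the $(3)\to(1)$ step of Proposition~\ref{P:structures} and matching $\epsilon_0$ with~$\alpha^\vee$. This is more work but more informative: it explains \emph{why} $R$ is the span of the syzygetic relations, rather than deducing it post hoc from a dimension count backed by a machine computation. The paper's approach is shorter and leans on Lemma~\ref{L:submodules} (already proved by \texttt{Magma}); yours is self-contained given Section~\ref{S:thetas and quadratic forms}. Either is fine. One small point worth making explicit in your write-up: the transitivity of the $\Sp_6(\F_2)$-action on~$\fes$ (needed to invoke Lemma~\ref{L:incidence structure from group}\eqref{I:index 28}) is standard but not literally stated in Proposition~\ref{P:structures}; it follows, for instance, from Witt's theorem applied to the quadratic forms~$q_\vartheta$.
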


\begin{proof}
The isomorphism $(\fes,\Sigma) \isom (\mathbf{\fes},\mathbf{\Sigma})$
exists because of the uniqueness in Lemma~\ref{L:incidence structure from group}.
The $\calG$-action on~$\fes$ must respect~$\Sigma$,
so we get
\[
	\calG \to \Aut(\fes,\Sigma) \isom \Aut(\mathbf{\fes},\mathbf{\Sigma}) 
	= \Sp_6(\F_2).
\]
The induced isomorphism $\F_2^{\fes} \to \F_2^{\mathbf{\fes}}$
sends $E$ to~$\mathbf{E}$ and $R$ to~$\mathbf{R}$
because by Lemma~\ref{L:submodules}
there is at most one submodule of each dimension.
By \eqref{E:alpha and q}, we have
\[
	J[2] \isom \ker\left(E^\vee \to R^\vee\right) \isom 
	\ker(\mathbf{E}^\vee \to \mathbf{R}^\vee).\qedhere
\]
\end{proof}

\subsection{Computing bitangents and syzygetic quadruples}

\begin{lemma}\label{L:computing bitangents and quadruples}
Let $k$ be a field for which arithmetic operations can be computed.
Suppose that $g(x,y,z) \in k[x,y,z]$
is a degree-$4$ homogeneous polynomial 
defining a smooth curve~$X$ in~$\PP^2$.
\begin{enumerate}[\upshape (a)]
\item\label{I:computing Delta}
We can compute $\fes$ as a subscheme in~$\PPdual^2_k$.
\item\label{I:computing Sigma}
In the remaining parts, suppose that $F$ is 
an explicit finite Galois extension of $k$
over which $\fes$ splits completely,
and write $\fes$ for $\fes(F)$.
If we choose a bijection $\fes \isomto \{1,\ldots,28\}$,
then $\Sigma$ can be determined as an explicit subset
of~$\binom{\{1,\ldots,28\}}{4}$.
\item\label{I:syzygetic isomorphism}
With notation as in~\eqref{I:computing Sigma},
we can compute an isomorphism 
$(\fes,\Sigma) \to (\mathbf{\fes},\mathbf{\Sigma})$.
\item\label{I:Galois group}
If we have an explicit description of~$\Gal(F/k)$ acting on~$F$,
then the image of the homomorphism $\rho \colon \calG_k \to \Sp_6(\F_2)$
of Corollary~\ref{C:specialization} can be computed.
\end{enumerate}
\end{lemma}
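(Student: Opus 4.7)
For part~\eqref{I:computing Delta}, the plan is to parametrize $\fes$ directly inside~$\PPdual^2_k$ by translating Definition~\ref{D:bitangent} into explicit polynomial equations. A line $\ell \subset \PP^2$ with dual coordinates $(u:v:w)$ cuts out on $X$ a divisor which, after parametrizing $\ell \isom \PP^1$ and substituting into $g$, is represented by a binary quartic form $F_{u,v,w}(s,t)$ whose coefficients are polynomials in $u,v,w$. The line is a bitangent precisely when $F_{u,v,w}$ is, up to scalar, a square of a binary quadratic, which is equivalent to asking that $\gcd(F_{u,v,w},\partial F_{u,v,w}/\partial s)$ have degree at least~$2$; by standard subresultant theory this is a closed condition expressed by the vanishing of an explicit family of polynomials in~$u,v,w$. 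The ideal generated by these polynomials defines~$\fes \subset \PPdual^2_k$, and can be handled by Gr\"obner-basis computation in~$k[u,v,w]$.

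For part~\eqref{I:computing Sigma}, once the $28$ bitangents are known over~$F$, we apply Lemma~\ref{L:bitang pair classes}: a $4$-subset $\{l_1,\ldots,l_4\}$ is syzygetic if and only if the eight tangency points in $\beta_{l_1}+\cdots+\beta_{l_4}$ lie on some plane conic. For each bitangent $l_i$, we first compute $\beta_{l_i}$ by restricting $g$ to $l_i$ and taking the square root over~$F$ of the resulting binary quartic. For each of the $\binom{28}{4}$ quadruples we then test whether some nonzero conic passes through the corresponding eight points --- a single rank computation on a $6$-column matrix built from degree-$2$ monomials evaluated at the points. Collecting the quadruples for which this rank drops yields~$\Sigma$.

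For part~\eqref{I:syzygetic isomorphism}, we reduce to finding any isomorphism between the two concrete incidence structures on a $28$-element set. By Proposition~\ref{P:structures} together with Remark~\ref{R:connectedness of moduli space}, such an isomorphism exists. We construct one by a backtracking search: fix the image of a first element of $\fes$, extend one element at a time, and at each step prune against the condition that a quadruple in $\fes$ lies in $\Sigma$ precisely when its image lies in $\mathbf{\Sigma}$; since $\Aut(\mathbf{\fes},\mathbf{\Sigma}) = \Sp_6(\F_2)$ acts transitively and the structures have no small automorphism issues, the search terminates quickly with a valid bijection. For part~\eqref{I:Galois group}, each element of $\Gal(F/k)$ acts on~$\fes$ through its given action on the coefficients in the defining equations of the bitangents (computed in~\eqref{I:computing Delta}); transporting each such permutation through the isomorphism produced in~\eqref{I:syzygetic isomorphism} yields an element of $\Sp_6(\F_2) = \Aut(\mathbf{\fes},\mathbf{\Sigma})$, and the image of~$\rho$ is then the subgroup generated by the images of a generating set of~$\Gal(F/k)$.

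The main obstacle in practice will be~\eqref{I:computing Delta}: the polynomial system cutting out $\fes$ can have large degree and its explicit solution over a number field may require nontrivial computation, in particular the identification of a suitable splitting field~$F$. Parts~\eqref{I:computing Sigma}--\eqref{I:Galois group}, by contrast, are purely linear-algebraic or combinatorial and present no serious difficulty once~\eqref{I:computing Delta} has been carried out.
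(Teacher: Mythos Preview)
Your approach matches the paper's in overall structure for all four parts, but there is one genuine error in~\eqref{I:computing Delta}. The claimed equivalence ``$F_{u,v,w}$ is a square of a quadratic $\iff \deg\gcd(F_{u,v,w},\partial_s F_{u,v,w}) \ge 2$'' is false: a binary quartic with root-multiplicity pattern $(3,1)$ also has $\gcd$ of degree~$2$, yet is not a square. Geometrically, such forms arise exactly when $\ell$ is the tangent line at an ordinary flex of~$X$, and a smooth plane quartic generically has $24$ of these. Your subresultant locus would therefore contain the flex-tangent lines in addition to the $28$~bitangents. The paper instead imposes the condition $F_{u,v,w}(s,t)=(a_0 s^2+a_1 st+a_2 t^2)^2$ with indeterminate $a_0,a_1,a_2$ and eliminates the~$a_i$; this correctly cuts out perfect squares. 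Your argument is easily repaired along the same lines, or by adjoining a further invariant that separates the patterns $(2,2)$ and~$(4)$ from~$(3,1)$.

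For~\eqref{I:computing Sigma} the paper takes a slightly more economical route: rather than looping over all $\binom{28}{4}$ quadruples, it loops over the $\binom{28}{3}$ triples, for each one solves by linear algebra for a conic~$Q$ with $Q.X \ge \beta_{l_1}+\beta_{l_2}+\beta_{l_3}$, and reads off the fourth bitangent, if any, from the residual divisor $Q.X - (\beta_{l_1}+\beta_{l_2}+\beta_{l_3})$. Your direct test on quadruples is equally valid; note only that when some $\beta_{l_i}$ involves a repeated point (a hyperflex), the ``eight points on a conic'' test must be read as a divisor condition rather than a set-theoretic one. Parts~\eqref{I:syzygetic isomorphism} and~\eqref{I:Galois group} are essentially identical to the paper's argument.
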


\begin{proof}\hfill
  \begin{enumerate}[\upshape (a)]
  \item 
We describe $\fes$ as a subscheme of~$\PPdual^2$
by describing its intersection with each standard affine patch of~$\PPdual^2$.
For example, to compute the part of~$\fes$ in the patch
whose points correspond to lines $l \colon ux+vy+wz=0$ with $w=1$,
substitute the parametrization $(s:t)\mapsto (s:t:-us-vt)$ of the line
into $g(x,y,z)$, set the result equal to $(a_0 s^2 + a_1 st + a_2 t^2)^2$
for indeterminate $a_0,a_1,a_2$,
and eliminate $a_0,a_1,a_2$ to find the conditions on $u,v,w$
for $l$ to be a bitangent.
\item 
We use criterion~(iii) in Lemma~\ref{L:bitang pair classes}.
Enumerate the $3$-element subsets $\{l_1,l_2,l_3\}$ of~$\fes$.
For each, use linear algebra over $F$ to determine if there is a
conic~$Q$ in~$\PP^2$ such that
$Q.X \ge \beta_{l_1} + \beta_{l_2} + \beta_{l_3}$;
if so, compute $Q.X$ to check whether it equals
$\beta_{l_1} + \beta_{l_2} + \beta_{l_3} + \beta_{l_4}$
for some $l_4 \in \Delta$.
Record all such $\{l_1,\ldots,l_4\}$.
\item 
Given \eqref{I:computing Sigma},
this is a matter of matching combinatorial data.
A bijection $\fes \to \fes'$ can be built one value at a time;
we try all possibilities, checking as we go along that the
distinguished $4$-element subsets match so far. 

\item 
We compute equations for the bitangents,
and hence compute the action of $\Gal(F/k)$ on $\fes$.
Using~\eqref{I:syzygetic isomorphism}, we translate this
into an action of~$\Gal(F/k)$ on~$\mathbf{\fes}$.
The image of $\Gal(F/k) \to \Aut \mathbf{\fes}$
is~$\im \rho$.
\qedhere
  \end{enumerate}
\end{proof}

\subsection{Computing the discriminant of a ternary quartic form}
\label{S:computing discriminant}

The following is well-known.

\begin{lemma}
Fix positive integers $n$ and~$d$.
Let $g(x_0,\ldots,x_n) \in \Z[x_0,\ldots,x_n]$
be a degree-$d$ homogeneous polynomial with indeterminate 
coefficients $c_1,\ldots,c_N$,
so $N = \binom{n+d}{d}$.
Then there is a polynomial $D(c_1,\ldots,c_N) \in \Z[c_1,\ldots,c_N]$,
called the \defi{discriminant},
such that whenever $c_1,\ldots,c_N$ are specialized to elements
of a field~$k$,
the polynomial~$D$ vanishes if and only if
the hypersurface $g=0$ in~$\PP^n_k$ fails to be smooth of dimension~$n-1$.
Moreover, $D$ is unique up to a sign.
\end{lemma}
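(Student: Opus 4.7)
The plan is to construct $D$ as the defining polynomial of the image of a universal incidence variety, and then verify its characteristic-independent behaviour.  Consider $V \subset \Aff^N_\Z \times \PP^n_\Z$ cut out by $g$ together with its partial derivatives $\partial_0 g,\ldots,\partial_n g$; for a field $k$ and a specialisation $c \in k^N$, the hypersurface $g_c = 0$ has a singular point over $\kbar$ if and only if $V$ has a $\kbar$-point above~$c$.  Projecting $V$ onto $\PP^n$ displays it as a bundle of affine linear subspaces of $\Aff^N$: the defining equations become linear forms in the coefficients~$c$, and their rank is constant because the construction is preserved by the transitive $\GL_{n+1}$-action on $\PP^n$.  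In characteristic zero, Euler's identity $\sum x_j\partial_j g = d g$ forces that rank to be $n+1$ (the equation coming from $g$ is redundant), so $V_\Q$ is geometrically irreducible of dimension $N-1$.

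By properness of $\PP^n$, the image $\calD \colonequals \pi_1(V) \subset \Aff^N_\Z$ is closed.  The open locus of smooth hypersurfaces in $\Aff^N_\Q$ is nonempty (witness the Fermat hypersurface $x_0^d + \cdots + x_n^d$), so $\calD_\Q$ is a proper closed subvariety of $\Aff^N_\Q$; combined with the dimension count above, this forces $\dim \calD_\Q = N-1$ and $\calD_\Q$ irreducible.  Therefore $\calD_\Q$ is the vanishing locus of a single irreducible polynomial in $\Q[c_1,\ldots,c_N]$, and clearing denominators and extracting the content yields a primitive $D \in \Z[c_1,\ldots,c_N]$, unique up to $\pm 1$ by Gauss's lemma.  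The direction ``$g_c = 0$ non-smooth $\Rightarrow D(c) = 0$'' in every characteristic is immediate: a singular point of $g_c = 0$ produces a $\kbar$-point of $V$ above $c$, hence $c$ lies in the scheme-theoretic image $\calD$.

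The main obstacle is the reverse implication, together with uniqueness, in characteristics where Euler's identity degenerates.  I would handle it by constructing a companion polynomial directly via the classical Macaulay--Cayley multivariate resultant applied to the system $(g,\partial_0 g,\ldots,\partial_n g)$: determinantal formulae produce $D' \in \Z[c_1,\ldots,c_N]$ whose vanishing over any field is equivalent to the existence of a common projective zero of the system, hence to non-smoothness of the hypersurface.  Over $\Q$, a comparison with the discriminant locus above shows that $D' = D \cdot E$, where the extraneous factor~$E$ is a product of coefficient-minors that cannot vanish identically on $\calD$ in any characteristic; this transfers the ``if'' direction from $D'$ to $D$.  Applying the same analysis to any other primitive square-free polynomial with the stated property shows it equals $\pm D$, giving uniqueness.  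For the paper's intended application to plane quartics in characteristic zero, the simpler resultant $\operatorname{Res}(\partial_0 g,\partial_1 g,\partial_2 g)$ already gives $D$ up to an explicit nonzero constant.
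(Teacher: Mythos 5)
The first half of your construction---the incidence scheme $V \subset \Aff^N_\Z \times \PP^n_\Z$ cut out by $g$ and its partials, properness of the projection, irreducibility of $V_\Q$ via the linear fibration over $\PP^n$, and extraction of a primitive integral $D$ by Gauss's lemma---is sound and is essentially the paper's argument, except that the paper runs it over $\Z$ while you run it only over $\Q$. That difference is where the genuine gap sits: everything you prove about $D$ concerns the generic fibre, and the passage to positive characteristic is not actually carried out. Concretely, (1) your ``immediate'' direction (non-smooth $\Rightarrow D(c)=0$) only shows that $c$ lies in the closed image $\calD=\pi_1(V)$; to conclude $D(c)=0$ you need $\calD\subseteq V(D)$, i.e.\ that $\calD$ has no vertical components over primes $p$, which does not follow from information about $V_\Q$ alone. (2) The proposed repair of the reverse direction via a ``Macaulay--Cayley resultant of $(g,\partial_0 g,\ldots,\partial_n g)$'' is not a classical construction: the resultant takes exactly $n+1$ forms in $n+1$ variables, and $\operatorname{Res}(\partial_0 g,\ldots,\partial_n g)$ detects a common zero of the partials, which is equivalent to non-smoothness of $\{g=0\}$ only when $\operatorname{char} k \nmid d$---precisely the case you need is excluded; and the asserted factorization $D'=D\cdot E$ with $E$ never vanishing on $\calD$ in any characteristic is exactly what would have to be proved. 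A smaller slip: $\dim\calD_\Q=N-1$ is not ``forced'' by $\dim V_\Q=N-1$ and $\calD_\Q\neq\Aff^N_\Q$; you need $V\to\calD$ to be generically finite, e.g.\ by exhibiting (for $d\ge 2$) a hypersurface with only finitely many singular points.

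The fix lives inside your own setup and is what the paper does: run the fibration argument integrally. At $x=(1:0:\cdots:0)$ the $n+2$ linear forms in the coefficients cut out by $g,\partial_0 g,\ldots,\partial_n g$ are, up to the factor $d$, the coefficient of $x_0^d$ and the coefficients of $x_0^{d-1}x_j$ for $1\le j\le n$; their rank is $n+1$ in every characteristic, including $p\mid d$, because when the Euler-degenerate partial drops out the equation $g$ itself supplies the missing rank. Hence the fibres of $V\to\PP^n_\Z$ are affine linear of codimension $n+1$ over every point, $V$ is integral over $\Z$, and its closed image $\calD$ is integral and dominates $\Spec\Z$; granting generic finiteness it is a hypersurface in $\Aff^N_\Z$, so its ideal is principal, generated by an irreducible $D$ unique up to sign, and then $D(c)=0$ if and only if $c\in\calD$ if and only if the specialized hypersurface is non-smooth, uniformly in all characteristics and with no resultant patch needed.
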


\begin{proof}[Sketch of proof]
Let $B=\Aff^N_\Z$, and let $X \subset \PP^n \times B$ be the closed 
subscheme defined by $g=0$, so $X \to B$ is the universal family
of degree-$d$ hypersurfaces in~$\PP^n$.
Let $S$ be the locus where $X \to B$ fails to be smooth of dimension~$n-1$.
Then $S \to B$ is proper, 
so its image is a closed subscheme $V \subseteq \Aff^N_\Z$.
On the other hand, analyzing the fibers of the other projection $S \to \PP^n$
lets us show that $S$ and~$V$ are integral (or empty)
and lets us compute their dimensions.
This eventually shows that $V$ is a hypersurface (possibly empty),
so $V$ is cut out by a single equation, defined up to a unit of~$\Z$.
\end{proof}

We now restrict to the case $n=2$ and~$d=4$,
in which case $D$ is a polynomial of degree~$27$,
denoted~$I_{27}(g)$ in the notation of~\cite{Dixmier1987}.
We fix the sign by decreeing that $I_{27}(x^4+y^4+z^4)>0$. 

Over a ring in which $\deg g$ is invertible,
the discriminant of~$g$ agrees, up to a unit,
with the resultant~$R(g)$ of
$\frac{\del g}{\del x}, \frac{\del g}{\del y}, \frac{\del g}{\del z}$ 
(cf.~\cite{Gelfand-Kapranov-Zelevinsky2008}*{Chapter~13, \S1}),
for which an algorithm is given in~\cite{Salmon1876}*{\S91}.
Thus $I_{27}(g) = c R(g)$ for some $c \in \Z[1/2]^\times$ independent of~$g$.

To determine the power of~$2$ in~$c$,
we compute~$R(g)$ for a single $g \in \Z_2[x,y,z]$
for which $g=0$ is smooth over~$\Z_2$
(e.g., take $g = x^4 + y^3 z + z^3 y$);
for this~$g$ we must have $c R(g) \in \Z_2^\times$.
The sign of~$c$ can be determined as the sign of~$R(x^4+y^4+z^4)$.
It turns out that $c=2^{-14}$ 
(cf.~\cite{Gelfand-Kapranov-Zelevinsky2008}*{Chapter~13, Proposition~1.7} 
and~\cite{Demazure2012}*{\S5, D\'efinition~4 and Exemple~3}).

This algorithm was implemented by Christophe Ritzenthaler, 
and included by David Kohel 
in his {\tt Magma} package {\tt Echidna}~\cite{Kohel-Echidna}.
We used this implementation
(but negated the output to make the sign agree with our convention).

\begin{remark}
Given a smooth plane quartic curve~$X$ over~$\Q$ defined by $g=0$,
we can choose $g \in \Z[x,y,z]$ to minimize~$|I_{27}(g)|$.
But $I_{27}(-g)=-I_{27}(g)$, 
so the {\em sign} of~$I_{27}(g)$ for such a minimizer $g$
is not uniquely determined by the curve $X$ in $\PP^2_\Q$.
\end{remark}

\subsection{Computing fake Selmer groups of smooth plane quartics}
\label{S:g3 compute}

We apply the procedure outlined in Section~\ref{S:compute fake} 
to compute~$\Selfake{\alpha}(J)$ for the fake descent setup given
in Section~\ref{S:bitangents},
under the assumption that $X_{k_v}$ has a divisor of degree~$1$
for all $v \in \Omega_k$.
For simplicity and practicality, we assume that $k=\Q$.
By multiplying the polynomial $g(x,y,z)$ defining~$X$ in~$\PP^2$
by a positive integer, we may assume that $g$ has coefficients in~$\Z$.
We proceed with the following steps:
\begin{enumerate}
 \item Determine $L$.
 \item Determine the ring of integers $\calO_L$ of~$L$.
 \item Determine a finite set $\calS\subset\Omega_k$ 
	containing the archimedean places, the
	places of residue characteristic~$2$,
	and the places~$v$ where the Tamagawa number
	$c_v(J)$ is even.
 \item Determine $\widetilde{L(2,\calS)}$.
 \item Determine the image $G$ of $\calG \to \Aut \fes$
       as a subgroup of~$\Sp_6(\F_2)$ up to $\Sp_6(\F_2)$-conjugacy.
 \item Choose a finite set $\calT\subset\Omega_k$. For each~$v\in\calT$,
 \begin{enumerate}
   \item Determine the decomposition group~$D_v$ as a subgroup of~$G$
         up to $G$-conjugacy.
   \item Determine $\im C_v$.
 \end{enumerate}
 \item Compute $S_\calT'=\{\delta\in \widetilde{L(2,\calS)}: 
	N_{L/k}(\delta)=1 \in \frac{k^\times}{k^{\times 2}} 
	\textup{ and } 
	\Res_v\delta \in \im C_v \textup{ for all } v\in\calT\}$.
\end{enumerate}
Sections~\ref{S:determine L}--\ref{S:local computations} 
elaborate on the implementation of the corresponding steps.

Lemma~\ref{L:kernel of norm} and 
Theorem~\ref{T:finite description of Selmer}\eqref{I:upper bound for fake Sel}
imply $\Selfake{\alpha}(J) \subseteq S_\calT'$.
So the result of the calculation is an upper bound~$S_\calT'$
for~$\Selfake{\alpha}(J)$,
which by Theorem~\ref{T:finite description of Selmer}\eqref{I:good T}
can be guaranteed to equal~$\Selfake{\alpha}(J)$ by choosing $\calT$
appropriately large.
(But as explained in Remark~\ref{R:smaller T}, 
in practice we will often not choose $\calT$ so large.)

\subsubsection{Determine $L$}\label{S:determine L}

Our goal is to find a squarefree $h(t) \in k[t]$
of degree~$28$ such that $\fes \isom \Spec L$ with $L\colonequals k[t]/h(t)$.
First compute~$\fes$ as a subscheme of~$\PPdual^2$ as in
Lemma~\ref{L:computing bitangents and quadruples}\eqref{I:computing Delta}.
Next, repeatedly choose a projection $\PPdual^2\to\PP^1$
(i.e., perform a change of variables, and then eliminate a variable)
until one is found that maps $\fes$ to an \'etale scheme~$\fes'$
of degree~$28$ with $\infty \notin \fes'$.
Then $\fes'$ is the zero locus in~$\Aff^1$ of the desired $h(t) \in k[t]$.

Let $\theta$ be the image of~$t$ in~$L$.

\subsubsection{Determine the ring of integers of $L$}

Here our task is to find a $\Z$-basis of~$\calO_L$,
with each basis element expressed as a polynomial in~$\theta$
of degree less than~$28$.
This is a standard operation in algebraic number theory packages 
if $L$ is a field. 
If $L$ splits nontrivially as a product of fields, 
we just compute the integer rings of each constituent separately 
and piece the results together using the Chinese remainder theorem.

{}From the denominators of the coefficients of~$h(x)$
we find $m\in\Z$ such that $m\theta$ is an algebraic integer. 
The standard approach for determining~$\calO_L$ would then be
to compute the integral closure of $\calO\colonequals \Z[m\theta]$ in~$L$.
But since $\theta$ was obtained
by projecting a degree~$28$ subscheme of~$\PP^2$ onto~$\PP^1$,
there are probably primes introduced into~$\Disc \calO$
that do not divide $\Disc \calO_L$.  In practice these primes can easily
be of size~$10^{100}$, so even finding them in~$\Disc \calO$ may involve
a challenging factorization. 

Here are two ways to circumvent this problem:
\begin{enumerate}[\upshape (1)]
\item 
Compute the \defi{discriminant}~$I_{27}(g)$ of the quartic form
as in Section~\ref{S:computing discriminant},
and factor it.
If $p$ is a prime not dividing the integer~$I_{27}(g)$,
then the $28$~bitangents of~$X$ reduce to the $28$~distinct bitangents
of the smooth plane quartic curve defined by $g(x,y,z) \bmod p$,
so $p \nmid \Disc \calO_L$.
In other words, the set of prime factors of~$I_{27}(g)$
is an upper bound for the set of primes dividing~$\Disc \calO_L$.
{\tt Magma}'s routine {\tt MaximalOrder} accepts this upper bound as part
of the input to help it remove extraneous factors via a lazy factorization.
\item
Use a different projection to find a second order~$\calO'$.
Then the order generated by $\calO$ and~$\calO'$
is likely of small index in~$\calO_L$,
in which case computing its integral closure in~$L$ is much easier.
\end{enumerate}

\begin{remark}
It is advisable to compute an LLL-reduced basis for~$\calO_L$ and use that
to represent elements of $\calO_L$ and~$L$.
\end{remark}

\subsubsection{Determine the set $\calS\subset\Omega_k$}

Compute the integer~$I_{27}(g)$ and factor it.
If $p$ is a prime such that $p \nmid I_{27}(g)$, 
then $X$ has good reduction at~$p$,
from which it follows that $J$ has good reduction at~$p$
and $c_p(J)=1$.
Therefore we may take the set~$\calS$
of Proposition~\ref{P:Sel is unramified}
to be the set of prime divisors of~$I_{27}(g)$,
together with $2$ and~$\infty$.

\begin{remark}
\label{R:smaller S}
Often we can use a smaller~$\calS$ by computing a proper
regular model of~$X$ over~$\Z_p$ and using
\cite{Bosch-Lutkebohmert-Raynaud1990}*{\S9.6,~Theorem~1} 
to compute~$c_p(J)$.
{\tt Magma} has an implementation by Steve Donnelly 
that can compute regular models of plane curves in many cases; 
there is also a function that extracts the component group~$\Phi$;
then $c_p(J)=\#\Phi(\F_p)$ is a divisor of~$\#\Phi$,
so this may let us prove that $c_p(J)$ is odd, 
in which case $p$ can be excluded from~$\calS$.
\end{remark}

Here is a common special case in which no extra computation is required
to exclude a prime:
\begin{remark}
\label{R:primes of multiplicity 1}
Suppose that an odd prime $p$ appears with exponent~$1$ in
the factorization of~$I_{27}(g)$.
Then $\Proj \Z_p[x,y,z]/(g)$ is regular and its special fiber is
an irreducible curve with multiplicity~$1$, with a single node.
We deduce $c_p(J)=1$, so we may exclude $p$ from~$\calS$.
\end{remark}

\subsubsection{Determine $\widetilde{L(2,\calS)}$}

Our goal here is to compute elements of~$L^\times$
whose images in~$L^\times/L^{\times 2} k^\times$
form a basis for~$\widetilde{L(2,\calS)}$.

We first compute~$L(2,\calS)$.
If we compute $L(2,\calS')$ for some $\calS' \supset \calS$,
we can recover its subgroup~$L(2,\calS)$
by performing linear algebra with the valuations in~$\calS'-\calS$;
thus we are free to enlarge~$\calS$.
If $\calS$ is enlarged enough that 
we can verify that $\Cl(\calO_{L,\calS})[2]=0$,
then \eqref{E:L(n,S)} implies that
$L(2,\calS)=\calO_{L,\calS}^\times/\calO_{L,\calS}^{\times 2}$.

Next, since $k=\Q$, we have $\Cl(\calO_\calS)=0$,
so \eqref{E:widetilde{L(n,S)}} implies
$\widetilde{L(2,\calS)} = \coker\left( \calO_S^\times/\calO_S^{\times 2} 
	\to \calO_{L,\calS}^\times/\calO_{L,\calS}^{\times 2}\right)$.

Thus we need only solve the following standard problems
of algebraic number theory:
\begin{enumerate}[\upshape (A)]
\item\label{I:find S} 
find $\calS$ such that $\Cl(\calO_{L,\calS})[2]=0$, and
\item\label{I:compute units} 
compute bases for $\calO_S^\times/\calO_S^{\times 2}$ and 
$\calO_{L,\calS}^\times/\calO_{L,\calS}^{\times 2}$.
\end{enumerate}

Current methods for solving problem~\eqref{I:find S}
involve computing the whole class group $\Cl(\calO_{L,\calS})$.
Doing this unconditionally requires computation up to the Minkowski
constants of the fields~$L_i$ with product~$L$,
which requires running time polynomial in~$\Disc \calO_{L_i}$;
in the case where $L$ is a degree~$28$ field, 
it is rare that $\Disc \calO_L$ is small enough to make this practical.
On the other hand, if we are willing to assume the Generalized Riemann
Hypothesis for the~$L_i$ of large degree,
we can handle many more instances.

Problem~\eqref{I:compute units} is less serious, 
once problem~\eqref{I:find S} has been solved.
One can check whether an element of~$\calO_{L,\calS}^\times$ is a square,
by constructing the square root numerically to prove a positive answer
and by reducing modulo primes to prove a negative answer.
The Dirichlet $\calS$-unit theorem 
yields $\dim_{\F_2} \calO_{L,\calS}^\times/\calO_{L,\calS}^{\times 2} = \#\calS$
in advance, so if generators are constructed conditionally,
they can be verified without difficulty.

\begin{remark}
{\tt Magma} has a command to compute $L(2,\calS)$ when $k=\Q$.
\end{remark}

\subsubsection{Choice of $\calT$}
\label{S:choice of T}

Choose a finite set of places~$\calT$,
while keeping Remark~\ref{R:smaller T} in mind.
If it is necessary to get a better upper bound~$S_\calT'$ on
$\Seltf{\alpha}(J)$, we can enlarge $\calT$ later.

\subsubsection{Determine the global and local Galois actions on $\fes$}
\label{S:g3 galois}

We have two goals:
\begin{enumerate}[\upshape (A)]
\item\label{I:global Galois}
Describe the image $G$ of
the homomorphism $\rho \colon \calG \to \Sp_6(\F_2)$ 
in Corollary~\ref{C:specialization}
as a subgroup of~$\Sp_6(\F_2)$ up to $\Sp_6(\F_2)$-conjugacy.
Since in Lemma~\ref{L:incidence structure from group}\eqref{I:index 28} 
we fixed an embedding $\Sp_6(\F_2) \injects S_{28}$,
the answer can be specified by giving explicit elements of~$S_{28}$
that generate one such representative~$G$ of its conjugacy class.
\item\label{I:local Galois}
For each $v \in \calT$,
describe the decomposition group~$D_v$
as a subgroup of the~$G$ in \eqref{I:global Galois} up to $G$-conjugacy.
Each $D_v$ is to be specified by a list of elements of $G \subseteq S_{28}$
that generate~$D_v$.
\end{enumerate}
If \eqref{I:global Galois} and~\eqref{I:local Galois} are done, 
then $E$, $R$, $J[2]$ with the actions of $\calG$ and~$\calG_{k_v}$
can be computed as submodules and subquotients
of~$\F_2^{\fes}$, as in Section~\ref{S:modules}.

{\tt Magma} shows that there are $1397$~conjugacy classes
of subgroups in~$\Sp_6(\F_2)$ to distinguish.
Here are some methods that can be applied (with varying degree
of success) towards determining the image~$G_K$ of $\calG_K \to \Sp_6(\F_2)$
(up to $\Sp_6(\F_2)$-conjugacy) for a field~$K$:
\begin{itemize}
\item \emph{Splitting field.} 
If a splitting field of $\fes$ over~$K$
is of small enough degree,
then Lemma~\ref{L:computing bitangents and quadruples}\eqref{I:Galois group}
is practical.
But in the worst case, the splitting degree is $\#\Sp_6(\F_2) = 1\,451\,520$.
\item \emph{Orbit sizes.}
Factoring $h(x)$ over~$K$ determines the orbit sizes of~$G_K$;
this already cuts the number of possibilities from $1397$ down to 
at most~$107$ (there are $107$~conjugacy classes with orbit size multiset
$\{4,8,16\}$).
Factoring $h(x)$ over the fields obtained by adjoining 
one root of (one factor of)~$h(x)$
determines the orbit size multisets for the one-point stabilizers;
these constrain $G_K$ further.
\item \emph{Decomposition subgroups.}
If $K$ is a global field~$k$,
and we have computed decomposition subgroups~$D_v$ in~$\Sp_6(\F_2)$
up to $\Sp_6(\F_2)$-conjugacy for several~$v$,
then these constrain the possibilities for~$G_K$.
\end{itemize}

Next we discuss the practicality of these methods for special types
of fields~$K$:
\begin{enumerate}[\upshape 1.]
\item \emph{Finite fields.} 
Here $G_K$ is cyclic.
{\tt Magma} shows that for each cyclic subgroup of~$\Sp_6(\F_2)$,
either the orbit size multiset determines the conjugacy class uniquely,
or the subgroup has size at most~$6$, 
in which case the splitting field method is practical.
So $G_K$ is determined easily.
\item \emph{Archimedean local fields.} 
The splitting field has degree at most~$2$,
so use the splitting field method.
\item \emph{Non-archimedean local fields.} 
If $X$ has good reduction, then we reduce to the case of a finite field.
Otherwise, let $q=p^e$ be the size of the residue field (for $k=\Q$, $e=1$);
then we have normal subgroups $P \triangleleft I$ of~$G_K$
(wild inertia and inertia)
such that $G_K/I$ is cyclic of some order~$f$,
and $I/P$ is cyclic of order dividing~$q^f-1$
and $P$ is a $p$-group.
If $p>7$, then $p \nmid \#\Sp_6(\F_2)$, so $P=\{1\}$,
and $G_K$ is metacyclic;
there are only $214$~conjugacy classes of metacyclic subgroups
of~$\Sp_6(\F_2)$, and the largest such subgroup has order~$60$,
which is about the largest degree 
for which the splitting field method is easily practical.
For $K=\Q_2,\Q_3,\Q_5,\Q_7$,
the maximal size of $G_K$ is $2304,432,120,60$,
respectively;
if the splitting field method is impractical,
we can eliminate possibilities by using the orbit size methods,
and by using that the local Galois group must be contained in
the global Galois group if the latter has been computed already.
\item \emph{The global field $\Q$.}
Determining the Galois group of a polynomial $h(t)\in\Q[t]$ 
is a standard problem in computational number theory: 
see \cites{Stauduhar1973, Geissler-Klueners2000, Geissler2003}.
A recent implementation by Claus Fieker and J\"urgen Kl\"uners in {\tt Magma}
chooses a prime~$p$, labels the roots of~$h$ in a splitting field~$K_p/\Q_p$
by giving approximations to them,
and computes the Galois group over~$\Q$ as a subgroup of
the permutations of the labels; in fact, by choosing an unramified prime~$p$,
the computations can be done in finite extensions of~$\F_p$ instead of~$\Q_p$.
Once this is done for our~$h$ of degree~$28$,
the splitting field method can then compute $G$ in~$\Sp_6(\F_2)$
up to $\Sp_6(\F_2)$-conjugacy.

An alternative to using the general-purpose implementation above
is to use orbit sizes and decomposition subgroups to try to
eliminate all possibilities except one.
When this succeeds, which in practice seems to be almost always the case,
it is often faster.
\end{enumerate}

Summary: In practice we can always achieve goal~\eqref{I:global Galois}.
Goal~\eqref{I:local Galois} may be more difficult to reach,
but the methods presented above often succeed.
{}From now on, we assume that both goals have been reached.

\subsubsection{Determining the local images}
\label{S:local computations}

Given $v \in \calT$, our goal is to compute elements of~$L_v^\times$
whose images in $L_v^\times/L_v^{\times 2} k_v^\times$ form a basis of~$\im C_v$.
We follow the approach in Section~\ref{S:compute fake}.

First let us explain how to compute the map~$\cc_K$
in Section~\ref{S:non-arch local image},
for any finite extension~$K$ of~$k_v$.
Let $(u_\theta:v_\theta:w_\theta)\in\fes(L)$ 
be the generic point of~$\fes$ over~$k$;
we perturb $x_0$ so that $u_\theta x + v_\theta y + w_\theta z$ does not vanish
at (points in the support of)~$x_0$.
Given $P \in X(K)$,
choose a linear form $u_0 x + v_0 y + w_0 z \in K[x,y,z]$ 
not vanishing at~$P$ or (points in the support of)~$x_0$.
If $P \in X(K)$ does not lie on any bitangent,
then evaluating the rational function
\[ 
	\frac{u_\theta x + v_\theta y + w_\theta z}{u_0 x + v_0 y + w_0 z}
\]
at $P$ and $x_0$ give elements of~$L_K^\times$;
their ratio mapped to $L_K^{\times}/L_K^{\times 2} K^\times$ is $\cc_K(P)$.
If $P$ does lie on a bitangent, then use Remark~\ref{R:no moving necessary},
which applies since the~$\beta_l$ are disjoint.

Now, to compute $\im C_v$,
attempt to use Remark~\ref{R:image of random points},
in which for $k_v=\Q_p$, \eqref{E:J(k_v) mod n} becomes
\begin{equation}
\label{E:im gamma_v}
\# \frac{J(k_v)}{2J(k_v)} =
 \begin{cases}
 \#J[2](k_v), &\textup{if $v \ne 2$}\\
 8\#J[2](k_v), &\textup{if $v=2$.}\\
 \end{cases}
\end{equation}
If the approach of Remark~\ref{R:image of random points} fails, 
use the general approach of 
Section \ref{S:non-arch local image} or~\ref{S:arch local image}.

\subsection{Comparing $\Selfake{\alpha}(J)$ with $\Sel{2}(J)$}

Let notation be as in Section~\ref{S:g3 compute},
where we computed a group $S'_\calT$ containing $\Selfake{\alpha}(J)$,
which Section~\ref{S:selmer group comparison} relates to~$\Sel{2}(J)$.

We review here the ways in which $S'_\calT$, $\Selfake{\alpha}(J)$,
and~$\Sel{2}(J)$ can differ:
\begin{enumerate}[\upshape (i)]
 \item $S'_\calT$ may be larger than $\Selfake{\alpha}(J)$
 \item $\Selfake{\alpha}(J)$ may fail to be contained in 
$\alpha(H^1(k,J[2]))$ (both are subgroups of $H^1(k,E^\vee)$).
 \item If $\Selfake{\alpha}(J) \subseteq \alpha(H^1(k,J[2]))$,
then $\alpha$ induces a map $\Sel{2}(J)\to \Selfake{\alpha}(J)$
(see Theorem~\ref{T:Selseq}),
but it may fail to be injective and/or surjective.
\end{enumerate}

And here are ways to detect or avoid these differences:
\begin{enumerate}[\upshape (i)]
\item
Theorem~\ref{T:finite description of Selmer}\eqref{I:good T} 
shows how to choose~$\calT$
to ensure that $\Selfake{\alpha}(J) = S_\calT$,
in which case both equal the group~$S_\calT'$ sandwiched between them.
But the amount of computation involved in implementing that strategy 
is probably prohibitive,
so we may prefer to use the observation of Remark~\ref{R:smaller T}.
\item
Lemma~\ref{L:Sha1R}\eqref{I:if Sha=0} 
shows that the containment 
$\Selfake{\alpha}(J) \subseteq \alpha(H^1(k,J[2]))$
follows if $\Sha^1(k,R^\vee)=0$,
and Proposition~\ref{P:sha1 decomp} gives an upper bound
on~$\Sha^1(k,R^\vee)$ that is often~$0$.
More precisely,
this upper bound is~$0$ for $1103$ of the subgroup classes of~$\Sp_6(\F_2)$.
Even if we are unlucky enough to have $G$ 
in the remaining $266$ subgroup classes,
there is hope since we find that 
there is a $G$-orbit in $\Sigma$ of size at most~$7$,
so the ideas of Appendix~\ref{S:SelDirect} are probably practical; and
$J[2]$ is reducible, 
so there are nontrivial isogenies available for descent computations.
\item 
In order to estimate the kernels and cokernels of 
$\Sel{2}(J)\to \Selfake{\alpha}(J)$,
we use Theorem~\ref{T:Selseq}. 
Assuming that we succeeded in computing global and local Galois groups
as in Section~\ref{S:g3 galois}, 
we have for each~$v\in\calT$ an explicit description of
\[\xymatrix{
0\ar[r]&
   J[2](k)  \ar[r]\ar@{^(->}[d]&
   E^\vee(k)\ar[r]^q\ar@{^(->}[d]&
   R^\vee(k)\ar@{^(->}[d]\\
0\ar[r]&
   J[2](k_v)  \ar[r]&
   E^\vee(k_v)\ar[r]^q&
   R^\vee(k_v).\\
}\]
Let $\kappa_v'$ be the map $\calK = \ker \alpha \to \ker \alpha_v$ 
implicit in~\eqref{E:kappa},
which can be identified with
\[
	\frac{R^\vee(k)}{q E^\vee(k)} \to \frac{R^\vee(k_v)}{q E^\vee(k_v)},
\]
and hence computed.
The map in~\eqref{E:kappa} we are really interested in is the map
\[
	\kappa_v \colon \calK = \ker \alpha \to 
	W_v \colonequals  \frac{\ker \alpha_v}{\ker \alpha_v \intersect \im \gamma_v}.
\]
We computed $\# J(k_v)/2J(k_v)$ and $\# \im C_v$;
dividing gives the size of 
$\ker C_v \isom \gamma_v(\ker C_v) = \ker \alpha_v \intersect\im \gamma_v$,
so in some cases we may have sufficient information to deduce~$\kappa_v$
and piece together $\kappa=\prod_v\kappa_v$ and its kernel and cokernel. 
Even if we can determine $\# \coker \kappa$, however,
it is only an upper bound on 
$\#\coker(\Sel{2}(J)\to\Selfake{\alpha}(J))$
(see Theorem~\ref{T:Selseq}).
\end{enumerate}

\subsection{Descent on the curve}
\label{S:curve descent}

Let $X$ be a smooth plane quartic over a global field~$k$
of characteristic not~$2$.
The fake descent setup described in Section~\ref{S:bitangents}
can be used also to perform a descent on the curve 
as described in Section~\ref{S:Sel for X}. 
Let $u_\theta,v_\theta,w_\theta \in L$ 
be as in Section~\ref{S:local computations},
and choose $f$ to be $(u_\theta x+v_\theta y+w_\theta z)/\ell$
for some linear form~$\ell \in k[x,y,z]$.

To apply Remark~\ref{R:no moving necessary for curve},
we need to find the $c$ and~$r$ of Lemma~\ref{L:coset of kernel of norm}.
By interpolation, find a degree~$14$ form $r_{14} \in k[x,y,z]$
whose zero divisor on~$X$ is $\sum_{P \in \fes} \beta_P$;
such a form is unique modulo~$g$ and up to scalar multiple.
Thus there exists $c \in k^\times$ such that 
\begin{equation}
  \label{E:r_14}
	N_{L[x,y,z]/k[x,y,z]} (u_\theta x+v_\theta y+w_\theta z) 
	\equiv c r_{14}(x,y,z)^2 \pmod{g(x,y,z)}.
\end{equation}
Comparing coefficients lets us compute~$c$.
Then $c$ and $r\colonequals r_{14}/\ell^{14}$ are as in 
Lemma~\ref{L:coset of kernel of norm}.

We can now evaluate $C_{f,v}(Q)$ for any~$Q \in X(k_v)$,
by using~$f$; in fact, it suffices to evaluate
$u_\theta x+v_\theta y+w_\theta z$
at any triple of homogeneous coordinates representing~$Q$,
since the value of $\ell$ on this triple will be in $k_v^\times$.

\begin{lemma} 
\label{L:good reduction for descent on curve}
Assume that the $u_\theta,v_\theta,w_\theta$ above are in $\calO_L$.
Let $\calS\subset\Omega_k$ be a finite set of places 
containing the archimedean places, 
the places of residue characteristic~$2$,
the places of bad reduction of the model $g(x,y,z)=0$ of~$X$,
and places lying under $\calO_L$-primes
dividing the $\calO_L$-ideal $(u_\theta,v_\theta,w_\theta)$.
Then $\im C_f \subset \widetilde{L(2,\calS)}$.
\end{lemma}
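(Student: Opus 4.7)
The plan is to invoke Proposition~\ref{P:unramified classes} and reduce to showing that for every $Q\in X(k)$ and every $v\notin\calS$, the image of $C_f(Q)$ in $\widetilde{L_v^\times/L_v^{\times 2}k_v^\times}$ is unramified; in fact we will prove the stronger statement that $\alpha\colonequals u_\theta x_0+v_\theta y_0+w_\theta z_0$ has even valuation at every prime $\fP$ of $\calO_L$ above $v$, where $(x_0:y_0:z_0)\in\calO_v^3$ is a choice of primitive homogeneous coordinates for $Q$ at $v$.

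Fix $v\notin\calS$. Because $v$ is a place of good reduction for $g=0$, the curve $X$ admits a smooth proper model $\calX\to\Spec\calO_v$; because the bitangents stay distinct under reduction, $\fes$ extends to a finite \'etale $\calO_v$-scheme $\fes_v\colonequals\Spec\calO_{L,v}$, and the closure $\widetilde\beta\subset\calX\times_{\calO_v}\fes_v$ of the generic-fiber divisor $\beta$ is an effective relative Cartier divisor. Since the $\calO_L$-ideal $(u_\theta,v_\theta,w_\theta)$ is coprime to $v$ by the hypothesis on $\calS$, the linear form $s\colonequals u_\theta x+v_\theta y+w_\theta z$ is a section of $\OO(1)$ on $\PP^2\times\fes_v$ having no base point on any fiber over $\fes_v$; thus its zero locus on $\PP^2\times\fes_v$ is an effective Cartier divisor whose restriction $Z$ to the regular surface $\calX\times\fes_v$ is again an effective Cartier divisor. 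On the generic fiber, $Z$ restricts to $2\beta$ by the defining property of bitangents; because $s$ does not vanish on any entire special fiber (again by the hypothesis on $(u_\theta,v_\theta,w_\theta)$), there is no extra vertical component, so $Z=2\widetilde\beta$.

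Using Remark~\ref{R:moving} we may replace the linear form $\ell$ by another in the same linear equivalence class so that $\ell(Q)\ne0$ and $\ell$ defines a good integral divisor $\widetilde D$ on $\calX$; then on $\calX\times\fes_v$ the rational function $f=s/\ell$ has divisor $2\widetilde\beta-\widetilde D$. Pulling back along $(Q,\id_{\fes_v})\colon\Spec\calO_{L,v}\to\calX\times\fes_v$ gives
\[
  \divisor_{\calO_{L,v}}\!\bigl(\alpha/\ell(Q)\bigr)=2\,Q^{*}\widetilde\beta-Q^{*}\widetilde D.
\]
Since $Q^{*}\widetilde D$ is the pullback to $\calO_{L,v}$ of $\divisor_{\calO_v}(\ell(Q))$, and since $L_v/k_v$ is unramified at $v$ (good reduction again), we obtain
\[
  \divisor_{\calO_{L,v}}(\alpha)=2\,Q^{*}\widetilde\beta,
\]
so $v_\fP(\alpha)$ is even for every prime $\fP$ of $\calO_L$ above $v$.

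Therefore $C_f(Q)\in\widetilde{L_v^\times/L_v^{\times2}k_v^\times}$ is unramified for every $v\notin\calS$, and Proposition~\ref{P:unramified classes} gives $C_f(Q)\in\widetilde{L(2,\calS)}$. The main obstacle is the integral divisor computation of Step~2: one has to verify that the integral zero divisor of $s$ on $\calX\times\fes_v$ has no spurious vertical component, which is exactly where the three hypotheses on $\calS$ (good reduction, residue characteristic not $2$, and nonvanishing of $(u_\theta,v_\theta,w_\theta)$ mod primes above $v$) all enter; once that is in hand, the cancellation with $\widetilde D$ and the evenness of $2\,Q^{*}\widetilde\beta$ are formal.
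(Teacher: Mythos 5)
Your proof is correct, but it takes a genuinely different route from the paper's. The paper argues purely with valuations: choosing primitive integral coordinates for the point, it notes that on the (good) reduction the $28$ bitangents remain distinct, so at most one component $u_Qx+v_Qy+w_Qz$ can have positive valuation, and then the global identity $N_{L/k}(u_\theta x+v_\theta y+w_\theta z)\equiv c\,r_{14}^2\pmod{g}$ from~\eqref{E:r_14}, together with $2\mid v(c)$, forces the sum of the valuations to be even, hence each one to be even; this makes the value a square in $L\otimes k_{v,u}$. You instead spread the bitangency relation out over $\calO_v$: on the integral model $\calX\times\fes_v$ the zero divisor of $s=u_\theta x+v_\theta y+w_\theta z$ has no vertical components (integral special fibres plus the unit-ideal hypothesis on $(u_\theta,v_\theta,w_\theta)$), so it equals $2\widetilde\beta$, and pulling back along the $\calO_{L,v}$-section given by $Q$ yields evenness prime by prime, with no use of the norm form $r_{14}$ or the constant $c$. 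Your argument is more conceptual and localizes the parity statement directly at each prime above $v$ (indeed it would survive even if $v$ ramified in $L$, since only parity matters), while the paper's is more elementary, avoids integral models, and reuses the form $r_{14}$ that is needed anyway in Section~\ref{S:curve descent}. One shared caveat, not a new gap: if $Q$ happens to lie on a bitangent, then one component of $\alpha$ vanishes, your pullback is undefined on the corresponding component of $\fes_v$, and the paper's product of valuations degenerates likewise; in that case the relevant component of $C_f(Q)$ is the one recovered via the norm convention of Remark~\ref{R:no moving necessary for curve} and Lemma~\ref{L:coset of kernel of norm}, and its parity follows from the evenness of all the other components together with $2\mid v(c)$ (or, in your setup, from $v_\fP(c\,r(Q)^2\ell(Q)^{-14})$ being even), so both arguments extend with one extra sentence.
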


\begin{proof}
Let $v\in\Omega_k\setminus\calS$.  
Then $L_v$ is a product of local fields,
and we let $\calO_{L,v}$ be the product of their valuation subrings.
Suppose that $(x:y:z)\in X(k_v)$. 
Without loss of generality, $x,y,z\in\calO_v$ and $(x,y,z)\calO_v=\calO_v$.
The point $(u_\theta:v_\theta:w_\theta) \in \PPdual^2(L)$ 
extends to a point in $\PPdual^2(\calO_{L,v})$.
Given $Q \in \fes(k_{v,u}) = \fes(k_s)$,
we can specialize $u_\theta$ to an element~$u_Q$ in the valuation
ring of~$k_{v,u}$, and define $v_Q$ and~$w_Q$ similarly;
our hypothesis on $(u_\theta,v_\theta,w_\theta)$
implies that $\min(v(u_Q),v(v_Q),v(w_Q))=0$.
On the reduction of~$X$ at~$v$,
the $28$~bitangents are distinct,
so the reduction of $(x:y:z) \in X(k_v)$
lies on at most one bitangent, so
\[
	v(u_Q x+v_Q y+ w_Qz)>0
\]
for at most one~$Q$.
On the other hand,
\eqref{E:r_14} and our hypothesis on $(u_\theta,v_\theta,w_\theta)$
imply that $2 \mid v(c)$,
so
\[
	2\mid v(N_{L_v/k_v}(u_\theta x + v_\theta y + w_\theta z))
	= v \left(\prod_Q (u_Q x + v_Q y + w_Q z) \right).
\]
Combining the previous two sentences shows that $2\mid v(u_Q x + v_Q y + w_Q z)$ 
for all~$Q$, and hence $u_Q x + v_Q y + w_Q z$ is a square in~$L\tensor k_{v,u}$.
\end{proof}

By breaking up $X(k_v)$ as in Section~\ref{S:non-arch local image},
we can compute $C_{f,v}(X(k_v))$ for any~$v$.
(Note that in contrast to Section~\ref{S:non-arch local image}, 
we do not need to work with extensions of~$k_v$.)
For a finite set $\calT\subset \Omega_k$, we compute
\[
S'_\calT\colonequals \{\delta\in \widetilde{L(2,\calS)}: 
	N(\delta)\in ck^{\times2} \textup{ and }
	\delta_v \in C_{f,v}(X(k_v)) \textup{ for all } v\in\calT\},
\]
which contains $\Selfake{f}(X)$.
In particular, if $S'_\calT$ is empty, then $X$ has no $k$-rational points.

On the other hand, the following may be useful in proving the existence
of local points:
\begin{lemma}
\label{L:Weil plus Hensel}
Let $k_v$ be a nonarchimedean local field with valuation ring~$\calO_v$
and residue field~$\F_v$ of size at least~$37$.
If $X_{k_v}$ is a smooth plane quartic,
and $X_{\F_v}$ is geometrically irreducible,
then $X(k_v)$ is nonempty.
\end{lemma}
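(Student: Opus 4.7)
My plan is a two-step argument: produce a smooth $\F_v$-point on the reduction $X_{\F_v}$, then Hensel-lift it to a $k_v$-point. First I would pick an integral model by scaling the defining polynomial of $X_{k_v} \subset \PP^2_{k_v}$ so that at least one coefficient is a unit of $\calO_v$; the resulting $\calX \subset \PP^2_{\calO_v}$ is flat over $\calO_v$, with smooth generic fiber~$X_{k_v}$ and special fiber~$X_{\F_v}$. Since the smooth locus of $\calX \to \Spec \calO_v$ is open and contains the entire generic fiber, any $\F_v$-point at which $X_{\F_v}$ is smooth lies in this locus, and Hensel's lemma will then lift it to a point of $\calX(\calO_v) = X(k_v)$.

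It therefore suffices to prove $X^{\smooth}_{\F_v}(\F_v) \ne \emptyset$ whenever $q \colonequals \#\F_v \ge 37$. I would invoke the Aubry--Perret generalization of the Weil bound for geometrically irreducible projective curves: if $g_0$ is the geometric genus of $X_{\F_v}$ and $\delta \colonequals 3 - g_0$ is the total drop from the arithmetic genus, then
\[
        \#X_{\F_v}(\F_v) \;\ge\; q + 1 - 2 g_0 \sqrt{q} - \delta .
\]
Since each local $\delta$-invariant is at least $1$, the number of $\F_v$-rational singular points is at most $\delta$, so
\[
        \#X^{\smooth}_{\F_v}(\F_v) \;\ge\; q + 1 - 2 g_0 \sqrt{q} - 2\delta.
\]

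To conclude I would check positivity in the four cases $g_0 \in \{3,2,1,0\}$ (so $\delta \in \{0,1,2,3\}$). The binding case is $g_0 = 3$: the bound becomes $q + 1 - 6\sqrt{q}$, which is positive exactly when $q \ge 36$, and this is where the threshold in the hypothesis comes from. For $g_0 \le 2$ the coefficient of $\sqrt{q}$ decreases by $2$ for each unit drop in $g_0$, which easily absorbs the extra additive loss of $2\delta \le 6$; all four inequalities hold comfortably for $q \ge 37$. The main obstacle is conceptual rather than computational: one needs a point-counting bound that remains effective uniformly across \emph{all} possible singular reductions rather than one optimized only for the smooth case, and Aubry--Perret is precisely tailored to this situation. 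Everything else---the four numerical checks and the standard Hensel lift---is routine.
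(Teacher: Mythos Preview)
Your proof is correct and follows the same two-step strategy as the paper: case-split on the geometric genus $g_0 \in \{0,1,2,3\}$ of the reduction, apply the Weil bound to produce a smooth $\F_v$-point, and Hensel-lift. The only difference is packaging---the paper normalizes and subtracts at most $6-2g_0$ punctures, while you cite Aubry--Perret and then subtract at most $\delta$ rational singular points---but since $6-2g_0 = 2\delta$ the resulting inequality $\#X^{\smooth}_{\F_v}(\F_v) \ge q+1-2g_0\sqrt{q}-2\delta$ is literally identical in each case, with the binding case $g_0=3$ giving the threshold $q \ge 37$.
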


\begin{proof}
Removing the singularities from $X_{\F_v}$ 
yields a smooth curve of genus~$g$ with at most $6-2g$ punctures,
for some $g \in \{0,1,2,3\}$.
In all four cases, the Hasse-Weil bound implies the
existence of a smooth $\F_v$-point.
By Hensel's lemma, it lifts to an $\calO_v$-point,
which gives a $k_v$-point.
\end{proof}

\subsection{Examples}
\label{S:examples}

Denis Simon 
kindly supplied us with a list of smooth plane quartics with small
integer coefficients and small discriminant.
These serve as test cases for the methods
outlined in Section~\ref{S:g3 compute}.
In the first three examples, we use fake descent to determine 
the structure of~$J(\Q)$, which in the first two examples
lets us determine~$X(\Q)$.
In the fourth example, we use descent on the curve to prove 
that $X(\Q)=\emptyset$.

\subsubsection{A genus $3$ curve with $J(\Q) \isom \Z/51\Z$}
\label{S:disc 4727 example}
Let $X$ be the curve in~$\PP^2_\Q$ defined by
\[
	x^3y - x^2y^2 - x^2z^2 - xy^2z + xz^3 + y^3z=0.
\]
(This is isomorphic to the curve of smallest discriminant in Simon's list.)

\medskip 
\noindent\emph{Steps 1 and 2.}
The algebra $L=\Q[t]/(h(t))$ turns out to be 
a degree~$28$ number field over~$\Q$.
It follows that $\calG_\Q$ acts transitively on the bitangents.
We compute $\calO_L$ and find that $\Disc\calO_L=2^{42}\cdot29^6\cdot163^6$.

\noindent\emph{Step 3.} We find that $I_{27}=4727=29\cdot163$.
By Remark~\ref{R:primes of multiplicity 1},
we may take $\calS=\{2,\infty\}$.

\noindent\emph{Step 4.} 
The class group of~$\calO_L$ is generated by primes of norm below the
Minkowski bound, which is $36\,984\,868$, 
remarkably small for a degree-$28$ number field. 
We can prove unconditionally that the class group of~$\calO_L$ is trivial,
and we can find explicit generators of~$L(2,\calS)$.
We find $L(2,\calS) \isom (\Z/2\Z)^{17}$ 
and $\widetilde{L(2,\calS)} \isom (\Z/2\Z)^{15}$.

\noindent\emph{Step 5.} 
By computing the Frobenius action at $3$ and $5$, 
we find that $G$ is either the full group~$\Sp_6(\F_2)$
or the unique index-$36$ subgroup up to conjugacy. 
The larger group acts doubly transitively on the bitangents, 
whereas the smaller does not. 
If we factor $h(t)$ over~$L$,
we find factors of degrees $1,12,15$, so $G$ is the smaller group. 
With this information we can check that
\begin{equation}\label{E:ex4247 JER}
\xymatrix@R=1em{
0\ar[r]& J[2](\Q)\ar[r]& E^\vee(\Q)\ar[r]& R^\vee(\Q)\\
&\ar@{=}[u] 0 &\ar@{=}[u] 0 &\ar@{=}[u] \F_2
}
\end{equation}

\noindent\emph{Step 6.} 
Taking $\calT=\emptyset$ (i.e., doing no local computations with~$X$)
yields the upper bound $S_\emptyset' \isom (\Z/2\Z)^{13}$ 
on~$\Selfake{\alpha}(J)$.
To obtain a better bound, we next try $\calT=\{2\}$; 
it will turn out that this suffices.

\noindent\emph{Step 6a.} 
We check that $h$ is irreducible over~$\Q_2$,
so $D_2$ acts transitively on~$\fes$.
This, together with the general constraints on a decomposition subgroup
at~$2$, leaves only two candidates for~$D_2$ up to conjugacy:
a group~$G_{56}$ of order~$56$ and a group~$G_{168}$ of order~$168$.
We can distinguish these by the number of points fixed
by their one-point stabilizers:
$G_{56}$ leaves $4$~points fixed and $G_{168}$ leaves $1$~point fixed.
We find that $h$ has $4$~roots in $\Q_2(\theta)$, so $D_2=G_{56}$.
Actually, the distinction is unimportant, because for either group we have
\begin{equation}\label{E:ex4247 JER 2}
\xymatrix@R=1em{
0\ar[r]& J[2](\Q_2)\ar[r]& E^\vee(\Q_2)\ar[r]^{q} & R^\vee(\Q_2)\\
&\ar@{=}[u] 0 &\ar@{=}[u] 0 &\ar@{=}[u] \F_2.
}
\end{equation}

\noindent\emph{Step 6b.} From $J[2](\Q_2)=0$ and~\eqref{E:im gamma_v},
we deduce $\#\im \gamma_2=8$.
Embed $X_{\Q_2}$ in~$J_{\Q_2}$ using $x_0=(0:0:1) \in X(\Q_2)$;
we find points in~$X(\Q_2)$ whose images under~$C_2$
generate a group of size~$8$.
Thus the inequality $\#\im C_2 \le \#\im \gamma_2$
is an equality; i.e., $\#\im C_2=8$.
By Lemma~\ref{L:computing W_v} and its proof, we obtain
\begin{equation}
  \label{E:W_2 in example 1}
	W_2 \isom \ker \alpha_2 \isom \frac{R^\vee(\Q_2)}{q E^\vee(\Q_2)} 
	\isom \Z/2\Z,
\end{equation}
by~\eqref{E:ex4247 JER 2}.

\noindent\emph{Step 7.} 
Given that we have explicit generators for~$\widetilde{L(2,\calS)}$,
we can compute the map
\[
	\widetilde{L(2,\calS)}
	\to 
	\frac{(L\tensor \Q_2)^\times}{(L\tensor \Q_2)^{\times2}\Q_2^\times}.
\]
In Step 6b, we computed $\im C_2$, 
so we can compute $S'_\calT$ for~$\calT=\{2\}$.
We find $S'_\calT=0$.
Thus~$\Selfake{\alpha}(J)=0$.

By~\eqref{E:ex4247 JER} and Corollary~\ref{C:bounding Sel to Selfake},
$\Sel{2}(J)$ is either $0$ or~$\Z/2\Z$.
We can decide which by computing $\ker \kappa$ in Theorem~\ref{T:Selseq}.
By~\eqref{E:W_2 in example 1},
the map $\calK \to W_2$ is 
$R^\vee(\Q)/qE^\vee(\Q) \to R^\vee(\Q_2)/qE^\vee(\Q_2)$,
which is an isomorphism,
so $\kappa$ is injective;
thus $\Sel{2}(J)=0$ by Theorem~\ref{T:Selseq}.

\begin{remark}
The situation can be reinterpreted as follows:
the kernel~$\calK$ of the map $\alpha\colon H^1(\Q,J[2])\to H^1(\Q,E^\vee)$
has a non-trivial element~$\delta$,
but its restriction $\delta_2 \in H^1(\Q_2,J[2])$ 
is not in~$\im \gamma_2$.
(This is because $0 \ne \delta_2 \in \ker \alpha_2$,
while $\im \gamma_2 \intersect \ker \alpha_2 = 0$: see Step 6b above.)
In classical language, $\delta$ corresponds to a $2$-covering of~$J$
that has no $\Q_2$-point, so $\delta \notin \Sel{2}(J)$.
\end{remark}

\begin{proposition}
If $X$ is the curve
\[
	x^3y - x^2y^2 - x^2z^2 - xy^2z + xz^3 + y^3z=0
\]
in $\PP^2_\Q$, then $J(\Q)=\langle[(0:1:0)-(0:0:1)]\rangle\isom \Z/51\Z$ and
\[
	X(\Q)=\{(1 : 1 : 1), (0 : 1 : 0), (0 : 0 : 1), 
		(1 : 0 : 0), (1 : 1 : 0), (1 : 0 : 1)\}.
\]
\end{proposition}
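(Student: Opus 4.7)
From the calculation of $\Sel{2}(J)=0$ just established, the weak Mordell--Weil exact sequence gives $J(\Q)/2J(\Q) \hookrightarrow \Sel{2}(J) = 0$, so $\rk J(\Q)=0$ and $J(\Q)[2]=0$. Since $J(\Q)$ is finitely generated, it is in fact a finite abelian group of \emph{odd} order. The plan is therefore to pin down this finite odd group and then use its smallness to enumerate $X(\Q)$ via Abel--Jacobi.

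First I would obtain an upper bound on $\#J(\Q)$ by reducing modulo several primes $p$ of good reduction of~$X$. Since $I_{27}(g)=29\cdot 163$, every prime $p\notin\{2,29,163\}$ is such a prime, and for odd $p$ the reduction map is injective on prime-to-$p$ torsion. Point-counting $\#X(\F_{p^i})$ for $i=1,2,3$ and a few small~$p$ determines $\#J(\F_p)$ via the zeta function; intersecting the odd parts should collapse to $51=3\cdot 17$ after only two or three primes.

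Next, exhibit an element of order $51$: let $P\colonequals[(0:1:0)-(0:0:1)]\in J(\Q)$. To show $P$ has order exactly $51$, compute its reduction $\bar P\in J(\F_p)$ for a single odd $p\notin\{29,163\}$ coprime to $51$ (for instance $p=5$), and verify that $\bar P$ has order $51$ there; together with the upper bound above, this forces $J(\Q)=\langle P\rangle \isom \Z/51\Z$.

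Finally, to determine $X(\Q)$, fix the basepoint $x_0\colonequals(0:0:1)$ and consider the Abel--Jacobi embedding $X(\Q)\hookrightarrow J(\Q)$, $Q\mapsto[Q-x_0]$. Since $\#J(\Q)=51$, there are at most $51$ rational points; the six listed points map to some six classes $n_iP$ with pairwise distinct $n_i\in\Z/51\Z$, which one computes directly. For each of the remaining $45$ classes $nP$, I would verify, via reduction modulo a prime $p$ coprime to $51$ where $J(\Q)\hookrightarrow J(\F_p)$, that $nP\bmod p$ does not lie in the image of the map $X(\F_p)\to J(\F_p)$; that image is a short, explicitly computable finite list. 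The main obstacle is that at a single prime some unwanted class $nP$ may accidentally coincide with the image of a non-rational point of $X(\F_p)$; if this happens for some $n$, one combines information from two primes (applying the Mordell--Weil sieve to the cyclic group $\Z/51\Z$) or, as a fallback, directly tests effectivity of $nP+[x_0]$ by a Riemann--Roch / linear-algebra computation over $\Q$. Since only finitely many classes need to be excluded and the genus is only~$3$, this last step is routine.
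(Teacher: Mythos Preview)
Your proposal is correct and follows essentially the same route as the paper: deduce from $\Sel{2}(J)=0$ that $J(\Q)$ is finite of odd order, bound it by reduction at good primes, exhibit a point of order~$51$, and then test the $51$ classes via Abel--Jacobi. The paper is slightly more streamlined in two places: it uses only reduction modulo~$3$ (where $\#J(\F_3)=51$ exactly, and the full torsion injects since $3$ is an odd prime of good reduction), and for the final step it simply checks, for each class $nP$, whether $nP+[(0:0:1)]$ is represented by a degree-$1$ effective divisor (your Riemann--Roch fallback) rather than sieving first.
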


\begin{proof}
Since $\Sel{2}(J)=0$, the group~$J(\Q)$ is finite and of odd order.
Reduction modulo~$3$ injects $J(\Q)_\tors$ into~$J(\F_3)$,
which, according to {\tt Magma}, is of order~$51$.
(We use~$J(\F_3)$ to denote the group of $\F_3$-points on the reduction.)
On the other hand, 
{\tt Magma} shows that $[(0:1:0)-(0:0:1)]$ is of exact order~$51$ in~$J(\Q)$.
Thus $J(\Q) \isom \Z/51\Z$.

Finally, we determine~$X(\Q)$.
The Abel-Jacobi map $X\to J$ given by $P\mapsto [P-(0:0:1)]$ 
injects $X(\Q)$ into~$J(\Q)$.
{\tt Magma} determines which points in~$J(\Q)$
can be represented as~$[P-(0:0:1)]$,
and the result is as stated.
\end{proof}

\subsubsection{A positive rank example}

\label{S:disc 14227 example}
Let $X$ be the curve in~$\PP^2_\Q$ defined by
\[
	x^2y^2 - xy^3 - x^3z - 2x^2z^2 + y^2z^2 - xz^3 + yz^3=0.
\]
The group $J(\Q)_{\tors}$ injects into~$J(\F_3)$,
and its odd part injects into~$J(\F_2)$
(the curve has good reduction at $2$ and~$3$).
We compute $J(\F_2)\isom \Z/71\Z$ and $J(\F_3)\isom \Z/85\Z$,
so $J(\Q)_{\tors}=0$.
On the other hand, the divisor class
\[
	G\colonequals [(0:1:-1)+(0:0:1)-2(0:1:0)]
\]
is nonzero since $X$ is not hyperelliptic.
Thus $\rk J(\Q) \ge 1$.

\medskip 
\noindent\emph{Steps 1 and 2.}
The algebra~$L$ turns out to be a degree~$28$ number field.
We find that $\Disc \calO_L = 2^{42}\cdot41^6\cdot347^6$.

\noindent\emph{Step 3.}
We have $I_{27}=41\cdot347$.
By Remark~\ref{R:primes of multiplicity 1},
we may take $\calS=\{2,\infty\}$.

\noindent\emph{Step 4.} 
The Minkowski bound for $\calO_L$ is $1\,008\,340\,641$.
The truly dedicated enthusiast 
could probably verify unconditionally 
that the class group of~$\calO_L$ is trivial.
We verified this only conditionally 
on the Generalized Riemann Hypothesis for~$L$.
Subject to this, 
we find explicit generators of $L(2,\calS)\isom (\Z/2\Z)^{17}$
and $\widetilde{L(2,\calS)} \isom (\Z/2\Z)^{15}$.

\noindent\emph{Step 5.} 
By computing the Frobenius action at $5$ and~$7$,
we find that $G \isom \Sp_6(\F_2)$.
Thus $J[2](\Q)=E^\vee(\Q)=R^\vee(\Q)=0$.

\noindent\emph{Step 6.} 
It will turn out that taking $\calT=\{2\}$
is enough to obtain the upper bound $S'_{\{2\}} \isom \Z/2\Z$
on~$\Selfake{\alpha}(J)$.

\noindent\emph{Step 6a,b.} 
We check that $h$ is irreducible over~$\Q_2$,
so $D_2$ acts transitively on the bitangents. 
This leaves $6$~possibilities for~$D_2$ up to $\Sp_6(\F_2)$-conjugacy,
but for all of them we have $J[2](\Q_2)=E^\vee(\Q_2)=0$.
Following Remark~\ref{R:image of random points},
we obtain $\#\im C_2 \le \# J(\Q_2)/2J(\Q_2) = 8$ (see~\eqref{E:im gamma_v}),
but we also find enough points in~$X(\Q_2)$ to show that $\#\im C_2 \ge 8$,
so $\#\im C_2=8$.

\noindent\emph{Step 7.} 
We can compute the map
\[
	\widetilde{L(2,\calS)}\to 
	\frac{(L\tensor \Q_2)^\times}{(L\tensor \Q_2)^{\times2}\Q_2^\times}
\]
explicitly, and we have $\im C_2$; from this we compute that $S'_\calT=\Z/2\Z$.

Since $R^\vee(\Q)=0$, Corollary~\ref{C:bounding Sel to Selfake}
implies that $\Sel{2}(J) \to \Selfake{\alpha}(J)$ is injective.
Thus
\[
	\frac{J(\Q)}{2J(\Q)} \hookrightarrow 
	\Sel{2}(J) \hookrightarrow 
	\Selfake{\alpha}(J) \subset 
	S'_\calT = \Z/2\Z,
\]
so $\rk J(\Q) \le 1$.
Combining this with the earlier information yields $J(\Q) \isom \Z$.

\begin{proposition} 
Let $X$ be the curve
\[ 
	x^2y^2 - xy^3 - x^3z - 2x^2z^2 + y^2z^2 - xz^3 + yz^3 = 0
\]
in $\PP^2_\Q$.
Assume the Generalized Riemann Hypothesis.
Then $J(\Q)\isom \Z$ and
\[ 
\begin{split}
X(\Q)=\{
& (1:1:0), (-1 : 0 : 1), (0 : -1 : 1), (0 : 1 : 0),\\
& (1 : 1 : -1), (0 : 0 : 1), (1 : 0 : 0), (1 : 4 : -3)
\}.
\end{split}
\]
\end{proposition}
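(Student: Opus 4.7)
The plan is to combine the rank calculations already carried out to deduce $J(\Q)\isom \Z$, and then apply a Chabauty--Coleman argument to pin down $X(\Q)$.

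For the first part I would just assemble the pieces. The group $\Selfake{\alpha}(J)$ has been shown above to inject into $S'_\calT\isom \Z/2\Z$, and since $R^\vee(\Q)=0$ we have $\calK=0$, so Corollary~\ref{C:bounding Sel to Selfake} gives $\Sel{2}(J)\hookrightarrow\Selfake{\alpha}(J)$; hence $\#J(\Q)/2J(\Q)\le 2$ and so $\rk J(\Q)\le 1$. Reducing modulo~$2$ and~$3$ and using $\gcd(\#J(\F_2),\#J(\F_3))=\gcd(71,85)=1$ forces $J(\Q)_\tors=0$. The divisor class $G=[(0:1:-1)+(0:0:1)-2(0:1:0)]$ is nonzero (since $X$ is not hyperelliptic), and hence of infinite order, giving $\rk J(\Q)\ge 1$. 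Thus $J(\Q)\isom\Z$.

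For the second part, the eight listed points are directly verified to lie on~$X$, so what remains is to prove that $X(\Q)$ has no further members. Since $\rk J(\Q)=1<3=g$ the Chabauty--Coleman method is available: fixing a base point such as $P_0=(0:0:1)\in X(\Q)$ and the associated Abel-Jacobi embedding $\iota\colon X\to J$, I would choose a prime $p$ of good reduction (for instance $p=3$) and produce a nonzero regular $1$-form $\eta$ on~$X_{\Q_p}$ whose pullback to~$J_{\Q_p}$ kills $\log_J(G)\in \Lie(J_{\Q_p})$; such $\eta$ spans a $2$-dimensional subspace of~$H^0(X_{\Q_p},\Omega^1)$. Then on each residue disk $D\subset X(\Q_p)$ the Coleman integral $F_D(Q)=\int_{P_0}^Q \iota^*\eta$ is a convergent power series in a uniformizer of~$D$ that vanishes on $D\intersect X(\Q)$, and its number of zeros is controlled by a Newton polygon estimate.

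The hard part will be bookkeeping rather than any single step. If the Chabauty bound on some residue disk exceeds the number of listed rational points in that disk, I would supplement with a Mordell--Weil sieve, combining the images of $X(\Q)$ in $J(\F_p)$ for several primes~$p$ to eliminate the spurious cosets of $J(\Q)$ modulo a suitable~$N$. The sieve also requires an explicit generator of~$J(\Q)\isom\Z$; while $G$ is a natural candidate, its index $[J(\Q):\langle G\rangle]$ must divide $\gcd_p \#J(\F_p)$ over primes of good reduction, and this is a finite and effective list of candidates that can be resolved with a further finite computation. Once the resulting bound on $\#X(\Q)$ matches~$8$, the Proposition follows.
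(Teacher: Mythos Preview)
Your outline is correct and follows the same Chabauty--Coleman strategy as the paper, but the paper's execution is sharper and avoids both the Mordell--Weil sieve and any need to identify a generator of~$J(\Q)$. Two points are worth noting. First, the paper takes $P_0=(1{:}1{:}0)$ and observes that $P_1=(1{:}4{:}{-}3)$ reduces to the same point of~$X(\F_3)$; since $[P_1-P_0]$ has infinite order, the $2$-dimensional annihilator $V\subset H^0(X_{\Q_3},\Omega^1)$ is exactly $\{\omega:\int_{P_0}^{P_1}\omega=0\}$, and this integral is an ordinary power-series integral within a single residue disk---no Coleman integration between disks is required to pin down~$V$. Second, instead of falling back on a sieve when a single form gives too many zeros, the paper uses \emph{both} basis forms $\omega_1,\omega_2$ of~$V$: on six of the seven residue disks one of them has nonvanishing reduction, forcing at most one rational point; on the remaining disk (containing $P_0$ and $P_1$) each $\int_{P_0}^{\,\cdot\,}\omega_i$ has three $3$-adic zeros, but their common zero set has at most two elements, which are the known points.

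One small correction: your proposed bound ``$[J(\Q):\langle G\rangle]$ must divide $\gcd_p \#J(\F_p)$'' is not valid in general; bounding the index of a known point in a rank-$1$ Mordell--Weil group typically requires height arguments, not reduction counts. Fortunately, since the paper's Chabauty argument already gives the sharp bound, neither a generator nor a sieve is needed here.
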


\begin{proof}
We already proved that $J(\Q) \isom \Z$.
To determine~$X(\Q)$
we use an explicit version of the method of Chabauty and Coleman 
over~$\Q_3$ (see \cite{McCallum-Poonen2012} for a survey).

We check that $\#X(\F_3)=7$,
and that the set of $8$~points in~$X(\Q)$ listed above
surjects onto~$X(\F_3)$,
with $P_0\colonequals (1:1:0)$ and $P_1\colonequals (1:4:-3)$ reducing to the same point.
Since $\rk J(\Q)=1$ and $\dim J = 3$,
there is a $2$-dimensional subspace $V \subset H^0(X_{\Q_3},\Omega^1)$
such that $\int_P^{P'} \omega = 0$ for all $P,P' \in X(\Q)$
and $\omega \in V$.
Since $[P_1-P_0]$ is nonzero in~$J(\Q)$, it is of infinite order,
so $V = \{ \omega : \int_{P_0}^{P_1} \omega = 0 \}$.
We compute this integral (to some $3$-adic precision) 
for~$\omega$ in a basis for~$H^0(X_{\Q_3},\Omega^1)$
by evaluating the integral as a power series
in a uniformizing parameter~$t$ at~$P_0$;
then linear algebra produces a basis for~$V$.
Explicitly, if we identify each $\omega \in H^0(X_{\Q_3},\Omega^1)$
with a linear form $ux+vy+wz$,
a basis for~$V$ is given by
$\omega_1$ corresponding to $(21262+O(3^{10}))x-y$ 
and $\omega_2$ corresponding to $(1302+O(3^{10}))x-z$.

For each point of~$X(\F_3)$, we need to find the rational points
in the corresponding residue class in~$X(\Q_3)$.
For each point $Q \in X(\F_3)$ other than the reduction of~$P_0$ (and~$P_1$),
the mod~$3$ reduction of one of $\omega_1,\omega_2$ is nonvanishing at~$Q$,
so by Proposition~6.3 in~\cite{Stoll2006-chabauty},
there is at most one rational point reducing to~$Q$,
and we already know one.
The rational points~$P$ in the residue class containing $P_0$ and~$P_1$
satisfy $\int_{P_0}^P \omega_1 = \int_{P_0}^P \omega_2 = 0$;
these give two power series equations over~$\Q_3$ to be solved
for $t \in 3\Z_3$.
We calculate that each power series has three zeros in~$3 \Z_3$,
but the intersection is of size at most~$2$,
so $P_0$ and~$P_1$ are the only rational points in this residue class.
\end{proof}

\subsubsection{A modular curve of level $13$}

Let $X_\mathrm{split}(13)$ (resp.\ $X_\mathrm{nonsplit}(13)$)
be the modular curve of level~$13$ over~$\Q$
corresponding to the normalizer of a split (resp.\ nonsplit) Cartan subgroup
of~$\SL_2(\Z/13\Z)$.
These are non-hyperelliptic curves of genus~$3$,
and it turns out~\cites{Baran-preprint,Baran-preprint2} 
that both are isomorphic to the smooth plane quartic curve
\begin{equation}
\label{E:Xsplit13}
	X\colon x^3y + x^3z - 2x^2y^2 - x^2yz + xy^3 - xy^2z 
		+ 2xyz^2 - xz^3 - 2y^2z^2 + 3yz^3 = 0.
\end{equation}
This curve has at least the following $7$~rational points.
\[
\{ (0 : 1 : 0), (0 : 0 : 1), (-1 : 0 : 1), (1 : 0 : 0), (1 : 1 : 0), (0 : 3 : 
2), (1 : 0 : 1) \}\]
We compute $J(\F_3) \isom \Z/91\Z$ and $J(\F_7) \isom \Z/659\Z$.
Since $J(\Q)_{\tors}$ injects into both groups,
$J(\Q)_{\tors}=0$.
This, with the fact that $\#X(\Q) \ge 2$, implies that $\rk J(\Q) > 0$.
One can verify that the divisor classes
\[[ (0:1:0)-(1:0:0) ], [(0:0:1)-(1:0:0)], [(-1:0:1)-(1:0:0)] \in J(\Q)\]
generate a group containing all differences of the points listed above.
Furthermore, the image in $J(\F_3)\times J(\F_5)\times J(\F_{43})$ is isomorphic to
\[\Z/(7\cdot 13\cdot 29\cdot 97)\Z\times \Z/13\Z\times \Z/13\Z,\]
so $\rk J(\Q)\geq 3$.

We apply the procedure in Section~\ref{S:g3 compute} 
to compute an upper bound on~$\rk J(\Q)$.

\medskip 
\noindent\emph{Steps 1 and 2.}
The algebra $L$ is a degree~$28$ number field
with $\Disc \calO_L = 2^{42} \cdot 13^{24}$.

\noindent\emph{Step 3.}
We have $I_{27}=13^6$.
The closed subscheme of~$\PP^2_{\Z_{13}}$ defined by~\eqref{E:Xsplit13}
is regular, and its special fiber is
a geometrically integral curve of genus~$0$, so $c_{13}(J)=1$.
Therefore we can take $\calS = \{2, \infty\}$.

\noindent\emph{Step 4.} 
The Minkowski bound for $\calO_L$ is $8\,158\,071\,456$.
The truly dedicated enthusiast 
could probably verify unconditionally 
that the class group of~$\calO_L$ is trivial.
We verified this only conditionally 
on the Generalized Riemann Hypothesis for~$L$.
Subject to this, 
we find explicit generators of $L(2,\calS) \isom (\Z/2\Z)^{17}$
and $\widetilde{L(2,\calS)} \isom (\Z/2\Z)^{15}$.

\noindent\emph{Step 5.} 
Since $L$ is a field, $G$ acts transitively on the bitangents.
There are $18$~subgroups of~$\Sp_6(\F_2)$
up to conjugacy with that property. 
The Fieker-Kl\"uners implementation in {\tt Magma} for finding Galois groups 
yields $\#G=504$.
This determines~$G$ uniquely up to conjugacy.

This information allows us to compute:
\begin{equation}\label{E:exX13 JER}
\xymatrix@R=1em{
0\ar[r]& J[2](\Q)\ar[r]& E^\vee(\Q)\ar[r]& R^\vee(\Q)\\
&\ar@{=}[u] 0 &\ar@{=}[u] 0 &\ar@{=}[u] \F_2^2.
}
\end{equation}

\noindent\emph{Steps 6 and 7.}
It will turn out that taking $\calT=\{2\}$ is enough to obtain the upper bound
$S'_{\{2\}} \isom \Z/2\Z$ on $\Selfake{\alpha}(J)$.
The fact that $L$ has only one prime above~$2$
shows that $D_2$ acts transitively. 
This together with the constraints on a decomposition group at~$2$
determines $D_2$ uniquely up to conjugacy in~$\Sp_6(\F_2)$.
We obtain $\#D_2=56$ and
\begin{equation}\label{E:exX13 JER 2}
\xymatrix@R=1em{
0\ar[r]& J[2](\Q_2)\ar[r]& E^\vee(\Q_2)\ar[r]& R^\vee(\Q_2)\\
&\ar@{=}[u] 0 &\ar@{=}[u] 0 &\ar@{=}[u] \F_2^2.
}
\end{equation}
Following Remark~\ref{R:image of random points},
we obtain $\#\im C_2 \le \#\im \gamma_2 = \# J(\Q_2)/2J(\Q_2) = 2^3$ 
(see~\eqref{E:im gamma_v}).
Further computation shows that $S'_\emptyset=(\ZZ/2\ZZ)^{13}$ 
and that the homomorphism
\[
	S'_\emptyset \to 
	\frac{(L\tensor \Q_2)^\times}{(L\tensor\Q_2)^{\times 2}\Q_2^\times}
\]
is injective. 
It follows that $\#S'_\calT\le \#\im C_2 \le 2^3$.
Thus $\#\Selfake{\alpha}(J) \le 2^3$.
By Corollary~\ref{C:bounding Sel to Selfake} and \eqref{E:exX13 JER}, 
this implies $\#\Sel{2}(J) \le 2^5$,
so $\rk J(\Q)\leq 5$. 

We now present two approaches to improve this to $\rk J(\Q) \le 3$.
The first is to use the isomorphism
$\End J \isom \Z[\zeta_7+\zeta_7^{-1}]$
of~\cite{Baran-preprint2}*{Proposition~2.4} 
to obtain that $\rk J(\Q)$ is a multiple of~$3$, 
which improves the bound to $\rk J(\Q) \le 3$.
The second is to follow Section~\ref{S:non-arch local image} 
to compute that $\#\im C_2=2$, which can be used as follows.
By the same argument as in the previous paragraph, 
$\#\im C_2=2$ implies $\#\Selfake{\alpha}(J) \le 2^1$
and $\#\Sel{2}(J) \le 2^3$.
On the other hand, $3 \leq \rk J(\Q) \le \dim_{\F_3} \Sel{2}(J)$,
so equality holds everywhere in this paragraph.
In particular, if $\Sha(J)$ is the Shafarevich-Tate group of~$J$,
then $\Sha(J)[2]=0$.
We have proved

\begin{proposition}
Let $X=X_\mathrm{split}(13) \isom X_\mathrm{nonsplit}(13)$ over $\Q$. 
Assume the Generalized Riemann Hypothesis. 
Then $J(\Q) \isom \Z^3$ and $\Sha(J)[2]=0$.
\end{proposition}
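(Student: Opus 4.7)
The plan is to sandwich $\rk J(\Q)$ between $3$ and $3$ using the explicit rational points as a lower bound and the $2$-Selmer calculation as an upper bound, and then to read off $\Sha(J)[2]=0$ from the resulting tightness of the Selmer bound.

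First I would dispose of the lower bound: the preceding computation already exhibits three divisor classes whose images in $J(\F_3)\times J(\F_5)\times J(\F_{43})$ generate a subgroup of $\Z$-rank~$3$, and the embeddings $J(\Q)_{\tors} \injects J(\F_3)$ and $J(\Q)_{\tors}\injects J(\F_7)$ (both of good reduction) give $J(\Q)_{\tors}=0$ since $\gcd(91,659)=1$ is forced to be odd torsion and is trivial. So $\rk J(\Q)\ge 3$ and $J(\Q)$ is torsion-free.

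Next I would run the procedure of Section~\ref{S:g3 compute} to obtain an upper bound on $\#\Sel{2}(J)$, and hence on $\rk J(\Q)$. Under GRH, the computed $\widetilde{L(2,\calS)}\isom(\Z/2\Z)^{15}$ provides a finite target, and the known $G=\rho(\calG_\Q)\subset\Sp_6(\F_2)$ of order~$504$ with decomposition group $D_2$ of order~$56$ yields the explicit description \eqref{E:exX13 JER} and \eqref{E:exX13 JER 2} of $J[2]$, $E^\vee$, $R^\vee$ over $\Q$ and $\Q_2$. Taking $\calT=\{2\}$, the crucial input is the local image $\im C_2\subseteq L_2^\times/L_2^{\times 2}\Q_2^\times$: the general bound $\#\im C_2\le\#J(\Q_2)/2J(\Q_2)=8$ from \eqref{E:im gamma_v} combined with the observation that $S'_\emptyset\to(L\tensor\Q_2)^\times/(L\tensor\Q_2)^{\times 2}\Q_2^\times$ is already injective would only give $\#\Sel{2}(J)\le 2^5$ and $\rk J(\Q)\le 5$. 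To sharpen this, I would carry out the full algorithm of Section~\ref{S:non-arch local image}, enumerating extensions $K/\Q_2$ of degree $\le g=3$, partitioning $X(K)$ into residue disks on which $\cc_K$ is constant, taking norms to $\Q_2$, and showing that the subgroup so generated has order exactly~$2$. With $\#\im C_2=2$ in hand, Lemma~\ref{L:kernel of norm} and Theorem~\ref{T:finite description of Selmer}(a) force $\#\Selfake{\alpha}(J)\le 2$, and Corollary~\ref{C:bounding Sel to Selfake} together with $R^\vee(\Q)\isom\F_2^2$ gives $\#\Sel{2}(J)\le 2^3$.

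Finally I would combine the two bounds: $3\le\rk J(\Q)\le\dim_{\F_2}\Sel{2}(J)\le 3$, so all three are equal to~$3$, all inclusions are equalities, and in particular $\Sel{2}(J)\isom(\Z/2\Z)^3$. Together with $J(\Q)_{\tors}=0$ this gives $J(\Q)\isom\Z^3$, and the exact sequence $0\to J(\Q)/2J(\Q)\to\Sel{2}(J)\to\Sha(J)[2]\to 0$, whose first two terms now have the same order $2^3$, yields $\Sha(J)[2]=0$. The main obstacle is the precise computation of $\#\im C_2$: the generic bound from $\#J(\Q_2)/2J(\Q_2)$ is too weak, so one really has to run the residue-disk algorithm in $L_2^\times/L_2^{\times 2}\Q_2^\times$ over the tower of low-degree extensions of $\Q_2$. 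As an alternative path I would note that $\End J\isom\Z[\zeta_7+\zeta_7^{-1}]$ (a fact from the modular interpretation) forces $3\mid\rk J(\Q)$, which combined with $\rk J(\Q)\le 5$ already gives $\rk J(\Q)=3$ without the sharper local calculation, although $\Sha(J)[2]=0$ then still requires the refined bound $\#\Sel{2}(J)\le 2^3$.
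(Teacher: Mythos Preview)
Your proposal is correct and follows essentially the same path as the paper's proof: establish $\rk J(\Q)\ge 3$ and $J(\Q)_{\tors}=0$ from reductions, run the fake Selmer machinery with $\calT=\{2\}$, use the injectivity of $S'_\emptyset\to L_2^\times/L_2^{\times 2}\Q_2^\times$ to bound $\#\Selfake{\alpha}(J)$ by $\#\im C_2$, and then sharpen $\#\im C_2$ from $8$ to $2$ via the residue-disk algorithm of Section~\ref{S:non-arch local image} to obtain $\#\Sel{2}(J)\le 2^3$. The paper likewise notes both the endomorphism-ring divisibility argument and the direct local computation, and your observation that the former alone does not yield $\Sha(J)[2]=0$ is exactly right.
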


\begin{remark}
By \eqref{E:exX13 JER 2} and Lemma~\ref{L:computing W_v},
we have $W_2=0$.
In particular, we are in the situation of Corollary~\ref{C:Nicecase}.
\end{remark}

\subsubsection{A genus $3$ curve violating the local-to-global principle}

The fake descent setup presented in Section~\ref{S:bitangents} allows us
also to compute $\Selfake{f}(X)$ for a smooth plane quartic~$X$.

\begin{proposition} 
Let $X$ be the curve in~$\PP^2_\Q$ defined by
\[
	x^4 + y^4 + x^2 y z + 2 x y z^2 - y^2 z^2 + z^4 = 0.
\]
Then $X(\R)\ne \emptyset$ and $X(\Q_p)\ne \emptyset$ for all~$p$,
but if the Generalized Riemann Hypothesis holds, then
$X(\Q)=\emptyset$.
\end{proposition}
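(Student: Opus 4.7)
The plan is to establish the three assertions separately: local solubility at the real place, local solubility at every $p$-adic place, and then (assuming GRH) the emptiness of $X(\Q)$ by applying the descent-on-the-curve machinery of Section~\ref{S:curve descent}.

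For $X(\R)\ne\emptyset$, I would exhibit a single real point by an explicit search: parametrize lines through a generic $\R$-point of $\PP^2$ and use the intermediate value theorem on the resulting one-variable quartic. (For instance, for fixed $y,z\in\R$ the quartic in $x$ has nonpositive leading coefficient behaviour balanced against possibly negative values of $-y^2z^2$, so sign-change analysis at well-chosen $(y,z)$ yields a real root.) For the $p$-adic places, first compute $I_{27}(g)$ using the algorithm recalled in Section~\ref{S:computing discriminant}, factor it to determine the (finite) set of primes of bad reduction, and treat the cases in turn. At each prime $p$ of good reduction with $\#\F_p\geq 37$, Lemma~\ref{L:Weil plus Hensel} immediately gives $X(\Q_p)\ne\emptyset$. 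At the small primes of good reduction ($p\le 31$) and at each bad prime, I would exhibit an $\F_p$-smooth point on the reduction (or on a desingularized model at a bad prime) and lift it by Hensel's lemma; if no smooth $\F_p$-point exists on the given model, refine the model or work modulo a higher power of $p$ until Hensel applies.

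The core of the argument is part~(3), where I would run the algorithm of Section~\ref{S:curve descent} to prove $\Selfake{f}(X)=\emptyset$, which forces $X(\Q)=\emptyset$. Concretely, following Steps 1--5 of Section~\ref{S:g3 compute}: compute $L=\Q[t]/h(t)$ of degree $28$ via a projection of the bitangent scheme $\fes\subset\PPdual^2$; compute $\calO_L$ using the factorization of $I_{27}(g)$ as an upper bound on the primes dividing $\Disc\calO_L$; determine a set $\calS$ containing $\{2,\infty\}$ together with the odd primes of genuinely bad reduction (using Remark~\ref{R:primes of multiplicity 1} and, if needed, regular-model computations as in Remark~\ref{R:smaller S}); and compute generators of $\widetilde{L(2,\calS)}$. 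Next, choose $f=(u_\theta x+v_\theta y+w_\theta z)/\ell$ for a convenient linear form $\ell\in\Q[x,y,z]$, determine the constant $c\in\Q^\times$ and auxiliary form $r_{14}$ as in~\eqref{E:r_14} by interpolation and coefficient comparison, and for each $v\in\calT$ compute the image $C_{f,v}(X(\Q_v))$ by partitioning $X(\Q_v)$ into residue disks as in Section~\ref{S:non-arch local image} (and into connected components at $v=\infty$ as in Section~\ref{S:arch local image}). Finally, intersect with the condition $N(\delta)\in c\Q^{\times 2}$ to obtain $S'_\calT$, and verify that it is empty.

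The main obstacle is the global Steps 2 and 4: the number field $L$ has degree $28$, and unconditional computation of $\calO_L$ and of $\Cl(\calO_{L,\calS})$ would require working up to the Minkowski bound, which is far out of reach. This is the source of the GRH hypothesis: assuming GRH for the $L$-functions of $L$, standard algorithms (e.g.\ Buchmann's subexponential method, as implemented in \texttt{Magma}) yield the class group and $\calS$-unit group conditionally, and hence explicit generators of $\widetilde{L(2,\calS)}$. Once these are in hand, the local computations are straightforward but must be done at enough primes in $\calT$ to cut $S'_\calT$ down to the empty set; I expect that $\calT=\{2\}$ together with one or two odd primes dividing $I_{27}(g)$ suffices, and the verification amounts to bookkeeping of local Selmer conditions under the explicit descent map.
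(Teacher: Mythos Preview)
Your proposal follows essentially the same route as the paper: verify local solubility via Lemma~\ref{L:Weil plus Hensel} and Hensel, then run the curve-descent of Section~\ref{S:curve descent} conditionally on GRH to show $S'_\calT=\emptyset$ (the paper finds $I_{27}=-2^8\cdot 5^2\cdot 1361\cdot 97103$, takes $\calS=\{\infty,2,5,1361,97103\}$, and succeeds with $\calT=\{2,1361\}$). One correction: for descent on the \emph{curve}, the containment $\im C_f\subset\widetilde{L(2,\calS)}$ is justified by Lemma~\ref{L:good reduction for descent on curve}, which requires $\calS$ to contain all primes of bad reduction of the given model (and primes under $(u_\theta,v_\theta,w_\theta)$); Remarks~\ref{R:primes of multiplicity 1} and~\ref{R:smaller S} concern Tamagawa numbers for \emph{Jacobian} descent and do not let you shrink~$\calS$ here.
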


\begin{proof}
We have $I_{27}=-2^8\cdot 5^2\cdot 1361\cdot97103$, 
so Lemma~\ref{L:Weil plus Hensel} implies $X(\Q_p)\ne  \emptyset$ 
for $p \ge 37$.
A further calculation using Hensel's lemma shows that $X(\Q_p) \ne \emptyset$
for $p<37$ too,
and that $X(\R) \ne \emptyset$.

To prove $X(\Q) = \emptyset$, we apply Section~\ref{S:curve descent} with
$\calS=\{\infty,2,5,1361,97103\}$. 
It is straightforward to find $(u_\theta,v_\theta,w_\theta)$ 
satisfying the hypotheses of Lemma~\ref{L:good reduction for descent on curve}
for~$\calS$.
We find that $\Disc(\calO_L)=2^{30}\cdot 5^{10}\cdot 1361^6\cdot 97103^6$
and that the Minkowski bound exceeds~$10^{22}$,
so determining $\Cl(\calO_{L,\calS})$ unconditionally 
is out of the question.
Conditional on the Generalized Riemann Hypothesis for~$L$,
which we assume from now on,
we find that $\Cl(\calO_{L,\calS})=\Z/2\Z$ 
and we also find $\calO_{L,\calS}^\times/\calO_{L,\calS}^{\times 2}$.
This leads to explicit generators of $L(2,\calS) \isom (\Z/2\Z)^{41}$
and $\widetilde{L(2,\calS)} \isom (\Z/2\Z)^{36}$.
We compute $c=-1$.
For $\calT=\{2,1361\}$, we find $S'_\calT=\emptyset$, so $X(\Q)=\emptyset$.
\end{proof}

\appendix
\section{Determining the $\phi$-Selmer group directly}
\label{S:SelDirect}

We can modify our approach so that it computes $\Sel{\phi}(J)$ directly,
instead of computing~$\Seltf{\alpha}(J)$
and hoping to control the difference.

The fundamental idea behind a true descent setup is to replace
the Galois module~$\widehat{J}[\widehat{\phi}]$ whose cohomology we want
by a permutation module~$(\ZZ/n\ZZ)^\fes$ whose cohomology we can compute.
The discrepancies between $\Seltf{\alpha}(J)$ and~$\Sel{\phi}(J)$
come from the non-injectivity of
$(\ZZ/n\ZZ)^\fes\to\widehat{J}[\widehat{\phi}]$. 
In this section we deal with the non-injectivity
by finding another permutation module~$(\ZZ/n\ZZ)^{\fes'}$
that surjects onto the kernel.

\subsection{Correspondences}
\label{S:correspondences}

A \defi{correspondence} $\corr{\TAU}{\fes}{\fes'}$
between finite $\calG$-sets 
is a $\calG$-homo\-mor\-phism $\Z^\fes \to \Z^{\fes'}$.
Given a $\calG$-module~$M$ and $\corr{\TAU}{\fes}{\fes'}$,
we define homomorphisms $\TAU^* \colon M^{\fes'} \to M^\fes$
and $\TAU_* \colon M^\fes \to M^{\fes'}$ as follows.
Restricting the composition pairing
\[
	\Hom_\Z(\Z^{\fes'},M) \times \Hom_\Z(\Z^\fes,\Z^{\fes'}) 
	\to \Hom_\Z(\Z^\fes,M)
\]
by setting the second argument to~$\TAU$
yields a $\calG$-homomorphism 
that with the identifications of Remark~\ref{R:Z^fes}
becomes $\TAU^* \colon M^{\fes'} \to M^\fes$.
Applying this construction to the $\Z$-dual of $\TAU$ yields~$\TAU_*$.

\begin{example}
\label{Ex:what TAU can do}
Suppose that $X$ is a nice $k$-variety.
If we apply the previous remark to the \'etale group scheme~$M$
such that $M(k_s)$ is the $\calG_k$-module $\Div X_s$,
then $M^{\fes}(k) = \Div (X \times \fes)$,
and we obtain 
$\TAU_* \colon \Div(X \times \fes) \to \Div(X \times \fes')$.
Similarly we obtain 
$\TAU_* \colon k(X \times \fes)^\times \to k(X \times \fes')^\times$.
\end{example}

\begin{lemma}
\label{L:correspondences surjecting and injecting}
Let $M$ be a finite $\calG$-module such that $nM=0$.
\begin{enumerate}[\upshape (a)]
\item\label{I:surjecting}
There exists a finite $\calG$-set~$\fes$
with a surjection $(\Z/n\Z)^\fes \surjects M$.
\item\label{I:injecting}
There exists a finite $\calG$-set~$\fes$
with an injection $M \injects \mu_n^\fes$ (here we assume $\Char k \nmid n$).
\end{enumerate}
\end{lemma}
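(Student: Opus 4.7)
The plan is to prove (a) by the most obvious construction and then deduce (b) by Cartier duality.

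For (a), I would simply take $\fes = M$ viewed as a finite $\calG$-set (forgetting the group structure on~$M$). The identity map $M \to M$ is a $\calG$-equivariant map of $\calG$-sets, and any such map $\fes \to M$ extends uniquely to a $\calG$-homomorphism $\Z^\fes \to M$ of abelian groups by sending the basis element $\delta_m$ to~$m$. Since $nM = 0$, this factors through $(\Z/n\Z)^\fes \to M$ (recall $(\Z/n\Z)^\fes$ is the free $\Z/n\Z$-module on~$\fes$, per Remark~\ref{R:Z^fes}). Surjectivity is immediate because every element of~$M$ appears as some~$\delta_m \mapsto m$.

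For (b), I would apply part~(a) to the Cartier dual~$M^\vee$, which still satisfies $n M^\vee = 0$ (and is a well-defined finite $\calG$-module since $\Char k \nmid n$), to obtain a $\calG$-equivariant surjection
\[
	(\Z/n\Z)^{\fes'} \surjects M^\vee
\]
for some finite $\calG$-set~$\fes'$. Taking Cartier duals reverses the arrow and, by Remark~\ref{R:Cartier dual} together with the fact that Cartier duality is an involution on finite $\calG$-modules killed by~$n$ (when $\Char k \nmid n$), produces
\[
	M \isom M^{\vee\vee} \injects \bigl((\Z/n\Z)^{\fes'}\bigr)^\vee \isom \mu_n^{\fes'},
\]
as required. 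Exactness of Cartier duality (which follows from Remark~\ref{R:exact} applied to $\Hom_\Z(-,k_s^\times)$, or equivalently from the fact that the functor takes short exact sequences to short exact sequences in this range) guarantees that injectivity is preserved.

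There is no real obstacle here; the statement is essentially just the observation that finite $\calG$-modules killed by~$n$ are quotients of permutation $\Z/n\Z$-modules, combined with a dualization. The only point requiring attention is verifying the two invocations of Cartier duality (that $(M^\vee)^{\fes'} \isom (M^{\fes'})^\vee$ and that $M^{\vee\vee} \isom M$), both of which are recorded or implicit in Section~\ref{S:twisted powers}.
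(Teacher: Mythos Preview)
Your proof is correct and follows essentially the same approach as the paper: take $\fes = M$ for part~(a), then apply (a) to $M^\vee$ and dualize for part~(b). The paper's proof is just a terser version of what you wrote.
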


\begin{proof}
For \eqref{I:surjecting}, take $\fes=M$: 
the identity $M \to M$ induces $\Z^M \surjects M$,
which factors through $(\Z/n\Z)^M$ since $nM=0$.
Applying \eqref{I:surjecting} to $M^\vee\colonequals \Hom_\Z(M,k_s^\times)$
and applying $\Hom_\Z(-,k_s^\times)$ yields~\eqref{I:injecting}.
\end{proof}

\begin{lemma} 
\label{L:correspondence divisible by n}
Each of the four homomorphisms
\begin{align*}
\Hom_\Z(\Z^\fes,\Z^{\fes'}) &\xrightarrow{\TAU\mapsto\TAU_*} \Hom_\Z((\Z/n\Z)^\fes,(\Z/n\Z)^{\fes'}) \\
\Hom_\Z(\Z^\fes,\Z^{\fes'}) &\xrightarrow{\TAU\mapsto\TAU^*}
 \Hom_\Z((\Z/n\Z)^{\fes'},(\Z/n\Z)^\fes) \\
\Hom_\Z(\Z^\fes,\Z^{\fes'}) &\xrightarrow{\TAU\mapsto\TAU_*} \Hom_\Z(\mu_n^\fes,\mu_n^{\fes'}) \\
\Hom_\Z(\Z^\fes,\Z^{\fes'}) &\xrightarrow{\TAU\mapsto\TAU^*} \Hom_\Z(\mu_n^{\fes'},\mu_n^\fes)
\end{align*}
is surjective with kernel $n \Hom_\Z(\Z^\fes,\Z^{\fes'})$
(in the last two we assume $\Char k \nmid n$).
\end{lemma}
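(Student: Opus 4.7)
The plan is to reduce all four cases to the single elementary fact that, for free finitely generated abelian groups $A$ and $B$, the reduction-mod-$n$ map
\[
\Hom_\Z(A,B)/n\Hom_\Z(A,B) \xrightarrow{\sim} \Hom_\Z(A/nA,B/nB)
\]
is an isomorphism. This follows by applying $\Hom_\Z(A,-)$ to $0 \to B \xrightarrow{n} B \to B/nB \to 0$ (using $\Ext^1_\Z(A,B)=0$ since $A$ is free) and then observing that $\Hom_\Z(A,B/nB) = \Hom_\Z(A/nA,B/nB)$ because the target is $n$-torsion.

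For the first map, I would unwind the definitions of Section~\ref{S:correspondences}: representing $\TAU$ as an $\fes \times \fes'$ integer matrix $(a_{P,Q})$ via the standard bases, the induced map $\TAU_* \colon (\Z/n)^\fes \to (\Z/n)^{\fes'}$ is represented, after identifying $\Hom_\Z(\Z^S, M) = M^S$, by the same matrix reduced mod $n$. Applying the elementary fact with $A=\Z^\fes$ and $B=\Z^{\fes'}$ then yields both surjectivity and the stated kernel. The second map $\TAU \mapsto \TAU^*$ is handled in exactly the same way after swapping the roles of $\fes$ and $\fes'$, using the canonical self-duality $(\Z^S)^\vee \isom \Z^S$ (the dual matrix is the transpose, which has the same entries mod $n$).

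For the third and fourth maps (with $\mu_n$ in place of $\Z/n$), I would use that the constructions $\TAU \mapsto \TAU_*$ and $\TAU \mapsto \TAU^*$ are natural in the coefficient module. Since $\Char k \nmid n$, there is an abelian-group (not $\calG$-equivariant) isomorphism $\mu_n(k_s) \isom \Z/n\Z$; this induces an abelian-group isomorphism $\mu_n^\fes \isom (\Z/n)^\fes$ and converts each $\mu_n$ case into one of the $\Z/n$ cases already treated. The loss of $\calG$-equivariance is harmless here, because the $\Hom$ groups in the statement are only $\Hom_\Z$ with no Galois condition imposed.

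The lemma poses no real obstacle: the only bookkeeping point is verifying that $\TAU_*$, defined via the $\Z$-dual of $\TAU$, really does act by reduction mod $n$ on the matrix entries. Once that is checked, all four claims become the same assertion about free $\Z$-modules.
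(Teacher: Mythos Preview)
Your proposal is correct and follows exactly the approach the paper takes: the paper's entire proof is the single sentence ``The statements do not involve the $\calG$-action, so each reduces to an easy statement about abelian groups.'' You have simply spelled out that easy statement (reduction mod~$n$ on matrices of free $\Z$-modules) and made explicit the observation that a non-equivariant identification $\mu_n \isom \Z/n\Z$ suffices because only $\Hom_\Z$ appears.
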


\begin{proof}
The statements do not involve the $\calG$-action,
so each reduces to an easy statement about abelian groups.
\end{proof}

\subsection{Determining the Selmer group} \label{S:Det Selmer}

Set $\widetilde{\G_m^\fes} \colonequals  \G_m^\fes$
for a true descent setup, and $\widetilde{\G_m^\fes} \colonequals  \G_m^\fes/\mu_n$ for a
fake descent setup. 

Recall that $R= \ker(\alpha^\vee \colon E \surjects \widehat{J}[\widehat{\phi}])$.
Fix a finite \'etale scheme $\fes'=\Spec L'$ 
with a surjection $(\Z/n\Z)^{\fes'} \surjects R$
(one such choice is $\fes'=R$). 
By definition of~$R$, we have an exact sequence
\begin{equation}
  \label{E:tau*}
(\Z/n\Z)^{\fes'} \To E \stackrel{\alpha^\vee}\To \widehat{J}[\widehat{\phi}] \To 0
\end{equation}
and its dual
\begin{equation}
\label{E:dual of tau*}
    0 \To A[\phi] \To E^\vee \To \mu_n^{\fes'}
\end{equation}
in which the image of the last map is~$R^\vee$.

\begin{remark} \label{R:critSha1}
  Define $Q$ and~$q'$ by the exact sequence
  \begin{equation}
    \label{E:Q and q'}
	0 \To R^\vee \To \mu_n^{\fes'} \stackrel{q'}{\To} Q \To 0 . 
  \end{equation}
  If $Q(k) = q'\bigl(\mu_n(L')\bigr)$
  and $\Sha^1(k, \mu_n^{\fes'}) = 0$
  (the latter holds if $n$ is prime, by Lemma~\ref{L:Sha of mu_p}),
  then $\Sha^1(k, R^\vee) = 0$, so
  the first assumption in Corollary~\ref{C:Nicecase} is satisfied.
  This gives a simple way of showing that $\Sha^1(k, R^\vee)$ vanishes,
  but the criterion can be weaker than that coming from Section~\ref{S:sha1}.
\end{remark}

By Lemma~\ref{L:correspondence divisible by n}, the composition 
\begin{equation}
\label{E:def of TAU}
	(\Z/n\Z)^{\fes'} \surjects R \injects E \injects (\Z/n\Z)^\fes
\end{equation}
is $\TAU^*$ for some correspondence
$\corr{\TAU}{\fes}{\fes'}$.
We use~$\TAU_*$ to denote any of several homomorphisms induced by~$\TAU$;
the context will make the meaning clear.
In the fake case,
$\TAU_* \colon \mu_n^\fes \to \mu_n^{\fes'}$
kills the diagonal~$\mu_n$,
because taking duals in~\eqref{E:def of TAU} 
shows that $\TAU_*$ factors through $E^\vee = \mu_n^\fes/\mu_n$.
Thus $\TAU_* \colon \G_m^\fes\to\G_m^{\fes'}$
induces $\TAU_* \colon \widetilde{\G_m^\fes}\to\G_m^{\fes'}$
in both cases.

Let $U$ be the image of 
$(\TAU_*, n) \colon \G_m^\fes \to \G_m^{\fes'} \times \G_m^\fes$.
The last map in~\eqref{E:dual of tau*} is induced by
$\TAU_* \colon \mu_n^\fes \to \mu_n^{\fes'}$,
so we have a commutative diagram with exact rows and columns
\begin{equation}
\begin{split}
\label{E:two rows}  
 \xymatrix{ 
     		       &  & & 0 \ar[d] \\
     		       & 0 \ar[d] & & R^\vee \ar[d] \\
              0 \ar[r] & A[\phi] \ar[d]_{\alpha} \ar[r]
                       & \widetilde{\G_m^\fes} \ar@{=}[d] \ar[r]^{(\TAU_*,n)}
                       & U \ar[d]^{\pr_2} \ar[r]
                       & 0\phantom{,} \\
              0 \ar[r] & E^\vee \ar[r] \ar[d]_q
                       & \widetilde{\G_m^\fes} \ar[r]^{n}
                       & \G_m^\fes \ar[r] \ar[d]
                       & 0, \\
                       & R^\vee \ar[d] & & 0 \\
                       & 0 \\
            }
\end{split}
\end{equation}
in which $\ker \pr_2$ is identified as~$R^\vee$ by the snake lemma.
Taking cohomology, we obtain
\begin{equation}
\begin{split}
 \label{E:UH diagram}
   \xymatrix{ 
		& & R^\vee(k) \ar@{^{(}->}[d] \\
		& (\widetilde{\G_m^\fes})(k) \ar@{=}[d] \ar[r]^-{(\TAU_*,n)}
                & U(k) \ar[d]^{\pr_2} \ar[r]
                & H^1(k, A[\phi]) \ar[d]_{\alpha} \ar[r]
                & H^1(k, \widetilde{\G_m^\fes}) \phantom{.} \ar@{=}[d] \\
             E^\vee(k) \ar[r] & (\widetilde{\G_m^\fes})(k) \ar[r]^-n
                & L^\times \ar[r]
                & H^1(k, E^\vee) \ar[r]
                & H^1(k, \widetilde{\G_m^\fes}) .
            }
\end{split}
\end{equation}

\begin{lemma} \label{L:UasKernel}
  There is a finite \'etale $k$-scheme~$\fes''$ and a correspondence
  $\corr{\TAU'}{\fes'}{\fes''}$ such that
  \[ \mu_n^\fes \stackrel{\TAU_*}{\To} \mu_n^{\fes'} \stackrel{\TAU'_*}{\To} \mu_n^{\fes''} \]
  is exact. There is another correspondence $\corr{\TAU''}{\fes}{\fes''}$
  such that $\TAU' \circ \TAU = n \TAU''$. 
Then
  \[ U = \ker\Bigl(\G_m^{\fes'} \times \G_m^{\fes} \To \G_m^{\fes'} \times \G_m^{\fes''}, \quad
                   (\ell', \ell) \longmapsto
                     \bigl(\frac{{\ell'}^n}{\TAU_*(\ell)},
                           \frac{\TAU'_*(\ell')}{\TAU''_*(\ell)}\bigr) \Bigr) .
  \]
\end{lemma}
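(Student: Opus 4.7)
The plan is to construct $\fes''$, $\TAU'$, and $\TAU''$ with the stated properties in turn, and then verify the kernel description by a diagram chase on $k_s$-points.

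First, for the existence of $\fes'$ and $\TAU'$: consider the cokernel $M \colonequals \coker(\TAU_* \colon \mu_n^\fes \to \mu_n^{\fes'})$, which is a finite $\calG$-module killed by~$n$. By Lemma~\ref{L:correspondences surjecting and injecting}\eqref{I:injecting}, there exists a finite $\calG$-set~$\fes''$ and an injection $M \injects \mu_n^{\fes''}$. Composing with the projection $\mu_n^{\fes'} \surjects M$ produces a $\calG$-homomorphism $\mu_n^{\fes'} \to \mu_n^{\fes''}$ whose kernel is exactly $\im(\TAU_*)$. By Lemma~\ref{L:correspondence divisible by n}, this homomorphism has the form $\TAU'_*$ for some correspondence $\corr{\TAU'}{\fes'}{\fes''}$, and by construction the sequence $\mu_n^\fes \xrightarrow{\TAU_*} \mu_n^{\fes'} \xrightarrow{\TAU'_*} \mu_n^{\fes''}$ is exact.

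Next, for the existence of $\TAU''$: since $\TAU'_* \circ \TAU_* = 0$ on $\mu_n^\fes$, the correspondence $\TAU' \circ \TAU \in \Hom_\Z(\Z^\fes, \Z^{\fes''})$ induces the zero map on $\mu_n^\fes \to \mu_n^{\fes''}$. By the kernel computation in Lemma~\ref{L:correspondence divisible by n}, $\TAU' \circ \TAU$ lies in $n \Hom_\Z(\Z^\fes, \Z^{\fes''})$, so we can write $\TAU' \circ \TAU = n \TAU''$ for some correspondence $\corr{\TAU''}{\fes}{\fes''}$.

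Let $V$ denote the kernel described in the statement. For the inclusion $U \subseteq V$: if $(\ell', \ell) = (\TAU_*(m), m^n)$ for some $m \in \G_m^\fes$, then $\ell'^n = \TAU_*(m^n) = \TAU_*(\ell)$ and $\TAU'_*(\ell') = (\TAU'\circ\TAU)_*(m) = (n\TAU'')_*(m) = \TAU''_*(m^n) = \TAU''_*(\ell)$. For the reverse inclusion $V \subseteq U$, it suffices to check equality on $k_s$-points since both are closed subgroup schemes of $\G_m^{\fes'} \times \G_m^\fes$. Given $(\ell', \ell) \in V(k_s)$, use surjectivity of the $n$-th power map on $\G_m(k_s)$ to pick $m \in \G_m^\fes(k_s)$ with $m^n = \ell$. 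Then $\TAU_*(m)^n = \TAU_*(\ell) = \ell'^n$, so $\zeta \colonequals \ell'/\TAU_*(m) \in \mu_n^{\fes'}(k_s)$. Apply $\TAU'_*$:
\[
  \TAU'_*(\zeta) = \frac{\TAU'_*(\ell')}{\TAU'_*\TAU_*(m)} = \frac{\TAU''_*(\ell)}{(n\TAU'')_*(m)} = \frac{\TAU''_*(m^n)}{\TAU''_*(m^n)} = 1,
\]
so by exactness of the sequence from part~(1), $\zeta = \TAU_*(\eta)$ for some $\eta \in \mu_n^\fes(k_s)$. Replacing $m$ by $m\eta$ gives $m^n = \ell$ still and $\TAU_*(m\eta) = \ell'$, hence $(\ell', \ell) \in U(k_s)$, as required.

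The routine parts are the direct inclusion $U \subseteq V$ and the construction of $\TAU''$ via Lemma~\ref{L:correspondence divisible by n}; the only nontrivial input is the surjectivity/exactness argument on $k_s$-points for the reverse inclusion, which reduces to the exactness statement of part~(1) applied to the element $\zeta$. No significant obstacle is anticipated.
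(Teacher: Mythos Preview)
Your proof is correct and follows essentially the same approach as the paper's: construct $\fes''$ and $\TAU'$ from the cokernel via Lemma~\ref{L:correspondences surjecting and injecting}\eqref{I:injecting}, obtain $\TAU''$ from Lemma~\ref{L:correspondence divisible by n}, then verify both inclusions on $k_s$-points with the same diagram chase (pick an $n^{\tH}$ root, form the ratio in $\mu_n^{\fes'}$, and use exactness). Two trivial slips: in your first paragraph ``existence of $\fes'$ and $\TAU'$'' should read ``$\fes''$ and $\TAU'$'', and your temporary name $V$ for the kernel clashes with the paper's later use of $V$ for the image subgroup of $U(k)$.
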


\begin{proof}
Lemma~\ref{L:correspondences surjecting and injecting}\eqref{I:injecting}
applied to the cokernel of~$\TAU_*$ yields $\fes''$ and~$\TAU'$.
By Lemma~\ref{L:correspondence divisible by n},
$\TAU' \circ \TAU$ is $n$ times some~$\TAU''$.
Direct computation shows that $U$ is contained in the kernel.
To see the other inclusion, let $(\ell', \ell)$ be in the kernel. 
Let $\lambda \in \G_m^\fes(k_s)$ be such that $\lambda^n = \ell$, 
and let $\zeta' = \ell'/\TAU_*(\lambda)$. 
Then 
${\zeta'}^n = {\ell'}^n/\TAU_*(\lambda^n) = {\ell'}^n/\TAU_*(\ell) = 1$,
so $\zeta' \in \mu_n^{\fes'}$.
  Also,
  \[ \TAU'_*(\zeta') = \frac{\TAU'_*(\ell')}{\TAU'_* \TAU_*(\lambda)}
                     = \frac{\TAU'_*(\ell')}{(n \TAU'')_*(\lambda)}
                     = \frac{\TAU'_*(\ell')}{\TAU''_*(\lambda^n)}
                     = \frac{\TAU'_*(\ell')}{\TAU''_*(\ell)}
                     = 1 ,
  \]
  which implies that $\zeta' = \TAU_*(\zeta)$ for some $\zeta \in \mu_n^\fes$.
  Then $(\ell',\ell) = (\TAU_*(\zeta \lambda), (\zeta \lambda)^n) \in U$.
\end{proof}

\begin{corollary} \label{C:ComputeUk}
  Keeping the notation from Lemma~\ref{L:UasKernel}, we have
  \[ U(k) = \bigl\{(\ell',\ell) \in {L'}^\times \times L^\times
                     : \TAU_*(\ell) = {\ell'}^n, \TAU''_*(\ell) = \TAU'_*(\ell')\bigr\} .
  \]
\end{corollary}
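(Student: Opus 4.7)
The plan is to derive the corollary directly from Lemma~\ref{L:UasKernel} by taking $k$-rational points. Lemma~\ref{L:UasKernel} identifies $U$ as the kernel of a homomorphism
\[
	\Psi \colon \G_m^{\fes'} \times \G_m^{\fes} \To \G_m^{\fes'} \times \G_m^{\fes''}, \qquad
	(\ell',\ell) \longmapsto \Bigl(\frac{{\ell'}^n}{\TAU_*(\ell)}, \; \frac{\TAU'_*(\ell')}{\TAU''_*(\ell)}\Bigr)
\]
of commutative $k$-group schemes. Since the $k$-points functor on commutative $k$-group schemes (equivalently, the global-sections functor on the associated sheaves of abelian groups) is left exact, we obtain $U(k) = \ker\bigl(\Psi(k)\bigr)$.

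The remaining task is to read off $\Psi(k)$. Writing $\fes = \Spec L$, $\fes' = \Spec L'$, and $\fes'' = \Spec L''$, one has $\G_m^{\fes}(k) = L^\times$, $\G_m^{\fes'}(k) = {L'}^\times$, and $\G_m^{\fes''}(k) = {L''}^\times$, and the correspondences $\TAU$, $\TAU'$, $\TAU''$ induce the indicated maps $\TAU_*$, $\TAU'_*$, $\TAU''_*$ between the relevant unit groups, as in Example~\ref{Ex:what TAU can do} applied to the multiplicative analogue. In these terms $\Psi(k)$ sends a pair $(\ell',\ell) \in {L'}^\times \times L^\times$ to $\bigl({\ell'}^n/\TAU_*(\ell),\, \TAU'_*(\ell')/\TAU''_*(\ell)\bigr)$, and demanding that both coordinates equal~$1$ yields exactly the two equations $\TAU_*(\ell) = {\ell'}^n$ and $\TAU''_*(\ell) = \TAU'_*(\ell')$ appearing in the statement.

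There is essentially no obstacle here: the corollary is a direct transcription of Lemma~\ref{L:UasKernel} to $k$-rational points. The only ingredient beyond that lemma is the left exactness of $G \mapsto G(k)$, which is automatic for any homomorphism of commutative group schemes, so no further argument is needed.
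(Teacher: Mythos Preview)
Your argument is correct and matches the paper's intent: the paper states Corollary~\ref{C:ComputeUk} without proof, treating it as immediate from Lemma~\ref{L:UasKernel} by passing to $k$-points (equivalently, $\calG$-invariants), exactly as you do. There is nothing to add.
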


In this paragraph, suppose that we are in the fake case.
The map $\TAU_* \colon \mu_n^\fes \to \mu_n^{\fes'}$ 
factors through $E^\vee = \mu_n^\fes/\mu_n$,
so it kills the diagonal image of~$\mu_n$.
Let $\iota \colon \Z \to \Z^\fes$ be the diagonal embedding;
the previous sentence shows that 
$(\tau \circ \iota)_* \colon \mu_n \to \mu_n^{\fes'}$
is trivial.
By Lemma~\ref{L:correspondence divisible by n}, 
$\tau \circ \iota = n\theta$ for some correspondence~$\theta$.
The image of $a \in k^\times$ under $\theta_* \colon \G_m \to \G_m^{\fes'}$
in each component of $(L')^\times$ is a fixed power of~$a$.

\begin{lemma} \label{L:Images} \strut
\begin{enumerate}[\upshape (a)]
\item \label{I:true images}
In the true case, $\widetilde{\G_m^\fes}(k)$
is mapped by $\widetilde{\G_m^\fes} \stackrel{n}\to \G_m^\fes$
to $L^{\times n} \subseteq L^\times$ 
and by $(\TAU_*,n)$ to
\begin{equation}
\label{E:true V}
	\bigl\{(\TAU_*(\ell), \ell^n) : \ell \in L^\times\bigr\} \subseteq U(k). 
\end{equation}
\item \label{I:fake images}
  In the fake case, $\widetilde{\G_m^\fes}(k)$
is mapped by $\widetilde{\G_m^\fes} \stackrel{n}\to \G_m^\fes$
to $L^{\times n} k^\times \subseteq L^\times$ 
and by $(\TAU_*,n)$ to
\begin{equation}
\label{E:fake V}
	\bigl\{(\TAU_*(\ell) \theta_*(a), \ell^n a) : 
	\ell \in L^\times, a \in k^\times \bigr\} \subseteq U(k). 
\end{equation}
\end{enumerate}
\end{lemma}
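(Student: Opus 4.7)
The plan is to dispose of the true case directly from $\widetilde{\G_m^\fes} = \G_m^\fes$, and in the fake case to unfold $(\G_m^\fes/\mu_n)(k)$ via Galois descent.

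For part~\eqref{I:true images}, we have $\widetilde{\G_m^\fes}(k) = L^\times$, and the map labelled~$n$ is the $n$-th power map, so its image is $L^{\times n}$. The map $(\TAU_*, n)$ sends $\ell \in L^\times$ to $(\TAU_*(\ell), \ell^n)$; that this pair lies in $U(k)$ is automatic, since $U$ was defined to be the (scheme-theoretic) image of $(\TAU_*, n) \colon \widetilde{\G_m^\fes} \to \G_m^{\fes'} \times \G_m^\fes$.

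For part~\eqref{I:fake images}, I first describe $(\G_m^\fes/\mu_n)(k)$ concretely. Lift any element to some $\tilde\ell \in \G_m^\fes(k_s) = L_s^\times$. Galois-invariance of its class in the quotient forces $\sigma(\tilde\ell)/\tilde\ell$ to lie in the diagonal subgroup $\mu_n(k_s) \subseteq \mu_n^\fes(k_s)$ for every $\sigma \in \calG$, and the resulting cocycle represents a class in $H^1(k, \mu_n) = k^\times/k^{\times n}$, say coming from some $a \in k^\times$. Choose $\hat a \in k_s^\times$ with $\hat a^n = a$ and $\sigma(\hat a)/\hat a = \sigma(\tilde\ell)/\tilde\ell$ for all $\sigma$. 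Writing $\iota_* \colon \G_m \to \G_m^\fes$ for the diagonal embedding (obtained from $\iota \colon \Z \to \Z^\fes$ via the construction of Section~\ref{S:correspondences}), the quotient $\ell \colonequals \tilde\ell/\iota_*(\hat a) \in L_s^\times$ is $\calG$-invariant, so lies in $L^\times$. Thus every element of $\widetilde{\G_m^\fes}(k)$ can be represented as $\tilde\ell = \ell \cdot \iota_*(\hat a)$ for some $(\ell, a) \in L^\times \times k^\times$, and conversely any such pair yields an element of $\widetilde{\G_m^\fes}(k)$.

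With this parametrization in hand, computing the images is mechanical. Since $\iota_*(\hat a)^n = \iota_*(a)$ equals $a$ viewed diagonally in $L^\times$, we have $\tilde\ell^n = \ell^n a$, so the image of $\widetilde{\G_m^\fes}(k) \to L^\times$ is $L^{\times n} k^\times$. For the $\TAU_*$-component, the correspondence identity $\TAU \circ \iota = n\theta$ yields $\TAU_* \circ \iota_* = (n\theta)_*$, which in multiplicative notation is the map $x \mapsto \theta_*(x)^n$; hence
\[
\TAU_*(\tilde\ell) = \TAU_*(\ell) \cdot \TAU_*(\iota_*(\hat a)) = \TAU_*(\ell) \cdot \theta_*(\hat a)^n = \TAU_*(\ell) \cdot \theta_*(\hat a^n) = \TAU_*(\ell) \cdot \theta_*(a),
\]
which establishes the description~\eqref{E:fake V}. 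The main subtlety is the cohomological identification of $(\G_m^\fes/\mu_n)(k)$ together with tracking that the additive identity $\TAU \circ \iota = n\theta$ of correspondences translates to the multiplicative formula $\TAU_*(\iota_*(\hat a)) = \theta_*(\hat a)^n$ for $\G_m$-valued points; once these are laid out, every remaining step is a direct computation.
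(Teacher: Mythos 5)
Your proof is correct and follows essentially the same route as the paper: the paper obtains the parametrization $(\ell,a)\in L^\times\times k^\times \mapsto \ell a^{1/n}$ of $\widetilde{\G_m^\fes}(k)$ by taking cohomology of $0\to\G_m\to\G_m^\fes\times\G_m\to\widetilde{\G_m^\fes}\to 0$ and invoking Hilbert~90, whereas you re-derive the same surjective parametrization by an explicit cocycle/Kummer argument. The subsequent computation of the images, including $\TAU_*(\iota_*(\hat a))=\theta_*(a)$ via $\TAU\circ\iota=n\theta$, is identical to the paper's.
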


\begin{proof}
The true case is immediate from the definitions,
so assume that we are in the fake case.
We have an exact sequence
\[
	0 \to \G_m \to \G_m^\fes \times \G_m 
		\stackrel{j}\to \widetilde{\G_m^\fes} \to 0
\]
with maps induced by $\ell \mapsto (\ell,\ell^{-n})$ 
and $(\ell,a) \mapsto \ell a^{1/n}$
(the $n^{\tH}$~root is well-defined modulo~$\mu_n$).
Taking cohomology shows that $j$ induces a surjection 
$L^\times \times k^\times \to \widetilde{\G_m^\fes}(k)$.
Following $j$ by $\widetilde{\G_m^\fes} \stackrel{n}\to \G_m^\fes$
yields $(\ell,a) \mapsto \ell^n a$, 
whose image on $k$-points is $L^{\times n} k^\times$.
Following $j$ by $\widetilde{\G_m^\fes} \stackrel{(\TAU_*,n)}\To U \subseteq \G_m^{\fes'} \times \G_m^\fes$
yields $(\ell,a) \mapsto (\TAU_*(\ell) \theta_*(a), \ell^n a)$,
whose image on $k$-points is~$V$.
\end{proof}

\begin{remark}
\label{R:representatives mod nth powers}
The definition of~$j$ in the proof of Lemma~\ref{L:Images}\eqref{I:fake images}
shows that in~\eqref{E:fake V}, it suffices to let $a$ 
run over a set of representatives of~$k^\times/k^{\times n}$.
\end{remark}

Define 
$V\colonequals \im\left(\widetilde{\G_m^\fes}(k) \stackrel{(\tau_*,n)}\To U(k)\right)$,
so $V$ is given by \eqref{E:true V} or~\eqref{E:fake V}.

\begin{lemma}
\label{L:pulling back divisors}
  Suppose that we have a true (respectively, fake) descent setup $(n,\fes,\LL)$.
  Choose $\beta$ as in Definition~\ref{D:true setup} 
  (respectively, $\beta$ and~$D$
  as in Definition~\ref{D:fake setup}).
  Then $\TAU_*(\beta)$ (respectively, $\TAU_*(\beta) - \theta_*(D)$) 
  is a principal divisor on~$X \times \fes'$.
\end{lemma}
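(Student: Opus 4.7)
The plan is to reduce, for each $P' \in \fes'(k_s)$, to showing that the $P'$-fiber of the divisor in question is principal on~$X_s$, and then to invoke Galois descent. Since $X \times \fes'$ is a finite disjoint union of nice $k$-varieties $X_{L'_i}$, and since Hilbert's Theorem~90 applied to
\[
	1 \to k_s^\times \to k_s(X_{L'_i})^\times \to \Princ (X_{L'_i})_s \to 0
\]
shows that any divisor on $X_{L'_i}$ that becomes principal over~$k_s$ is already principal over~$L'_i$, this fiberwise reduction is valid.

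Realize the correspondence $\TAU$ as an integer matrix $(T_{P',P})_{P' \in \fes',\, P \in \fes}$, so that $\TAU_*(\beta)_{P'} = \sum_P T_{P',P} \beta_P$. In the fake case, the factorization of $\TAU^*$ through $R \subseteq (\Z/n\Z)^\fes_{\deg 0}$ forces $\sum_P T_{P',P} = n e_{P'}$ for some integer $e_{P'}$, and unwinding $\TAU \circ \iota = n\theta$ yields $\theta_*(D)_{P'} = e_{P'} D$ and hence $\TAU_*(D \times \fes) = n\theta_*(D)$. Applying $\TAU_*$ to the identity $\divisor(f) = n\beta$ (true case) or $\divisor(f) = n\beta - (D \times \fes)$ (fake case), together with $\TAU_*(\divisor(f)) = \divisor(\TAU_*(f))$, yields
\[
	n \cdot \TAU_*(\beta)_{P'} = \divisor(\TAU_*(f)_{P'})
	\quad\text{or}\quad
	n \cdot \bigl(\TAU_*(\beta)_{P'} - \theta_*(D)_{P'}\bigr) = \divisor(\TAU_*(f)_{P'}),
\]
so the class of the $P'$-fiber in $\widehat{J}(k_s)$ lies in $\widehat{J}[n]$.

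It remains to see this class vanishes, which I will do by identifying it with $\alpha^\vee(\TAU^* e_{P'})$. In the true case, $\alpha^\vee \colon (\Z/n\Z)^\fes \to \widehat{J}[n]$ sends $(c_P)_P$ to $\sum_P c_P [\beta_P]$, so applying it to $\TAU^* e_{P'} = (T_{P',P})_P$ gives exactly the class of $\TAU_*(\beta)_{P'}$. In the fake case, $\alpha^\vee \colon (\Z/n\Z)^\fes_{\deg 0} \to \widehat{J}[n]$ sends $(c_P)$ with $\sum c_P = 0$ to $\sum c_P [\beta_P]$; using $n[\beta_{P_0}] = [D]$ in $\widehat{J}(k_s)$ for any $P_0 \in \fes(k_s)$, we rewrite
\[
	\sum_P T_{P',P}[\beta_P] - e_{P'}[D]
	= \sum_P T_{P',P}\bigl([\beta_P] - [\beta_{P_0}]\bigr),
\]
which is the value of $\alpha^\vee$ on $\TAU^* e_{P'}$. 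Since $\TAU^*$ factors through $R = \ker \alpha^\vee$ by~\eqref{E:def of TAU}, this class vanishes in both cases. The only real care required is the bookkeeping relating $\TAU$, $\TAU^*$, $\TAU_*$, and $\theta$ through the various self-dualities; no serious obstacle arises.
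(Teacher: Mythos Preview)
Your proof is correct and takes essentially the same approach as the paper's: both show that the class of each $P'$-fiber in $\Pic X_s$ equals $\alpha^\vee(\TAU^*\mathbf{e}_{P'})$, which vanishes since $\TAU^*$ factors through $R=\ker\alpha^\vee$. The paper leaves the Hilbert~90 descent implicit and, in the fake case, packages the identity $\sum_P T_{P',P}[\beta_P]-e_{P'}[D]=\sum_P T_{P',P}([\beta_P]-[\beta_{P_0}])$ into a commutative diagram rather than writing it out directly; your Step~2 (showing the class is $n$-torsion) is not needed for the argument but is harmless.
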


\begin{proof}
In the true case, the composition
\[
	(\Z/n\Z)^{\fes'} \stackrel{\TAU^*}\To (\Z/n\Z)^{\fes} \stackrel{\alpha^\vee}\To \widehat{J}[\widehat{\phi}] \subseteq \Pic X_s
\]
is~$0$ by~\eqref{E:tau*},
and it sends a basis element~$P'$ to the class of the divisor
$\TAU_*(\beta)|_{X \times \{P'\}}$.

In the fake case, 
we use the following diagram:
\begin{equation}
  \label{E:annoying}
\begin{split}
\xymatrix{
\Z^{\fes'} \ar[r]^-{(\TAU^*,\theta^*)} \ar@{->>}[d] & \dfrac{\Z^{\fes} \times \Z}{\langle (nP,1) : P \in \fes \rangle} \ar[r]^-{(\beta,-D)}  & \Pic X_s \\
(\Z/n\Z)^{\fes'} \ar[r]^{\TAU^*} & (\Z/n\Z)^{\fes}_{\deg 0} \ar[u] \ar[r]^-{\alpha^\vee} & \widehat{J}[\widehat{\phi}] \ar@{^{(}->}[u] \\
}
\end{split}
\end{equation}
Here the middle vertical map sends $\sum_P \bar{a}_P P$ (where $a_P \in \Z$
reduces to $\bar{a}_P \in \Z/n\Z$)
to $\left( \sum_P a_P P, \frac{\sum_P a_P}{n} \right)$.
The map $(\beta,-D)$ sends 
$(\sum a_P P,b)$ to the class of $\sum a_P \beta_P - b D$,
so $(nP,1)$ goes to the class of  $n\beta_P-D$, 
which is trivial by the definitions of $\beta$ and~$D$.
Both paths from $\Z^{\fes'}$ to 
$\frac{\Z^{\fes} \times \Z}{\langle (nP,1) : P \in \fes \rangle}$
send a basis element~$P'$ to
$\left( \sum_P \tau_{P',P} P, \frac{\sum_P \tau_{P',P}}{n} \right)$,
where $\tau_{P',P}$ is the coefficient of~$P'$ in $\tau(P) \in \Z^{\fes'}$.
Both paths from $(\Z/n\Z)^{\fes}_{\deg 0}$ to~$\Pic X_s$
send $P_1-P_2$ to the class of $\beta_{P_1} - \beta_{P_2}$.
Thus \eqref{E:annoying} commutes.
The composition along the bottom row is zero, so the
composition along the top row is zero, which is the desired result.
\end{proof}

Lemma~\ref{L:pulling back divisors} shows that 
there is a function 
$r = (r_{P'})_{P' \in \fes'} \in k(X \times \fes')^\times$ such that
\[ 
	\divisor(r) = 
        \begin{cases}
          \TAU_*(\beta), & \textup{ in the true case,} \\
	  \TAU_*(\beta) - \theta_*(D) & \textup{ in the fake case.}
        \end{cases}
\]
Recall from Definitions \ref{D:true setup} and~\ref{D:fake setup}
the function $f \in k(X\times \fes)^\times$ satisfying
\[ 
	\divisor(f) = 
        \begin{cases}
          n \beta, & \textup{ in the true case,} \\
	  n \beta - D & \textup{ in the fake case.}
        \end{cases}
\]

\begin{lemma}
\label{L:TAU_* of functions}
  We have $\TAU_*(f) \equiv r^n \pmod{(L')^\times}$. 
If $y \in \calY^0(X^\good)$, then
  $\TAU_*([y, \beta]) = r(y)$ (resp., $\TAU_*([y, \beta]_D) = r(y)$).
\end{lemma}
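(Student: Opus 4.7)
The plan is to verify the two assertions by direct divisor and bracket calculations, handling the true and fake cases in parallel, using the bilinearity of the Albanese--Picard pairing together with the relation $\TAU \circ \iota = n\theta$ that defines $\theta$ (where $\iota \colon \Z \to \Z^\fes$ is the diagonal).

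For the first statement, use the compatibility $\divisor(\TAU_*(f)) = \TAU_*(\divisor f)$. In the true case, $\divisor f = n\beta$, so $\TAU_*(\divisor f) = n\TAU_*(\beta) = n\divisor(r) = \divisor(r^n)$. In the fake case, $\divisor f = n\beta - D \times \fes$. Since $\TAU(1_\fes) = \TAU(\iota(1)) = n\theta(1)$, the correspondence $\TAU_*$ carries $D \times \fes$ to $n\,\theta_*(D)$, and hence $\TAU_*(\divisor f) = n\TAU_*(\beta) - n\theta_*(D) = n\divisor(r) = \divisor(r^n)$. In either case $\TAU_*(f)/r^n$ has trivial divisor on $X \times \fes'$, so it lies in $(L')^\times$.

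For the second statement, fix a matrix representation $(\TAU_{P',P})$ of $\TAU \colon \Z^\fes \to \Z^{\fes'}$, so that $\TAU_*\colon \G_m^\fes \to \G_m^{\fes'}$ sends $(g_P)_P$ to $\bigl(\prod_P g_P^{\TAU_{P',P}}\bigr)_{P'}$. By bilinearity of $[y,\,\cdot\,]$ in its second argument, the $P'$-component of $\TAU_*([y,\beta])$ in the true case is
\[
    \prod_P [y,\beta_P]^{\TAU_{P',P}}
    = \bigl[y, \sum_P \TAU_{P',P}\, \beta_P\bigr]
    = [y, \TAU_*(\beta)_{P'}]
    = [y, \divisor(r_{P'})]
    = r_{P'}(y),
\]
where the last equality uses the identity $[y,\divisor(g)] = g(y)$ from Section~\ref{S:Albanese-Picard}. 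This gives $\TAU_*([y,\beta]) = r(y)$.

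The fake case is the same computation with one bookkeeping correction. Setting $m \colonequals \theta(1) \in \Z^{\fes'}$, the identity $\TAU \circ \iota = n\theta$ gives $\sum_P \TAU_{P',P} = n m_{P'}$. Since $H$ and $h(y)^{1/n}$ are pulled back from $X$ and so independent of $P$, raising them to power $\TAU_{P',P}$ and multiplying over $P$ gives
\[
    \TAU_*([y,\beta]_D)_{P'}
    = \prod_P \bigl([y, \beta_P - H]/h(y)^{1/n}\bigr)^{\TAU_{P',P}}
    = [y, \TAU_*(\beta)_{P'} - n m_{P'} H] \cdot h(y)^{-m_{P'}};
\]
the $n^\tH$-root ambiguity disappears because $h(y)^{1/n}$ is raised to the integer power $n m_{P'}$. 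Now
\[
    \TAU_*(\beta)_{P'} - n m_{P'} H
    = \divisor(r_{P'}) + m_{P'}(D - nH)
    = \divisor\bigl(r_{P'}\, h^{m_{P'}}\bigr),
\]
so applying $[y,\divisor(\,\cdot\,)]$ yields $r_{P'}(y)\, h(y)^{m_{P'}}$, which after multiplying by $h(y)^{-m_{P'}}$ gives $r_{P'}(y)$, as desired. The only mildly delicate point is this last bookkeeping step in the fake case, namely checking that the exponents $\TAU_{P',P}$ aggregate to an integer multiple of $n$ so that the $n^\tH$-root disappears cleanly; this is precisely what the definition of $\theta$ guarantees.
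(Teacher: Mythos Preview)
Your proof is correct and follows essentially the same approach as the paper's: both reduce the first claim to a divisor computation using $\TAU\circ\iota = n\theta$, and both handle the second claim via bilinearity of the bracket together with $[y,\divisor(g)]=g(y)$. The only difference is presentational---you write the computation componentwise with the matrix entries $\TAU_{P',P}$, whereas the paper uses the compressed functorial notation $\iota_*$ and $\theta_*$ (and in fact your treatment of the first congruence in the fake case is more explicit than the paper's one-line justification).
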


\begin{proof}
Since $\divisor(\TAU_*(f)) = n \TAU_*(\beta) = \divisor(r^n)$,
the congruence holds.
In the true case,
\[
	\TAU_*([y, \beta]) = [y, \TAU_*(\beta)] = [y, \divisor(r)] = r(y) 
	\quad\in \G_m^{\fes'}(k_s).
\]
In the fake case, 
define $[y,\beta]_D$ using $H$ and $h$ as in~\eqref{E:fake bracket};
then
\[
	\TAU_*([y, \beta]_D) 
	= \frac{[y, \TAU_*(\beta- \iota_* H)]}{\TAU_* (\iota_*(h(y)^{1/n}))}
        = \frac{[y, \TAU_*(\beta) - \theta_*(nH)]}{\theta_*(h)(y)}
        = \frac{[y, \divisor(r \cdot \theta_*(h))]}{\theta_*(h)(y)}
        = r(y).\qedhere
\]
\end{proof}

In the following diagram with exact rows,
the first, second, and fourth rows are as 
in \eqref{E:true nz+y diagram} or~\eqref{E:fake nz+y diagram},
except that the first is a pushout by $J[n] \surjects A[\phi]$.
The third and fourth rows are the same as in~\eqref{E:two rows}.
The two maps from $\calZ^0 \times \calY^0$ to~$U$ coincide by
Lemma~\ref{L:TAU_* of functions}.
Thus the diagram below commutes:
\begin{equation} \label{E:diagramU}
\begin{split}
   \xymatrix{ 0 \ar[r]
                & A[\phi] \ar[r] \ar@{=}@/_12ex/[dd]
                & A \ar[r]^{\phi}
                & J \ar[r]
                & 0 \\
             0 \ar[r]
                & \widetilde{J[n]} \ar[r] \ar[u] \ar[d]
                & \calZ^0 \times \calY^0 \ar[r]^-{nz+y} \ar[u] \ar[d]
                & \calZ^0 \ar[r] \ar[u] \ar[d]^{(r,f)} \ar@/^12ex/[dd]^{f}
                & 0 \\
             0 \ar[r]
                & A[\phi] \ar[r] \ar[d]_{\alpha}
                & \widetilde{\G_m^\fes} \ar[r]^{(\TAU_*,n)} \ar@{=}[d]
                & U \ar[r] \ar[d]^{\pr_2}
                & 0 \\
             0 \ar[r]
                & E^\vee \ar[r]
                & \widetilde{\G_m^\fes} \ar[r]^n
                & \G_m^\fes \ar[r]
                & 0
           }
\end{split}
\end{equation}
Applying cohomology to the first, third and fourth rows
and using Lemma~\ref{L:Images}\eqref{I:fake images},
we obtain the following diagram with exact rows:
\begin{equation}
\label{E:big cohomology diagram}
\begin{split}
 \xymatrix{ 0 \ar[r]
                & \dfrac{J(k)}{\phi A(k)} \ar[r]^-{\gamma}
                & H^1(k, A[\phi]) \ar@{=}[dd] \\
                & \calZ^0(X^\good) \ar[u] \ar[d]^-{(r,f)} \\
              0 \ar[r]
                & \dfrac{U(k)}{V} \ar[r] \ar[d]^{\pr'_2}
                & H^1(k, A[\phi]) \ar[d]^{\alpha} \ar[r]
                & H^1(k, \widetilde{\G_m^\fes}) \ar@{=}[d] \\
              0 \ar[r]
                & \widetilde{\dfrac{L^\times}{L^{\times n} k^\times}} \ar[r]
                & H^1(k, E^\vee) \ar[r]
                & H^1(k, \widetilde{\G_m^\fes})
            }
\end{split}
\end{equation}
We write $H \colonequals \dfrac{U(k)}{V}$
and $\rho_v \colon H \to H_v$, where $H_v$ is the local analogue of~$H$.

\begin{lemma}
\label{L:Sha^1 of G_m}
$\Sha^1(k,\widetilde{\G_m^\fes})=0$.
\end{lemma}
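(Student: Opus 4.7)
The plan is to handle the true and fake cases separately, reducing each to a standard local-global principle.

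In the true case, $\widetilde{\G_m^\fes} = \G_m^\fes$, and the generalized Hilbert~90 already cited in Section~\ref{S:twisted powers} gives $H^1(k, \G_m^\fes) = H^1(\calG, L_s^\times) = 0$, so a fortiori $\Sha^1(k, \G_m^\fes)=0$.

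In the fake case, I would exploit the defining short exact sequence $0 \to \mu_n \to \G_m^\fes \to \widetilde{\G_m^\fes} \to 0$ (where $\mu_n$ embeds diagonally). Taking cohomology, Hilbert~90 makes $H^1(k,\G_m^\fes)=0$ disappear, so the connecting map yields an injection
\[
  H^1(k, \widetilde{\G_m^\fes}) \hookrightarrow H^2(k, \mu_n) \isom \Br(k)[n],
\]
and similarly $H^1(k_v,\widetilde{\G_m^\fes}) \hookrightarrow \Br(k_v)[n]$ compatibly for each place $v$. An element of $\Sha^1(k,\widetilde{\G_m^\fes})$ therefore corresponds to an element of $\Br(k)[n]$ vanishing in every $\Br(k_v)$.

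The key input is then the Albert--Brauer--Hasse--Noether theorem for global fields, which states that the map $\Br(k) \to \bigoplus_{v} \Br(k_v)$ is injective. Combining this with the compatible injections in the previous paragraph forces any class in $\Sha^1(k,\widetilde{\G_m^\fes})$ to be zero. I do not expect any serious obstacle here, since everything reduces to this well-known local--global fact; the only mildly delicate point is the naturality of the connecting maps with respect to restriction to a decomposition group, which is a formal consequence of the functoriality of cohomology.
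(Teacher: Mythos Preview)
Your proposal is correct and matches the paper's proof essentially line for line: both handle the true case via $H^1(k,\G_m^\fes)=0$ and the fake case via the connecting map from the sequence $0\to\mu_n\to\G_m^\fes\to\widetilde{\G_m^\fes}\to 0$, reducing to the local--global principle for the Brauer group. The paper phrases the conclusion as $\Sha^2(k,\mu_n)=0$, but this is exactly your Albert--Brauer--Hasse--Noether input.
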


\begin{proof}
In the true case, $H^1(k,\G_m^{\fes})=0$ 
(see the end of Section~\ref{S:twisted powers}).
In the fake case, taking cohomology of
\[
	0 \to \mu_n \to \G_m^{\fes} \to \widetilde{\G_m^{\fes}} \to 0
\]
yields an injection $H^1(k,\widetilde{\G_m^{\fes}}) \injects H^2(k,\mu_n)$,
which restricts to an injection 
$\Sha^1(k,\widetilde{\G_m^{\fes}}) \injects \Sha^2(k,\mu_n)$.
But $\Sha^2(k,\mu_n)=0$ by the local-global property for the Brauer group.
\end{proof}

\begin{proposition}
\label{P:Sel in terms of H}
Assume Hypothesis~\ref{H:circ}.
Then the injection $H \injects H^1(k,A[\phi])$ 
in the third row of~\eqref{E:big cohomology diagram}
identifies 
  \[ 
	\bigl\{h \in H : \rho_v(h) \in (r,f)\bigl(\calZ^0(X_{k_v}^\good)\bigr)
                                      \textup{\ for all places~$v$}\bigr\}.
  \]
with $\Sel{\phi}(J)$.
\end{proposition}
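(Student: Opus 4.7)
The plan is to reduce the proposition to a local diagram chase, leveraging the commutative diagram \eqref{E:diagramU}, its cohomological consequence \eqref{E:big cohomology diagram}, and their local analogues at each place~$v$.

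The first step is to check that for each $v \in \Omega_k$ the map $\calZ^0(X^\good_{k_v}) \to J(k_v)/\phi A(k_v)$ is surjective. By Hypothesis~\ref{H:circ}, $J(k_v)_\circ \to J(k_v)/\phi A(k_v)$ is surjective, so every class is represented by some $z \in \calZ^0(X_{k_v})$. Remark~\ref{R:same image} applied to the dense open $X^\good \subseteq X$ over~$k_v$ then shows that $z$ can be moved, without changing its class in $J(k_v)$, to a $0$-cycle supported on~$X^\good$. Hence the image of $\calZ^0(X^\good_{k_v})$ in $J(k_v)/\phi A(k_v)$ already fills up the whole group.

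Next, I would exploit the commutative diagram which is the local analogue of \eqref{E:big cohomology diagram}:
\[
\xymatrix{
0 \ar[r] & \dfrac{J(k_v)}{\phi A(k_v)} \ar[r]^-{\gamma_v} & H^1(k_v, A[\phi]) \ar@{=}[d] \\
 & \calZ^0(X^\good_{k_v}) \ar[u] \ar[r]^-{(r,f)} & H_v \ar@{^{(}->}[u] \\
}
\]
where the injectivity of $H_v \injects H^1(k_v,A[\phi])$ follows from the exactness of the third row of \eqref{E:big cohomology diagram} over~$k_v$. Combined with the surjectivity established in the first step, this diagram yields
\[
\im \gamma_v \;=\; \im\bigl(\calZ^0(X^\good_{k_v}) \to H^1(k_v,A[\phi])\bigr)
\;=\; \im\bigl((r,f)(\calZ^0(X^\good_{k_v})) \hookrightarrow H_v \hookrightarrow H^1(k_v,A[\phi])\bigr).
\]
In particular, $\im \gamma_v$ is contained in the image of~$H_v$, and because $H_v \injects H^1(k_v,A[\phi])$ is injective, the preimage in $H_v$ of $\im \gamma_v$ is exactly $(r,f)(\calZ^0(X^\good_{k_v}))$.

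Finally, combining this local identification with the global injection $H \injects H^1(k,A[\phi])$ from the third row of \eqref{E:big cohomology diagram}, an element $h \in H$ corresponds to some $\xi \in H^1(k,A[\phi])$, and for each~$v$ the restriction $\rho_v(h) \in H_v$ corresponds to $\xi_v \in H^1(k_v,A[\phi])$. Under the equivalence established above, the condition $\rho_v(h) \in (r,f)(\calZ^0(X^\good_{k_v}))$ is equivalent to $\xi_v \in \im \gamma_v$, which is precisely the defining condition for $\xi \in \Sel{\phi}(J)$. This gives the claimed identification. The main (minor) obstacle is the local surjectivity of $\calZ^0(X^\good_{k_v}) \to J(k_v)/\phi A(k_v)$: Hypothesis~\ref{H:circ} gives it onto $J(k_v)/\phi A(k_v)$ only from $\calZ^0(X_{k_v})$, so one has to invoke the moving lemma of Remark~\ref{R:same image} (which over local fields still applies via the same Bertini argument) to replace $X$ by its good-locus $X^\good$ without disturbing the class.
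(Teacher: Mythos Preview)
Your argument correctly establishes that, for $h\in H$ with image $\xi\in H^1(k,A[\phi])$, the local condition $\rho_v(h)\in (r,f)\bigl(\calZ^0(X_{k_v}^\good)\bigr)$ is equivalent to $\xi_v\in\im\gamma_v$. In other words, you have shown that the displayed set equals $H\cap\Sel{\phi}(J)$ inside $H^1(k,A[\phi])$. But the proposition asserts more: the injection identifies this set with \emph{all} of $\Sel{\phi}(J)$, i.e., $\Sel{\phi}(J)\subseteq H$. You never verify this containment, and it does not follow from what you have written.

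Concretely, the third row of~\eqref{E:big cohomology diagram} gives $H=\ker\bigl(H^1(k,A[\phi])\to H^1(k,\widetilde{\G_m^\fes})\bigr)$. Your local computation shows that $\im\gamma_v$ lies in the image of $H_v$, hence any $\xi\in\Sel{\phi}(J)$ maps to an element of $H^1(k,\widetilde{\G_m^\fes})$ that is locally trivial at every $v$, i.e., lies in $\Sha^1(k,\widetilde{\G_m^\fes})$. In the true case $H^1(k,\G_m^\fes)=0$ and there is nothing more to do, but in the fake case $H^1(k,\G_m^\fes/\mu_n)$ need not vanish. The paper closes this gap via Lemma~\ref{L:Sha^1 of G_m}, which shows $\Sha^1(k,\widetilde{\G_m^\fes})=0$ using the local-global principle for the Brauer group. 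Without this step (or an equivalent), your proof only yields an identification with $H\cap\Sel{\phi}(J)$, not with $\Sel{\phi}(J)$ itself.
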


\begin{proof}
Hypothesis~\ref{H:circ} shows that the image of $J(k_v)/\phi A(k_v)$
in~$H^1(k_v,A[\phi])$ equals the image of $\calZ^0(X_{k_v}^{\good})$,
which in~\eqref{E:big cohomology diagram} for~$k_v$
maps to~$0$ in~$H^1(k_v,\widetilde{\G_m^\fes})$.
Thus the image of $\Sel{\phi}(J) \subseteq H^1(k,A[\phi])$ 
in~$H^1(k,\widetilde{\G_m^\fes})$
lies in~$\Sha^1(k_v,\widetilde{\G_m^\fes})$,
which is~$0$ by Lemma~\ref{L:Sha^1 of G_m},
so $\Sel{\phi}(J) \subseteq H$ by~\eqref{E:big cohomology diagram}.
For $h \in H$, the element $\rho_v(h)$ is in the image
of $J(k_v)/\phi A(k_v)$ (or $\calZ^0(X_{k_v}^{\good})$ as above) 
if and only if it is in the image of $(r,f)\bigl(\calZ^0(X_{k_v}^\good)\bigr)$,
by~\eqref{E:big cohomology diagram}.
\end{proof}

We now describe the fibers of the map~$\pr'_2$ in~\eqref{E:big cohomology diagram}.
This will help us obtain a computable description of~$\Sel{\phi}(J)$:
see Proposition~\ref{P:HSreal} below.

Recall the exact sequence (cf.~Remark~\ref{R:critSha1})
\begin{equation} \label{E:AEQseq}
  0 \To A[\phi] \To E^\vee \stackrel{\TAU_*}\To \mu_n^{\fes'}
      \stackrel{q'}{\To} Q \To 0 .
\end{equation}

\begin{lemma} \label{L:lift-to-H1Aphi}
  Let $\ell \in L^\times$ and let $\xi$ be its image in~$H^1(k, E^\vee)$.
  \begin{enumerate}[\upshape (a)]
    \item \label{L:lift-toH1Aphi-a}
          If $\xi$ lifts
          to~$H^1(k, A[\phi])$, then $\TAU_*(\ell)$ is an $n^{\tH}$ power in~$L'$.
    \item \label{L:lift-toH1Aphi-b}
          Conversely, suppose that $\TAU_*(\ell)$ is an $n^{\tH}$ power in~$L'$,
          say $\TAU_*(\ell) = u^n$.
          Let $\lambda \in L_s^\times$ be such that $\lambda^n = \ell$.
          Then:
          \begin{enumerate}[\upshape 1.]
          \item 
		The element $w \colonequals q'(\TAU_*(\lambda)/u) \in Q$ 
		is $\calG$-invariant,
          \item 
		Let $Z \colonequals \{\zeta \in \mu_n(L') : q'(\zeta) = w\}$.
		Then $(\zeta u,\ell) \in U(k)$ if and only if $\zeta \in Z$.
	\item The set of elements of~$H^1(k, A[\phi])$ mapping to~$\xi$
          is the image of 
          $\{(\zeta u, \ell) : \zeta \in Z\}$
          under $U(k) \to H^1(k, A[\phi])$.
	\item 
          Two such pairs $(\zeta u, \ell)$ and $(\zeta' u, \ell)$ have the
          same image in~$H^1(k, A[\phi])$ if and only if
          $\zeta'/\zeta \in \TAU_*(E^\vee(k))$.
          \end{enumerate}
    \item \label{L:lift-toH1Aphi-c}
          In particular, if $\mu_n(L')$ surjects onto $Q(k)$, then $\xi$
          lifts to~$H^1(k, A[\phi])$ if and only if $\TAU_*(\ell)$ is an $n^{\tH}$ power
          in~$L'$.
  \end{enumerate}
\end{lemma}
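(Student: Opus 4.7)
My plan is to decompose \eqref{E:AEQseq} into the short exact sequences
\[
	0 \to A[\phi] \to E^\vee \to R^\vee \to 0
	\quad\text{and}\quad
	0 \to R^\vee \to \mu_n^{\fes'} \stackrel{q'}\to Q \to 0,
\]
and chase $\ell$ through the associated long cohomology sequences inside the diagram~\eqref{E:big cohomology diagram}. For~(a): under the Kummer identification $H^1(k,\mu_n^{\fes'}) \cong L'^\times/L'^{\times n}$, I would verify that the composition $L^\times \to H^1(k,E^\vee) \stackrel{\TAU_*}\to H^1(k,\mu_n^{\fes'})$ sends $\ell \mapsto [\TAU_*(\ell)]$ by choosing $\lambda \in L_s^\times$ with $\lambda^n = \ell$, so that $\xi$ is represented by the $E^\vee$-valued cocycle $\sigma\mapsto\overline{\sigma(\lambda)/\lambda}$, whose $\TAU_*$-image $\sigma \mapsto \sigma(\TAU_*(\lambda))/\TAU_*(\lambda)$ is the Kummer cocycle for $\TAU_*(\ell)$. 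A lift of $\xi$ to $H^1(k,A[\phi])$ then forces vanishing in $H^1(k,R^\vee)$ and hence in $H^1(k,\mu_n^{\fes'})$, giving $\TAU_*(\ell) \in L'^{\times n}$.

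For~(b)(1)--(2): first I would invoke Lemma~\ref{L:UasKernel} to identify $q'$ with the restriction of $\TAU'_*$ to $\mu_n^{\fes'}$ (whose image in $\mu_n^{\fes''}$ is precisely~$Q$). Then, using $\TAU' \circ \TAU = n\TAU''$, I compute
\[
	w = \TAU'_*(\TAU_*(\lambda)/u)
	  = \TAU''_*(\lambda^n)/\TAU'_*(u)
	  = \TAU''_*(\ell)/\TAU'_*(u) \in Q,
\]
which is $\calG$-invariant because $\ell \in L^\times$ and $u \in L'^\times$ both are. For~(b)(2), Corollary~\ref{C:ComputeUk} characterises $U(k)$ as pairs $(\ell',\ell)$ with $\ell'^n = \TAU_*(\ell)$ and $\TAU'_*(\ell') = \TAU''_*(\ell)$; for $(\zeta u,\ell)$ the first is automatic from $\zeta^n = 1$, while the second rearranges to $\TAU'_*(\zeta) = \TAU''_*(\ell)/\TAU'_*(u)$, i.e., $q'(\zeta) = w$, i.e., $\zeta \in Z$.

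For~(b)(3): the top row of~\eqref{E:diagramU} gives exactness of
\[
	\widetilde{\G_m^\fes}(k) \to U(k) \to H^1(k,A[\phi]) \to H^1(k,\widetilde{\G_m^\fes}).
\]
Any lift $\eta$ of $\xi$ would map to zero in $H^1(k,\widetilde{\G_m^\fes})$ (as does $\xi$, since it comes from $L^\times$, and the two downward maps agree by commutativity), so $\eta$ lifts further to a class in $U(k)/V$; after shifting by $V$ I would arrange the second coordinate to equal $\ell$, whereupon~(b)(2) restricts the first coordinate to $\{\zeta u : \zeta \in Z\}$. For~(b)(4): two such lifts differ by $(\zeta'/\zeta,1) \in R^\vee \subset U(k)$, which by Lemma~\ref{L:Images} lies in~$V$ exactly when, in the true case, there is $\ell_0 \in L^\times$ with $\ell_0^n = 1$ and $\TAU_*(\ell_0) = \zeta'/\zeta$, i.e., $\zeta'/\zeta \in \TAU_*(E^\vee(k))$; in the fake case, the defining parameters satisfy $\ell_0^n a = 1$, forcing $c \colonequals \ell_0^n \in k^\times$, and the first-coordinate shift $\TAU_*(\ell_0)/\theta_*(c)$ matches $\TAU_*$ under the Kummer identification $E^\vee(k) \cong \{\ell_0 \in L^\times : \ell_0^n \in k^\times\}/k^\times$, via the identity $\TAU_*(c^{1/n}) = \theta_*(c)$ (a direct consequence of $\TAU \circ \iota = n\theta$). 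Part~(c) would then follow at once from~(a) and~(b)(3), since surjectivity of $\mu_n(L') \to Q(k)$ guarantees $Z \ne \emptyset$ whenever $\TAU_*(\ell)$ is an $n^\tH$~power.

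The main delicacy will be the bookkeeping among $\TAU, \TAU', \TAU''$, particularly the identity $\TAU' \circ \TAU = n\TAU''$ that is what lets the $L''$-valued condition of Corollary~\ref{C:ComputeUk} be transported back to the $L'$-language needed for~$w$; a secondary hurdle in the fake case is matching the explicit parameterisation of $V$ in Lemma~\ref{L:Images} with the Kummer-theoretic description of $E^\vee(k)$ used to verify~(b)(4).
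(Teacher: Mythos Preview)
Your argument is correct, but it is organized rather differently from the paper's, and the comparison is instructive.

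The paper's proof opens with a single observation drawn from~\eqref{E:UH diagram}: the set $\alpha^{-1}(\xi)$ of lifts equals the image of $\pr_2^{-1}(\ell) \subset U(k)$ under $U(k) \to H^1(k,A[\phi])$. This one sentence yields~(a), (b)3, and~(b)4 almost immediately, and~(b)2 is then proved from the \emph{definition} of~$U$ as the image of $(\TAU_*,n)$ (not from its description as a kernel): one writes an element of $\pr_2^{-1}(\ell)$ as $(\TAU_*(\tilde\lambda),\tilde\lambda^n)$ with $\tilde\lambda = \tilde\zeta\lambda$ and reads off the condition $q'(\zeta)=w$ from exactness of~\eqref{E:AEQseq}. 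Likewise~(b)1 is checked intrinsically, by computing ${}^\sigma w/w = q'(\TAU_*({}^\sigma\lambda/\lambda))$ and invoking $q'\circ\TAU_* = 0$; the auxiliary set~$\fes''$ never appears.

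Your route, by contrast, treats each part on its own and leans on the $\fes''$-description throughout: you prove~(a) by a direct Kummer-cocycle chase, and~(b)1--(b)2 via Corollary~\ref{C:ComputeUk} and the relation $\TAU'\circ\TAU = n\TAU''$. This has the pleasant side effect of producing the closed formula $w = \TAU''_*(\ell)/\TAU'_*(u)$, making Galois-invariance manifest (this is essentially the content of Remark~\ref{R:compute w}). For~(b)4 you unwind~$V$ explicitly via Lemma~\ref{L:Images}, including the fake-case identity $\TAU_*(c^{1/n}) = \theta_*(c)$; the paper instead notes in one line that $V \cap \ker\pr_2 = (\TAU_*,n)(E^\vee(k))$, which handles both cases uniformly. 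One small correction: the exact sequence you quote in~(b)3 comes from the \emph{third} row of~\eqref{E:diagramU} (equivalently the first row of~\eqref{E:two rows}), not the top row.
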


\begin{proof}
  In~\eqref{E:UH diagram}, the image of $\pr_2^{-1}(\ell)$
  under $U(k) \to H^1(k, A[\phi])$ equals the set $\alpha^{-1}(\xi)$ of lifts of~$\xi$.
  \begin{enumerate}[\upshape (a)]
    \item If $\xi$ lifts, then there is a pair $(u, \ell) \in U(k)$.
          By Lemma~\ref{L:UasKernel}, $\TAU_*(\ell) = u^n$.
    \item 
      \begin{enumerate}[\upshape 1.]
      \item 
	Note that $\TAU_*(\lambda)/u \in \mu_n^{\fes'}$, since
          \[ \left(\frac{\TAU_*(\lambda)}{u}\right)^n
              = \frac{\TAU_*(\lambda^n)}{u^n}
              = \frac{\TAU_*(\ell)}{u^n}
              = 1 .
          \]
          For $\sigma \in \calG$,
          \[ \frac{{}^\sigma w}{w}
              = \frac{{}^\sigma q'\bigl(\TAU_*(\lambda)/u\bigr)}%
                    {q'\bigl(\TAU_*(\lambda)/u\bigr)}
              = q'\Bigl(\frac{{}^\sigma \TAU_*(\lambda)}{\TAU_*(\lambda)}\Bigr)
              = q'\Bigl(\TAU_*\Bigl(\frac{{}^\sigma \lambda}{\lambda}\Bigr)\Bigr)
              = 1,
          \]
          since $q' \circ \TAU_*$ is trivial and $u$ is $\calG$-invariant.
        \item 
          By part~\eqref{L:lift-toH1Aphi-a}, the elements of 
	  $\pr_2^{-1}(\ell) \subset U(k)$ have
          the form $(\zeta u, \ell)$ with suitable $\zeta \in \mu_n(L')$.
          By definition of~$U$,
          the condition on~$\zeta$ is that there exists $\widetilde{\lambda} \in L_s^\times$
          such that $\TAU_*(\widetilde{\lambda}) = \zeta u$ and $\widetilde{\lambda}^n = \ell$.
          Equivalently, writing $\widetilde{\lambda} = \widetilde{\zeta} \lambda$,
          there exists $\widetilde{\zeta} \in \mu_n(L_s)$ such
          that $\TAU_*(\widetilde{\zeta}) = \zeta u/\TAU_*(\lambda)$.
          By~\eqref{E:AEQseq}, $\TAU_*(\mu_n(L_s)) = \ker(q')$,
          so this in turn is equivalent
          to $q'(\zeta) = q'(\TAU_*(\lambda)/u) = w$.
        \item This follows from 2.\ and the first sentence of this proof.
        \item 
          In~\eqref{E:UH diagram}, the intersection of $\ker \pr_2$ with
          the kernel of $U(k) \to H^1(k, A[\phi])$ is 
	  $(\TAU_*,n)(E^\vee(k))=\TAU_*(E^\vee(k)) \times \{1\}$.
      \end{enumerate}
    \item This follows from part~\eqref{L:lift-toH1Aphi-b}. \qedhere
  \end{enumerate}
\end{proof}

\begin{remark} \label{R:compute w}
  We can compute $w \in Q(k)$ by working over a finite field.
  Let $v$ be a finite place of~$k$ such that the characteristic 
  of its residue field $\F_v$ does not
  divide~$n$ and such that $\ell$ is a unit at~$v$. 
  The formula in Lemma~\ref{L:lift-to-H1Aphi}\eqref{L:lift-toH1Aphi-b}1.,\ 
  applied over~$\F_v$ to the mod $v$ reductions of $\ell$ and~$u$, 
  computes an element of $Q(\F_v)$ that is the image in $Q(\F_v)$ of the
  desired $w$.
  Since the reduction map $Q(k) \to Q(\F_v)$ is injective, 
  we can recover $w$ in $Q(k)$.
\end{remark}

Proposition~\ref{P:Sel in terms of H} involves an infinite group~$H$.
Imposing the condition that elements are unramified 
at all places outside a finite set~$\calS$
lets us replace~$H$ by a finite subgroup~$H_\calS$.
This will reduce the determination of~$\Sel{\phi}(J)$ to a finite computation.
Let $H^1(k, A[\phi])_\calS$ be the group of classes unramified 
outside~$\calS$, as in Section~\ref{S:local and global unramified classes}.

\begin{proposition} \label{P:HSreal}
Assume Hypothesis~\ref{H:circ}.
  Let $\calS$ be a finite set of places of~$k$ containing the set of places
  in Proposition~\ref{P:Sel is unramified} 
  and the places at which $E^\vee$ is ramified.
  Let $H_\calS$ be the preimage in~$H = U(k)/V$ of
  $\widetilde{L(n, \calS)} \subset \widetilde{L^\times/L^{\times n} k^\times}$
  under the map~$\pr'_2$ in~\eqref{E:big cohomology diagram}.
Then the injection $H \injects H^1(k,A[\phi])$ 
in the third row of~\eqref{E:big cohomology diagram}
identifies the subgroup~$H_\calS$ with
$H^1(k, A[\phi])_\calS \intersect \im(H \to H^1(k,A[\phi]))$,
and identifies
\begin{equation}
  \label{E:H_S and Selmer}
 \bigl\{h \in H_\calS :
        \rho_v(h) \in (r,f)\bigl(\calZ^0(X_{k_v}^\good)\bigr)
                \textup{\ for all $v \in \calS$}\bigr\}
\end{equation}
with~$\Sel{\phi}(J)$.
\end{proposition}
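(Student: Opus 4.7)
The plan is to prove the two identifications separately, bootstrapping from Proposition~\ref{P:Sel in terms of H}, which already handles the global version without the unramified condition.

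First I would establish that, under the injection $H \injects H^1(k,A[\phi])$ coming from the exactness of the third row of~\eqref{E:big cohomology diagram}, the subgroup $H_\calS$ corresponds exactly to $H^1(k,A[\phi])_\calS \intersect \im(H \to H^1(k,A[\phi]))$. Fix $h \in H$ with image $\xi \in H^1(k,A[\phi])$ and $\pr'_2(h) = \ell \in \widetilde{L^\times/L^{\times n}k^\times}$. Commutativity of~\eqref{E:big cohomology diagram} identifies $\alpha(\xi)$ with the image of $\ell$ in $H^1(k,E^\vee)$, and Proposition~\ref{P:unramified classes} shows that $\ell \in \widetilde{L(n,\calS)}$ iff $\alpha(\xi_v) \in H^1(k_v,E^\vee)_\unr$ for all $v \notin \calS$. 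So it suffices to show that for $v \notin \calS$, $\xi_v$ is unramified iff $\alpha(\xi_v)$ is. The ``only if'' direction is functoriality of restriction to~$k_{v,u}$. For the converse, I would use the hypothesis that $\calS$ contains all places where $E^\vee$ ramifies: then the sequence $0 \to A[\phi] \to E^\vee \to R^\vee \to 0$ from~\eqref{E:alpha and q} is an exact sequence of unramified $\calG_{k_v}$-modules, so $E^\vee(k_{v,u}) \to R^\vee(k_{v,u})$ is surjective and hence $H^1(k_{v,u},A[\phi]) \to H^1(k_{v,u},E^\vee)$ is injective; this gives the reverse implication.

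Second, I would combine the first step with Proposition~\ref{P:Sel in terms of H}, which identifies $\Sel{\phi}(J)$ with $\{h \in H : \rho_v(h) \in (r,f)(\calZ^0(X_{k_v}^\good)) \text{ for all } v\}$. Proposition~\ref{P:Sel is unramified}\eqref{I:description of Selmer} gives $\Sel{\phi}(J) \subseteq H^1(k,A[\phi])_\calS$, so from the first step $\Sel{\phi}(J) \subseteq H_\calS$. It remains to show that for $h \in H_\calS$ and $v \notin \calS$, the local condition at~$v$ is automatic. The top half of~\eqref{E:big cohomology diagram} (over~$k_v$), together with Hypothesis~\ref{H:circ} applied locally, identifies the image of $\calZ^0(X_{k_v}^\good) \to H_v \injects H^1(k_v,A[\phi])$ with $\im \gamma_v$. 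Since $\calS$ was chosen as in Proposition~\ref{P:Sel is unramified}, Lemma~\ref{L:Tamagawa-unramified} gives $\im \gamma_v = H^1(k_v,A[\phi])_\unr$ for $v \notin \calS$. The image $\xi_v$ of $h$ in $H^1(k_v,A[\phi])$ is unramified because $h \in H_\calS$ (by Step~1), so $\xi_v \in \im \gamma_v$, which is exactly the condition that $\rho_v(h) \in (r,f)(\calZ^0(X_{k_v}^\good))$.

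The main obstacle is the first step: one must verify that being unramified in $H^1(k_v,A[\phi])$ transfers correctly to and from being unramified in $H^1(k_v,E^\vee)$ for $v \notin \calS$. The ``only if'' direction is free, but the converse requires the injectivity of $H^1(k_{v,u},A[\phi]) \to H^1(k_{v,u},E^\vee)$, which in turn uses the surjectivity of $E^\vee(k_{v,u}) \to R^\vee(k_{v,u})$. This is precisely why the hypothesis that $\calS$ contain the ramification places of $E^\vee$ is essential—outside that set, the three terms in~\eqref{E:alpha and q} all have trivial inertia action at~$v$, forcing the short exact sequence to remain exact on $k_{v,u}$-points.
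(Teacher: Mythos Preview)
Your proof is correct and follows essentially the same approach as the paper's: the key step in both is that for $v \notin \calS$ the module $E^\vee$ is unramified, so $E^\vee(k_{v,u}) \to R^\vee(k_{v,u})$ is surjective and hence unramifiedness of $\xi$ in $H^1(k_v,A[\phi])$ is equivalent to unramifiedness of $\alpha(\xi)$ in $H^1(k_v,E^\vee)$; this together with Proposition~\ref{P:unramified classes} gives the first identification, and the second then follows from Proposition~\ref{P:Sel is unramified}\eqref{I:description of Selmer} and (the proof of) Proposition~\ref{P:Sel in terms of H}. Your write-up spells out slightly more detail (e.g., invoking Lemma~\ref{L:Tamagawa-unramified} explicitly for the automatic local conditions outside~$\calS$), but the argument is the same.
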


\begin{proof}
  Let $v \notin \calS$ be a place of~$k$. 
  Since $E^\vee$ is unramified at~$v$, we have $E^\vee(k_{v,u}) = E^\vee(k_s)$,
  so the first map in 
  \[ E^\vee(k_{v,u}) \To R^\vee(k_{v,u}) \To H^1(k_{v,u}, A[\phi])
      \stackrel{\alpha}{\To} H^1(k_{v,u}, E^\vee)
  \]
  is surjective.  This shows that for $\xi \in H^1(k, A[\phi])$,
  $\xi$ is unramified outside~$\calS$ if and only if $\alpha(\xi)$ is unramified
  outside~$\calS$.
  On the other hand, Proposition~\ref{P:unramified classes} 
  shows that $H_\calS$ 
  equals the set of $h \in H$ whose image $\pr_2'(h)$ 
  in $\widetilde{L^\times/L^{\times n} k^\times}$
  is unramified outside $\calS$.
  By~\eqref{E:big cohomology diagram}, the previous two sentences
  yield the first identification.
  Combining this with
  Proposition~\ref{P:Sel is unramified}\eqref{I:description of Selmer}
  and the last sentence of the proof of Proposition~\ref{P:Sel in terms of H}
  yields the second identification.
\end{proof}

We now sketch an algorithm for computing $\Sel{\phi}(J)$,
using the explicit description given in Proposition~\ref{P:HSreal}.
First compute~$\widetilde{L(n, \calS)}$.
Lemma~\ref{L:lift-to-H1Aphi} lets us compute its inverse image~$H_\calS$
under the map~$\pr_2'$ in~\eqref{E:big cohomology diagram}.
To perform the computations required by Lemma~\ref{L:lift-to-H1Aphi},
we need to be able to evaluate $\TAU_* : L^\times \to {L'}^\times$ and
extract $n^\tH$~roots in~$L'$;
the remaining computations use only finite Galois modules like $E^\vee$,
$\mu_n^{\fes'}$, or~$Q$, so they are not difficult.

Similarly, for each $v \in \calS$, 
first compute $\widetilde{L_v^\times/L_v^{\times n} k_v^\times}$,
and use Lemma~\ref{L:lift-to-H1Aphi} (over~$k_v$) to obtain a description of~$H_v$.
The map $H_\calS \to H_v$ is induced by the inclusion $U(k) \injects U(k_v)$,
so it too is easily described.
Next, assuming that we can evaluate $r$ and~$f$
on~$\calZ^0(X_{k_v}^\good)$, 
we can determine the image of~$J(k_v)$ in~$H_v$.
Using all this, the second identification in Proposition~\ref{P:HSreal}
lets us compute~$\Sel{\phi}(J)$.

We summarize this discussion as follows.

\begin{theorem} \label{T:realSel}
  Given a true or fake descent setup with associated isogeny 
  $\phi \colon A \to J$, 
  we can compute the Selmer group $\Sel{\phi}(J)$ if we can
  do the following:
  \begin{itemize}
    \item Compute in the algebras $L$ and~$L'$ and their completions 
		(this includes the ability to take $n^{\tH}$ roots).
    \item Determine a set $\calS$ of places of~$k$ as in Proposition~\ref{P:HSreal}.
    \item Compute $k(n, \calS)$ and~$L(n, \calS)$.
    \item Evaluate $\TAU_*$ on $L^\times$ and $L_v^\times$ for $v \in \calS$,
          and on finite residue fields.
    \item Evaluate $f$ and~$r$ on~$\calZ^0(X_{k_v}^\good)$ for $v \in \calS$.
  \end{itemize}
\end{theorem}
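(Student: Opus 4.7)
The plan is to formalize the algorithm sketched in the paragraphs immediately preceding the theorem, whose correctness is essentially guaranteed by Proposition~\ref{P:HSreal}. The key reduction is that Proposition~\ref{P:HSreal} identifies $\Sel{\phi}(J)$ with the set~\eqref{E:H_S and Selmer}, which lives inside a finite, computable subgroup $H_\calS$ of~$H = U(k)/V$; thus the task splits into (i) computing $H_\calS$ explicitly, and (ii) cutting it down by finitely many local conditions indexed by $v \in \calS$.

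For step (i), I would first produce generators for~$\widetilde{L(n,\calS)}$ from the inputs $k(n,\calS)$ and~$L(n,\calS)$ via the exact sequences in the proof of Proposition~\ref{P:L(n,S)}. Then, given $\ell \in L^\times$ representing a class in~$\widetilde{L(n,\calS)}$, I would apply Lemma~\ref{L:lift-to-H1Aphi} to decide whether $\ell$ lifts to~$H_\calS$ and to enumerate such lifts: this requires evaluating $\TAU_*$ on~$L^\times$, extracting an $n^{\tH}$~root of $\TAU_*(\ell)$ in~$L'$ when one exists, computing the element $w \in Q(k)$ via the finite-field recipe of Remark~\ref{R:compute w} (which needs only~$\TAU_*$ on residue fields and arithmetic in the finite modules $E^\vee,\mu_n^{\fes'},Q$, all of which are obtained once the $\calG$-action on~$\fes,\fes'$ is known), and finally computing the finite coset $Z/\TAU_*(E^\vee(k))$. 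Taking preimages under $\pr'_2$ yields a finite description of~$H_\calS$ together with the inclusion $H_\calS \hookrightarrow H^1(k,A[\phi])_\calS$ of Proposition~\ref{P:HSreal}.

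For step (ii), for each $v \in \calS$ I would carry out the same construction over~$k_v$ to obtain the local group~$H_v$ and the restriction map $\rho_v \colon H_\calS \to H_v$ induced by $U(k) \hookrightarrow U(k_v)$. The image of~$J(k_v)$ in~$H_v$ is computed by evaluating the pair $(r,f)$ on suitably many $0$-cycles in $\calZ^0(X_{k_v}^{\good})$, using the same techniques as in Section~\ref{S:compute fake} (finitely many residue disks in the non-archimedean case, finitely many connected components in the archimedean case); the surjection $J(k_v)_\circ \twoheadrightarrow J(k_v)/\phi A(k_v)$ from Hypothesis~\ref{H:circ} (implicit in Proposition~\ref{P:HSreal}) ensures we capture the full local image. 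Intersecting the preimages $\rho_v^{-1}((r,f)(\calZ^0(X_{k_v}^{\good})))$ over $v \in \calS$ cuts $H_\calS$ down to the finite group~\eqref{E:H_S and Selmer}, which by Proposition~\ref{P:HSreal} equals~$\Sel{\phi}(J)$.

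The only genuinely non-mechanical input is the selection and verification of the finite set~$\calS$ of Proposition~\ref{P:HSreal}, but this is assumed as a hypothesis of the theorem. The main practical obstacle, as noted already for the fake Selmer group, is the computation of $L(n,\calS)$ (equivalently, the $\calS$-unit and $\calS$-class groups of the constituents of~$L$ and~$L'$); every other step is a finite combinatorial/linear-algebraic manipulation of Galois modules over known finite base rings, routine once the $\calG$-actions on $\fes$ and~$\fes'$ and the maps $\TAU_*$ and $(r,f)$ are in hand.
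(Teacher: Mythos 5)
Your proposal is correct and follows essentially the same route as the paper: it formalizes the algorithm sketched just before the theorem, namely using Proposition~\ref{P:HSreal} to identify $\Sel{\phi}(J)$ with the set~\eqref{E:H_S and Selmer}, computing $H_\calS$ via Lemma~\ref{L:lift-to-H1Aphi} (with Remark~\ref{R:compute w} and $n^{\tH}$~roots in~$L'$), and then imposing the local conditions at $v \in \calS$ by evaluating $(r,f)$ on $\calZ^0(X_{k_v}^\good)$. No substantive differences from the paper's argument.
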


\begin{example}
Consider the case of $2$-descent on Jacobians 
of non-hyperelliptic genus-$3$ curves~$X$, as in Section~\ref{S:genus 3}.
For generic~$X$, the smallest usable set~$\fes'$ is the set of syzygetic
quadruples, which has size~$315$.
We have not had the need to implement the approach of this appendix
on such examples, but it is likely that this could be done if required.
\end{example}


\subsection{Determining the Selmer group in special situations}

In some cases, the computation of~$\Sel{\phi}(J)$ as described in
Section~\ref{S:Det Selmer} can be simplified.
In particular, in the following proposition, the set~$\calS$
is potentially smaller than that required in Proposition~\ref{P:HSreal}.

\begin{proposition} \label{P:specialSel}
  In the situation of Section~\ref{S:Det Selmer}, let $\calS$ be a set of
  places of~$k$ containing the set of places 
  in Proposition~\ref{P:Sel is unramified}.
  Assume that the map $E^\vee(k') \to R^\vee(k')$ is surjective for all field
  extensions $k'$ of $k$ and that the map $q' \colon \mu_n(L') \to Q(k)$ 
  is surjective. 
  Then
  \[ \Sel{\phi}(J) \isom \{\delta = [\ell] \in \widetilde{L(n, \calS)} :
                      \TAU_*(\ell) \in {L'}^{\times n} \textup{\ and\ }
                      \delta_v \in \im C_v \textup{\ for all $v \in \calS$}\} .
  \]
\end{proposition}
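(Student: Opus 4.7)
The plan is to derive Proposition~\ref{P:specialSel} as a clean consequence of Proposition~\ref{P:HSreal} combined with Lemma~\ref{L:lift-to-H1Aphi}, using the two hypotheses to make two key simplifications.

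First I would observe that hypothesis~(1), applied to $k$ and to each completion $k_v$ (as well as to $k_{v,u}$), implies via the cohomology of
\[
    0 \to A[\phi] \to E^\vee \stackrel{\TAU_*}\to R^\vee \to 0
\]
that the connecting map $R^\vee(k') \to H^1(k',A[\phi])$ vanishes, so that $\alpha \colon H^1(k',A[\phi]) \to H^1(k',E^\vee)$ is injective for $k' \in \{k,k_v,k_{v,u}\}$. In particular $\Sel{\phi}(J)$ embeds into $H^1(k,E^\vee)$ via $\alpha$, and a class in $H^1(k,A[\phi])$ is unramified at $v$ if and only if its image in $H^1(k_v,E^\vee)$ is unramified. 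Combined with Proposition~\ref{P:unramified classes}, this tells me that the image of $\Sel{\phi}(J)$ in $\widetilde{L^\times/L^{\times n}k^\times}$ lies inside $\widetilde{L(n,\calS)}$ (for any $\calS$ as in Proposition~\ref{P:Sel is unramified}).

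Next, by Lemma~\ref{L:lift-to-H1Aphi}\eqref{L:lift-toH1Aphi-c}, hypothesis~(2) is exactly what is needed to conclude that a class $\delta = [\ell] \in \widetilde{L^\times/L^{\times n} k^\times}$ lifts to $H^1(k,A[\phi])$ if and only if $\TAU_*(\ell) \in (L')^{\times n}$; and by injectivity of $\alpha$ from the previous step, the lift (when it exists) is unique. This gives a bijection between $\Sel{\phi}(J)$ and the set of $\delta \in \widetilde{L(n,\calS)}$ satisfying $\TAU_*(\ell) \in (L')^{\times n}$ such that the unique lift $\xi \in H^1(k,A[\phi])$ lies in $\Sel{\phi}(J)$.

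It then remains to recognize that the condition $\xi_v \in \im\gamma_v$ in the definition of $\Sel{\phi}(J)$ is equivalent to $\delta_v \in \im C_v$, and that we need only check this for $v \in \calS$. For $v \in \calS$ the equivalence is immediate from injectivity of $\alpha_v$ (if $\delta_v = C_v(P_v)$ for some $P_v \in J(k_v)/\phi A(k_v)$, then $\gamma_v(P_v) - \xi_v \in \ker\alpha_v = 0$, so $\xi_v = \gamma_v(P_v)$). For $v \notin \calS$, the membership $\delta \in \widetilde{L(n,\calS)}$ means $\delta_v$ is unramified, hence by the first paragraph $\xi_v \in H^1(k_v,A[\phi])_{\unr}$, and Lemma~\ref{L:Tamagawa-unramified} (applicable thanks to the choice of $\calS$) yields $\xi_v \in \gamma_v(J(k_v))$ automatically. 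Assembling these observations gives the claimed description.

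I do not anticipate a serious obstacle: the hypotheses are precisely tailored so that the two potential discrepancies identified in Section~\ref{S:selmer group comparison} (failure of $\alpha$ to be injective, and failure of elements of $\Sel{\alpha}$ to lift) both vanish. The only care required is to confirm that hypothesis~(1) really does apply to $k_{v,u}$ as well as to $k_v$, which is what enables the automatic treatment of places $v \notin \calS$; this is immediate since $k_{v,u}$ is a field extension of $k$.
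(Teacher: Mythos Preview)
Your proposal is correct and follows essentially the same route as the paper: both use the first hypothesis to get injectivity of $\alpha$ over $k$, $k_v$, and $k_{v,u}$ (hence the equivalence of unramifiedness before and after $\alpha$), and the second hypothesis to invoke Lemma~\ref{L:lift-to-H1Aphi}\eqref{L:lift-toH1Aphi-c} for the lifting criterion. One small remark: your opening line says you will derive the result from Proposition~\ref{P:HSreal}, but you cannot actually invoke that proposition as stated, since its hypothesis on~$\calS$ (containing the places where $E^\vee$ ramifies) is stronger than what Proposition~\ref{P:specialSel} assumes; fortunately your actual argument bypasses~$H_\calS$ and works directly with Lemma~\ref{L:Tamagawa-unramified}, which is exactly what the paper does (it too only borrows from the \emph{proof} of Proposition~\ref{P:HSreal}, not its statement).
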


\begin{proof}
  Since $E^\vee(k) \to R^\vee(k)$ is surjective,
  the map $H^1(k, A[\phi]) \stackrel{\alpha}{\to} H^1(k, E^\vee)$ is injective.
  So the map $\pr_2'$ in~\eqref{E:big cohomology diagram} 
  identifies $H$ with a subgroup of $\widetilde{L^\times/L^{\times n} k^\times}$,
  and similarly identifies $H_v$ with a subgroup of 
  $\widetilde{L_v^\times/L_v^{\times n} k_v^\times}$, for each~$v$.
  Also $E^\vee(k_{v,u}) \to R^\vee(k_{v,u})$ is surjective for each~$v$,
  so the proof of Proposition~\ref{P:HSreal} shows that 
  $H_\calS$ is the subgroup of elements of~$H$ unramified outside~$\calS$.
  Since $\mu_n(L') \to Q(k)$ is surjective, 
  Lemma~\ref{L:lift-to-H1Aphi}\eqref{L:lift-toH1Aphi-c} yields
 \begin{equation}
 \label{E:H_S formula}
	 H_\calS \isom \{[\ell] \in \widetilde{L(n, \calS)} : 
			\TAU_*(\ell) \in {L'}^{\times n}\}.
  \end{equation}
  In~\eqref{E:big cohomology diagram}, $C_v = \pr_2' \circ (r,f)$, 
  so $(r,f)\bigl(\calZ^0(X_{k_v}^\good)\bigr) = \im C_v$.
  Substituting this and \eqref{E:H_S formula}
  into~\eqref{E:H_S and Selmer} completes the proof.
\end{proof}

\begin{remark}
  In contrast with Theorem~\ref{T:realSel}, 
  Proposition~\ref{P:specialSel} lets us determine $\Sel{\phi}(J)$
  without evaluating $\TAU_*$ on~$L_v$
  and without evaluating (or even constructing)~$r$.
\end{remark}

\begin{remark} \label{R:surj finite comp}
  The surjectivity assumptions in Proposition~\ref{P:specialSel} 
  can be checked by a finite computation.
  This is clear for the statements over~$k$, 
  since they involve only the $k$-points of some finite Galois modules. 
  To check surjectivity of $E^\vee(k') \to R^\vee(k')$
  for all extension fields~$k'$ of~$k$, 
  let $\Gamma$ be the (finite) Galois group over~$k$
  of the splitting field of~$E^\vee$; 
  then the action of $\calG_{k'}$
  on~$E^\vee$ (and therefore also~$R^\vee$) factors through a subgroup
  $\Gamma' \le \Gamma$.
  The surjectivity of $E^\vee(k') \to R^\vee(k')$ is determined by~$\Gamma'$,
  so it suffices to check it for each~$\Gamma'$;
  in fact, we need only consider one~$\Gamma'$ in each conjugacy class
  of subgroups.
\end{remark}

\begin{remark}
\label{R:Sel=Sel}
Suppose that $n$ is prime, and that the surjectivity assumptions 
in Proposition~\ref{P:specialSel} hold.
Then $\Sha^1(k, R^\vee) = 0$ by Remark~\ref{R:critSha1}, 
and $W_v = 0$ for all~$v$ by Lemma~\ref{L:Kvtrivial}\eqref{I:W_v formula},
and $\calK = 0$ by its second definition (preceding~\eqref{E:kappa}).
So by Lemma~\ref{L:Sha1R}\eqref{I:if Sha=0} 
and Corollary~\ref{C:Nicecase}, we actually have
  \[ \Sel{\phi}(J) \isom \Seltf{\alpha}(J) . \]
The advantage of 
computing~$\Sel{\phi}(J)$ using Proposition~\ref{P:specialSel}
instead of computing~$\Seltf{\alpha}(J)$
using Theorem~\ref{T:finite description of Selmer}\eqref{I:good T} 
is that the former requires local computations
at only the places in~$\calS$ instead of the places
in the potentially much larger set~$\calT$
of Theorem~\ref{T:finite description of Selmer}\eqref{I:good T}.
This improvement is possible because of
the additional condition $\TAU_*(\ell) \in {L'}^{\times n}$:
Proposition~\ref{P:specialSel} says that
$\Sel{\phi}(J) \isom S_{\calS} \cap \{[\ell] \in \widetilde{L(n, \calS)} : \TAU_*(\ell) \in {L'}^{\times n}\}$,
which by Lemma~\ref{L:lift-to-H1Aphi}\eqref{L:lift-toH1Aphi-c}
equals $S_\calS \cap \alpha(H^1(k,A[\phi]))$ under the assumptions made.
In other words, enlarging $\calS$ to a~$\calT$ large enough that
$S_\calT = \Seltf{\alpha}(J)$ has the effect of intersecting
$S_\calS$ with $\alpha(H^1(k,A[\phi]))$.
\end{remark}

\begin{example}
  We consider 3-descent on an elliptic curve~$J$,
  as in \cite{Schaefer-Stoll2004}.
  Let $\fes = J[3] - \{0\}$, and let $\beta$ be the graph of the
  map $\fes \to \Div^0 J_s$ sending $P$ to $(P)-(O)$.
  This defines a true descent setup, for which $A[\phi]=J[3]$
  and $L$ is an \'etale algebra of degree~$8$.
  Fix a Weierstrass equation for~$J$; then
  let $\fes'$ be the set of eight lines in~$\PP^2$ passing through
  three of the points in~$\fes$, together with the four vertical lines
  passing through two of them and the origin of~$J$.
  Let the correspondence~$\TAU$ be given by incidence.
  A finite computation as in Remark~\ref{R:surj finite comp}
  shows that the maps $\mu_3(L') \to Q(k)$
  and $\mu_3(L) \to R^\vee(k)$ are surjective over any field.
  So Proposition~\ref{P:specialSel} gives us a way to determine
  $\Sel{3}(J)$, if we can compute $L(3, \calS)$.
\end{example}

The approach described here and in Section~\ref{S:Det Selmer}
has advantages and disadvantages compared
to computing~$\Seltf{\alpha}(J)$ as in Section~\ref{S:compute fake}.
One obvious advantage is that it computes the Selmer group directly.
Also, as in Remark~\ref{R:Sel=Sel},
it requires local computations at only the places in~$\calS$
instead of the places in a potentially much larger set~$\calT$.
So whenever this direct approach is feasible (as in the example above), 
one should use it.

On the other hand, the direct approach
requires not only information on ($\calS$-)class
and unit groups of~$L$,
but also a presentation of~$L'$, the map
$\TAU_* : L^\times \to {L'}^\times$, and possibly the function~$r$, 
which is defined over~$L'$.


\subsection{Passing from a true or fake Selmer group to the actual Selmer group}

Recall the exact sequence
\[ 0 \To \ker \kappa \To \Sel{\phi}(J)
    \stackrel{\alpha}{\To}
      \Seltf{\alpha}(J) \intersect \alpha\bigl(H^1(k, A[\phi])\bigr)
    \To \coker \kappa.
\]
from Theorem~\ref{T:Selseq}. Assuming that we have computed~$\Seltf{\alpha}(J)$,
we can determine the order of~$\Sel{\phi}(J)$ if we can
\begin{itemize}
  \item find the order of $\ker \kappa$;
  \item for any given $\xi \in \Seltf{\alpha}(J)$ check if
        $\xi \in \alpha\bigl(H^1(k, A[\phi])\bigr)$;
  \item and if so, find its image in $\coker \kappa$.
\end{itemize}

In this subsection, we will explain why this seems no easier than computing~$\Sel{\phi}(J)$
directly using the approach of Section~\ref{S:Det Selmer}.

Lemma~\ref{L:lift-to-H1Aphi} implies the following.

\begin{corollary} \label{C:lift}
  If we can evaluate the map $\TAU_*$ explicitly on~$L^\times$
  and if we can extract $n^\tH$ roots in~$L'$,
  then we can determine whether any given
  element in the image of~$L^\times$ in~$H^1(k, E^\vee)$ lies
  in~$\alpha\bigl(H^1(k, A[\phi])\bigr)$.
\end{corollary}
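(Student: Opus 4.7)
The plan is to combine Lemma~\ref{L:lift-to-H1Aphi} with Remark~\ref{R:compute w} to reduce the lifting question to finite computations in finite Galois modules, once the two assumed operations (evaluating~$\TAU_*$ on~$L^\times$ and extracting $n^\tH$~roots in~$L'$) are available.

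First, given $\ell \in L^\times$ representing a class $\xi \in H^1(k, E^\vee)$, compute $\TAU_*(\ell) \in (L')^\times$ and attempt to extract an $n^\tH$~root. By Lemma~\ref{L:lift-to-H1Aphi}\eqref{L:lift-toH1Aphi-a}, if $\TAU_*(\ell) \notin (L')^{\times n}$ then $\xi$ does not lift and we stop. Otherwise we have $\TAU_*(\ell) = u^n$ with $u \in (L')^\times$ explicitly in hand.

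Second, apply Lemma~\ref{L:lift-to-H1Aphi}\eqref{L:lift-toH1Aphi-b}: the class~$\xi$ lifts to~$H^1(k, A[\phi])$ if and only if the element $w = q'\bigl(\TAU_*(\lambda)/u\bigr) \in Q(k)$, for any $\lambda \in L_s^\times$ with $\lambda^n = \ell$, lies in the image of $q' \colon \mu_n(L') \to Q(k)$. The modules $E^\vee$, $\mu_n^{\fes'}$ and~$Q$ are finite, and their Galois actions are encoded in the action on~$\fes'$, which is already known; hence $\mu_n(L')$, $Q(k)$, and the homomorphism~$q'$ between them are explicitly computable finite objects, and deciding whether a given $w \in Q(k)$ lies in $q'(\mu_n(L'))$ is then a routine finite check.

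The one remaining subtlety is that $\lambda$ lives in~$L_s^\times$, not in~$L^\times$, so we cannot literally compute $\TAU_*(\lambda)/u$ over~$k$. Here Remark~\ref{R:compute w} takes over: choose a finite place~$v$ of~$k$ of residue characteristic coprime to~$n$ at which $\ell$ and~$u$ are units, reduce modulo~$v$, extract an $n^\tH$~root~$\bar\lambda$ of~$\bar\ell$ inside a finite extension of~$\F_v$ (elementary, since the residue field is finite), and form $\bar w = q'(\TAU_*(\bar\lambda)/\bar u) \in Q(\F_v)$; injectivity of the reduction map $Q(k) \injects Q(\F_v)$ then pins down~$w$ uniquely. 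The step I expect to be the main obstacle is the $n^\tH$-root extraction in~$L'$, but that is precisely one of the two operations postulated in the hypothesis; everything else is either a single call to~$\TAU_*$ or a computation with explicitly given finite Galois modules.
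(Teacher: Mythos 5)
Your proposal is correct and follows exactly the route the paper intends: part~\eqref{L:lift-toH1Aphi-a} of Lemma~\ref{L:lift-to-H1Aphi} rules out non-$n^\tH$-powers, part~\eqref{L:lift-toH1Aphi-b} reduces liftability to the finite condition $w \in q'(\mu_n(L'))$, and Remark~\ref{R:compute w} supplies the computation of~$w$ over a residue field, which is precisely the step the paper leaves implicit when it states the corollary as an immediate consequence of the lemma. The only (harmless) implicit point, shared with the paper, is that evaluating $\TAU_*$ on residue fields is subsumed in knowing $\TAU$ as an explicit correspondence.
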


This takes care of the second point in the list above.

For the other two points, we need to find the groups~$W_v$ and the map
$\kappa \colon \calK \to \prod_v W_v$. The group $\calK = R^\vee(k)/q E^\vee(k)$
can be found by a finite computation,
using the inclusion $R^\vee(k) \injects \mu_n(L')$ 
arising from~\eqref{E:Q and q'} to represent elements of~$R^\vee(k)$.
For the finitely many~$v$ for which 
the proof of Corollary~\ref{C:W_v=0 usually}
does not guarantee $W_v=0$, we use the following to compute~$W_v$:

\begin{lemma}\label{L:new computeWv}
Fix $v$.
\begin{enumerate}[\upshape (a)]
\item The map 
\[
	\ker C_v \stackrel{\gamma_v}\injects \ker \alpha_v 
		\isom R^\vee(k_v)/q E^\vee(k_v)
\]
can be described explicitly as follows.
Given $[z] \in \ker C_v \subseteq J(k_v)/\phi A(k_v)$,
represented by some $z \in \calZ^0(X_{k_v}^{\good})$,
the image~$\gamma_v([z])$
is represented by $\zeta \in R^\vee(k_v) \subset \mu_n(L_v')$
defined as follows:\\
      In the true case, $f(z_0) = \ell^n$ for some $\ell \in L_v^\times$; 
         set $\zeta := r(z)/\TAU_*(\ell)$. \\
      In the fake case, $f(z_0) = \ell^n a$ for some $\ell \in L_v^\times$
         and $a \in k_v^\times$; set $\zeta := r(z)/(\TAU_*(\ell) \theta_*(a))$.
\item \label{I:W_v algorithm}
Assume Hypothesis~\ref{H:curve with a k_v-point}.
If we know $\#\bigl(J(k_v)/\phi A(k_v)\bigr)$ 
and we can compute the functions $f$ and~$r$ on~$\calZ^0(X_{k_v}^{\good})$
and $\TAU_*$ on~$L_v^\times$,
then we can compute~$W_v$ in the course of the computation of~$\im C_v$
as in Remark~\ref{R:image of random points} as follows:
  \begin{enumerate}[\upshape 1.]
  \item Search for points on~$X$ over finite extensions of~$k_v$
        until a $0$-cycle $x_0$ of degree~$1$ on~$X_{k_v}$ is found.
  \item Randomly generate elements 
          $z \colonequals \tr_{K/k_v}(x) - dx_0 \in \calZ^0(X_{k_v}^{\good})$
        as in Remark~\ref{R:image of random points},
        let $\calJ$ be the subgroup of~$J(k_v)/\phi A(k_v)$ they generate so far,
        compute $I \colonequals C_v(\calJ) \subseteq 
		\widetilde{L_v^\times/L_v^{\times n} k_v^\times}$,
        and compute $G\colonequals \gamma_v(\ker(C_v|_\calJ)) 
		\subseteq R^\vee(k_v)/q E^\vee(k_v)$,
	until $\#I \cdot \#G = \#\bigl(J(k_v)/\phi A(k_v)\bigr)$.
  \item When equality occurs, $I = \im C_v$ 
	and $W_v \isom \bigl(R^\vee(k_v)/q E^\vee(k_v)\bigr)/G$.
  \end{enumerate}
\end{enumerate}
\end{lemma}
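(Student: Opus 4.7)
The plan for part~(a) is to extract the formula from diagram~\eqref{E:diagramU} applied over~$k_v$. By commutativity of the right-hand column of that diagram (combined with~\eqref{E:big cohomology diagram}), the connecting map~$\gamma_v$ sends $[z] \in J(k_v)/\phi A(k_v)$ to the class in~$H^1(k_v, A[\phi])$ of the pair $(r(z), f(z)) \in U(k_v)$, obtained via the connecting homomorphism of the third row of~\eqref{E:diagramU}. The assumption $[z] \in \ker C_v$ translates via the bottom-left of~\eqref{E:big cohomology diagram} to the statement that $\pr_2'$ of the class of $(r(z),f(z))$ vanishes in $\widetilde{L_v^\times/L_v^{\times n} k_v^\times}$, which says exactly that $f(z) \in L_v^{\times n}$ in the true case, respectively $f(z) \in L_v^{\times n} k_v^\times$ in the fake case. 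So we may write $f(z) = \ell^n$ (true) or $f(z) = \ell^n a$ (fake).

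Now by Lemma~\ref{L:Images}, the element $(\TAU_*(\ell), \ell^n)$ in the true case, and $(\TAU_*(\ell) \theta_*(a), \ell^n a)$ in the fake case, belongs to~$V$, so modulo~$V$ we have $(r(z), f(z)) \equiv (\zeta, 1)$ in~$U(k_v)$ with $\zeta$ the expression in the statement. The snake lemma applied to~\eqref{E:two rows} identifies $\ker \pr_2$ with~$R^\vee$; thus $(\zeta,1) \in \ker \pr_2$ means $\zeta \in R^\vee(k_v) \subset \mu_n(L_v')$. Two internal checks are required here: first, that $\zeta^n = 1$, which follows from the identity $\TAU_*(f) = r^n$ provided by Lemma~\ref{L:TAU_* of functions} (after normalizing~$r$ so that the constant in that congruence equals~$1$) together with $\theta_*(a)^n = \TAU_*(a)$ (coming from the defining relation $\tau \circ \iota = n\theta$); second, that $\TAU'_*(\zeta) = 1$, which by Lemma~\ref{L:UasKernel} is precisely the condition that $(\zeta, 1) \in U(k_v)$, and which is automatic since $(r(z), f(z))$ lies in~$U(k_v)$ by construction of the map $(r,f)$ in~\eqref{E:diagramU}. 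These together give the required formula.

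For part~(b), we run the algorithm of Remark~\ref{R:image of random points} to generate random elements $z = \tr_{K/k_v}(x) - d x_0$ whose images accumulate a subgroup $\calJ \subseteq J(k_v)/\phi A(k_v)$. At each step we maintain $I \colonequals C_v(\calJ)$ and, using~(a), the subgroup $G \colonequals \gamma_v(\ker(C_v|_\calJ))$ inside~$R^\vee(k_v)/qE^\vee(k_v)$. Since $\gamma_v$ is injective on $J(k_v)/\phi A(k_v)$ (it is the connecting map of an injection), $\#G = \#\ker(C_v|_\calJ)$, and hence $\#\calJ = \#I \cdot \#G$. When this product reaches the known value $\#J(k_v)/\phi A(k_v)$ we must have $\calJ = J(k_v)/\phi A(k_v)$, hence $I = \im C_v$ and $G = \gamma_v(\ker C_v) = \ker \alpha_v \intersect \im \gamma_v$, so by Definition~\ref{D:W_v}\eqref{I:W_v is image} we obtain $W_v \isom (R^\vee(k_v)/qE^\vee(k_v))/G$. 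Termination is ensured by Lemma~\ref{L:generating J(K)}, which shows such $0$-cycles generate all of~$J(k_v)$. The main obstacle is really in part~(a): verifying that $\zeta$ actually lies in the subgroup $R^\vee(k_v) \subseteq \mu_n(L_v')$ rather than merely in $\mu_n(L_v')$, for which the precise compatibility between $r$, $f$, and the correspondence~$\TAU$ recorded in Lemma~\ref{L:TAU_* of functions} is essential.
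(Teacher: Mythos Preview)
Your proof is correct and follows essentially the same approach as the paper. For part~(a), the paper likewise divides $(r(z),f(z))\in U(k_v)$ by the appropriate element of~$V_v$ from Lemma~\ref{L:Images} to obtain $(\zeta,1)$; the only cosmetic difference is that the paper invokes Corollary~\ref{C:ComputeUk} and Lemma~\ref{L:lift-to-H1Aphi}\eqref{L:lift-toH1Aphi-b}2.\ (with $\ell=u=\lambda=1$, hence $w=1$) to conclude $\zeta\in\ker(q'|_{\mu_n(L_v')})=R^\vee(k_v)$, whereas you unwind the same facts directly via Lemma~\ref{L:UasKernel}. For part~(b), the paper argues identically that $\#I\cdot\#G=\#\calJ$ and invokes Definition~\ref{D:W_v}\eqref{I:W_v is cokernel} (you cite the equivalent~\eqref{I:W_v is image}); your explicit remark on termination via Lemma~\ref{L:generating J(K)} and on the normalization of~$r$ so that $\TAU_*(f)=r^n$ exactly are both implicit in the paper.
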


\begin{proof}
\hfill
\begin{enumerate}[\upshape (a)]
\item 
In the true case, 
$C_v([z])=0$ means that $f(z)=\ell^n$ for some $\ell \in L_v^\times$.
Let $V_v$ be the local analogue of~$V$.
Dividing $(r(z),f(z)) \in U(k_v)$
by $(\TAU_*(\ell),\ell^n) \in V_v$ (cf.~Lemma~\ref{L:Images})
yields $(\zeta,1) \in U(k_v)$,
so Corollary~\ref{C:ComputeUk} implies $\zeta \in \mu_n(L')$.
In the fake case, divide instead by $(\TAU_*(\ell) \theta_*(a), \ell^n a) \in V_v$.
Lemma~\ref{L:lift-to-H1Aphi}\eqref{L:lift-toH1Aphi-b}2.\ 
applied with $\ell=1$, $u=1$, $\lambda=1$, and hence $w=1$, yields
$\zeta \in \ker(q'|_{\mu_n(L')}) = R^\vee(k_v)$ (see~\eqref{E:Q and q'}).
\item 
We have $\#I \cdot \#G = \#\calJ \le \#\bigl(J(k_v)/\phi A(k_v)\bigr)$
with equality if and only if $\calJ = J(k_v)/\phi A(k_v)$.
When equality occurs, $I = \im C_v$ and $G=\gamma_v(\ker C_v)$,
and Definition~\ref{D:W_v}\eqref{I:W_v is cokernel} yields
$W_v \isom \bigl(R^\vee(k_v)/q E^\vee(k_v)\bigr)/G$.\qedhere
\end{enumerate}
\end{proof}

\begin{remark}\label{R:stopping rule}
By computing $\#\calJ = \#I \cdot \#G$ 
as the algorithm in Lemma~\ref{L:new computeWv}\eqref{I:W_v algorithm}
progresses, we can detect when $\calJ = J(k_v)/\phi A(k_v)$.
This stopping rule can help make our computation more efficient.
\end{remark}

Once we know $W_v$ as a quotient of~$R^\vee(k_v)$ 
for all~$v$ for which $W_v$ might be nonzero,
the map
$\kappa \colon \calK \to \prod_v W_v$, with $\calK = R^\vee(k)/q E^\vee(k)$,
is induced by the maps $R^\vee(k) \to R^\vee(k_v)$
and hence is computable.
In particular, we can find the size of~$\ker \kappa$, which takes care of
the first point in our list.

The following lemma deals with the last point.

\begin{lemma}\label{L:image in coker kappa}
  Under the assumptions in 
  Corollary~\ref{C:lift} and Lemma~\ref{L:new computeWv}\eqref{I:W_v algorithm},
  given an element $\xi \in \Seltf{\alpha}(J) \intersect \alpha(H^1(k, A[\phi]))$,
  represented by some explicit $\ell \in L^\times$,
  we can find the image of~$\xi$ in $\coker \kappa$.
\end{lemma}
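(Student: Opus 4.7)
The plan is to trace the connecting homomorphism
$\Seltf{\alpha}(J) \intersect \alpha(H^1(k, A[\phi])) \to \coker\kappa$
of Theorem~\ref{T:Selseq}, which comes from applying the snake lemma to
\eqref{E:big snake}, through the explicit model of $H^1(k,A[\phi])$
and $H^1(k_v,A[\phi])$ as $U(k)/V$ and $U(k_v)/V_v$ provided by
\eqref{E:diagramU}. Since two admissible global lifts of~$\xi$ differ
by an element of $\calK=\ker\alpha$ and thus have the same image in
$\coker\kappa$, any one explicit lift will do.

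Writing $\xi=[\ell]$ with $\ell\in L^\times$, the hypothesis
$\xi\in\alpha(H^1(k,A[\phi]))$ combined with
Lemma~\ref{L:lift-to-H1Aphi}\eqref{L:lift-toH1Aphi-a} produces
$u\in(L')^\times$ with $\TAU_*(\ell)=u^n$; such a~$u$ is obtained by
extracting an $n^{\tH}$ root in~$L'$. By
Lemma~\ref{L:lift-to-H1Aphi}\eqref{L:lift-toH1Aphi-b}, after
multiplying $u$ by a suitable $\zeta\in\mu_n(L')$ (computed by finding
a preimage in $\mu_n(L')$ of the class $w\in Q(k)$ described in
Remark~\ref{R:compute w}, by finite search) we may arrange that
$(u,\ell)\in U(k)$ and represents a lift
$\tilde\xi\in H^1(k,A[\phi])$ of~$\xi$.

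By Corollary~\ref{C:W_v=0 usually} the set $\calS'$ of places with
$W_v\ne 0$ is finite and computable. For each $v\in\calS'$ we apply the
random search of Remark~\ref{R:image of random points}, equipped with
the stopping rule of Remark~\ref{R:stopping rule}, to produce
$z_v\in\calZ^0(X_{k_v}^\good)$ with $C_v([z_v])=\xi_v$; such a~$z_v$
exists because $\xi\in\Seltf{\alpha}(J)$. The bottom two rows
of~\eqref{E:diagramU} over~$k_v$ show that $\gamma_v([z_v])$ is
represented by $(r(z_v),f(z_v))\in U(k_v)/V_v$, while $\tilde\xi_v$
is represented by $(u,\ell)$; by Proposition~\ref{P:fake equivalence}
both map under $\alpha_v$ to~$\xi_v$, so the difference
$\gamma_v([z_v])-\tilde\xi_v$, represented by
$(r(z_v)/u,\,f(z_v)/\ell)\in U(k_v)/V_v$, lies in $\ker\alpha_v$.
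The local equality $[f(z_v)]=[\ell]$ in
$\widetilde{L_v^\times/L_v^{\times n}k_v^\times}$ yields a
factorization $f(z_v)/\ell=(\ell_v')^n$ in the true case and
$f(z_v)/\ell=(\ell_v')^n a_v$ in the fake case, with
$\ell_v'\in L_v^\times$ and $a_v\in k_v^\times$ found by the usual
local $n^{\tH}$-power test; subtracting the corresponding element
$(\TAU_*(\ell_v'),(\ell_v')^n)$ or
$(\TAU_*(\ell_v')\theta_*(a_v),(\ell_v')^n a_v)$ of $V_v$
(see Lemma~\ref{L:Images}) reduces our representative to
$(\zeta_v,1)\in U(k_v)$ with
$\zeta_v=r(z_v)/\bigl(u\,\TAU_*(\ell_v')\bigr)$ or
$\zeta_v=r(z_v)/\bigl(u\,\TAU_*(\ell_v')\theta_*(a_v)\bigr)$.
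Lemma~\ref{L:UasKernel} forces $\zeta_v\in R^\vee(k_v)$, and
$[\zeta_v]\in R^\vee(k_v)/qE^\vee(k_v)\isom\ker\alpha_v$ is the
explicit representative we seek. Projecting through the surjection
$R^\vee(k_v)/qE^\vee(k_v)\twoheadrightarrow W_v$ already computed in
Lemma~\ref{L:new computeWv} and assembling over $v\in\calS'$ yields
an element of $\prod_v W_v$, whose class modulo the computable image
of~$\kappa$ is the sought image of~$\xi$ in $\coker\kappa$.

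The principal obstacle is the search for the local preimages~$z_v$
with $C_v([z_v])=\xi_v$; this is exactly the kind of finite local
search that already underlies the computation of $\im C_v$ in
Remark~\ref{R:image of random points}, so it always terminates in
principle, but may be slow when $J(k_v)/\phi A(k_v)$ is large. All
the remaining steps reduce to arithmetic in $L$, $L'$ and their
completions, which is guaranteed by the standing hypotheses carried
over from Corollary~\ref{C:lift} and
Lemma~\ref{L:new computeWv}\eqref{I:W_v algorithm}.
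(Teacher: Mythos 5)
Your argument is correct and follows essentially the same route as the paper: lift $\xi$ to $(u,\ell)\in U(k)$ via Lemma~\ref{L:lift-to-H1Aphi}, at each $v$ with $W_v\ne 0$ compare with $(r(z_v),f(z_v))$ for a local $0$-cycle $z_v$ with $C_v([z_v])=\xi_v$, strip off an element of~$V_v$ to land on $(\zeta_v,1)$ with $\zeta_v\in R^\vee(k_v)$, and push to $\prod_v W_v$ modulo $\im\kappa$. The only discrepancy is a sign: you form $\gamma_v([z_v])-\tilde\xi_v$ rather than $\tilde\xi_v-\gamma_v([z_v])$, so your element of $\coker\kappa$ is the inverse of the snake-map image and should be negated (immaterial when $n=2$).
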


\begin{proof}
  Use Lemma~\ref{L:new computeWv} (and the sentence preceding it) 
  to compute~$W_v$ for all~$v$.
  Compute~$\kappa$ as in the sentences preceding
  Lemma~\ref{L:image in coker kappa}.
  Our task is to compute the image of~$\xi$ under the snake map
  implied by~\eqref{E:big snake}.
  To do this, we do a parallel diagram chase in the more computation-friendly diagram 
  \begin{equation}
    \begin{split}
    \label{E:not snake}
 \xymatrix{    
                0 \ar[r] & R^\vee(k) \ar[r] \ar[d]
                         & U(k) \ar[r]^-{\pr_2} \ar[d]
                         & L^\times \ar[d] \\
                0 \ar[r] & \rule[-0.7em]{0pt}{2.0em}\displaystyle\prod_v R^\vee(k_v) \ar[r]
                         & \rule[-0.7em]{0pt}{2.0em}\displaystyle\prod_v U(k_v) \ar[r]^-{\pr_2}
                         & \rule[-0.7em]{0pt}{2.0em}\displaystyle\prod_v L_v^\times \\
              }
    \end{split}
  \end{equation}
(with exact rows from~\eqref{E:UH diagram}), which maps to~\eqref{E:big snake}.
  
Use Lemma~\ref{L:lift-to-H1Aphi} to compute~$u$ such that $(u,\ell) \in U(k)$,
i.e., such that $\TAU_*(\ell)=u^n$.
  For the places~$v$ for which $W_v=0$,
  set $\zeta_v \colonequals 1 \in R^\vee(k_v)$.
  For the finitely many remaining~$v$, we have $\xi_v = C_v([z_v])$ for some
  $z_v \in \calZ^0(X_{k_v}^{\good})$ found during the local image computation;
  then $f(z_v) \equiv \ell$ in $\widetilde{L_v^\times/L_v^{\times n} k_v^\times}$.
  Dividing $(u,\ell) \in U(k_v)$ by~$(r(z_v),f(z_v))$
  and then by an element of~$V_v$ as in Lemma~\ref{L:Images}
  yields an element $(\zeta_v,1) \in U(k_v)$ with the same image as~$(u,\ell)$
  in the group $H^1(k_v,A[\phi])/\im \gamma_v$ in the corresponding position
  of~\eqref{E:big snake}.
  By exactness of the second row of~\eqref{E:not snake}, 
  we have $\zeta_v \in R^\vee(k_v)$.
  Then $(\zeta_v) \in \prod R^\vee(k_v)$ 
  maps to an element of $\prod W_v$ in~\eqref{E:big snake},
  which represents the image of~$\xi$ in~$\coker \kappa$.
\end{proof}

We conclude that in order to find 
(the order of) $\Sel{\phi}(J)$ from~$\Seltf{\alpha}(J)$,
we need to be able to lift a given $\ell \in L^\times$ to $(u,\ell) \in U(k)$
(or show that such a lift does not exist),
and to evaluate $r$ and~$\TAU_*$ locally at the places with potentially
nontrivial~$W_v$. 
But if we can do all this, then we can also compute~$\Sel{\phi}(J)$
directly as described in Theorem~\ref{T:realSel}.

\section*{Acknowledgments} 

Our research project was begun
at the Mathematical Sciences Research Institute in 2006, 
and continued over several years at several institutions:
Banff International Research Station, 
Institute for Computational and Experimental Research in Mathematics, 
Jacobs University, 
Massachusetts Institute for Technology, 
Mathematisches Forschungsinstitut Oberwolfach, 
Pacific Institute for the Mathematical Sciences, 
and Universit\"at Bayreuth.
We thank them all for their hospitality. 
We thank also David Kohel and Christophe Ritzenthaler for comments
on discriminants of ternary quartic forms, Benedict Gross and
Jack Thorne for the content of Remark~\ref{R:Thorne},
and Edward Schaefer for suggestions of references.

\begin{bibdiv}
\begin{biblist}


\bib{Atiyah-Wall1967}{article}{
  author={Atiyah, M. F.},
  author={Wall, C. T. C.},
  title={Cohomology of groups},
  conference={ title={Algebraic Number Theory (Proc. Instructional Conf., Brighton, 1965)}, },
  book={ publisher={Thompson, Washington, D.C.}, },
  date={1967},
  pages={94--115},
  review={\MR {0219512 (36 \#2593)}},
}

\bib{Baran-preprint}{misc}{
  author={Baran, Burcu},
  title={An exceptional isomorphism between modular curves of level~$13$},
  date={2013-03},
  note={Preprint, available at \url {http://www-personal.umich.edu/~bubaran/bbaran.pdf}},
}

\bib{Baran-preprint2}{misc}{
  author={Baran, Burcu},
  title={An exceptional isomorphism between level~$13$ modular curves via Torelli's theorem},
  date={2012-06},
  note={Preprint, available at \url {http://www-personal.umich.edu/~bubaran/torelli.pdf}},
}

\bib{Bhargava-Gross-Wang-AIT2-preprint}{misc}{
  author={Bhargava, Manjul},
  author={Gross, Benedict},
  author={Wang, Xiaoheng},
  title={Arithmetic invariant theory~II},
  date={2013-10-29},
  note={Preprint, \texttt {arXiv:1310.7689v1}},
}

\bib{Bosch-Lutkebohmert-Raynaud1990}{book}{
  author={Bosch, Siegfried},
  author={L{\"u}tkebohmert, Werner},
  author={Raynaud, Michel},
  title={N\'eron models},
  series={Ergebnisse der Mathematik und ihrer Grenzgebiete (3) [Results in Mathematics and Related Areas (3)]},
  volume={21},
  publisher={Springer-Verlag},
  place={Berlin},
  date={1990},
  pages={x+325},
  isbn={3-540-50587-3},
  review={\MR {1045822 (91i:14034)}},
}

\bib{Deligne-Mumford1969}{article}{
  author={Deligne, P.},
  author={Mumford, D.},
  title={The irreducibility of the space of curves of given genus},
  journal={Inst. Hautes \'Etudes Sci. Publ. Math.},
  number={36},
  date={1969},
  pages={75--109},
  issn={0073-8301},
  review={\MR {0262240 (41 \#6850)}},
}

\bib{Demazure2012}{article}{
  author={Demazure, Michel},
  title={R\'esultant, discriminant},
  language={French},
  journal={Enseign. Math. (2)},
  volume={58},
  date={2012},
  number={3-4},
  pages={333--373},
  issn={0013-8584},
  review={\MR {3058604}},
}

\bib{Dixmier1987}{article}{
  author={Dixmier, J.},
  title={On the projective invariants of quartic plane curves},
  journal={Adv. in Math.},
  volume={64},
  date={1987},
  number={3},
  pages={279--304},
  issn={0001-8708},
  review={\MR {888630 (88c:14064)}},
  doi={10.1016/0001-8708(87)90010-7},
}

\bib{Djabri-Schaefer-Smart2000}{article}{
  author={Djabri, Z.},
  author={Schaefer, Edward F.},
  author={Smart, N. P.},
  title={Computing the $p$-Selmer group of an elliptic curve},
  journal={Trans. Amer. Math. Soc.},
  volume={352},
  date={2000},
  number={12},
  pages={5583--5597},
  issn={0002-9947},
  review={\MR {1694286 (2001b:11047)}},
  doi={10.1090/S0002-9947-00-02535-6},
}

\bib{Flynn-Poonen-Schaefer1997}{article}{
  author={Flynn, E. V.},
  author={Poonen, Bjorn},
  author={Schaefer, Edward F.},
  title={Cycles of quadratic polynomials and rational points on a genus-$2$ curve},
  journal={Duke Math. J.},
  volume={90},
  date={1997},
  number={3},
  pages={435\ndash 463},
  issn={0012-7094},
  review={\MR {1480542 (98j:11048)}},
}

\bib{Geissler2003}{book}{
  author={Gei{\ss }ler, Katharina},
  title={Berechnung von Galoisgruppen \"uber Zahl- und Funktionenk\"orpern},
  date={2003},
  note={Ph.D.\ thesis, Technischen Universit\"at Berlin, available at \url {http://www.math.tu-berlin.de/~kant/publications/diss/geissler.pdf}\phantom {i}},
}

\bib{Geissler-Klueners2000}{article}{
  author={Geissler, Katharina},
  author={Kl{\"u}ners, J{\"u}rgen},
  title={Galois group computation for rational polynomials},
  note={Algorithmic methods in Galois theory},
  journal={J. Symbolic Comput.},
  volume={30},
  date={2000},
  number={6},
  pages={653--674},
  issn={0747-7171},
  review={\MR {1800032 (2001k:12006)}},
  doi={10.1006/jsco.2000.0377},
}

\bib{Gelfand-Kapranov-Zelevinsky2008}{book}{
  author={Gelfand, I. M.},
  author={Kapranov, M. M.},
  author={Zelevinsky, A. V.},
  title={Discriminants, resultants and multidimensional determinants},
  series={Modern Birkh\"auser Classics},
  note={Reprint of the 1994 edition},
  publisher={Birkh\"auser Boston Inc.},
  place={Boston, MA},
  date={2008},
  pages={x+523},
  isbn={978-0-8176-4770-4},
  review={\MR {2394437 (2009a:14065)}},
}

\bib{Gille-Szamuely2006}{book}{
  author={Gille, Philippe},
  author={Szamuely, Tam{\'a}s},
  title={Central simple algebras and Galois cohomology},
  series={Cambridge Studies in Advanced Mathematics},
  volume={101},
  publisher={Cambridge University Press},
  place={Cambridge},
  date={2006},
  pages={xii+343},
  isbn={978-0-521-86103-8},
  isbn={0-521-86103-9},
  review={\MR {2266528 (2007k:16033)}},
}

\bib{Gross-Harris2004}{article}{
  author={Gross, Benedict H.},
  author={Harris, Joe},
  title={On some geometric constructions related to theta characteristics},
  conference={ title={Contributions to automorphic forms, geometry, and number theory}, },
  book={ publisher={Johns Hopkins Univ. Press}, place={Baltimore, MD}, },
  date={2004},
  pages={279--311},
  review={\MR {2058611 (2005h:14079)}},
}

\bib{Hartshorne1977}{book}{
  author={Hartshorne, Robin},
  title={Algebraic geometry},
  note={Graduate Texts in Mathematics, No. 52},
  publisher={Springer-Verlag},
  place={New York},
  date={1977},
  pages={xvi+496},
  isbn={0-387-90244-9},
  review={\MR {0463157 (57 \#3116)}},
}

\bib{Kohel-Echidna}{misc}{
  author={Kohel, David R.},
  title={Algorithms for Elliptic Curves and Higher Dimensional Analogues (Echidna)},
  note={Available at \url {http://echidna.maths.usyd.edu.au/kohel/alg/index.html}\phantom {i}},
}

\bib{LangAbelianVarieties}{book}{
  author={Lang, Serge},
  title={Abelian varieties},
  note={Reprint of the 1959 original},
  publisher={Springer-Verlag},
  place={New York},
  date={1983},
  pages={xii+256},
  isbn={0-387-90875-7},
  review={\MR {713430 (84g:14041)}},
}

\bib{Magma}{article}{
  author={Bosma, Wieb},
  author={Cannon, John},
  author={Playoust, Catherine},
  title={The Magma algebra system. I. The user language},
  note={Computational algebra and number theory (London, 1993). Magma is available at \url {http://magma.maths.usyd.edu.au/magma/ }\phantom {i}},
  journal={J. Symbolic Comput.},
  volume={24},
  date={1997},
  number={3-4},
  pages={235\ndash 265},
  issn={0747-7171},
  review={\MR {1484478}},
  label={Magma},
}

\bib{McCallum-Poonen2012}{article}{
  author={McCallum, William},
  author={Poonen, Bjorn},
  title={The method of Chabauty and Coleman},
  book={ title={Explicit Methods in Number Theory}, subtitle={Rational Points and Diophantine Equations}, series={Panoramas et Synth\`eses}, volume={36}, publisher={Soci\'et\'e Math\'ematique de France}, place={Paris}, isbn={978-2-85629-359-1} },
  pages={99--117},
  date={2012},
}

\bib{MilneADT2006}{book}{
  author={Milne, J. S.},
  title={Arithmetic duality theorems},
  publisher={BookSurge, LLC},
  edition={Second edition},
  date={2006},
  pages={viii+339},
  isbn={1-4196-4274-X},
  review={\MR {881804 (88e:14028)}},
}

\bib{Mordell1922}{article}{
  author={Mordell, L. J.},
  title={On the rational solutions of the indeterminate equations of the third and fourth degrees},
  journal={Proc. Cambridge Phil. Soc.},
  volume={21},
  date={1922},
  pages={179--192},
}

\bib{MumfordAV1970}{book}{
  author={Mumford, David},
  title={Abelian varieties},
  series={Tata Institute of Fundamental Research Studies in Mathematics, No. 5 },
  publisher={Published for the Tata Institute of Fundamental Research, Bombay},
  date={1970},
  pages={viii+242},
  review={\MR {0282985 (44 \#219)}},
}

\bib{Mumford1971}{article}{
  author={Mumford, David},
  title={Theta characteristics of an algebraic curve},
  journal={Ann. Sci. \'Ecole Norm. Sup. (4)},
  volume={4},
  date={1971},
  pages={181--192},
  issn={0012-9593},
  review={\MR {0292836 (45 \#1918)}},
}

\bib{MumfordTheta2}{book}{
  author={Mumford, David},
  title={Tata lectures on theta. II},
  series={Progress in Mathematics},
  volume={43},
  note={Jacobian theta functions and differential equations; With the collaboration of C. Musili, M. Nori, E. Previato, M. Stillman and H. Umemura},
  publisher={Birkh\"auser Boston Inc.},
  place={Boston, MA},
  date={1984},
  pages={xiv+272},
  isbn={0-8176-3110-0},
  review={\MR {742776 (86b:14017)}},
}

\bib{Pauli-Roblot2001}{article}{
  author={Pauli, Sebastian},
  author={Roblot, Xavier-Fran{\c {c}}ois},
  title={On the computation of all extensions of a $p$-adic field of a given degree},
  journal={Math. Comp.},
  volume={70},
  date={2001},
  number={236},
  pages={1641--1659 (electronic)},
  issn={0025-5718},
  review={\MR {1836924 (2002e:11166)}},
  doi={10.1090/S0025-5718-01-01306-0},
}

\bib{Poonen-bertini2004}{article}{
  author={Poonen, Bjorn},
  title={Bertini theorems over finite fields},
  journal={Ann. of Math. (2)},
  volume={160},
  date={2004},
  number={3},
  pages={1099--1127},
  issn={0003-486X},
  review={\MR {2144974 (2006a:14035)}},
}

\bib{Poonen-Schaefer1997}{article}{
  author={Poonen, Bjorn},
  author={Schaefer, Edward F.},
  title={Explicit descent for Jacobians of cyclic covers of the projective line},
  journal={J. reine angew. Math.},
  volume={488},
  date={1997},
  pages={141\ndash 188},
  issn={0075-4102},
  review={\MR {1465369 (98k:11087)}},
}

\bib{Poonen-Schaefer-Stoll2007}{article}{
  author={Poonen, Bjorn},
  author={Schaefer, Edward F.},
  author={Stoll, Michael},
  title={Twists of $X(7)$ and primitive solutions to $x\sp 2+y\sp 3=z\sp 7$},
  journal={Duke Math. J.},
  volume={137},
  date={2007},
  number={1},
  pages={103--158},
  issn={0012-7094},
  review={\MR {2309145}},
}

\bib{Poonen-Stoll1999}{article}{
  author={Poonen, Bjorn},
  author={Stoll, Michael},
  title={The Cassels-Tate pairing on polarized abelian varieties},
  journal={Ann. of Math. (2)},
  volume={150},
  date={1999},
  number={3},
  pages={1109\ndash 1149},
  issn={0003-486X},
  review={\MR {1740984 (2000m:11048)}},
}

\bib{Salmon1876}{book}{
  author={Salmon, G.},
  title={Lessons introductory to the modern higher algebra},
  edition={3},
  publisher={Hodges, Foster, and Co.},
  place={Dublin},
  date={1876},
  label={Sal1876},
}

\bib{Salmon1879}{book}{
  author={Salmon, G.},
  title={A treatise on the higher plane curves},
  edition={3},
  publisher={Hodges, Foster, and Figgis},
  place={Dublin},
  date={1879},
  label={Sal1879},
}

\bib{Schaefer1995}{article}{
  author={Schaefer, Edward F.},
  title={$2$-descent on the Jacobians of hyperelliptic curves},
  journal={J. Number Theory},
  volume={51},
  date={1995},
  number={2},
  pages={219\ndash 232},
  issn={0022-314X},
  review={\MR {1326746 (96c:11066)}},
}

\bib{Schaefer1996}{article}{
  author={Schaefer, Edward F.},
  title={Class groups and Selmer groups},
  journal={J. Number Theory},
  volume={56},
  date={1996},
  number={1},
  pages={79\ndash 114},
  issn={0022-314X},
  review={\MR {1370197 (97e:11068)}},
}

\bib{Schaefer1998}{article}{
  author={Schaefer, Edward F.},
  title={Computing a Selmer group of a Jacobian using functions on the curve},
  journal={Math. Ann.},
  volume={310},
  date={1998},
  number={3},
  pages={447\ndash 471},
  issn={0025-5831},
  review={\MR {1612262 (99h:11063)}},
}

\bib{Schaefer-Stoll2004}{article}{
  author={Schaefer, Edward F.},
  author={Stoll, Michael},
  title={How to do a $p$-descent on an elliptic curve},
  journal={Trans. Amer. Math. Soc.},
  volume={356},
  date={2004},
  number={3},
  pages={1209\ndash 1231 (electronic)},
  issn={0002-9947},
  review={\MR {2021618 (2004g:11045)}},
}

\bib{SerreLocalFields1979}{book}{
  author={Serre, Jean-Pierre},
  title={Local fields},
  series={Graduate Texts in Mathematics},
  volume={67},
  note={Translated from the French by Marvin Jay Greenberg},
  publisher={Springer-Verlag},
  place={New York},
  date={1979},
  pages={viii+241},
  isbn={0-387-90424-7},
  review={\MR {554237 (82e:12016)}},
}

\bib{Stauduhar1973}{article}{
  author={Stauduhar, Richard P.},
  title={The determination of Galois groups},
  journal={Math. Comp.},
  volume={27},
  date={1973},
  pages={981--996},
  issn={0025-5718},
  review={\MR {0327712 (48 \#6054)}},
}

\bib{Stoll2006-chabauty}{article}{
  author={Stoll, Michael},
  title={Independence of rational points on twists of a given curve},
  journal={Compos. Math.},
  volume={142},
  date={2006},
  number={5},
  pages={1201--1214},
  issn={0010-437X},
  review={\MR {2264661}},
}

\bib{Thorne-thesis}{book}{
  author={Thorne, Jack A.},
  title={The arithmetic of simple singularities},
  date={2012-04},
  note={Ph.D.\ thesis, Harvard University},
  pages={v+55},
}

\bib{Weil1929}{article}{
  author={Weil, Andr{\'e}},
  title={L'arithm\'etique sur les courbes alg\'ebriques},
  language={French},
  journal={Acta Math.},
  volume={52},
  date={1929},
  number={1},
  pages={281--315},
  issn={0001-5962},
  review={\MR {1555278}},
  doi={10.1007/BF02547409},
}

\end{biblist}
\end{bibdiv}

\end{document}